\newtheorem{theorem}{Theorem}[section]
\newtheorem{proposition}[theorem]{Proposition}
\newtheorem{corollary}[theorem]{Corollary}
\newtheorem{definition}[theorem]{Definition}
\newtheorem{lemma}[theorem]{Lemma}
\newtheorem{thIntro}{Theorem}
\theoremstyle{remark}
\newtheorem{remark}[theorem]{Remark}
\newtheorem{example}[theorem]{Example}
\newcommand{\flecheIso}[4]{                     
            \begin{array}{rcl} #1 & \overset{\sim}{\to} & #2 \\   %
                         #3 &\mapsto & #4          %
            \end{array}}
\newcommand{\fonc}[5]{                     
            \begin{array}{crll}#1 :& #2 & \rightarrow & #3 \\   %
                         &#4 &\mapsto & #5          %
            \end{array}}
\newcommand{\foncIso}[5]{                     
            \begin{array}{crll}#1 :& #2 & \overset{\sim}{\rightarrow} & #3 \\   %
                         &#4 &\mapsto & #5          %
            \end{array}}
\DeclareMathOperator{\Ext}{Ext}
\DeclareMathOperator{\YExt}{YExt}
\DeclareMathOperator{\Hom}{Hom}
\title{\bf Davydov--Yetter cohomology \\and relative homological algebra}
\author[1]{M. Faitg}
\author[2]{A.M. Gainutdinov}
\author[1]{C. Schweigert}
\affil[1]{\small \textit{Fachbereich Mathematik, Universität Hamburg, Bundesstra{\ss}e 55, 20146 Hamburg, Germany}}
\affil[2]{\small \textit{Institut Denis Poisson, CNRS, Université de Tours, Universit\'e d’Orl\'eans, Parc de Grandmont, 37200 Tours, France}}
\date{}
\begin{document}

\begin{flushright}
\textsf{ZMP-HH/22-6
\\Hamburger Beitr\"age zur Mathematik Nr. 914}
\end{flushright}

{\let\newpage\relax\maketitle}

\maketitle

\vspace{-2em}

\noindent {\small {\em E-mail adresses:} matthieu.faitg@univ-tlse3.fr, azat.gainutdinov@lmpt.univ-tours.fr,
\\\hphantom{E-mail adresses: }christoph.schweigert@uni-hamburg.de}

\bigskip

\bigskip

\begin{abstract}
Davydov--Yetter (DY) cohomology classifies infinitesimal deformations of the monoidal structure of tensor functors and tensor categories. 
In this paper we provide new tools for the computation of the DY cohomology 
for finite tensor categories and exact functors between them. The key point
is to realize DY cohomology as relative Ext groups.
In particular, we prove that the infinitesimal deformations of a tensor category $\mathcal{C}$ are classified 
by the 3-rd self-extension group of the tensor unit 
of the Drinfeld center $\mathcal{Z}(\mathcal{C})$ relative to $\mathcal{C}$.
From classical results on relative homological algebra we get 
a long exact sequence for DY cohomology and a Yoneda product for which we provide an explicit formula.
Using the long exact sequence and duality, we obtain a
dimension formula for the cohomology groups based solely on relatively projective covers
which reduces a problem in homological algebra to a problem in representation theory, e.g. calculating the space of invariants in a certain object of $\mathcal{Z}(\mathcal{C})$.
Thanks to the Yoneda product, we also develop a method for computing DY cocycles 
explicitly which are needed for applications in the deformation theory.
We apply these tools to the category of finite-dimensional modules over a finite-dimensional Hopf algebra. We study in detail the examples of the bosonization of exterior algebras $\Lambda\mathbb{C}^k \rtimes \mathbb{C}[\mathbb{Z}_2]$, the Taft algebras and the small quantum group of $\mathfrak{sl}_2$ at a root of unity.
\end{abstract}

\bigskip

\bigskip

{\em Keywords: Davydov--Yetter cohomology, relative homological algebra, tensor categories.}
\\\indent {\em AMS subject classification 2020: 18G25, 18M05.}

\newpage

\tableofcontents

\section{Introduction}

\indent A deformation theory of monoidal structures has been introduced and studied by Davydov, Crane and Yetter \cite{davydov, CY, yetter1, yetter2}; it describes deformations of the monoidal structure of a $k$-linear monoidal functor or the associator of a $k$-linear monoidal category (where $k$ is a field), without changing the underlying functor and categories. This theory is the first step to the classification problem of monoidal structures \cite{davydov} but is also related to quantum algebra and low-dimensional topology. Within this deformation theory, it was shown in \cite{DE} that the category of all modules over the enveloping algebra $U(\mathfrak{g})$ of a simple Lie algebra $\mathfrak{g}$ admits a one-parameter family of non-trivial deformations. One therefore expects to recover the category of modules over the quantum group $U_q(\mathfrak{g})$, see e.g.\,\cite{CP} for the definition of $U_q(\mathfrak{g})$. Also, this theory allows to deform the braiding of a tensor category and this can be used to produce link invariants; see \cite{yetter1} where a relation with Vassiliev invariants was established.

\smallskip

\indent We first recall a bit more precisely this deformation theory, which is often called Davydov--Yetter theory as e.g.\,\,in \cite[\S7.22]{EGNO}. Let $\mathcal{C}, \mathcal{D}$ be $k$-linear monoidal categories, assumed strict for simplicity, and let $F : \mathcal{C} \to \mathcal{D}$ be a tensor functor, \textit{i.e.}\,a $k$-linear monoidal functor. By definition, $F$ comes with a natural isomorphism $\theta_{X,Y} : F(X \otimes Y) \overset{\sim}{\to} F(X) \otimes F(Y)$ such that the diagram
\begin{equation}\label{diagramTensorStructureTheta}
\xymatrix@C=6em{
F(X \otimes Y \otimes Z) \ar[r]^{\theta_{X \otimes Y, Z}} \ar[d]_{\theta_{X, Y \otimes Z}} & F(X \otimes Y) \otimes F(Z) \ar[d]^{\theta_{X,Y} \otimes \mathrm{id}_{F(Z)}}\\
F(X) \otimes F(Y \otimes Z) \ar[r]_{\!\!\!\!\!\!\!\!\mathrm{id}_{F(X)} \otimes \theta_{Y,Z}} & F(X) \otimes F(Y) \otimes F(Z)
} \end{equation}
is commutative. Let $F$ be strict, \textit{i.e.}\,$\theta = \mathrm{id}$; this can be assumed without loss of generality because a tensor functor can always be strictified \cite{JS}. In Davydov--Yetter theory we consider infinitesimal deformations $\theta_h = \mathrm{id} + hf$ with $h^2 = 0$, where $f$ is a natural transformation $f_{X,Y} : F(X \otimes Y) \to F(X) \otimes F(Y)$, such that the diagram \eqref{diagramTensorStructureTheta} remains commutative with $\theta_h$ instead of $\theta$. Then the condition \eqref{diagramTensorStructureTheta} on $\theta_h$ implies
\begin{equation}\label{cocycleCondition}
\mathrm{id}_{F(X_1)} \otimes f_{X_2, X_3} - f_{X_1 \otimes X_2, X_3} + f_{X_1, X_2 \otimes X_3} - f_{X_1,X_2} \otimes \mathrm{id}_{F(X_3)} = 0.
\end{equation}
The space of $2$-cochains $C^2_{\mathrm{DY}}(F)$ is defined as the space of all natural endo-transformations of $F \otimes F$ while the subspace of $2$-cocycles contains the solutions of \eqref{cocycleCondition}. In other words, the differential $\delta^2(f)_{X_1, X_2, X_3}$ is defined as the expression at the left hand-side of \eqref{cocycleCondition}. The other cochain spaces are $C^n_{\mathrm{DY}}(F) = \mathrm{End}(F^{\otimes n})$, for the differentials $\delta^n$ see \eqref{defDYdifferential}. The resulting cohomology, denoted by $H^{\bullet}_{\mathrm{DY}}(F)$, is called the Davydov--Yetter (DY) cohomology of $F$. The infinitesimal deformations of the monoidal structure of $F$ are classified up to equivalence by $H^2_{\mathrm{DY}}(F)$, and it was shown in \cite{yetter1} that the obstructions are contained in $H^3_{\mathrm{DY}}(F)$. This in particular means that if $H^3_{\mathrm{DY}}(F) = 0$ an infinitesimal deformation can be extended to all orders.

\smallskip

\indent We note that the identity functor $F = \mathrm{Id}_{\mathcal{C}}$ deserves a special attention because $H^3_{\mathrm{DY}}(\mathrm{Id}_{\mathcal{C}})$ classifies the infinitesimal deformations of the (trivial) associator of $\mathcal{C}$. Such a deformation is an  expansion $a_h = \mathrm{id} + hg$ over $k[h]/\langle h^2 \rangle$ which satisfies the pentagon equation, where $g$ is a natural transformation $g_{X,Y,Z} : X \otimes Y \otimes Z \to X \otimes Y \otimes Z$. The obstructions are contained in $H^4_{\mathrm{DY}}(\mathrm{Id}_{\mathcal{C}})$, at least for the extension of an infinitesimal deformation to the order $2$ \cite[Prop.\,3.21]{BD}. We will denote $H^n_{\mathrm{DY}}(\mathcal{C})$ instead of $H^n_{\mathrm{DY}}(\mathrm{Id}_{\mathcal{C}})$.

\smallskip

\indent In this paper we study DY cohomology for finite tensor categories and exact tensor functors between them (see our conventions at the beginning of \S \ref{relativeExtTensorCategories}). Such categories are equivalent to $A$-mod for a finite-dimensional $k$-algebra $A$ and are equipped with a rigid monoidal structure (see \S \ref{relativeExtTensorCategories}). They are ubiquitous in quantum algebra and mathematical physics, for instance from logarithmic CFTs \cite{GR17,FGR,CGR}, small quantum groups~\cite{lusztig, GLO18,N18} or more generally finite-dimensional Hopf algebras.

\smallskip

\indent In general, cohomology theories admit coefficients; for instance in Hochschild cohomology the coefficients are bimodules. Coefficients for DY cohomology have been introduced in \cite{GHS} and are objects in $\mathcal{Z}(F)$, namely the centralizer of the tensor functor $F : \mathcal{C} \to \mathcal{D}$. The category $\mathcal{Z}(F)$ is rigid and monoidal and plays a crucial role in this paper; its objects are pairs $\mathsf{V} = (V, \rho^V)$ where $V \in \mathcal{D}$ and $\rho^V : V \otimes F(-) \to F(-) \otimes V$ is like a half-braiding for $V$ (see more details in \S \ref{sectionAdjunctionCentralizerFunctor}). In particular $\mathcal{Z}(\mathrm{Id}_ {\mathcal{C}})$ is just the Drinfeld center $\mathcal{Z}(\mathcal{C})$. The DY cohomology with coefficients $\mathsf{V}, \mathsf{W} \in \mathcal{Z}(F)$ is denoted by $H^{\bullet}_{\mathrm{DY}}(F;\mathsf{V},\mathsf{W})$ and the cohomology without coefficients $H^{\bullet}_{\mathrm{DY}}(F)$ is recovered as $H^{\bullet}_{\mathrm{DY}}(F; \mathbf{1}, \mathbf{1})$, where $\mathbf{1}$ is the tensor unit of $\mathcal{Z}(F)$ (trivial coefficients).

\smallskip

\indent An obvious question is: how to compute Davydov--Yetter cohomology? This can be divided into two problems:
\begin{itemize}
\item[(P1)] Compute the dimension of the Davydov--Yetter cohomology groups.
\item[(P2)] Determine explicit cocycles. This question is especially relevant for $2$-cocycles (or $3$-cocycles for the identity functor) since they give rise to infinitesimal deformations.
\end{itemize}
In general it is difficult to solve these problems using only the definition of the DY complex $C^{\bullet}_{\mathrm{DY}}(F)$ because the cochain spaces grow fast and natural transformations are in general not suitable for explicit computations. A more successful strategy is to use methods from homological algebra. For instance in \cite{DE} the DY cohomology of $U(\mathfrak{g})$-Mod for a simple Lie algebra $\mathfrak{g}$ has been computed thanks to a natural filtration on this DY complex and using the associated spectral sequence. But in general the DY complex of a tensor category does not admit a natural filtration.

\smallskip

\indent In the case of finite tensor categories $\mathcal{C}, \mathcal{D}$ and an exact tensor functor $F : \mathcal{C} \to \mathcal{D}$, the forgetful functor $\mathcal{U} : \mathcal{Z}(F) \to \mathcal{D}$ is also exact and thus admits a left adjoint $\mathcal{F}$. We relate the DY cohomology to the relative Ext groups of this adjunction:
\begin{thIntro}\label{thIntroIsoDYRelExt}
The Davydov--Yetter cohomology of an exact tensor functor $F : \mathcal{C} \to \mathcal{D}$ with coefficients $\mathsf{V}, \mathsf{W} \in \mathcal{Z}(F)$ is isomorphic to the relative Ext groups of the adjunction defined by the forgetful functor $\mathcal{Z}(F) \to \mathcal{D}$:
\[ H^n_{\mathrm{DY}}(F; \mathsf{V}, \mathsf{W}) \cong \mathrm{Ext}^n_{\mathcal{Z}(F), \mathcal{D}}(\mathsf{V}, \mathsf{W}) \]
for all $n \geq 0$.
\end{thIntro}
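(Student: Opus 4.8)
The plan is to compute the relative $\mathrm{Ext}$ groups through the canonical relatively projective resolution attached to the adjunction $\mathcal{F} \dashv \mathcal{U}$, and then to identify the resulting cochain complex with the Davydov--Yetter complex. Recall from relative homological algebra that, for the exact forgetful functor $\mathcal{U} : \mathcal{Z}(F) \to \mathcal{D}$ with left adjoint $\mathcal{F}$, the relatively projective objects are the retracts of the $\mathcal{F}(D)$, $D \in \mathcal{D}$, and that $\mathrm{Ext}^n_{\mathcal{Z}(F), \mathcal{D}}(\mathsf{V}, \mathsf{W})$ is the cohomology of $\mathrm{Hom}_{\mathcal{Z}(F)}(P_\bullet, \mathsf{W})$ for any relatively projective resolution $P_\bullet \to \mathsf{V}$. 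The comonad $\bot = \mathcal{F}\mathcal{U}$ on $\mathcal{Z}(F)$ furnishes such a resolution, the bar resolution $\bot^{\bullet+1}\mathsf{V} \to \mathsf{V}$, whose terms lie in the image of $\mathcal{F}$ (hence are relatively projective) and which is $\mathcal{U}$-split by the comonad counit. It therefore suffices to identify $\mathrm{Hom}_{\mathcal{Z}(F)}(\bot^{\bullet+1}\mathsf{V}, \mathsf{W})$, with its bar differential, with $C^\bullet_{\mathrm{DY}}(F; \mathsf{V}, \mathsf{W})$.

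For the cochain spaces, I would peel off the outermost $\mathcal{F}$ using the adjunction isomorphism,
\[
\mathrm{Hom}_{\mathcal{Z}(F)}\big((\mathcal{F}\mathcal{U})^{n+1}\mathsf{V}, \mathsf{W}\big) \;\cong\; \mathrm{Hom}_{\mathcal{D}}\big((\mathcal{U}\mathcal{F})^{n}V,\, W\big),
\]
where $V = \mathcal{U}\mathsf{V}$ and $W = \mathcal{U}\mathsf{W}$. The next step is an explicit description of the monad $\mathcal{U}\mathcal{F}$ on $\mathcal{D}$: the left adjoint $\mathcal{F}$ is an induction functor built from a coend, $\mathcal{F}(D) \cong \int^{X \in \mathcal{C}} F(X) \otimes D \otimes {}^{\vee}F(X)$ with its canonical half-braiding, so that $\mathcal{U}\mathcal{F}(D) = \int^{X} F(X) \otimes D \otimes {}^{\vee}F(X)$. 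Feeding this into the Hom space, turning the coend in the contravariant slot into an end and using rigidity to move the duals across, a Yoneda/coend computation yields
\[
\mathrm{Hom}_{\mathcal{D}}\big((\mathcal{U}\mathcal{F})^n V, W\big) \;\cong\; \mathrm{Nat}_{X_1, \dots, X_n}\big(V \otimes F(X_1) \otimes \cdots \otimes F(X_n),\; F(X_1) \otimes \cdots \otimes F(X_n) \otimes W\big),
\]
and the right-hand side is precisely $C^n_{\mathrm{DY}}(F; \mathsf{V}, \mathsf{W})$; for $n = 0$ this records $\mathrm{Hom}_{\mathcal{D}}(V, W)$, whose half-braiding-compatible part is $\mathrm{Hom}_{\mathcal{Z}(F)}(\mathsf{V}, \mathsf{W}) = \mathrm{Ext}^0$, matching $H^0_{\mathrm{DY}}$.

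It then remains to match differentials. The bar differential is the alternating sum $\sum_{i=0}^{n} (-1)^i d_i^\ast$ of the cofaces dual to $d_i = \bot^{\,i}\varepsilon\,\bot^{\,n-i} \colon \bot^{n+1}\mathsf{V} \to \bot^{n}\mathsf{V}$, where $\varepsilon \colon \mathcal{F}\mathcal{U} \to \mathrm{Id}$ is the counit. Under the identification above, each inner coface ($0 < i < n$) is the monad multiplication applied in the $i$-th slot, which on coends merges the adjacent variables $X_i, X_{i+1}$ into $X_i \otimes X_{i+1}$ and so reproduces the terms $f_{\dots, X_i \otimes X_{i+1}, \dots}$ of the differential; the two extreme cofaces involve the counit at the outer positions and, through the half-braidings $\rho^V, \rho^W$ of the coefficients entering the adjunction unit and counit, reproduce the boundary terms of the shape $\mathrm{id}_{F(X_1)} \otimes (-)$ and $(-) \otimes \mathrm{id}_{F(X_{n+1})}$, as in \eqref{cocycleCondition} and its higher-degree analogue. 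The two complexes then agree and the isomorphism follows for all $n \geq 0$.

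The main obstacle is the middle step: pinning down the explicit coend form of $\mathcal{F}$ together with its half-braiding, and carrying out the rigidity and Yoneda bookkeeping so that the iterated monad $(\mathcal{U}\mathcal{F})^n$ is identified with the $n$-variable natural-transformation space, naturally in both $\mathsf{V}$ and $\mathsf{W}$. The delicate point within the differential matching is to check that the half-braidings of the coefficients occur in exactly the two extreme cofaces with the correct signs; once the cochain identification is available naturally, however, compatibility of the differentials is essentially forced and can be confirmed by a direct check in low degrees.
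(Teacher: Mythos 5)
Your proposal is correct and is essentially the paper's own argument: the paper likewise computes $\Ext^{\bullet}_{\mathcal{Z}(F),\mathcal{D}}(\mathsf{V},\mathsf{W})$ from the bar resolution of the comonad $G=\mathcal{F}\mathcal{U}$ on $\mathcal{Z}(F)$ and identifies the resulting complex $\Hom_{\mathcal{Z}(F)}\bigl(G^{\bullet+1}(\mathsf{V}),\mathsf{W}\bigr)$ with $C^{\bullet}_{\mathrm{DY}}(F;\mathsf{V},\mathsf{W})$ by exactly the adjunction--coend--rigidity computation you outline (Proposition \ref{relExtAndComonadCohom} combined with Theorem \ref{thGHS}, giving Corollary \ref{DYrelExt}). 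The only difference is organizational: the cochain identification and the matching of the bar differential with the DY differential, which you sketch and defer to ``a direct check in low degrees,'' are not re-derived in the paper but quoted from \cite[Thm.\,3.11]{GHS}, where they are established in full for all degrees.
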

\noindent Relative Ext groups are cohomology groups associated to a resolvent pair
$\xymatrix{
\mathcal{A} \ar@<1ex>[r]^{\mathcal{U}} & \ar[l]^{\mathcal{F}} \mathcal{B}
}$, 
which is an adjunction between abelian categories such that the right adjoint $\mathcal{U}$ is additive, exact and faithful. The definition and computation of these abelian groups is similar to that of the usual Ext groups, but with ``relative versions'' of projective objects and projective resolutions; this classical subject \cite{hochschild, macLane} is reviewed in \S \ref{relExtGroups}. We first prove in general that the relative Ext groups are isomorphic to the cohomology of the comonad $G = \mathcal{F}\mathcal{U}$ (a new result to the best of our knowledge); comonad cohomology \cite{BB} is reviewed in \S\ref{sectionRelExtGroupsAsComonadCohomology}. Then, to obtain Theorem \ref{thIntroIsoDYRelExt} we use the isomorphism discovered in \cite{GHS} between DY cohomology and the comonad cohomology of the adjunction $\mathcal{F} \dashv \mathcal{U}$ discussed above Theorem \ref{thIntroIsoDYRelExt}. We thus obtain the following equivalence relations between the three cohomology theories:
\begin{equation}\label{diagramIntro}
\begingroup%
  \makeatletter%
  \providecommand\color[2][]{%
    \errmessage{(Inkscape) Color is used for the text in Inkscape, but the package 'color.sty' is not loaded}%
    \renewcommand\color[2][]{}%
  }%
  \providecommand\transparent[1]{%
    \errmessage{(Inkscape) Transparency is used (non-zero) for the text in Inkscape, but the package 'transparent.sty' is not loaded}%
    \renewcommand\transparent[1]{}%
  }%
  \providecommand\rotatebox[2]{#2}%
  \newcommand*\fsize{\dimexpr\f@size pt\relax}%
  \newcommand*\lineheight[1]{\fontsize{\fsize}{#1\fsize}\selectfont}%
  \ifx\svgwidth\undefined%
    \setlength{\unitlength}{423.34139454bp}%
    \ifx\svgscale\undefined%
      \relax%
    \else%
      \setlength{\unitlength}{\unitlength * \real{\svgscale}}%
    \fi%
  \else%
    \setlength{\unitlength}{\svgwidth}%
  \fi%
  \global\let\svgwidth\undefined%
  \global\let\svgscale\undefined%
  \makeatother%
  \begin{picture}(1,0.24707552)%
    \lineheight{1}%
    \setlength\tabcolsep{0pt}%
    \put(-0.00186861,0.21470007){\color[rgb]{0,0,0}\makebox(0,0)[lt]{\lineheight{1.25}\smash{\begin{tabular}[t]{l}Davydov--Yetter cohomology\end{tabular}}}}%
    \put(0.0835469,0.17831855){\color[rgb]{0,0,0}\makebox(0,0)[lt]{\lineheight{1.25}\smash{\begin{tabular}[t]{l}$H^{\bullet}_{\mathrm{DY}}(F;\mathsf{V},\mathsf{W})$\end{tabular}}}}%
    \put(0.69467787,0.21343458){\color[rgb]{0,0,0}\makebox(0,0)[lt]{\lineheight{1.25}\smash{\begin{tabular}[t]{l}Comonad cohomology\end{tabular}}}}%
    \put(0.6899306,0.17863488){\color[rgb]{0,0,0}\makebox(0,0)[lt]{\lineheight{1.25}\smash{\begin{tabular}[t]{l}$H^{\bullet}_G\bigl( \mathsf{V}, \Hom_{\mathcal{Z}(F)}(?,\mathsf{W}) \bigr)$\end{tabular}}}}%
    \put(0.39703047,0.04224189){\color[rgb]{0,0,0}\makebox(0,0)[lt]{\lineheight{1.25}\smash{\begin{tabular}[t]{l}Relative Ext groups\end{tabular}}}}%
    \put(0.41790838,0.00488975){\color[rgb]{0,0,0}\makebox(0,0)[lt]{\lineheight{1.25}\smash{\begin{tabular}[t]{l}$\mathrm{Ext}^{\bullet}_{\mathcal{Z}(F),\mathcal{D}}(\mathsf{V},\mathsf{W})$\end{tabular}}}}%
    \put(0,0){\includegraphics[width=\unitlength,page=1]{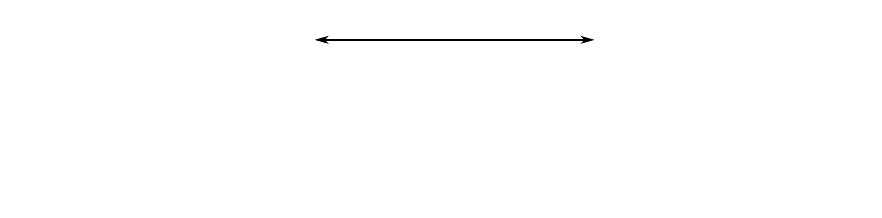}}%
    \put(0.39773427,0.21965115){\color[rgb]{0,0,0}\makebox(0,0)[lt]{\lineheight{1.25}\smash{\begin{tabular}[t]{l}\cite[Thm.\,3.11]{GHS}\end{tabular}}}}%
    \put(0,0){\includegraphics[width=\unitlength,page=2]{diagram_Intro.pdf}}%
    \put(0.76286105,0.09034732){\color[rgb]{0,0,0}\makebox(0,0)[lt]{\lineheight{1.25}\smash{\begin{tabular}[t]{l}Special case\\of Prop. \ref{relExtAndComonadCohom}\end{tabular}}}}%
    \put(0.19659011,0.08234352){\color[rgb]{0,0,0}\makebox(0,0)[lt]{\lineheight{1.25}\smash{\begin{tabular}[t]{l}Thm.\,\ref{thIntroIsoDYRelExt}\end{tabular}}}}%
    \put(0,0){\includegraphics[width=\unitlength,page=3]{diagram_Intro.pdf}}%
  \end{picture}%
\endgroup%

\end{equation}
\noindent The isomorphisms in the diagram \eqref{diagramIntro} also hold on the level of complexes, with all relations made explicit in \S \ref{sectionRelationDYCohomology}. In particular, the cochain complex isomorphism for the dashed arrow is explained below Corollary \ref{DYrelExt}.

\smallskip

\indent Working with relative Ext groups instead of comonad cohomology allows us to use powerful results from relative homological algebra, which is a well-established subject \cite{macLane}: existence of a long exact sequence of relative Ext groups associated to a short exact sequence, Yoneda product, Yoneda description of relative $\Ext^n$ groups in terms of certain $n$-fold exact sequences. These results are reviewed in \S \ref{subsectionPropRelExtGroups}. Our goal here is to use these properties in the case of the resolvent pair $\mathcal{Z}(F) \leftrightarrows \mathcal{D}$ and to obtain the corresponding properties for Davydov--Yetter cohomology through the isomorphism in Theorem \ref{thIntroIsoDYRelExt}. In that way we obtain the following results:
\begin{enumerate}
\item Proposition \ref{propH1Equals0}: $H^1_{\mathrm{DY}}(F) = 0$ provided that the tensor functor $F$ is exact and the ground field $k$ has characteristic $0$ and is algebraically closed.
\item Corollary \ref{coroLongExactSequenceDYCohomology}: long exact sequence for Davydov--Yetter cohomology with coefficients.
\item Corollary \ref{corollaryDimensionDYGroups}: dimension formulas for the Davydov--Yetter cohomology groups.
\item Theorem \ref{propYonedaProductDY}: explicit formula of the Yoneda product on Davydov--Yetter cohomology.
\item Proposition \ref{propositionDYGroupsAreModules}: graded module structure of $H^{\bullet}_{\mathrm{DY}}(F;\mathsf{V},\mathsf{W})$ over the Yoneda product algebra $H^{\bullet}_{\mathrm{DY}}(F)$, for any coefficients $\mathsf{V},\mathsf{W} \in \mathcal{Z}(F)$.
\item The method in \S \ref{methodDYCocyclesThanksToSequences} for constructing explicit DY cocycles for $\mathcal{C} = H\text{-}\mathrm{mod}$ and $F = \mathrm{Id}_{\mathcal{C}}$, where $H\text{-}\mathrm{mod}$ is the category of finite-dimensional modules over a finite-dimensional Hopf $k$-algebra $H$.
\end{enumerate}
These results uncover the importance of DY coefficients; in particular every $n$-cocycle can be written as a product of $n$ $1$-cocyles  with non-trivial coefficients (Lemma \ref{factorization1CocyclesDY}). All these properties are helpful to compute the DY cohomology on examples, as demonstrated in \S \ref{sectionExamples} for certain finite-dimensional Hopf algebras. Let us now give more details on how these results are obtained.

\smallskip

\indent The first item follows from the fact that $\mathcal{Z}(F)$ is a finite tensor category when $F$ is exact; therefore if $k$ has characteristic $0$ and is algebraically closed then $\Ext^1_{\mathcal{Z}(F)}(\mathbf{1}, \mathbf{1}) = 0$ by \cite[Thm.\,4.4.1]{EGNO} from which we derive $\Ext^1_{\mathcal{Z}(F),\mathcal{D}}(\mathbf{1}, \mathbf{1}) = 0$. The second item is immediate from the corresponding statement on relative Ext groups recalled in \S \ref{sectionLongExactSequenceExtGroups}.

\smallskip

\indent Before explaining the third item, we note that the resolvent pair $\mathcal{Z}(F) \leftrightarrows \mathcal{D}$ is monoidal, \textit{i.e.} the forgetful functor $\mathcal{U} : \mathcal{Z}(F) \to \mathcal{D}$ is monoidal. We show in \S \ref{sectionMonoidalResolventPairs} that the relative Ext groups of a monoidal resolvent pair have properties similar to those of usual Ext groups for tensor categories: the relatively projective objects form a tensor ideal and relative Ext groups are compatible with the duality. From these properties and the long exact sequence, we express the relative Ext groups in terms of Hom spaces in Proposition \ref{propositionExtWithRelProjCovers}. The associated dimension formula in Corollary \ref{CoroDimExtWithHom} requires only to find a relatively projective cover (see definition in \S \ref{subsectionRelProjCover}) and to determine the dimension of certain Hom spaces. For example, if the ground field $k$ has characteristic $0$ and is algebraically closed, the formula for the second cohomology is
\begin{equation}\label{formulaDimH2Intro}
\dim H_{\mathrm{DY}}^2(F) = \dim \Hom_{\mathcal{Z}(F)}\bigl(\mathsf{K},\mathsf{K}^{\vee}\bigr) - \dim \Hom_{\mathcal{Z}(F)}\bigl(\mathsf{P},\mathsf{K}^{\vee}\bigr)
\end{equation}
where $\mathsf{P}$ is the relatively projective cover of the unit object and $\mathsf{K} = \mathrm{ker}(\mathsf{P} \twoheadrightarrow \boldsymbol{1})$. This is especially efficient in practice and gives the dimension of the space of infinitesimal deformations of $F$. We give the general formula for all degrees in  Corollary \ref{corollaryDimensionDYGroups}, which is the third item of the above list. In \S \ref{subsubsectionDYcohomologyBarUi} we apply it to the example of $\mathcal{C} = \bar U_{\mathbf{i}}(\mathfrak{sl}_2)\text{-}\mathrm{mod}$ with $F = \mathrm{Id}_{\mathcal{C}}$ and we compute $\dim\bigl( H^n_{\mathrm{DY}}(F) \bigr)$ for $n \leq 4$; this calculation was not accessible with previous methods. Therefore we have a new method to address the problem (P1).

\smallskip

\indent To address the problem (P2) and to explain the items 4 and 5, we recall in \S \ref{sectionYonedaDescriptionOfRelativeExtGroups} the Yoneda description $\YExt^n_{\mathcal{A},\mathcal{B}}$ of the relative Ext groups $\Ext^n_{\mathcal{A},\mathcal{B}}$, based on exact sequences in $\mathcal{A}$ of length $n$ that are split in $\mathcal{B}$. The point is that these $n$-fold exact sequences are not so difficult to construct (in particular they are easier than constructing relatively projective resolutions) and that we have isomorphisms
\begin{equation}\label{isoYExtExtDYIntro}
\YExt^n_{\mathcal{Z}(F),\mathcal{D}}(\mathsf{V}, \mathsf{W}) \overset{\sim}{\to} \Ext^n_{\mathcal{Z}(F),\mathcal{D}}(\mathsf{V}, \mathsf{W}) \overset{\sim}{\to} H^n_{\mathrm{DY}}(F;\mathsf{V},\mathsf{W})
\end{equation}
which associates DY cocycles to exact sequences, for all $n \geq 0$ and $\mathsf{V}, \mathsf{W} \in \mathcal{Z}(F)$.

\smallskip

\indent There is a natural gluing operation $\circ : \YExt_{\mathcal{A},\mathcal{B}}^n(V,W) \times \YExt_{\mathcal{A},\mathcal{B}}^m(U,V) \to \YExt_{\mathcal{A},\mathcal{B}}^{m+n}(U,W)$ for $n$-fold exact sequences called Yoneda product, see \S \ref{sectionTheYonedaProduct}. We transport the Yoneda product through the isomorphism \eqref{isoYExtExtDYIntro}, in order to get an associative product on DY cohomology:
\[ \circ : \:\:H^n_{\mathrm{DY}}(F;\mathsf{V},\mathsf{W}) \times H^m_{\mathrm{DY}}(F;\mathsf{U},\mathsf{V}) \to H^{m+n}_{\mathrm{DY}}(F;\mathsf{U},\mathsf{W}). \]
The explicit expression of this product is determined in Theorem \ref{propYonedaProductDY} and turns out to be very simple; it is an important tool in practice because every cocycle can be factored as a product of $1$-cocycles. The fifth item in the above list follows from the fact that the product $\circ$ endows $H^{\bullet}(F)$ with the structure of a graded associative algebra and that $H_{\mathrm{DY}}^{\bullet}(F; \mathsf{U}, \mathsf{V})$ can be naturally endowed with the structure of a graded $H^{\bullet}(F)$-module; we also determine the formula of the corresponding action.

\smallskip

\indent In section \ref{sectionFinDimHopfAlgebras} we turn to the case $\mathcal{C} = H\text{-}\mathrm{mod}$ where $H$ is a finite-dimensional Hopf algebra over a field $k$ and we obtain explicit results for $F = \mathrm{Id}_{\mathcal{C}}$. Note that $\mathcal{Z}(\mathrm{Id}_{H\text{-}\mathrm{mod}}) \cong D(H)\text{-}\mathrm{mod}$, where $D(H)$ is the Drinfeld double of $H$ and $\mathcal{U} : D(H)\text{-}\mathrm{mod} \to H\text{-}\mathrm{mod}$ is the restriction functor for the standard embedding $H \hookrightarrow D(H) = (H^*)^{\mathrm{op}} \otimes H$. The isomorphism of Theorem \ref{thIntroIsoDYRelExt} then becomes
\begin{equation}\label{DYHmodIntro}
H^n_{\mathrm{DY}}(H\text{-}\mathrm{mod}; V, W) \cong \Ext^n_{D(H),H}(V, W)
\end{equation}
for all $n \geq 0$ and $V, W \in D(H)\text{-}\mathrm{mod}$. The Yoneda Ext group $\YExt^n_{D(H),H}(V,W)$ consists of exact sequences of $D(H)$-modules of length $n$ from $W$ to $V$ which split as sequences in $H$-mod. The method of \S \ref{methodDYCocyclesThanksToSequences}, based on the isomorphism \eqref{isoYExtExtDYIntro} and on its compatibility with the Yoneda product, associates to such an exact sequence an explicit DY $n$-cocycle. A key feature in this case is that the Davydov--Yetter cochains can be encoded in a very explicit manner in terms of $H$-linear maps, suitable for computations. Finally, we note in \S \ref{sectionFactoHopfAlg} that when $H$ is factorizable the isomorphism $D(H) \cong H^{\otimes 2}$ as algebras \cite{schneider} implies a convenient reformulation of the groups $\Ext_{D(H),H}$ as $\Ext_{H \otimes H, \Delta(H)}$ where $\Delta : H \to H \otimes H$ is the coproduct of $H$. As a consequence, when $k$ is algebraically closed, we show that $\mathsf{P}$ and $\mathsf{K}$ from \eqref{formulaDimH2Intro} are just the principal block of $H$ (the one containing the cointegral) and the kernel of the counit, respectively. Therefore, one can calculate the DY cohomologies in this case without even calculating the Drinfeld double. This is a quite useful fact that we employ in the case of small quantum groups at odd orders of roots of $1$.

\smallskip

\indent We apply these results and methods to examples in section \ref{sectionExamples}:
\begin{itemize}
\item In \S \ref{DYBk} we consider the bosonization $H = \Lambda(\mathbb{C}^k) \rtimes \mathbb{C}[\mathbb{Z}_2]$ of the exterior algebra of $\mathbb{C}^k$. We give explicit $2$-cocycles and show that $H^{\bullet}_{\mathrm{DY}}\bigl( \Lambda(\mathbb{C}^k) \rtimes \mathbb{C}[\mathbb{Z}_2]\text{-}\mathrm{mod} \bigr)$ is the polynomial algebra generated by them, thanks to the Yoneda product. This example was considered in \cite{GHS}, where only the dimensions of the DY cohomology groups were determined. In Proposition \ref{lemmaCongruenceBk}, we obtain the $2$-cocycles by constructing $2$-fold exact sequences in $D(H)$-mod and computing their images through the isomorphism \eqref{isoYExtExtDYIntro} with the method of \S \ref{methodDYCocyclesThanksToSequences}.
\item In \S \ref{sectionExampleTaftAlgebra} we consider the Taft algebras $H = T_q$ for every root of unity $q \in \mathbb{C}$. We show that $H^{\bullet}_{\mathrm{DY}}(T_q\text{-}\mathrm{mod}) \cong \mathbb{C}[X^2]$ as a graded algebra (where $X$ is of degree $1$) and we compute the explicit Davydov--Yetter cocycle in each even degree.

\item In \S \ref{sectionExampleBarUiSl2} we consider $H = \bar U_{\mathbf{i}}(\mathfrak{sl}_2)$, namely the restricted quantum group of $\mathfrak{sl}_2$ at $q = \mathbf{i} = \sqrt{-1}$, that appeared in a relation to vertex-operators algebras in \cite{FGST}. This example is much more involved: it is too difficult to construct a relatively projective resolution of the trivial $D(\bar U_{\mathbf{i}}(\mathfrak{sl}_2))$-module $\mathbb{C}$. Instead we determine only the relatively projective cover of $\mathbb{C}$ and we apply the dimension formulas of Corollary \ref{corollaryDimensionDYGroups}. We find:
\[ H^2_{\mathrm{DY}}(\bar U_{\mathbf{i}}(\mathfrak{sl}_2)\text{-}\mathrm{mod}) = 0, \quad \dim\bigl(H^3_{\mathrm{DY}}(\bar U_{\mathbf{i}}(\mathfrak{sl}_2)\text{-}\mathrm{mod}) \big) = 3, \quad H^4_{\mathrm{DY}}(\bar U_{\mathbf{i}}(\mathfrak{sl}_2)\text{-}\mathrm{mod}) = 0. \]
The first equality is easily obtained by \eqref{formulaDimH2Intro} while the two last are computationally harder. To overcome this, we note that computing the dimension of a Hom space is equivalent to counting invariants, which is a linear-algebra problem easily solved by a computer. We also construct two $3$-fold exact sequences and determine the corresponding DY $3$-cocycles through the isomorphism \eqref{isoYExtExtDYIntro} by the method of \S \ref{methodDYCocyclesThanksToSequences}. Then using a GAP program we prove that they are linearly independent in the corresponding cohomology and we find a third basis element of $H^3_{\mathrm{DY}}(\bar U_{\mathbf{i}}(\mathfrak{sl}_2)\text{-}\mathrm{mod})$. This gives a complete picture of the infinitesimal associators on $\bar U_{\mathbf{i}}$-mod in Proposition \ref{propDYbarUi}, that can be extended to order $2$ because of trivial $H^4_{\mathrm{DY}}$.

\item In \S \ref{sectionSmallQuantumGroupH2}, we apply our reformulation of the groups $\Ext_{D(H),H}$ for factorizable Hopf algebras and results from \cite{FGST} to prove in a few lines that $H^2_{\mathrm{DY}}\bigl(u_q(\mathfrak{sl}_2)\text{-}\mathrm{mod}\bigr) = 0$, where  $u_q(\mathfrak{sl}_2)$ is the small quantum group at a root of unity $q \in \mathbb{C}$ of odd order. For the case $q^3 = 1$ we also calculate $H^3_{\mathrm{DY}}(u_q(\mathfrak{sl}_2)\text{-}\mathrm{mod}) \cong \mathbb{C}$ using the same strategy as for $\bar U_{\mathbf{i}}$.
\end{itemize}

\smallskip

\indent The paper is organized as follows. In \S \ref{sectionRelativeHomAlg} we review the main facts of relative homological algebra, provide some results on relatively projective covers and observe that relative Ext groups are isomorphic to comonad cohomology. In \S \ref{sectionMonoidalResolventPairs} we give the special properties of relative Ext groups associated to a monoidal adjunction which allow us to derive a dimension formula for these groups in terms of relatively projective covers (see Corollary \ref{CoroDimExtWithHom}). In \S \ref{sectionAdjunctionCentralizerFunctor} we collect facts about the adjunction $\mathcal{Z}(F) \leftrightarrows \mathcal{D}$ while \S\S \ref{sectionRelationDYCohomology}--\ref{subsectionLongExactSequenceDYCohom} contain the main results of this paper on DY cohomology. In \S \ref{sectionFinDimHopfAlgebras} we specialize our results to the case of the identity functor on $H\text{-}\mathrm{mod}$ where $H$ is a finite-dimensional Hopf algebra; moreover we give a method to construct explicit DY cocycles in this case and provide further results for factorizable Hopf algebras in \S \ref{sectionFactoHopfAlg}. In \S\ref{sectionExamples} we apply the results of \S\ref{sectionFinDimHopfAlgebras} to a series of examples mentioned above.

\medskip

\noindent \textbf{Acknowledgments.} We thank P. Etingof for correspondence and T. D\'ecoppet for pointing out a gap in the first version of Proposition \ref{propATensBrelB}. We also thank the anonymous referee for careful reading and valuable comments. M.F. and C.S. are partially supported by the Deutsche Forschungsgemeinschaft (DFG, German Research Foundation) under Germany's Excellence Strategy - EXC 2121 “Quantum Universe”- 390833306. 
The work of A.M.G. was supported by the CNRS, and partially by the ANR grant JCJC ANR-18-CE40-0001 and the RSF Grant No.\ 20-61-46005. 
A.M.G. is also grateful to Hamburg University for its kind hospitality in 2021 and 2022.

\section{Relative homological algebra}\label{sectionRelativeHomAlg}
This section collects results on resolvent pairs, relatively projective objects, relative Ext groups, Yoneda product, \textit{etc}.\ following \cite{macLane}. It also contains results that we have not been able to find in the literature: for instance the existence and properties of relatively projective covers for quite general resolvent pairs in \S \ref{subsectionRelProjCover} or the relation between the notions from relative homological algebra and those from comonad cohomology in \S \ref{sectionRelExtGroupsAsComonadCohomology}.

\subsection{Resolvent pairs and relative $\Ext$ groups}\label{relExtGroups}
\indent Let $\mathcal{A}, \mathcal{B}$ be abelian categories and
\begin{equation}\label{adjunction}
\xymatrix@R=.7em{
\mathcal{A}\ar@/^.7em/[dd]^{\mathcal{U}}\\
\dashv\\
\ar@/^.7em/[uu]^{\mathcal{F}}\mathcal{B}
}
\end{equation}
be a pair of adjoint functors, where $\mathcal{F}$ is left adjoint to $\mathcal{U}$. We say that this adjunction is a {\em resolvent pair} if the functor $\mathcal{U} : \mathcal{A} \to \mathcal{B}$ is additive, exact and faithful. In all the sequel we assume that \eqref{adjunction} is a resolvent pair. Before defining the relative Ext groups associated to \eqref{adjunction}, we recall some notions of relative homological algebra from \cite[Chap.\,IX]{macLane}.

\begin{itemize}[topsep=.3em, itemsep=.1em]
\item A morphism $f \in \Hom_{\mathcal{A}}(V,W)$ is called {\em allowable} if there exists $s \in \Hom_{\mathcal{B}}(\mathcal{U}(W),\mathcal{U}(V))$ such that $\mathcal{U}(f) s \, \mathcal{U}(f) = \mathcal{U}(f)$. This condition is equivalent to the requirement that $\mathcal{U}(\ker(f))$ is a direct summand\footnote{Recall that in an additive category an object $X$ is a direct summand of $Y$ if there exist morphisms $\iota : X \to Y$ and $\pi : Y \to X$ such that $\pi\iota = \mathrm{id}_X$.} of $\mathcal{U}(V)$ and $\mathcal{U}(\mathrm{im}(f))$ is a direct summand of $\mathcal{U}(W)$.
\item An object $P$ of $\mathcal{A}$ is called {\em relatively projective} if the diagram (with exact row)
\[ \xymatrix{
& P \ar@{-->}[ld] \ar[d]^g & \\
V \ar[r]_{f} & W \ar[r] & 0
}\]
can always be filled, whenever $f$ is allowable and $g$ is any morphism in $\mathcal{A}$.

\item A {\em relatively projective resolution} of $V \in \mathcal{A}$ is an exact sequence
\begin{equation}\label{relativeProjectiveResolution}
0 \longleftarrow V \overset{d_0}{\longleftarrow} P_0 \overset{d_1}{\longleftarrow} P_1 \overset{d_2}{\longleftarrow} \ldots
 \end{equation}
such that each $P_i \in \mathcal{A}$ is relatively projective and each $d_i$ is allowable. The latter is equivalent to the statement that the image by $\mathcal{U}$ of the exact sequence splits, \textit{i.e.} $\mathcal{U}(P_i) \cong \mathrm{im}\bigl( \mathcal{U}(d_i) \bigr) \oplus \ker\bigl( \mathcal{U}(d_i) \bigr)$ for all $i$.
\end{itemize}

\noindent Note that the definitions are analogous to the ones in usual homological algebra, the difference being the notion of allowable morphism. 

\smallskip

\indent Let $\varepsilon : \mathcal{F}\mathcal{U} \to \mathrm{Id}_{\mathcal{A}}$ be the counit of the adjunction \eqref{adjunction}. Then for all $V \in \mathcal{A}$, $\varepsilon_V : \mathcal{F}\mathcal{U}(V) \to V$ is an epimorphism and is allowable since we have $\mathcal{U}(\varepsilon_V)\eta_{\mathcal{U}(V)} = \mathrm{id}_{\mathcal{U}(V)}$, where $\eta : \mathrm{Id}_{\mathcal{B}} \to \mathcal{U}\mathcal{F}$ is the unit of the adjunction. Thanks to $\varepsilon_V$ one can construct an important relatively projective resolution of $V$, called the bar resolution \cite[Thm.\,6.3]{macLane}:
\begin{equation}\label{relativeBarResolution}
\mathrm{Bar}^{\bullet}_{\mathcal{A},\mathcal{B}}(V) = \left( 0 \longleftarrow V \overset{\varepsilon_V}{\longleftarrow} \mathcal{F}\mathcal{U}(V) \overset{d_1}{\longleftarrow} (\mathcal{F}\mathcal{U})^2(V) \overset{d_2}{\longleftarrow} \ldots \right)
\end{equation}
where
\begin{equation}\label{differentialRelativeBarResolution}
d_n = \sum_{i=0}^n (-1)^i(\mathcal{F}\mathcal{U})^{n-i}\bigl(\varepsilon_{(\mathcal{F}\mathcal{U})^i(V)}\bigr).
\end{equation}
This shows in particular that any $V \in \mathcal{A}$ admits at least one relatively projective resolution. However, bar resolutions are not suitable for concrete computations because the objects $(\mathcal{F}\mathcal{U})^n(V)$ grow too fast. One can obtain smaller relatively projective objects thanks to the following basic properties:

\begin{lemma}\label{lemmaBasicPropertiesRelProj}
1. Let $M = V \oplus W$ be a direct sum in $\mathcal{A}$. Then $M$ is relatively projective if and only if $V$ and $W$ are relatively projective.
\\2. An object $P \in \mathcal{A}$ is relatively projective if and only if it is a direct summand of $\mathcal{F}(X)$ for some $X \in \mathcal{B}$.
\end{lemma}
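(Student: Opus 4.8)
The plan is to mirror the classical proofs characterizing projective objects, systematically replacing ``epimorphism'' by ``allowable epimorphism'' and the generating ``free'' objects by the images $\mathcal{F}(X)$ of the left adjoint. I would treat the two statements in the order 1 then 2, since 2 uses the summand argument underlying 1. Throughout, recall that in the defining diagram the exact row $V \xrightarrow{f} W \to 0$ forces $f$ to be an epimorphism, so relative projectivity is precisely a lifting property against \emph{allowable epimorphisms}.

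For part 1, I would argue exactly as for ordinary projectives, using only the biproduct structure of $M = V \oplus W$ with inclusions $\iota_V,\iota_W$ and projections $\pi_V,\pi_W$ satisfying $\iota_V\pi_V + \iota_W\pi_W = \mathrm{id}_M$. Given an allowable epimorphism $f : A \to B$ and a morphism $g : M \to B$: if $V$ and $W$ are relatively projective, lift $g\iota_V$ and $g\iota_W$ along $f$ to $h_V,h_W$ and set $h = h_V\pi_V + h_W\pi_W$, which satisfies $fh = g$; conversely, if $M$ is relatively projective, a lift of $g\pi_V$ precomposed with $\iota_V$ produces a lift for any $g : V \to B$, and symmetrically for $W$. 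No exactness or faithfulness of $\mathcal{U}$ is needed here, only additivity.

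For part 2, the key step --- and the main obstacle --- is to show that every free object $\mathcal{F}(X)$ is relatively projective; the rest is formal. Here I would combine allowability with the adjunction. Given an allowable epimorphism $f : A \to B$, the defining section $s \in \Hom_{\mathcal{B}}(\mathcal{U}(B),\mathcal{U}(A))$ with $\mathcal{U}(f)\,s\,\mathcal{U}(f) = \mathcal{U}(f)$ upgrades to a genuine section $\mathcal{U}(f)\,s = \mathrm{id}_{\mathcal{U}(B)}$, because $\mathcal{U}(f)$ is an epimorphism (as $\mathcal{U}$ is exact and $f$ is epi) and hence right-cancellable. Now, for $g : \mathcal{F}(X) \to B$, transport across the natural adjunction isomorphism $\Phi : \Hom_{\mathcal{A}}(\mathcal{F}(X),-) \cong \Hom_{\mathcal{B}}(X,\mathcal{U}(-))$: set $\bar g = \Phi(g) : X \to \mathcal{U}(B)$ and define the candidate lift $\tilde g := \Phi^{-1}(s\bar g) : \mathcal{F}(X) \to A$. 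Naturality of $\Phi$ in the target variable gives $\Phi(f\tilde g) = \mathcal{U}(f)\,s\,\bar g = \bar g = \Phi(g)$, whence $f\tilde g = g$ by injectivity of $\Phi$. The subtle points to handle carefully are the passage from the weak identity $\mathcal{U}(f)s\mathcal{U}(f) = \mathcal{U}(f)$ to the strict section, and the correct use of naturality in the variable $A$.

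Finally I would assemble part 2. For the ``if'' direction, a direct summand $P$ of $\mathcal{F}(X)$ gives a biproduct decomposition $\mathcal{F}(X) \cong P \oplus Q$, so part 1 forces $P$ to be relatively projective. For the ``only if'' direction, I use that the counit $\varepsilon_P : \mathcal{F}\mathcal{U}(P) \to P$ is an allowable epimorphism, as recorded in the discussion preceding the lemma via the triangle identity $\mathcal{U}(\varepsilon_P)\eta_{\mathcal{U}(P)} = \mathrm{id}$. Since $P$ is relatively projective, $\mathrm{id}_P$ lifts along $\varepsilon_P$ to a morphism $\sigma : P \to \mathcal{F}\mathcal{U}(P)$ with $\varepsilon_P\sigma = \mathrm{id}_P$; thus $P$ is a retract, hence a direct summand, of $\mathcal{F}(\mathcal{U}(P))$, giving the claim with $X = \mathcal{U}(P)$ and closing the equivalence.
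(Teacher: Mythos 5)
Your proof is correct, and its overall architecture matches the paper's: part 1 by the classical biproduct argument, the ``if'' direction of part 2 by reduction to part 1, and the ``only if'' direction by splitting the counit $\varepsilon_P$ using relative projectivity of $P$ (this last step is verbatim the paper's argument). The genuine difference is in the key step you correctly single out as the main obstacle: the paper simply cites \cite[Thm.\,6.1]{macLane} for the fact that $\mathcal{F}(X)$ is relatively projective, whereas you prove it from scratch. Your argument for that step is sound: since the row in the defining diagram is exact, $f$ is an epimorphism, exactness of $\mathcal{U}$ makes $\mathcal{U}(f)$ an epimorphism, so the allowability identity $\mathcal{U}(f)\,s\,\mathcal{U}(f) = \mathcal{U}(f)$ can be right-cancelled to the strict section $\mathcal{U}(f)\,s = \mathrm{id}_{\mathcal{U}(B)}$; transporting $s\,\Phi(g)$ back through the adjunction and invoking naturality of $\Phi$ in the target variable then produces the lift. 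This buys a self-contained proof (essentially reproving Mac Lane's theorem in this setting) at the cost of a few lines; the paper's citation is shorter but opaque. One small point worth making explicit in your write-up of the ``if'' direction of part 2: a direct summand in this paper means a retract ($\pi\iota = \mathrm{id}_P$), so passing to the biproduct decomposition $\mathcal{F}(X) \cong P \oplus \ker(\pi\iota\text{-idempotent})$ needed for part 1 uses that idempotents split in the abelian category $\mathcal{A}$ --- true, but it deserves a word (the paper glosses over the same point).
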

\begin{proof}
1. This is the same statement as in usual homological algebra. The easy proof is left to the reader.
\\2. We know by \cite[Thm.\,6.1]{macLane} that $\mathcal{F}(X)$ is relatively projective. Hence, direct summands of $\mathcal{F}(X)$ are relatively projective as well thanks to the previous item. Conversely, assume that $P$ is relatively projective. Then since $\varepsilon_P : \mathcal{F}\mathcal{U}(P) \to P$ is an allowable epimorphism we can fill the diagram
\[ \xymatrix{
& P \ar@{-->}[ld]_{\iota} \ar[d]^{\mathrm{id}_P} & \\
\mathcal{F}\mathcal{U}(P) \ar[r]_{\quad\varepsilon_P} & P \ar[r] & 0
}\]
Hence $\varepsilon_P \iota = \mathrm{id}_P$ and $P$ is a direct summand of $\mathcal{F}(\mathcal{U}(P))$.
\end{proof}

\begin{definition}\label{defRelExt}
Given a resolvent pair as in \eqref{adjunction} and a relatively projective resolution as in \eqref{relativeProjectiveResolution}, the $n$-th cohomology group of the complex of abelian groups
\[ 0 \longrightarrow \Hom_{\mathcal{A}}(P_0,W) \overset{d^*_1}{\longrightarrow} \Hom_{\mathcal{A}}(P_1,W) \overset{d^*_2}{\longrightarrow} \Hom_{\mathcal{A}}(P_2,W) \overset{d^*_3}{\longrightarrow} \ldots \]
is denoted by $\Ext^n_{\mathcal{A},\mathcal{B}}(V,W)$ and is called the $n$-th relative $\Ext$ group of $V$ and $W$. \\We denote by $[\alpha] \in \Ext_{\mathcal{A},\mathcal{B}}^n(V,W)$ the equivalence class of a cocycle $\alpha \in \Hom_{\mathcal{A}}(P_n,W)$.
\end{definition}
\noindent Thanks to the fundamental lemma of relative homological algebra \cite[Chap.\,IX, Thm.\,4.3]{macLane} (which is the generalization of the well-known statement in usual homological algebra), $\Ext^{\bullet}_{\mathcal{A},\mathcal{B}}(V,W)$ does not depend on the choice of a relatively projective resolution. For the particular choice of the bar resolution \eqref{relativeBarResolution}, we denote the complex in Definition \ref{defRelExt} by
\[ \mathrm{Bar}^{\bullet}_{\mathcal{A},\mathcal{B}}(V,W) = \left( 0 \longrightarrow \Hom_{\mathcal{A}}\bigl(\mathcal{F}\mathcal{U}(V),W\bigr) \overset{d_1^*}{\longrightarrow} \Hom_{\mathcal{A}}\bigr((\mathcal{F}\mathcal{U})^2(V),W\bigl) \overset{d_2^*}{\longrightarrow} \ldots \right). \]
Note that though the adjunction is not apparent in the notation $\Ext^n_{\mathcal{A},\mathcal{B}}(V,W)$ it is always assumed to be fixed, as different adjunctions yield in general different cohomologies since they yield different classes of allowable morphisms and of relatively projective objects.

\begin{example}\label{exampleSemisimpleBottom}
If the category $\mathcal{B}$ in \eqref{adjunction} is semisimple, then $\Ext^{\bullet}_{\mathcal{A},\mathcal{B}}(V,W) = \Ext^{\bullet}_{\mathcal{A}}(V,W)$. Indeed, any sub-object in $\mathcal{B}$ is a direct summand. In particular for any $f \in \Hom_{\mathcal{A}}(V,W)$, the objects $\mathcal{U}(\ker(f))$ and $\mathcal{U}(\mathrm{im}(f))$ are direct summands of $\mathcal{U}(V)$ and $\mathcal{U}(W)$, respectively, and thus $f$ is allowable. It follows that a relatively projective object is projective in $\mathcal{A}$ and that any projective resolution in $\mathcal{A}$ is relatively projective.
\end{example}

\indent Let $A$ be a finite-dimensional algebra over a field $k$ and $R \subset A$ be a subalgebra. The forgetful functor $\mathcal{U} : A\text{-}\mathrm{mod} \to R\text{-}\mathrm{mod}$ has induction as left adjoint: $\mathcal{F}(X) = A \otimes_R X$. This adjunction is an example of a resolvent pair. We write $\Ext_{A,R}$ instead of $\Ext_{A\text{-}\mathrm{mod}, R\text{-}\mathrm{mod}}$.  For instance if $R=A$ then every object is relatively projective and $\mathrm{Ext}^n_{A,A} = 0$ for all $n > 0$; if $R=k$ then an object is relatively projective if and only if it is projective and $\mathrm{Ext}^n_{A,k} = \mathrm{Ext}^n_A$ for all $n \geq 0$. The next example is another case of this type of resolvent pair:
\begin{example}\label{exampleAtensBRelBInVect}
Let $A,B$ be two finite-dimensional $k$-algebras. Take $\mathcal{A} = (A \otimes B)\text{-mod}, \mathcal{B} = B\text{-mod}$, where $B$ is identified with the subalgebra $1 \otimes B \subset A \otimes B$. If $V, V'$ are (finite-dimensional) $A$-modules and $W, W'$ are (finite-dimensional) $B$-modules we have
\begin{equation}\label{exampleExtABB}
\Ext_{A\otimes B,B}^n(V \boxtimes W, V' \boxtimes W') \cong \Ext_A^n(V,V') \otimes \Hom_B(W,W')
\end{equation}
where $V \boxtimes W$ is $V \otimes_k W$ with the action $(a \otimes b)\cdot(v \otimes w) = (a\cdot v) \otimes (b \cdot w)$. To see this isomorphism, note first that the induction functor is $\mathcal{F}(W) = A \boxtimes W$. If $P$ is a projective $A$-module, then the $(A \otimes B)$-module $P \boxtimes W$ is relatively projective for any $W \in B\text{-}\mathrm{mod}$. Indeed, as a projective $A$-module, $P$ is a direct summand of a free module $A^{\oplus n}$ for some $n \geq 0$ so that $P \boxtimes W$ is a direct summand of 
\[ A^{\oplus n} \boxtimes W \cong (A \boxtimes W)^{\oplus n} \cong A \boxtimes W^{\oplus n} = \mathcal{F}(W^{\oplus n}). \]
Now, take a projective resolution $0 \leftarrow V \overset{d_0}{\longleftarrow} P_0 \overset{d_1}{\longleftarrow} P_1 \overset{d_2}{\longleftarrow} \ldots$ in $A\text{-mod}$ and consider 
\begin{equation}\label{relProjResolutionABB}
0 \longleftarrow V \boxtimes W \xleftarrow{d_0 \otimes \mathrm{id}} P_0 \boxtimes W \xleftarrow{d_1 \otimes \mathrm{id}} P_1 \boxtimes W \xleftarrow{d_2 \otimes \mathrm{id}} \ldots.
\end{equation}
It is an exact sequence of relatively projective modules. Moreover each $d_i$ is allowable: indeed, since the $B$-action on $\mathcal{U}(P_i \boxtimes S)$ is simply $b\cdot (x \otimes w) = x \otimes (b\cdot w)$, $\mathcal{U}\bigl(\ker(d_i \otimes \mathrm{id})\bigr) = \mathcal{U}\big( \ker(d_i) \boxtimes W \big)$ is a direct summand of $\mathcal{U}(P_i \boxtimes W)$ and similarly $\mathcal{U}\bigl(\mathrm{im}(d_i \otimes \mathrm{id})\bigr) = \mathcal{U}\big( (\mathrm{im}(d_i) \boxtimes W \big)$ is a direct summand of $\mathcal{U}(P_{i-1} \boxtimes W)$). Hence \eqref{relProjResolutionABB} is a relatively projective resolution.  Then \eqref{exampleExtABB} follows from the equality $\Hom_{A \otimes B}(P_i \boxtimes W, V' \boxtimes W') = \Hom_{A}(P_i,V') \otimes \Hom_{B}(W,W')$. In section \ref{sectionAlgebrasInBraidedTensorCategories} we will consider an analogous statement for algebras in braided finite tensor categories.
\end{example}

\indent The next lemma is an immediate generalization of a well-known fact in homological algebra and will be useful later; it can be found in \cite[Chap.\,IX, Prop.\,4.2]{macLane} but we recall the proof for convenience.

\begin{lemma}\label{relProjAndHom}
An object $P \in \mathcal{A}$ is relatively projective if and only if the functor $\Hom_{\mathcal{A}}(P,-)$ sends allowable short exact sequences in $\mathcal{A}$ to short exact sequences (of abelian groups).
\end{lemma}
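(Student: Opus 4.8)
The plan is to mimic the classical proof that $P$ is projective if and only if $\Hom_{\mathcal{A}}(P,-)$ is exact, with ordinary exactness replaced by allowable exactness. The starting observation is that the covariant functor $\Hom_{\mathcal{A}}(P,-)$ is left exact on any abelian category, with no allowability hypothesis needed: applied to an allowable short exact sequence $0 \to V' \overset{i}{\to} V \overset{f}{\to} W \to 0$ it always yields an exact sequence $0 \to \Hom_{\mathcal{A}}(P,V') \to \Hom_{\mathcal{A}}(P,V) \overset{f_*}{\to} \Hom_{\mathcal{A}}(P,W)$. Hence the whole content of the lemma reduces to a single surjectivity statement: $\Hom_{\mathcal{A}}(P,-)$ preserves allowable short exact sequences if and only if $f_*$ is surjective for the epimorphism $f$ of every such sequence.

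For the forward direction, I suppose $P$ is relatively projective and fix an allowable short exact sequence with epimorphism $f : V \to W$. Since $f$ is in particular an allowable epimorphism, the defining lifting property of a relatively projective object applies verbatim: any $g \in \Hom_{\mathcal{A}}(P,W)$ factors as $g = f\tilde{g}$ for some $\tilde{g} \in \Hom_{\mathcal{A}}(P,V)$, which is precisely the surjectivity of $f_*$. Combined with left exactness, this shows that $\Hom_{\mathcal{A}}(P,-)$ sends the sequence to a short exact sequence.

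For the converse, I assume $\Hom_{\mathcal{A}}(P,-)$ preserves allowable short exact sequences and reconstruct the lifting property. Given an allowable epimorphism $f : V \to W$ and an arbitrary $g : P \to W$, I form the short exact sequence $0 \to \ker f \overset{i}{\to} V \overset{f}{\to} W \to 0$. The key step is to check that this is an \emph{allowable} short exact sequence. Because $f$ is allowable, $\mathcal{U}(\ker f)$ is a direct summand of $\mathcal{U}(V)$; since $\mathcal{U}$ is exact the sequence $0 \to \mathcal{U}(\ker f) \to \mathcal{U}(V) \to \mathcal{U}(W) \to 0$ is short exact, and a retraction onto the summand $\mathcal{U}(\ker f)$ splits it. A $\mathcal{U}$-split sequence is exactly what it means for the original sequence to be allowable (as recorded below \eqref{relativeProjectiveResolution}). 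Applying the hypothesis then yields surjectivity of $f_* : \Hom_{\mathcal{A}}(P,V) \to \Hom_{\mathcal{A}}(P,W)$, so $g$ lifts through $f$ and $P$ is relatively projective.

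The only subtlety, and the one place where the relative setup genuinely intervenes, is the dictionary between an \emph{allowable morphism} and an \emph{allowable short exact sequence}: I expect the main (minor) obstacle to be confirming that an allowable epimorphism always completes to an allowable short exact sequence via its kernel. This I handle above using the direct-summand characterization of allowability together with the exactness of $\mathcal{U}$; everything else is formal.
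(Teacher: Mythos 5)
Your proof is correct and follows essentially the same route as the paper's: left exactness of $\Hom_{\mathcal{A}}(P,-)$ is automatic, so the lemma reduces to surjectivity of $f_*$ for allowable epimorphisms $f$, which is precisely the lifting property defining relative projectivity. The only difference is that you explicitly check that an allowable epimorphism completes, via its kernel, to an \emph{allowable} short exact sequence — a point the paper's proof leaves implicit — and your verification (the direct-summand characterization of allowability plus exactness of $\mathcal{U}$) is sound.
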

\begin{proof}
Let $0 \to X \overset{j}{\to} Y \overset{\sigma}{\to} Z \to 0$ be an allowable short exact sequence in $\mathcal{A}$. Recall that the functor $\Hom_{\mathcal{A}}(P,-)$ is left exact for all $P \in \mathcal{A}$ which means that
\[ 0 \longrightarrow \Hom_{\mathcal{A}}(P,X) \overset{j_*}{\longrightarrow} \Hom_{\mathcal{A}}(P,Y) \overset{\sigma_*}{\longrightarrow} \Hom_{\mathcal{A}}(P,Z) \]
is automatically exact. Thus the sequence 
\[ 0 \longrightarrow \Hom_{\mathcal{A}}(P,X) \overset{j_*}{\longrightarrow} \Hom_{\mathcal{A}}(P,Y) \overset{\sigma_*}{\longrightarrow} \Hom_{\mathcal{A}}(P,Z) \longrightarrow 0 \]
is exact if and only if $\sigma_* = \Hom_{\mathcal{A}}(P,\sigma)$ is surjective. But note that the definition of a relatively projective object is equivalent to the statement that if $\sigma$ is an allowable epimorphism then $\Hom_{\mathcal{A}}(P,\sigma)$ is surjective. Hence the equivalence.
\end{proof}

\subsection{Properties of relative $\Ext$ groups}\label{subsectionPropRelExtGroups}
In this section we consider a resolvent pair of abelian categories as in \eqref{adjunction} and we discuss properties of the associated relative Ext groups: the Yoneda description of these groups, the Yoneda product and the long exact sequence of Ext groups associated to a short exact sequence of coefficients.

\subsubsection{Yoneda description of relative $\Ext$ groups}\label{sectionYonedaDescriptionOfRelativeExtGroups}
A $n$-fold exact sequence from $W$ to $V$ is an exact sequence in $\mathcal{A}$ of the form
\begin{equation}\label{nFoldExactSequence}
0 \longrightarrow W \overset{f_n}{\longrightarrow} X_n \overset{f_{n-1}}{\longrightarrow} \ldots \overset{f_1}{\longrightarrow} X_1 \overset{f_0}{\longrightarrow} V \longrightarrow 0.
\end{equation}
It is called allowable if each $f_i$ is allowable. This condition is equivalent to the property that the exact sequence
\[ 0 \longrightarrow \mathcal{U}(W) \xrightarrow{\mathcal{U}(f_n)} \mathcal{U}(X_n) \xrightarrow{\mathcal{U}(f_{n-1})} \ldots \xrightarrow{\mathcal{U}(f_1)} \mathcal{U}(X_1) \xrightarrow{\mathcal{U}(f_0)} \mathcal{U}(V) \longrightarrow 0\]
splits in $\mathcal{B}$.
\begin{definition}
The $n$-th relative Yoneda Ext group of $V$ and $W$, denoted by $\YExt^n_{\mathcal{A},\mathcal{B}}(V,W)$ is the set of all allowable $n$-fold exact sequences from $W$ to $V$, modulo congruence relations. We denote by $[S]$ the congruence class of a sequence $S$.
\end{definition}
\noindent We do not explain what are the congruence relations in $\YExt^n_{\mathcal{A},\mathcal{B}}(V,W)$ and refer to \cite{macLane}, Chap. III (\S 1 and \S 5) and Chap. XII (\S 4).

\smallskip

\indent For further use, let us recall from \cite[Chap.\,III, Thm.\,6.4]{macLane}\footnote{In \cite{macLane} both Ext groups are denote by $\Ext^n$. In Theorem 6.4 of Chapter III, $\YExt^n$ is denoted $\Ext^n$ while $\Ext^n$ is denoted $H^n$. Moreover this theorem is stated for usual $\Ext$ groups, but in \S 5, Chap. XII it is said that it holds for relative $\Ext$ groups.} the construction of the isomorphism
\begin{equation}\label{defBarEta}
\overline{\eta} : \YExt^n_{\mathcal{A},\mathcal{B}}(V,W) \overset{\sim}{\to} \Ext^n_{\mathcal{A},\mathcal{B}}(V,W).
\end{equation}
We first define a map $\eta$ at the level of cocycles. Let $S$ be a $n$-fold allowable exact sequence as in \eqref{nFoldExactSequence} and let $ \ldots \overset{d_1}{\longrightarrow} P_0 \overset{d_0}{\longrightarrow} V \longrightarrow 0$ be a relatively projective resolution of $V$. Fill the dashed arrows in the diagram (whose bottom row is $S$)
\begin{equation}\label{IsoYExtExtOnCocycles}
\xymatrix{
P_n \ar[r]^{d_n} \ar@{-->}[d]^{\eta_n = \eta(S)} & P_{n-1} \ar[r]^{d_{n-1}} \ar@{-->}[d]^{\eta_{n-1}} & \ldots \ar[r]^{d_1} & P_0 \ar[r]^{d_0} \ar@{-->}[d]^{\eta_0} & V \ar[r] \ar@{=}[d] & 0\\
W \ar[r]_{f_n} & X_n \ar[r]_{f_{n-1}} & \ldots \ar[r]_{f_1} & X_1 \ar[r]_{f_0} & V \ar[r] & 0
}\end{equation}
such that it becomes commutative. This is always possible by the fundamental lemma of relative homological algebra \cite[Chap.\,IX, Thm.\,4.3]{macLane}, since the top row is a relatively projective resolution. Then the morphism $\eta_n \in \Hom_{\mathcal{A}}(P_n,W)$ is a cocycle of the complex from Definition \ref{defRelExt} which we denote by $\eta(S)$. Indeed, $f_n \eta_n d_{n+1} = \eta_{n-1} d_n d_{n+1} = 0$ and since $f_n$ is a monomorphism we get $\eta_n d_{n+1} = 0$. This cocycle  depends on the choice of the relatively projective resolution of $V$ but one shows that $\eta$ sends congruent sequences to cohomologous cocycles, yielding a morphism $\overline{\eta} : \YExt^n_{\mathcal{A},\mathcal{B}}(V,W) \to \Ext^n_{\mathcal{A},\mathcal{B}}(V,W)$  defined by $\overline{\eta}([S]) = [\eta(S)]$; in particular $\overline{\eta}([S])$ does not longer depend on the choice of the relatively projective resolution. Finally, $\overline{\eta}$ turns out to be an isomorphism.

\smallskip

\indent With the Yoneda description of $\Ext$ groups, it is easy to see that there is an injective morphism
\begin{equation}\label{injectionRelExt1FullExt1}
\Ext^1_{\mathcal{A},\mathcal{B}}(V,W) \hookrightarrow \Ext^1_{\mathcal{A}}(V,W).
\end{equation}
Indeed, $\YExt^1_{\mathcal{A},\mathcal{B}}(V,W)$ (resp. $\YExt^1_{\mathcal{A}}(V,W)$) consists of congruence classes of allowable short exact sequences (resp. congruence classes of short exact sequences) from $W$ to $V$. In both cases, the zero element is the split exact sequence $0 \to W \to V \oplus W \to V \to 0$ and two sequences $0 \to W \to Z \to V \to 0$, $0 \to W \to Z' \to V \to 0$ are congruent if there exists a morphism in $\Hom_{\mathcal{A}}(Z,Z')$ such that the obvious diagram is commutative (see pages 64 and 368 in \cite{macLane}). Hence, if a sequence is equal to $0$ in $\YExt^1_{\mathcal{A}}(V,W)$, then it is equal to $0$ in $\YExt^1_{\mathcal{A},\mathcal{B}}(V,W)$. This property is not true for higher $\Ext$ groups; indeed, two $n$-fold exact sequences (resp. allowable exact sequences) $S, T$ are equal in $\YExt^n_{\mathcal{A}}(V,W)$ (resp. in $\YExt^n_{\mathcal{A},\mathcal{B}}(V,W)$) if there exists a chain of morphisms like for instance $S \to C_1 \leftarrow C_2 \to C_3 \leftarrow T$, where the $C_i$ are $n$-fold exact sequences (resp. allowable exact sequences). Hence it might happen that two allowable exact sequences are not equal $\YExt^n_{\mathcal{A},\mathcal{B}}(V,W)$ but they are equal in $\YExt^n_{\mathcal{A}}(V,W)$ because they can be related by a chain of morphisms using non-allowable exact sequences. For an example, see item 2 in Remark \ref{remarkRelExtNot0UsualExt0BarUi} below.

\smallskip

\indent In general, it is not easy to prove that a $n$-fold allowable exact sequence is not congruent to $0$ (\textit{i.e.}\,that it is not equal to $0$ in the corresponding Yoneda Ext group), except for $n=1$ where congruence is just isomorphism of short exact sequences. For $n=2$ we have this useful criterion \cite[Chap.\,XII, Lem.\,5.3]{macLane}:

\begin{lemma}\label{lemma2FoldSequences}
Let $S = \bigl( 0 \to W \to X_2 \overset{\text{\footnotesize $\pi$}}{\longrightarrow} K \to 0 \bigr)$ and $T = \bigl( 0 \to K \overset{\text{\footnotesize $j$}}{\longrightarrow} X_1 \to V \to 0 \bigr)$ be allowable short exact sequences in $\mathcal{A}$ and let $S \circ T = \bigl( 0 \to W \to X_2 \overset{\text{\footnotesize $j\pi$}}{\longrightarrow} X_1 \to V \to 0 \bigr)$ be their Yoneda product (see \S \ref{sectionTheYonedaProduct} below). The following are equivalent:
\begin{enumerate}[itemsep=0em, topsep=.3em]
\item $S \circ T \equiv 0$.
\item There exists an allowable short exact sequence $P = \bigl(0 \to W \to M \to X_1 \to 0\bigr)$ such that $S = Pj$, where $Pj$ denotes the pullback of $P$ by $j$.
\item There exists an allowable short exact sequence $Q = \bigl(0 \to X_2 \to N \to V \to 0\bigr)$ such that $T = \pi Q$, where $\pi Q$ denotes the push-forward of $Q$ by $\pi$.
\end{enumerate}
\end{lemma}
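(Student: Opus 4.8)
The plan is to prove the equivalence $(1)\Leftrightarrow(2)$ in full and then deduce $(1)\Leftrightarrow(3)$ by symmetry. The symmetry is genuine because every notion entering the statement — allowable $n$-fold exact sequences, their congruence, the Yoneda product, and pullbacks/push-forwards — depends only on the proper class of $\mathcal{U}$-split short exact sequences, and this class is self-dual (a sequence is $\mathcal{U}$-split iff its reversal is). Passing to the opposite category exchanges monomorphisms with epimorphisms, pullbacks with push-forwards, and the two halves $S$ and $T$ of the composite; hence $(1)\Leftrightarrow(3)$ is literally $(1)\Leftrightarrow(2)$ read in $\mathcal{A}^{\mathrm{op}}$, and it suffices to treat $(1)\Leftrightarrow(2)$. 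Throughout I will use the isomorphism $\overline{\eta}$ of \eqref{defBarEta} to pass between $\YExt$ and the cochain description of $\Ext$, together with the elementary functoriality of the Yoneda product. Conceptually, $(1)\Leftrightarrow(2)$ is nothing but exactness of the long exact sequence at the spot $\Ext^1_{\mathcal{A},\mathcal{B}}(K,W)$, the connecting map being Yoneda multiplication by $[T]$ and the incoming map being the pullback $j^{*}$; since that sequence is set up only later, I give a direct construction.

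For the easy direction $(2)\Rightarrow(1)$, I would invoke the interchange law $(Pj)\circ T\equiv P\circ(jT)$, which moves the morphism $j$ across a Yoneda product. Here $jT\in\YExt^1_{\mathcal{A},\mathcal{B}}(V,X_1)$ is the push-forward of $T$ along its own monomorphism $j$, and this extension is split: the identity $\mathrm{id}_{X_1}$ of the middle term of $T$ restricts to $j$ on $K$ and thereby supplies the retraction of the pushout defining $jT$. Consequently $S\circ T=(Pj)\circ T\equiv P\circ(jT)\equiv P\circ 0\equiv 0$. The symmetric computation — moving $\pi$ across and using that the pullback $S\pi$ of $S$ along its own epimorphism $\pi$ is split (via $\mathrm{id}_{X_2}$) — gives $(3)\Rightarrow(1)$.

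The crux is $(1)\Rightarrow(2)$. Fix a relatively projective resolution $\cdots\to P_1\xrightarrow{d_1}P_0\xrightarrow{d_0}V\to0$ and set $\Omega=\ker(d_0)=\mathrm{im}(d_1)$. Lifting $d_0$ through the allowable epimorphism $p:X_1\to V$ (using that $P_0$ is relatively projective) yields $\psi_0:P_0\to X_1$ with $p\psi_0=d_0$; since $p\psi_0 d_1=0$, the map $\psi_0 d_1$ factors through $\ker p=\mathrm{im}\,j$, giving $\psi_1:P_1\to K$ with $j\psi_1=\psi_0 d_1$. Using relative projectivity of $P_1$ and allowability of $\pi$, lift $\psi_1$ to $\chi_1:P_1\to X_2$ with $\pi\chi_1=\psi_1$; then $\pi\chi_1 d_2=\psi_1 d_2=0$, so $\chi_1 d_2=u\,\eta_2$ for a unique $\eta_2:P_2\to W$. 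As in \eqref{IsoYExtExtOnCocycles}, the chain map $(\eta_2,\chi_1,\psi_0,\mathrm{id}_V)$ into $S\circ T$ shows that $\eta_2$ represents $\overline{\eta}([S\circ T])$. Hypothesis $(1)$ now gives $\eta_2=\beta d_2$ for some $\beta:P_1\to W$; replacing $\chi_1$ by $\chi_1-u\beta$ keeps $\pi\chi_1=\psi_1$ but forces $\chi_1 d_2=0$. Thus $\chi_1$ annihilates $\mathrm{im}(d_2)=\ker(d_1)$ and descends to a map $c:\Omega\to X_2$ with $\pi c=\bar\psi_1$ and $j\pi c=\psi_0|_\Omega$, where $\bar\psi_1:\Omega\to K$ is determined by $j\bar\psi_1=\psi_0|_\Omega$. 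This descended lift $c$ on the syzygy $\Omega$ is precisely the extra datum that the vanishing of the class provides.

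With $c$ at hand I would define $M$ to be the pushout of the span $X_2\xleftarrow{c}\Omega\hookrightarrow P_0$, with structure maps $a:X_2\to M$ and $b:P_0\to M$. The relation $j\pi c=\psi_0|_\Omega$ makes $q:M\to X_1$ well defined by $qa=j\pi$ and $qb=\psi_0$; one then checks that $q$ is an epimorphism whose kernel is $W$, embedded by $au$, so that $P=\bigl(0\to W\xrightarrow{au} M\xrightarrow{q} X_1\to0\bigr)$ is a short exact sequence, and that $a:X_2\to M$ defines a morphism of extensions $(\mathrm{id}_W,a,j):S\to P$, which identifies $S$ with the pullback $Pj$. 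It remains to see that $P$ is allowable: applying the exact additive functor $\mathcal{U}$ turns $M$ into the corresponding pushout in $\mathcal{B}$, and a retraction of $\mathcal{U}(au)$ is assembled from a $\mathcal{U}$-splitting $r$ of the allowable sequence $S$ together with a $\mathcal{U}$-splitting of $\Omega\hookrightarrow P_0$ (available because the resolution is relatively projective), the two being glued over $\mathcal{U}(\Omega)$ by the pushout universal property. I expect the main obstacle to lie exactly in this last step: verifying that the pushout has kernel precisely $W$ and that the assembled map is a genuine $\mathcal{U}$-retraction, i.e.\ that the abstract hypothesis ``$S\circ T\equiv0$'' has been faithfully converted into an honest allowable extension $P$ of $X_1$ by $W$ restricting to $S$ along $j$.
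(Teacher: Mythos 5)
Note first that the paper offers no proof of its own for this lemma: it is quoted from Mac Lane (Chap.\,XII, Lem.\,5.3), whose argument is resolution-free and valid for an arbitrary proper class of short exact sequences. Your proposal instead gives a resolution-based proof, and most of it is correct. The direction (2)$\Rightarrow$(1) via the interchange congruence $(Pj)\circ T\equiv P\circ(jT)$ together with the splitness of $jT$ is sound (and (3)$\Rightarrow$(1) symmetrically), and your construction for (1)$\Rightarrow$(2) checks out in every detail: the lifts $\psi_0,\psi_1,\chi_1$ exist by relative projectivity and allowability, correcting $\chi_1$ by $u\beta$ kills $\mathrm{im}(d_2)$, the descended map $c:\Omega\to X_2$ produces the pushout $M$, the sequence $P=(0\to W\to M\to X_1\to 0)$ is exact with $S=Pj$, and allowability of $P$ follows by gluing the $\mathcal{U}$-retraction coming from $S$ with $\mathcal{U}(c)\circ\rho$ (where $\rho$ retracts $\mathcal{U}(\Omega)\subset\mathcal{U}(P_0)$, available because $d_1$ is allowable) over $\mathcal{U}(\Omega)$ via the pushout property.

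The genuine gap is the claim that (1)$\Leftrightarrow$(3) is ``(1)$\Leftrightarrow$(2) read in $\mathcal{A}^{\mathrm{op}}$''. The \emph{statement} of the lemma is indeed self-dual, but your \emph{proof} of (1)$\Rightarrow$(2) is not: it uses relatively projective resolutions and the isomorphism $\overline{\eta}$, both of which come from the left adjoint $\mathcal{F}$. Dualizing preserves the allowable class but not the resolvent-pair structure: in $\mathcal{A}^{\mathrm{op}}$ the functor $\mathcal{U}^{\mathrm{op}}$ has a right adjoint $\mathcal{F}^{\mathrm{op}}$, not a left one, so $\mathcal{A}^{\mathrm{op}}$ need not have enough relative projectives --- equivalently, $\mathcal{A}$ need not have enough relative injectives, since nothing in the resolvent-pair axioms provides a right adjoint to $\mathcal{U}$. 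This is precisely why Mac Lane's proper-class argument can exploit the symmetry while a resolution-based one cannot be transported across duality. The gap is, however, easy to close with what you already built: the same pushout $M$ yields (3) directly. Indeed $\ker(pq:M\to V)=a(X_2)$, because if $pq(m)=0$ then $q(m)\in j(K)=qa(X_2)$, so $m\in a(X_2)+\ker(q)=a(X_2)$ using $\ker(q)=au(W)\subset a(X_2)$; hence $Q=\bigl(0\to X_2\xrightarrow{a}M\xrightarrow{pq}V\to 0\bigr)$ is exact, the triple $(\pi,q,\mathrm{id}_V)$ is a morphism $Q\to T$ exhibiting $T=\pi Q$, and allowability of $Q$ follows by gluing $\mathrm{id}_{\mathcal{U}(X_2)}$ with $\mathcal{U}(c)\circ\rho$ over $\mathcal{U}(\Omega)$, exactly as you did for $P$.
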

\noindent By definition of the pullback and of the push-forward of a short exact sequence, items 2 and 3 mean respectively that there exist commutative diagrams (with exact rows):
\[\xymatrix{
S = \big(0 \ar[r] & W \ar[r] \ar@{=}[d] & X_2 \ar[r] \ar@{-->}[d] & K \ar[r] \ar[d]^{j} & 0\big)\\
P = \big(0 \ar[r] & W \ar[r] & M \ar[r] & X_1 \ar[r] & 0\big)
} \qquad \text{ and } \qquad
\xymatrix{
Q= \big(0 \ar[r] & X_2 \ar[r] \ar[d]^{\pi} & N \ar[r] \ar@{-->}[d] & V \ar[r] \ar@{=}[d] & 0\big)\\
T = \big(0 \ar[r] & K \ar[r] & X_1 \ar[r] & V \ar[r] & 0\big)
}\]
This lemma can be extended to $n$-fold exact sequences (see \cite[Chap.\,XII, Lem.\,5.5]{macLane}) but it becomes much more difficult to apply in practice.
\begin{corollary}\label{remarkCriterion2FoldSequences}
Let $S,T$ be allowable short exact sequences as above.
\begin{enumerate}[itemsep=0em, topsep=.3em]
\item If $S \not\equiv 0$ and $\Ext^1_{\mathcal{A},\mathcal{B}}(X_1,W) = 0$, then $S \circ T \not \equiv 0$.
\item If $T \not\equiv 0$ and $\Ext^1_{\mathcal{A},\mathcal{B}}(V,X_2) = 0$, then $S \circ T \not \equiv 0$
\end{enumerate}
\end{corollary}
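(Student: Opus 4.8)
The plan is to prove both statements by contraposition, in each case reducing directly to Lemma \ref{lemma2FoldSequences}. For the first item, I would assume $S \circ T \equiv 0$ and deduce $S \equiv 0$, contradicting the hypothesis $S \not\equiv 0$. By the equivalence of items 1 and 2 in Lemma \ref{lemma2FoldSequences}, the assumption $S \circ T \equiv 0$ furnishes an allowable short exact sequence $P = \bigl(0 \to W \to M \to X_1 \to 0\bigr)$ with $S = Pj$, the pullback of $P$ along $j : K \to X_1$. Now $P$ represents a class in $\YExt^1_{\mathcal{A},\mathcal{B}}(X_1,W)$, which is isomorphic to $\Ext^1_{\mathcal{A},\mathcal{B}}(X_1,W) = 0$ via $\overline{\eta}$ of \eqref{defBarEta}; hence $P$ is congruent to the split sequence, i.e.\ it is the zero class. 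The key point to invoke is that the pullback by $j$ descends to a well-defined map $\YExt^1_{\mathcal{A},\mathcal{B}}(X_1,W) \to \YExt^1_{\mathcal{A},\mathcal{B}}(K,W)$ carrying the zero class to the zero class (the pullback of a split sequence is split, and is in particular allowable). Therefore $S = Pj \equiv 0$, giving the desired contradiction.

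The second item is entirely dual and I would treat it symmetrically. Assuming $S \circ T \equiv 0$, the equivalence of items 1 and 3 in Lemma \ref{lemma2FoldSequences} yields an allowable short exact sequence $Q = \bigl(0 \to X_2 \to N \to V \to 0\bigr)$ with $T = \pi Q$, the push-forward of $Q$ along $\pi : X_2 \to K$. Since $Q$ represents a class in $\YExt^1_{\mathcal{A},\mathcal{B}}(V,X_2) \cong \Ext^1_{\mathcal{A},\mathcal{B}}(V,X_2) = 0$, it is congruent to the split sequence, and the push-forward by $\pi$ descends to a map $\YExt^1_{\mathcal{A},\mathcal{B}}(V,X_2) \to \YExt^1_{\mathcal{A},\mathcal{B}}(V,K)$ sending the zero class to the zero class. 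Hence $T = \pi Q \equiv 0$, contradicting $T \not\equiv 0$.

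The only delicate point---and the step I would treat most carefully---is the compatibility of pullback and push-forward with the congruence relations defining the Yoneda Ext groups, together with the identification of the zero class with the congruence class of the split short exact sequence (as noted below \eqref{injectionRelExt1FullExt1}). Both facts are standard for $\YExt^1$ and are contained in the treatment of pullbacks and push-forwards in \cite[Chap.\,III]{macLane}; for the relative theory they transfer verbatim because all sequences produced by Lemma \ref{lemma2FoldSequences}, as well as the split sequences and their pullbacks/push-forwards, remain within the class of allowable sequences. No genuinely new computation is required beyond invoking Lemma \ref{lemma2FoldSequences} and the vanishing hypotheses on $\Ext^1_{\mathcal{A},\mathcal{B}}$.
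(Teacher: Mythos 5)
Your proof is correct and follows essentially the same route as the paper: invoke the equivalence in Lemma \ref{lemma2FoldSequences} to produce the sequence $P$ (resp.\ $Q$), observe it lies in the vanishing group $\YExt^1_{\mathcal{A},\mathcal{B}}(X_1,W)$ (resp.\ $\YExt^1_{\mathcal{A},\mathcal{B}}(V,X_2)$), and conclude via compatibility of pullback (resp.\ push-forward) with congruence that $S$ (resp.\ $T$) would be congruent to $0$, a contradiction. The only difference is presentational: you spell out the step that pullback/push-forward descend to the Yoneda Ext groups and preserve the zero class, which the paper leaves implicit in ``It follows that $Pj \equiv 0$ as well.''
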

\begin{proof}
We show the first item, the proof of the second being analogous. Any allowable short exact sequence $P$ as in Lemma \ref{lemma2FoldSequences} belongs to $\YExt^1_{\mathcal{A},\mathcal{B}}(X_1,W) \cong \Ext^1_{\mathcal{A},\mathcal{B}}(X_1,W) = 0$ and thus is congruent to $0$. It follows that $Pj \equiv 0$ as well. But then $S = Pj$ is impossible since $S \not \equiv 0$. Hence, by the equivalence in Lemma \ref{lemma2FoldSequences}, $S \circ T \not \equiv 0$.
\end{proof}
\noindent See \S \ref{explicitCocyclesTaft} for a use of this corollary on an example.

\subsubsection{The Yoneda product}\label{sectionTheYonedaProduct}
\indent For two allowable exact sequences from $W$ to $V$ and from $V$ to $U$, their Yoneda product $\circ$ is an allowable exact sequence from $W$ to $U$ defined by
\begin{align}
&\bigl(0 \longrightarrow W \overset{f_n}{\longrightarrow} Y_n \overset{f_{n-1}}{\longrightarrow} \ldots \overset{f_1}{\longrightarrow} Y_1 \overset{f_0}{\longrightarrow} V \longrightarrow 0\bigr) \circ \bigl(0 \longrightarrow V \overset{g_m}{\longrightarrow} X_m \overset{g_{m-1}}{\longrightarrow} \ldots \overset{g_1}{\longrightarrow} X_1 \overset{g_0}{\longrightarrow} U \longrightarrow 0\bigr) \nonumber\\
&= 0 \longrightarrow W \overset{f_n}{\longrightarrow} Y_n \overset{f_{n-1}}{\longrightarrow} \ldots \overset{f_1}{\longrightarrow} Y_1 \xrightarrow{g_mf_0}  X_m \overset{g_{m-1}}{\longrightarrow} \ldots \overset{g_1}{\longrightarrow} X_1 \overset{g_0}{\longrightarrow} U \longrightarrow 0 \label{defYonedaProduct}
\end{align}
Any $n$-fold allowable exact sequence can be written as a product of $n$ allowable short exact sequences \cite[Chap.\,III, \S 5]{macLane}.

\smallskip

\indent The Yoneda product is compatible with the congruence relations and yields a bilinear map 
\[ \fonc{\circ}{\YExt_{\mathcal{A},\mathcal{B}}^n(V,W) \times \YExt_{\mathcal{A},\mathcal{B}}^m(U,V)}{\YExt_{\mathcal{A},\mathcal{B}}^{m+n}(U,W)}{([S],[S'])}{[S \circ S']} \]
We recall how to compute the corresponding product (also called Yoneda product) on cocycles:
\begin{equation}\label{yonedaProductOnExt}
\circ : \Ext_{\mathcal{A},\mathcal{B}}^n(V,W) \times \Ext_{\mathcal{A},\mathcal{B}}^m(U,V) \to \Ext_{\mathcal{A},\mathcal{B}}^{m+n}(U,W)
\end{equation}
along the isomorphism $\overline{\eta}$ from \eqref{defBarEta}. Let $\ldots \overset{d^U_1}{\longrightarrow} P_0 \overset{d^U_0}{\longrightarrow} U \longrightarrow 0$ and $\ldots \overset{d^V_1}{\longrightarrow} Q_0 \overset{d^V_0}{\longrightarrow} V \longrightarrow 0$ be relatively projective resolutions of $U$ and $V$ respectively and let $\alpha \in \Hom_{\mathcal{A}}(Q_n,W)$ and $\beta \in \Hom_{\mathcal{A}}(P_m,V)$ be cocycles with respect to these resolutions. Fill the dashed arrows in the diagram
\begin{equation}\label{diagramLiftYoneda}
\xymatrix@C=2.5em@R=2.5em{
P_{m+n} \ar@{-->}[d]^{\widetilde{\beta}_n} \ar[r]^{d^U_{m+n}} & \: \ldots \: \ar[r]^{d^U_{m+2}} & P_{m+1} \ar@{-->}[d]^{\widetilde{\beta}_1} \ar[r]^{d^U_{m+1}} & P_m \ar@{-->}[d]^{\widetilde{\beta}_0} \ar[rd]^{\beta} & & \\
Q_n \ar[r]^{d^V_n} & \: \ldots \: \ar[r]^{d^V_2} & Q_1 \ar[r]^{d^V_1} & Q_0 \ar[r]^{d_0^V} & V \ar[r] & 0
}
\end{equation}
such that it becomes commutative (this is always possible thanks to the fundamental lemma of relative homological algebra \cite[Chap.\,IX, Thm.\,4.3]{macLane}). Then $\alpha \circ \beta$ is defined to be the composition $\alpha\widetilde{\beta}_n \in \Hom_{\mathcal{A}}(P_{m+n}, W)$ and $[\alpha] \circ [\beta]$ is defined to be $[\alpha \circ \beta] \in \Ext_{\mathcal{A},\mathcal{B}}^{m+n}(V,W)$. The definition is such that the map $\eta$ from \eqref{IsoYExtExtOnCocycles} satisfies $\eta(S \circ S') = \eta(S) \circ \eta(S')$, and thus in particular $\overline{\eta}([S] \circ [S']) = \overline{\eta}([S]) \circ \overline{\eta}([S'])$.

\subsubsection{The long exact sequence for relative $\Ext$ groups}\label{sectionLongExactSequenceExtGroups}
\indent Note that $\Ext^n_{\mathcal{A},\mathcal{B}}(-,-)$ is a bifunctor $\mathcal{A}^{\mathrm{op}} \times \mathcal{A} \to \mathbf{A\!b}$ for all $n$, because it is an instance of a derived functor \cite[Chap.\,XII, \S 9]{macLane}.  Let $U,V,W$ be objects in $\mathcal{A}$ and let $f : U \to V$ be a morphism in $\mathcal{A}$; here we only need to recall how to define the morphism of abelian groups
\[ f^* = \Ext_{\mathcal{A},\mathcal{B}}^n(f,W) : \Ext_{\mathcal{A},\mathcal{B}}^n(V,W) \to \Ext_{\mathcal{A},\mathcal{B}}^n(U,W). \]
We first define $f^*$ on cochains. Take relative projective resolutions $\ldots \to P_1 \to P_0 \to U \to 0$, $\ldots \to Q_1 \to Q_0 \to V \to 0$ of $U$, $V$ respectively, and let $\alpha \in \mathrm{Hom}_{\mathcal{A}}(Q_n, W)$ be a cocycle representing a class $[\alpha] \in \Ext^n_{\mathcal{A},\mathcal{B}}(V,W)$ (see Definition \ref{defRelExt}). Fill the dashed arrows in the diagram
\[ \xymatrix{
P_n \ar[r] \ar@{-->}[d]^{\widetilde{f}_n} & P_{n-1} \ar[r] \ar@{-->}[d]^{\widetilde{f}_{n-1}} & \ldots \ar[r] & P_0 \ar[r] \ar@{-->}[d]^{\widetilde{f}_0}& U \ar[r] \ar[d]^f & 0\\
Q_n \ar[r] & Q_{n-1} \ar[r] & \ldots \ar[r] & Q_0 \ar[r] & V \ar[r] & 0
} \]
such that it becomes commutative (again this is always possible thanks to the fundamental lemma of relative homological algebra). Then $\alpha \widetilde{f}_n \in \mathrm{Hom}_{\mathcal{A}}(P_n, W)$ is a cocycle which we denote by $f^*(\alpha)$ and we put $f^*([\alpha]) = [f^*(\alpha)] \in \Ext_{\mathcal{A},\mathcal{B}}^n(U,W)$.

\begin{theorem}\label{thLongExactSequenceExtGroups} {\em \cite[Chap.\,XII, Thm.\,5.1]{macLane}\footnote{Note that in \cite{macLane} this theorem is stated in a more general setting based on the notion of proper class of short exact sequences. Here we take the class of all allowable short exact sequences in $\mathcal{A}$ (\textit{i.e.}\,the short exact sequences which arrows are allowable morphisms), which is a proper class as remarked in \cite[p.\,368]{macLane}.}}
Let $S = \big(0 \longrightarrow U \overset{j}{\longrightarrow} V \overset{\pi}{\longrightarrow} W \longrightarrow 0 \big)$
be an allowable short exact sequence in $\mathcal{A}$ and let $N$ be any object in $\mathcal{A}$. Then we have the long exact sequence of abelian groups
\[\xymatrix{
0 \ar[r] & \Hom_{\mathcal{A}}(W,N) \ar[r]^{\pi^*} & \Hom_{\mathcal{A}}(V,N) \ar[r]^{j^*} & \Hom_{\mathcal{A}}(U,N) \ar[lld]_{c^0} &\\
& \Ext^1_{\mathcal{A},\mathcal{B}}(W,N) \ar[r]^{\pi^*} & \Ext^1_{\mathcal{A},\mathcal{B}}(V,N) \ar[r]^{j^*} & \Ext^1_{\mathcal{A},\mathcal{B}}(U,N) \ar[lld]_{c^1} & \\
& \Ext^2_{\mathcal{A},\mathcal{B}}(W,N) \ar[r]^{\pi^*} & \Ext^2_{\mathcal{A},\mathcal{B}}(V,N) \ar[r]^{j^*} & \Ext^2_{\mathcal{A},\mathcal{B}}(U,N) & \!\!\!\!\!\!\!\!\!\!\!\!\!\!\!\!\!\!\!\ldots
} \]
\end{theorem}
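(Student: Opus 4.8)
The plan is to construct the long exact sequence by the standard homological-algebra technique of comparing relatively projective resolutions, adapted to the relative setting. The key input is that the short exact sequence $S = (0 \to U \overset{j}{\to} V \overset{\pi}{\to} W \to 0)$ is \emph{allowable}, which by definition means $\mathcal{U}(S)$ splits in $\mathcal{B}$. First I would build a relatively projective resolution of the middle term $V$ compatible with resolutions of $U$ and $W$. Concretely, choose relatively projective resolutions $P_\bullet \to U$ and $R_\bullet \to W$, and use the horseshoe lemma (in its relative version) to produce a resolution $Q_\bullet \to V$ with $Q_n \cong P_n \oplus R_n$ fitting into a short exact sequence of complexes
\[ 0 \longrightarrow P_\bullet \longrightarrow Q_\bullet \longrightarrow R_\bullet \longrightarrow 0. \]
The crucial point is that this sequence of complexes is \emph{degreewise allowable}: since each $Q_n = P_n \oplus R_n$ as objects of $\mathcal{A}$, applying $\mathcal{U}$ gives a split short exact sequence in $\mathcal{B}$, so every level is allowable and the horseshoe construction goes through exactly as in the classical case.

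Next I would apply the contravariant functor $\Hom_{\mathcal{A}}(-,N)$ to the short exact sequence of complexes. The key step here is verifying that
\[ 0 \longrightarrow \Hom_{\mathcal{A}}(R_\bullet,N) \longrightarrow \Hom_{\mathcal{A}}(Q_\bullet,N) \longrightarrow \Hom_{\mathcal{A}}(P_\bullet,N) \longrightarrow 0 \]
is a short exact sequence of cochain complexes of abelian groups. Exactness on the left and in the middle is automatic from left-exactness of $\Hom$, and surjectivity on the right follows because each level $0 \to P_n \to Q_n \to R_n \to 0$ is split in $\mathcal{A}$ (as $Q_n = P_n \oplus R_n$), so $\Hom_{\mathcal{A}}(-,N)$ sends it to a split, hence surjective, sequence. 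Taking the long exact cohomology sequence of this short exact sequence of complexes then yields precisely the asserted long exact sequence, with the connecting maps $c^n$ arising as the usual connecting homomorphisms in cohomology; the identification of the induced maps in the resolution with $j^*$ and $\pi^*$ follows from the naturality construction of $f^*$ recalled just before the theorem statement.

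The main obstacle I anticipate is establishing the relative horseshoe lemma cleanly, namely that one can always choose the middle resolution $Q_\bullet \to V$ so that each term splits as $P_n \oplus R_n$ \emph{and} each differential $Q_n \to Q_{n-1}$ is allowable, while remaining compatible with the chosen resolutions of $U$ and $W$. The splitting of $\mathcal{U}(S)$ in $\mathcal{B}$ is exactly what is needed to lift the relatively projective cover $R_0 \to W$ through $\pi$, so that the induced map on the $0$-th level is allowable; one then iterates on the kernels, at each stage using that an allowable epimorphism admits a $\mathcal{B}$-section to keep the construction within the allowable class. Since the excerpt already provides the bar resolution \eqref{relativeBarResolution}, the fundamental lemma of relative homological algebra, and Lemma \ref{relProjAndHom} characterizing relative projectivity via $\Hom_{\mathcal{A}}(P,-)$ preserving allowable short exact sequences, all the ingredients are in place; the work is in assembling them carefully so that allowability is preserved at every step. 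As noted in the footnote, this is precisely the content of \cite[Chap.\,XII, Thm.\,5.1]{macLane} specialized to the proper class of allowable short exact sequences, so I would ultimately cite that reference while indicating the construction above.
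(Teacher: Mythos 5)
Your outline is sound, but be aware that the paper does not prove this theorem at all: it is quoted directly from MacLane (Chap.\,XII, Thm.\,5.1), with a footnote noting that the class of allowable short exact sequences is a proper class in MacLane's sense, so his general result applies verbatim. Your route is therefore genuinely different in that you reconstruct the argument: a relative horseshoe lemma produces a levelwise split short exact sequence of relatively projective resolutions $0 \to P_\bullet \to Q_\bullet \to R_\bullet \to 0$, and applying $\Hom_{\mathcal{A}}(-,N)$ and taking the long exact cohomology sequence gives the result. The sketch holds together: $Q_n = P_n \oplus R_n$ is relatively projective by Lemma \ref{lemmaBasicPropertiesRelProj}, the levelwise splittings (in $\mathcal{A}$, hence in $\mathcal{B}$ after applying $\mathcal{U}$) make the $\Hom$-sequence of complexes exact, and you correctly isolate the two places where allowability of $S$ is indispensable: lifting $R_0 \to W$ through the allowable epimorphism $\pi$ (the lifting property of relatively projective objects only applies against allowable epimorphisms), and checking that the kernel sequence stays allowable so the construction iterates. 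What the citation buys the paper is that it skips exactly this horseshoe-type induction, which is the only delicate point; what your argument buys is a self-contained proof inside the resolvent-pair formalism of \S\,\ref{relExtGroups}.

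One compatibility check remains if your construction is to replace the citation in the paper's logic. Immediately after the theorem, the connecting morphisms are described as $c^n(\alpha) = (-1)^n\, \alpha \circ \overline{\eta}(S)$, i.e.\ as Yoneda products with the class of $S$, and this explicit form is what gets used later (in Corollary \ref{corollaryInductionFormulaExt} and in \eqref{connectingMorphismIsoDY}, hence in the DY applications). Your snake-lemma construction yields the connecting homomorphisms in a different guise, so you would still need to identify them with the Yoneda-product formula up to the stated sign; this is routine but not automatic, and it is part of what the reference to MacLane is silently supplying.
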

\noindent For each $n$, $\pi^*$ and $j^*$ are the pullbacks $\Ext^n_{\mathcal{A},\mathcal{B}}(\pi,N)$ and $\Ext^n_{\mathcal{A},\mathcal{B}}(j,N)$. The maps $c^n$ are called connecting morphisms and are defined by 
\[ c^n(\alpha) = (-1)^n \alpha \circ \overline{\eta}(S) \]
where $\circ$ is the Yoneda product defined after \eqref{yonedaProductOnExt} and $\overline{\eta}([S]) \in \Ext^1_{\mathcal{A},\mathcal{B}}(W,U)$ is the element associated to $[S] \in \YExt^1_{\mathcal{A},\mathcal{B}}(W,U)$ by the isomorphism from \eqref{defBarEta}.

\smallskip

\indent The long exact sequence from Theorem \ref{thLongExactSequenceExtGroups} gives in particular an inductive formula for the relative $\Ext$ groups:
\begin{corollary}\label{corollaryInductionFormulaExt}
Let $0 \longrightarrow L \overset{i}{\longrightarrow} Q \overset{p}{\longrightarrow} V \longrightarrow 0$ be an allowable short exact sequence in $\mathcal{A}$, with $Q$ a relatively projective object. We have
\begin{align*}
&\Ext_{\mathcal{A},\mathcal{B}}^n(V,W) \cong \Ext_{\mathcal{A},\mathcal{B}}^{n-1}(L,W) \:\: \text{ for n} > 1, \\
&\Ext_{\mathcal{A},\mathcal{B}}^1(V,W) \cong \Hom_{\mathcal{A}}(L,W)/\mathrm{im}(i^*)
\end{align*}
where $i^*$ is the pullback $\Hom_{\mathcal{A}}(Q, W) \to \Hom_{\mathcal{A}}(L, W)$.
\end{corollary}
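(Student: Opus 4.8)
The plan is to feed the given allowable short exact sequence $S = \bigl(0 \to L \xrightarrow{i} Q \xrightarrow{p} V \to 0\bigr)$ into the long exact sequence of Theorem \ref{thLongExactSequenceExtGroups}, taking the coefficient object there to be $N = W$, and then to exploit the relative projectivity of the middle term $Q$. In the notation of that theorem the triple $(U,V,W)$ becomes $(L, Q, V)$ here, so the long exact sequence reads
\[
0 \to \Hom_{\mathcal{A}}(V,W) \xrightarrow{p^*} \Hom_{\mathcal{A}}(Q,W) \xrightarrow{i^*} \Hom_{\mathcal{A}}(L,W) \xrightarrow{c^0} \Ext^1_{\mathcal{A},\mathcal{B}}(V,W) \xrightarrow{p^*} \Ext^1_{\mathcal{A},\mathcal{B}}(Q,W) \to \cdots
\]
and continues in each degree with $\Ext^{n-1}_{\mathcal{A},\mathcal{B}}(L,W) \xrightarrow{c^{n-1}} \Ext^n_{\mathcal{A},\mathcal{B}}(V,W) \xrightarrow{p^*} \Ext^n_{\mathcal{A},\mathcal{B}}(Q,W)$, where $p^*$ and $i^*$ are the pullbacks and $c^{\bullet}$ the connecting morphisms.

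The one non-formal input is the vanishing $\Ext^n_{\mathcal{A},\mathcal{B}}(Q,W) = 0$ for every $n \geq 1$, which I would establish directly from Definition \ref{defRelExt}. Since $Q$ is relatively projective, the sequence $0 \leftarrow Q \xleftarrow{\mathrm{id}_Q} Q \leftarrow 0 \leftarrow 0 \leftarrow \cdots$ is a relatively projective resolution of $Q$: it is exact, its terms are relatively projective (namely $Q$ itself, and the zero object, which is a direct summand of $\mathcal{F}(0)$ by Lemma \ref{lemmaBasicPropertiesRelProj}), and all its differentials are allowable ($\mathrm{id}_Q$ with section $\mathrm{id}$, and the zero maps with section $0$). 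Applying $\Hom_{\mathcal{A}}(-,W)$ yields the complex $\Hom_{\mathcal{A}}(Q,W) \to 0 \to 0 \to \cdots$, whose cohomology is concentrated in degree $0$; by Definition \ref{defRelExt} this is exactly the asserted vanishing.

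With this vanishing the conclusion is pure diagram chasing. For $n \geq 2$ the relevant four-term portion of the long exact sequence is
\[
\Ext^{n-1}_{\mathcal{A},\mathcal{B}}(Q,W) \xrightarrow{i^*} \Ext^{n-1}_{\mathcal{A},\mathcal{B}}(L,W) \xrightarrow{c^{n-1}} \Ext^n_{\mathcal{A},\mathcal{B}}(V,W) \xrightarrow{p^*} \Ext^n_{\mathcal{A},\mathcal{B}}(Q,W),
\]
whose two outer terms vanish because $n-1 \geq 1$; exactness then forces $c^{n-1}$ to be both injective and surjective, giving $\Ext^n_{\mathcal{A},\mathcal{B}}(V,W) \cong \Ext^{n-1}_{\mathcal{A},\mathcal{B}}(L,W)$. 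For $n = 1$ the relevant portion is
\[
\Hom_{\mathcal{A}}(Q,W) \xrightarrow{i^*} \Hom_{\mathcal{A}}(L,W) \xrightarrow{c^0} \Ext^1_{\mathcal{A},\mathcal{B}}(V,W) \xrightarrow{p^*} \Ext^1_{\mathcal{A},\mathcal{B}}(Q,W) = 0,
\]
so $c^0$ is surjective with kernel $\mathrm{im}(i^*)$ and hence induces the claimed isomorphism $\Hom_{\mathcal{A}}(L,W)/\mathrm{im}(i^*) \cong \Ext^1_{\mathcal{A},\mathcal{B}}(V,W)$.

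I do not anticipate any genuine obstacle: this is the standard dimension-shifting argument transported to the relative setting, and the only points requiring care are the bookkeeping that identifies the triple $(U,V,W)$ of Theorem \ref{thLongExactSequenceExtGroups} with $(L,Q,V)$ here, and the (routine) verification that the trivial one-step resolution computes $\Ext^{\bullet}_{\mathcal{A},\mathcal{B}}(Q,-)$ and yields the vanishing above.
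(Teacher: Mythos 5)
Your proposal is correct and follows essentially the same route as the paper: feed the allowable sequence into the long exact sequence of Theorem \ref{thLongExactSequenceExtGroups} with $N=W$ and use the vanishing $\Ext^n_{\mathcal{A},\mathcal{B}}(Q,W)=0$ for $n\geq 1$ to extract the isomorphisms. The only difference is that you justify this vanishing explicitly via the trivial resolution $0 \leftarrow Q \xleftarrow{\mathrm{id}_Q} Q \leftarrow 0 \leftarrow \cdots$, a step the paper merely asserts (and your labelling of the pullback maps $p^*$, $i^*$ in the low-degree part is in fact the correct one).
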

\begin{proof}
Since $Q$ is relatively projective we have $\Ext^n_{\mathcal{A},\mathcal{B}}(Q,-) = 0$ for all $n \geq 1$. Thanks to Theorem \ref{thLongExactSequenceExtGroups} we get for all $n > 1$ an exact sequence
\begin{equation*}
\Ext^{n-1}_{\mathcal{A},\mathcal{B}}(Q,W) = 0 \overset{i^*}{\longrightarrow} \Ext^{n-1}_{\mathcal{A},\mathcal{B}}(L,W) \overset{c^{n-1}}{\longrightarrow} \Ext^{n}_{\mathcal{A},\mathcal{B}}(V,W) \overset{p^*}{\longrightarrow} \Ext^n_{\mathcal{A},\mathcal{B}}(Q,W) = 0
\end{equation*}
which implies that $c^{n-1}$ is an isomorphism of vector spaces. For the case $n=1$, again thanks to Theorem \ref{thLongExactSequenceExtGroups} we have the exact sequence
\[ 0 \longrightarrow \Hom_{\mathcal{A}}(V,W) \overset{p^*}{\longrightarrow} \Hom_{\mathcal{A}}(Q,W) \overset{i^*}{\longrightarrow} \Hom_{\mathcal{A}}(L,W) \overset{c^0}{\longrightarrow} \Ext^1_{\mathcal{A},\mathcal{B}}(V,W) \overset{p^*}{\longrightarrow} \Ext^1_{\mathcal{A},\mathcal{B}}(Q,W) = 0 \]
which gives the desired isomorphism.
\end{proof}

\subsection{Relatively projective cover}\label{subsectionRelProjCover}
\indent Relatively projective covers have been defined and studied in \cite{thevenaz} in the special case where $\mathcal{A}$ is the category of modules over a ring, $\mathcal{B}$ is the category of modules over a subring and the adjunction is given by the usual restriction and induction functors.

\smallskip

\indent Here we only assume that $\mathcal{A}$ is an abelian category {\em in which every object has finite length}; the abelian category $\mathcal{B}$ and the additive functors $\mathcal{U}, \mathcal{F}$ remain arbitrary, except of course that they form a resolvent pair as in \eqref{adjunction}. We prove existence, uniqueness and some properties of the relatively projective cover in this general setting. Recall that the {\em length} of an object $V \in \mathcal{A}$, denoted by $\ell(V)$, is the length of any of its Jordan--H\"older series \cite[\S 1.5]{EGNO}. A basic fact is that if $V \in \mathcal{A}$ and $S \hookrightarrow V$ is a subobject then $\ell(V/S) = \ell(V) - \ell(S)$, where $V/S = \mathrm{coker}(S \hookrightarrow V)$. As a result:
\begin{equation}\label{propLengthEpiMono}
\text{If } \ell(V) = \ell(W) \text{ then }
\begin{array}{l}
e : V \to W \text{ is an epimorphism} \: \implies\: e \text{ is an isomorphism.} \\
j : V \to W \text{ is a monomorphism} \: \implies\: j \text{ is an isomorphism.}
\end{array}
\end{equation}
For the first claim we use that $V/\ker(e) \cong W$ implies $\ell\bigl(\ker(e)\bigr) = 0$ and thus $e$ is a monomorphism; but in an abelian category a morphism which is both mono and epi is an isomorphism. For the second claim $\ell\bigl( W/\mathrm{im}(j) \bigr) = \ell(W) - \ell\bigl(\mathrm{im}(j)\bigr) = \ell(W) - \ell(V) = 0$.

\smallskip

\indent As explained in \cite[\S 1]{thevenaz}, there are two possible definitions of a relatively projective cover (as for usual projective covers) based respectively on the notions of minimal epimorphism and relatively essential epimorphism. Recall that an allowable epimorphism $\pi : M \to V$ in $\mathcal{A}$ is called
\begin{itemize}
\item {\em minimal} if for all $f \in \mathrm{End}_{\mathcal{A}}(M)$,
\begin{equation}\label{defMinimalEpi}
\pi f = \pi \quad \implies \quad f \text{ is an isomorphism.}
\end{equation}
\item {\em relatively essential} if for every subobject $\iota : S \hookrightarrow M$,
\[ \pi\iota : S \to V \text{ is an allowable epimorphism} \quad \implies \quad \iota \:\text{ is an isomorphism}. \]
We write $\pi \iota = \pi|_S$ when the monomorphism $\iota$ is implicit.
\end{itemize}
Thanks to the assumption that any object of $\mathcal{A}$ has finite length we have:
\begin{lemma}\label{lemmaMinimalEssential}
Let $P$ be a relatively projective object and $\pi : P \to V$ be an allowable epimorphism. Then $\pi$ is minimal if and only if it is relatively essential.
\end{lemma}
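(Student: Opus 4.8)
The plan is to prove the two implications separately, exploiting the fact that $\mathcal{A} \cong A\text{-}\mathrm{mod}$ so that every object is a finite-dimensional vector space and $\End_{\mathcal{A}}(P)$ is a finite-dimensional algebra. First I would prove that relatively essential implies minimal. Suppose $\pi : P \to V$ is relatively essential and take $f \in \End_{\mathcal{A}}(P)$ with $\pi f = \pi$. Set $S = \mathrm{im}(f) \subset P$. Then $\pi|_S = \pi f$ (up to the inclusion) is still an allowable epimorphism, since $\pi f = \pi$ is allowable by hypothesis; relative essentiality forces $S = P$, so $f$ is surjective. Because $P$ is a finite-dimensional vector space, a surjective endomorphism is automatically injective, hence an isomorphism. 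This direction is the easy one.

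For the converse, suppose $\pi : P \to V$ is minimal and let $S \subset P$ be a submodule such that $\pi|_S : S \to V$ is an allowable epimorphism. The goal is to produce an endomorphism $f$ of $P$ with $\pi f = \pi$ which fails to be an isomorphism unless $S = P$, thereby contradicting minimality. The natural construction is to use relative projectivity of $P$: since $\pi|_S : S \to V$ is an allowable epimorphism and $P$ is relatively projective, the identity $\mathrm{id}_P$ together with $\pi$ lifts through $\pi|_S$, giving a morphism $f : P \to S \hookrightarrow P$ with $\pi f = \pi$. Explicitly, I would fill the diagram with $P$ on top, target morphism $\pi : P \to V$, and bottom allowable epimorphism $\pi|_S : S \to V$; the filler is a map $P \to S$, and composing with the inclusion $S \hookrightarrow P$ yields the desired $f \in \End_{\mathcal{A}}(P)$ whose image lies in $S$ and which satisfies $\pi f = \pi$. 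By minimality $f$ is an isomorphism, so in particular surjective, forcing $\mathrm{im}(f) = P$; but $\mathrm{im}(f) \subseteq S$, whence $S = P$.

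The main subtlety to verify carefully is that the lift produced by relative projectivity genuinely satisfies the hypotheses needed to invoke the defining property: one must check that $\pi|_S$ is an \emph{allowable} epimorphism (which is exactly the standing assumption on $S$) so that the relatively projective lifting property of $P$ applies, and that the composite $f = \iota \circ (\text{lift})$ is a morphism in $\mathcal{A}$ with $\pi f = \pi$. I expect the only real obstacle is bookkeeping: keeping track of whether it is $\mathrm{im}(f)$ or $S$ that one controls in each direction, and confirming that the finite-dimensionality of $P$ over $k$ is what upgrades ``surjective'' to ``isomorphism'' in both arguments. Since $\mathcal{A} = A\text{-}\mathrm{mod}$ consists of finite-dimensional modules, this equivalence of surjectivity and bijectivity for endomorphisms is precisely where the hypothesis on $\mathcal{A}$ enters, and it is what makes the proof go through in this generality.
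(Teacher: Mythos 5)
Your proposal is correct and takes essentially the same route as the paper's proof: in one direction you use relative projectivity of $P$ to lift $\pi$ through the allowable epimorphism $\pi|_S$ and then invoke minimality of $\pi$ applied to the composite with the inclusion, and in the other you restrict $\pi$ to $\mathrm{im}(f)$ and use finite-dimensionality to upgrade surjectivity of $f$ to invertibility. The one spot where you are terser than the paper is the claim that $\pi|_{\mathrm{im}(f)}$ is \emph{allowable}: the paper verifies this explicitly by taking a $\mathcal{B}$-section $s$ with $\mathcal{U}(\pi)s = \mathrm{id}_{\mathcal{U}(V)}$ and checking that $\mathcal{U}(f)s$ is then a $\mathcal{B}$-section of $\mathcal{U}(\pi|_{\mathrm{im}(f)})$, which is exactly the small computation your phrase ``up to the inclusion'' is standing in for.
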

\begin{proof}
Assume that $\pi$ is minimal and let $\iota : S \hookrightarrow P$ be a subobject such that $\pi \iota$ is an allowable epimorphism. We can fill the diagram
\[ \xymatrix{
& P \ar@{-->}[ld]_f \ar[d]^{\pi} & \\
S \ar[r]_{\pi \iota} & V \ar[r] & 0
}\]
Since $\pi$ is minimal it follows that $\iota f$ is an isomorphism and hence that $\iota$ is an epimorphism. But $\iota$ is also a monomorphism by definition, so it is an isomorphism.
\\\indent Conversely, assume that $\pi$ is relatively essential and let $f : P \to P$ be such that $\pi f = \pi$. Write $f = je$ where $e : P \to I$ is an epimorphism and $j : I \to P$ is a monomorphism. Since $(\pi j) e = \pi f = \pi$ is an epimorphism, we have that $\pi j$ is an epimorphism. Moreover $\pi j$ is allowable: indeed, since $\pi$ is allowable and is an epimorphism there exists $s \in \Hom_{\mathcal{B}}\bigl( \mathcal{U}(V),\mathcal{U}(P) \bigr)$ such that $\mathcal{U}(\pi)s = \mathrm{id}_{\mathcal{U}(V)}$, so we have
\[ \mathcal{U}(\pi j) \, \mathcal{U}(e)s = \mathcal{U}(\pi f) s = \mathcal{U}(\pi) s = \mathrm{id}_{\mathcal{U}(V)}. \]
It follows that $j : I \to P$ is an isomorphism because $\pi$ is relatively essential, and thus $f = je$ is an epimorphism from an object of finite length to itself. Hence $f$ is an isomorphism by \eqref{propLengthEpiMono}.
\end{proof}

\begin{definition}\label{defRelProjCover}
A relatively projective cover of $V \in \mathcal{A}$ is a relatively projective object $R_V$ together with a relatively essential allowable epimorphism $p_V : R_V \to V$.
\end{definition}

\begin{proposition}\label{propPropertiesRelProjCover}
1. Any object $V \in \mathcal{A}$ has a relatively projective cover, which is unique up to isomorphism.
\\2. Let $P$ be any relatively projective object with an allowable epimorphism $\pi : P \to V$. Then $R_V$ is a direct summand of $P$ and $\pi|_{R_V} = p_V$.
\\3. Assume that the categories $\mathcal{A}$, $\mathcal{B}$ and the functor $\mathcal{U}$ are $k$-linear. If $\mathrm{End}_{\mathcal{B}}\bigl(\mathcal{U}(V)\bigr) \cong k$ then $\Hom_{\mathcal{A}}(R_V,V) \cong k$, with basis element $p_V$.
\end{proposition}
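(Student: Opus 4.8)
The plan is to treat the three statements in order, using the allowable epimorphism $\varepsilon_V : \mathcal{F}\mathcal{U}(V) \to V$ (the counit) as the canonical relatively projective cover in degree $0$, together with Lemma \ref{lemmaMinimalEssential} to pass freely between minimal and relatively essential epimorphisms.

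For existence in part 1, I would start from $P := \mathcal{F}\mathcal{U}(V)$, which lies in $\mathcal{A} = A\text{-}\mathrm{mod}$ and is therefore finite-dimensional, and which carries the allowable epimorphism $\varepsilon_V$. Among all direct summands $R$ of $P$ for which the restriction $\varepsilon_V|_R : R \to V$ is still an allowable epimorphism — a nonempty collection, since $R = P$ qualifies — I would pick one of minimal $k$-dimension, call it $R_V$, and set $p_V := \varepsilon_V|_{R_V}$. Such $R_V$ is relatively projective as a summand of $\mathcal{F}\mathcal{U}(V)$ by Lemma \ref{lemmaBasicPropertiesRelProj}. To see that $p_V$ is minimal, take $f \in \mathrm{End}_{\mathcal{A}}(R_V)$ with $p_V f = p_V$; if $f$ were not an isomorphism then Fitting's lemma gives a decomposition $R_V = \ker(f^n) \oplus \mathrm{im}(f^n)$ with $\mathrm{im}(f^n)$ a proper direct summand, and $p_V f^n = p_V$ forces $p_V$ to factor through $\mathrm{im}(f^n)$ by an allowable epimorphism, contradicting minimality of the dimension. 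Hence $p_V$ is minimal, so relatively essential by Lemma \ref{lemmaMinimalEssential}, and $(R_V, p_V)$ is a relatively projective cover.

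Uniqueness (part 1) and part 2 follow from the same lifting argument, which combines relative projectivity of one object with an allowable epimorphism out of the other. Given a second cover $(R', p')$, I would produce $\phi : R_V \to R'$ and $\psi : R' \to R_V$ with $p'\phi = p_V$ and $p_V \psi = p'$; then $p_V \psi\phi = p_V$ and minimality make $\psi\phi$ (and symmetrically $\phi\psi$) an isomorphism, so $\phi$ is an isomorphism. For part 2, given any relatively projective $P$ with allowable epimorphism $\pi : P \to V$, the same device yields $\phi : R_V \to P$ with $\pi\phi = p_V$ and $\psi : P \to R_V$ with $p_V\psi = \pi$; minimality makes $\psi\phi$ an isomorphism, so $\phi$ is a split monomorphism exhibiting $R_V$ as a direct summand of $P$ on which $\pi$ restricts to $p_V$.

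The main subtlety is part 3, where a direct attempt to compare an arbitrary $g \in \Hom_{\mathcal{A}}(R_V,V)$ with $p_V$ runs into epimorphisms onto $V$ that need not be allowable, so the cover property cannot be applied to them. I would sidestep this entirely via the adjunction. By part 2 applied to $P = \mathcal{F}\mathcal{U}(V)$ and $\pi = \varepsilon_V$, the cover $R_V$ is a direct summand of $\mathcal{F}\mathcal{U}(V)$, whence $\Hom_{\mathcal{A}}(R_V, V)$ is a $k$-linear direct summand of $\Hom_{\mathcal{A}}(\mathcal{F}\mathcal{U}(V), V)$. The adjunction $\mathcal{F} \dashv \mathcal{U}$ identifies the latter with $\mathrm{End}_{\mathcal{B}}(\mathcal{U}(V))$, which is finite-dimensional over $k$ (all Hom spaces between objects of $\mathcal{A} = A\text{-}\mathrm{mod}$ are), and since $\mathcal{U}(V)$ is simple and $k$ is algebraically closed, Schur's lemma gives $\mathrm{End}_{\mathcal{B}}(\mathcal{U}(V)) \cong k$. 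Thus $\Hom_{\mathcal{A}}(R_V,V)$ is a direct summand of a one-dimensional space; as it contains the nonzero element $p_V$, it equals $k\,p_V$. I expect this adjunction reduction to be the crucial step, as it converts a delicate question about allowability into the elementary identity $\mathrm{End}_{\mathcal{B}}(\mathcal{U}(V)) \cong k$.
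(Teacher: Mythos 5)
Your proof is correct, and most of it follows the paper's route: uniqueness and part 2 are done by lifting along relative projectivity plus minimality, and part 3 is exactly the paper's argument (realize $R_V$ as a summand of $\mathcal{F}\mathcal{U}(V)$, apply the adjunction, then Schur's lemma). The genuine divergence is in the existence step. The paper minimizes dimension over \emph{all submodules} $S \subset P$ on which $\pi$ restricts to an allowable epimorphism; relative essentiality of $p_V$ is then immediate from the definition, but the paper must afterwards prove that the minimal $S$ is a direct summand (lift $\pi$ through $\pi|_R$ to get $f : P \to R$, show $\mathrm{im}(f) = R$, and split the inclusion $\iota$ via $(f\iota)^{-1}f$). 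You instead minimize only over \emph{direct summands} of $\mathcal{F}\mathcal{U}(V)$, so relative projectivity of $R_V$ comes for free, but essentiality does not: you need Fitting's lemma to turn a non-invertible $f \in \mathrm{End}_{\mathcal{A}}(R_V)$ with $p_V f = p_V$ into a strictly smaller summand $\mathrm{im}(f^n)$ still carrying an allowable epimorphism, and then Lemma \ref{lemmaMinimalEssential} converts minimality into essentiality. (One small point you assert without proof: that $p_V|_{\mathrm{im}(f^n)}$ is \emph{allowable}; this is verified exactly as in the proof of Lemma \ref{lemmaMinimalEssential}, by composing a $\mathcal{B}$-section of $\mathcal{U}(p_V)$ with $\mathcal{U}(f^n)$.) The two arguments trade the same work, your Fitting step against the paper's splitting step, and both are valid; yours keeps every object relatively projective from the start, the paper's avoids Fitting's lemma and exploits the essentiality definition directly. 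In part 2 you also do a direct two-sided lifting where the paper simply reruns its existence construction inside the given $P$ and invokes uniqueness, but this difference is cosmetic.
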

\begin{proof}
1. For existence, let $P$ be a relatively projective object together with an allowable epimorphism $\pi : P \to V$. There always exists such a pair $(P,\pi)$; for instance one can take $P = G(V)$ and $\pi = \varepsilon_V$ where $G = \mathcal{F}\mathcal{U}$ is the comonad on $\mathcal{A}$ and $\varepsilon$ is the counit of $G$. Consider
\[ \mathbb{S} = \bigl\{ j : S \hookrightarrow P \text{ subobject} \, \big| \, \pi j \text{ is an allowable epimorphism} \bigr\} \]
and let $\iota : R \hookrightarrow P$ be an element of $\mathbb{S}$ with $R$ of minimal length.
\\We first show that $\pi \iota$ is minimal in the sense of \eqref{defMinimalEpi}. Let $g : R \to R$ be such that $\pi \iota g = \pi \iota$ and decompose $g$ as $R \overset{e}{\rightarrow} I \overset{j}{\rightarrow} R$, with $e$ epimorphism and $j$ monomorphism. By the same arguments as in the proof of Lemma \ref{lemmaMinimalEssential} we get that $\pi \iota j : I \to V$ is an allowable epimorphism and thus $(\iota j : I \hookrightarrow P) \in \mathbb{S}$. Since $I$ is a subobject of $R$ we have $\ell(I) \leq \ell(R)$ but the minimality assumption on $\ell(R)$ forces $\ell(I) = \ell(R)$. It follows that $j : I \hookrightarrow R$ is an isomorphism by \eqref{propLengthEpiMono}. Hence $g = je$ is an epimorphism and thus $g$ is an isomorphism by \eqref{propLengthEpiMono}, as desired.
\\Now we show that $R$ is relatively projective. Since $P$ is relatively projective there exists $f : P \to R$ such that $(\pi\iota) f = \pi$. Then $(\pi \iota)(f \iota) = \pi\iota$, which implies that $f\iota : R \to R$ is an isomorphism by the minimality of $\pi\iota$ proven just above. If we define $\mathrm{pr} = (f\iota)^{-1}f : P \to R$ we have $\mathrm{pr} \, \iota = \mathrm{id}_R$, which shows that $R$ is a direct summand of $P$ and hence that $R$ is relatively projective by Lemma \ref{lemmaBasicPropertiesRelProj}.
\\As a result $R_V = R$ together with $p_V = \pi \iota$ is a relatively projective cover by Lemma \ref{lemmaMinimalEssential}.
\\\indent For uniqueness, let $(R_V, p_V)$, $(R'_V, p'_V)$ be two relatively projective covers of $V$. We can fill the diagrams
\[ \xymatrix{
& R_V \ar@{-->}[ld]_g \ar[d]^{p_V} & \\
R'_V \ar[r]_{p'_V} & V \ar[r] & 0
} \qquad
\xymatrix{
& R'_V \ar@{-->}[ld]_h \ar[d]^{p'_V} & \\
R_V \ar[r]_{p_V} & V \ar[r] & 0
}\]
Note that $p_V hg = p'_V g = p_V$ and $p'_V gh = p_V h =p'_V$. Since $p_V$ and $p'_V$ are minimal by Lemma \ref{lemmaMinimalEssential}, $hg$ and $gh$ are isomorphisms. It follows that $g$ and $h$ are isomorphisms.
\\2. By the proof of item 1 we know that $P$ contains a direct summand $R$ such that $(R,\pi|_R)$ is a relatively projective cover. By uniqueness of the projective cover, $(R_V, p_V)$ is isomorphic to $(R,\pi|_R)$.
\\3. By the previous item $R_ V$ is a direct summand of $G(V)$, so we have
\begin{equation}\label{HomRelProjCover}
\Hom_{\mathcal{A}}(R_V,V) \hookrightarrow  \Hom_{\mathcal{A}}(G(V),V) = \Hom_{\mathcal{A}}(\mathcal{F}\mathcal{U}(V),V)
 \underset{(\star)}{\cong} \Hom_{\mathcal{B}}(\mathcal{U}(V),\mathcal{U}(V)) \cong k.
\end{equation}
The bijection $(\star)$ is $k$-linear, given by $f \mapsto \mathcal{U}(f)\eta_{\mathcal{U}(V)}$ where $\eta$ is the unit of the adjunction. As a result $\Hom_{\mathcal{A}}(R_V,V)$ is at most one-dimensional; but it contains $p_V$, so it is one-dimensional.
\end{proof}

\begin{remark}\label{remarkHowToComputeRelProjCover}
It follows from the proof of Proposition \ref{propPropertiesRelProjCover} that the relatively projective cover $R_V$ of $V$ is isomorphic to any of the direct summands of $G(V)$ which cover $V$ by the restriction of $\varepsilon_V$ and which have minimal length; in particular all such direct summands are isomorphic. If the assumptions in the third item of Proposition \ref{propPropertiesRelProjCover} hold then we see from \eqref{HomRelProjCover} that $R_V$ is the unique indecomposable direct summand of $G(V)$ on which $\varepsilon_V$ does not vanish.
\end{remark}

\subsection{Comonad cohomology and relative $\Ext$ groups}\label{sectionRelExtGroupsAsComonadCohomology}
Let $G = \mathcal{F}\mathcal{U}$ be the comonad on $\mathcal{A}$ associated to an adjunction $\mathcal{F} \dashv \mathcal{U}$ as in \eqref{adjunction}. In this section we review the theory of comonad cohomology introduced in \cite{BB} and in the case of a resolvent pair we relate it to relative Ext groups. Our notations and conventions are those of \cite{GHS}.
\begin{itemize}[itemsep=0em]
\item An object $P$ of $\mathcal{A}$ is called {\em $G$-projective} if there exists a morphism $s : P \to G(P)$ such that $\varepsilon_P \, s = \mathrm{id}_P$, where $\varepsilon$ is the counit of $G$. By \cite[Lem.\,2.5]{GHS}, this is equivalent to the requirement that $P$ is a direct summand of some $G(V)$.
\item A sequence $X \overset{f}{\longrightarrow} Y \overset{g}{\longrightarrow} Z$ in $\mathcal{A}$ is called {\em $G$-exact} if $gf = 0$ and the sequence of abelian groups
\[ \Hom_{\mathcal{A}}(G(V),X) \overset{f_*}{\longrightarrow} \Hom_{\mathcal{A}}(G(V),Y) \overset{g_*}{\longrightarrow} \Hom_{\mathcal{A}}(G(V),Z) \]
is exact for all $V \in \mathcal{A}$. Note that thanks to the adjunction property, this last condition is equivalent to the exactness of
\begin{equation}\label{GExact}
\Hom_{\mathcal{B}}\bigl(\mathcal{U}(V),\mathcal{U}(X)\bigr) \xrightarrow{\mathcal{U}(f)_*} \Hom_{\mathcal{B}}\bigl(\mathcal{U}(V),\mathcal{U}(Y)\bigr) \xrightarrow{\mathcal{U}(g)_*} \Hom_{\mathcal{B}}\bigl(\mathcal{U}(V),\mathcal{U}(Z)\bigr).
\end{equation}
for all $V \in \mathcal{A}$.
\item A sequence $0 \longleftarrow V \overset{d_0}{\longleftarrow} P_0 \overset{d_1}{\longleftarrow} P_1 \overset{d_2}{\longleftarrow} \ldots$ in $\mathcal{A}$ is called a {\em $G$-resolution} if each $P_i$ is $G$-projective and the sequence is $G$-exact.
\end{itemize}
There always exists at least one $G$-projective resolution of any object $V$ in $\mathcal{A}$, called the bar resolution:
\begin{equation}\label{barResolutionG}
\mathrm{Bar}^{\bullet}_G(V) = \left( 0 \longleftarrow V \overset{\varepsilon_V}{\longleftarrow} G(V) \overset{d_1}{\longleftarrow} G^2(V) \overset{d_2}{\longleftarrow} \ldots \right)
\end{equation}
where $\varepsilon_V : G(V) \to V$ is the counit of the comonad $G$ and
\begin{equation}\label{barDifferentialG}
d_n = \sum_{i=0}^n (-1)^iG^{n-i}\bigl(\varepsilon_{G^i(V)}\bigr).
\end{equation}

\begin{definition}\label{defComonadCohomology}
Let $\mathcal{E}$ be an abelian category and $E : \mathcal{A} \to \mathcal{E}$ be a contravariant additive functor. Given a $G$-projective resolution as above, the  cohomology of the complex in $\mathcal{E}$
\[ 0 \longrightarrow E(P_0) \xrightarrow{E(d_1)} E(P_1) \xrightarrow{E(d_2)} E(P_2) \xrightarrow{E(d_3)} \ldots \]
is called the cohomology of $V$ associated to $G$ with coefficients in $E$, and is denoted $H^{\bullet}_G(V,E)$.
\end{definition}
There is an analogue of the fundamental lemma of homological algebra for comonad cohomology which implies that $H^{\bullet}_G(V,E)$ does not depend on the choice of a $G$-projective resolution \cite[\S 4.2]{BB}. For the particular choice of the bar resolution \eqref{barResolutionG}, we denote the complex in Definition \ref{defComonadCohomology} by
\begin{equation}\label{barComplexCoefficients}
\mathrm{Bar}^{\bullet}_G(V,E) = \left( 0 \longrightarrow E(G(V)) \overset{E(d_1)}{\longrightarrow} E(G^2(V)) \overset{E(d_2)}{\longrightarrow} \ldots \right)
\end{equation}

\indent We now relate the notions from comonad cohomology to those from relative homological algebra. Suppose we are given a resolvent pair of categories as in \eqref{adjunction} and let $G = \mathcal{F}\mathcal{U}$ be the associated comonad on $\mathcal{A}$.
\begin{lemma}
Let $X \overset{f}{\longrightarrow} Y \overset{g}{\longrightarrow} Z$ be a sequence in $\mathcal{A}$.
\begin{enumerate}[itemsep=0em, topsep=.3em]
\item Let $\ker(g)=(k : K \to Y)$. The sequence is $G$-exact if and only if $gf = 0$ and there exists $t \in \Hom_{\mathcal{B}}\bigl(\mathcal{U}(K),\mathcal{U}(X)\bigr)$ such that $\mathcal{U}(f)t = \mathcal{U}(k)$.
\item If the sequence is $G$-exact and $g$ is allowable, then $f$ is allowable.
\item If the sequence is exact and $f$ is allowable, then it is $G$-exact.
\item If the sequence is $G$-exact, then it is exact.
\end{enumerate}
\end{lemma}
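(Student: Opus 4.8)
The lemma has four items, and the plan is to prove them essentially in the order stated, exploiting the characterization \eqref{GExact} of $G$-exactness and the definition of allowable morphisms in terms of splittings after applying $\mathcal{U}$. The guiding principle is that both $G$-exactness and allowability are really statements about $\mathcal{B}$ (via $\mathcal{U}$), so each item should reduce to a diagram chase in $\mathcal{B}$ together with the adjunction isomorphism $\Hom_{\mathcal{A}}(G(V),-) \cong \Hom_{\mathcal{B}}(\mathcal{U}(V),\mathcal{U}(-))$.

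\smallskip

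\noindent For item 1, I would unwind the definition of $G$-exactness using \eqref{GExact}. The condition that \eqref{GExact} be exact for all $V$ is a representability-type statement; the natural move is to test it on the universal object. Concretely, taking $V = K$ and chasing the element $\mathcal{U}(k) \in \Hom_{\mathcal{B}}(\mathcal{U}(K),\mathcal{U}(Y))$, which lies in $\ker\bigl(\mathcal{U}(g)_*\bigr)$ since $gk = 0$, exactness at the middle term produces $t$ with $\mathcal{U}(f)t = \mathcal{U}(k)$. Conversely, given such a $t$ I would show directly that every element of $\ker\bigl(\mathcal{U}(g)_*\bigr)$ in $\Hom_{\mathcal{B}}(\mathcal{U}(V),\mathcal{U}(Y))$ factors through $\mathcal{U}(f)$: any $\varphi$ with $\mathcal{U}(g)\varphi = 0$ factors through $\ker\bigl(\mathcal{U}(g)\bigr)$, and since $\mathcal{U}$ is exact this kernel is $\mathcal{U}(K)$, so $\varphi = \mathcal{U}(k)\psi = \mathcal{U}(f)(t\psi)$ for a suitable $\psi$. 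This is the technical heart of the lemma and the step I expect to be the main obstacle, because it requires carefully identifying $\ker\bigl(\mathcal{U}(g)\bigr)$ with $\mathcal{U}(K)$ (using exactness of $\mathcal{U}$) and handling the universal property cleanly.

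\smallskip

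\noindent Items 2 through 4 should then follow more formally. For item 2, I would apply item 1 to obtain $t$ with $\mathcal{U}(f)t = \mathcal{U}(k)$; the hypothesis that $g$ is allowable means $\mathcal{U}\bigl(\ker(g)\bigr) = \mathcal{U}(K)$ is a direct summand of $\mathcal{U}(Y)$, say with projection $r : \mathcal{U}(Y) \to \mathcal{U}(K)$ satisfying $r\,\mathcal{U}(k) = \mathrm{id}$. Composing, $tr$ should serve as a splitting witnessing that $f$ is allowable, and I would verify the defining relation $\mathcal{U}(f)(tr)\mathcal{U}(f) = \mathcal{U}(f)$ by a short computation using $\mathcal{U}(f)t = \mathcal{U}(k)$ and the fact that $\ker(g) = \mathrm{im}(f)$ up to the relevant identifications.

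\smallskip

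\noindent For item 3, if the sequence is exact then $\ker(g) = \mathrm{im}(f)$, and allowability of $f$ gives a splitting of $\mathcal{U}(f)$; this splitting is exactly the data needed to verify the factorization condition in item 1, so $G$-exactness follows. For item 4, I would use that $G$-exactness forces the sequence, after applying $\mathcal{U}$, to be exact at $Y$ — testing \eqref{GExact} suitably shows $\ker\bigl(\mathcal{U}(g)\bigr) = \mathrm{im}\bigl(\mathcal{U}(f)\bigr)$ — and then invoke faithfulness and exactness of $\mathcal{U}$ to deduce exactness of the original sequence in $\mathcal{A}$. All of items 2--4 are short diagram chases once item 1 is in place, so the real work is concentrated in item 1.
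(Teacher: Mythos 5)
Your proposal is correct and takes essentially the same route as the paper's proof: item 1 by testing $G$-exactness at $V = K$ and, for the converse, factoring any test morphism through $\ker(\mathcal{U}(g)) = \mathcal{U}(K)$; item 2 by composing $t$ with a retraction of $\mathcal{U}(k)$ supplied by allowability of $g$ (the paper builds this retraction explicitly from $s$ via $\mathrm{id} - s\,\mathcal{U}(g)$, which is the same content as the direct-summand characterization you invoke); item 3 via $t = s\,\mathcal{U}(k)$ and cancellation of the epimorphism in the image factorization of $f$; item 4 by showing $\ker(\mathcal{U}(g)) = \mathrm{im}(\mathcal{U}(f))$ and then reflecting exactness along the exact faithful functor $\mathcal{U}$. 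The one imprecision is in item 2: you should not appeal to $\ker(g) = \mathrm{im}(f)$, since that is exactness of the sequence, which is only established in item 4; what is actually needed is merely that $\mathcal{U}(f)$ factors through $\mathcal{U}(k)$, and this already follows from $gf = 0$ (part of the definition of $G$-exactness) together with $\ker(\mathcal{U}(g)) = \mathcal{U}(K)$, so the computation $\mathcal{U}(f)(tr)\mathcal{U}(f) = \mathcal{U}(k)\,r\,\mathcal{U}(f) = \mathcal{U}(f)$ goes through without any forward reference.
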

\begin{proof}
Note that $\ker(\mathcal{U}(g)) = \mathcal{U}(\ker(g))$ and $\mathrm{im}(\mathcal{U}(f)) = \mathcal{U}(\mathrm{im}(f))$ since  $\mathcal{U}$ is exact.
\\1. Since $gk = 0$, the exactness of \eqref{GExact} with $V=K$ implies that there exists $t : \mathcal{U}(K) \to \mathcal{U}(X)$ such that $\mathcal{U}(f)t = \mathcal{U}(k)$. Conversely, let $V \in \mathcal{A}$ and $h : \mathcal{U}(V) \to \mathcal{U}(Y)$ such that $\mathcal{U}(g)h = 0$. Thus by the universal property of the kernel, there exists $u : \mathcal{U}(V) \to \mathcal{U}(K)$ such that $h = \mathcal{U}(k)u = \mathcal{U}(f)tu$, which shows that $h \in \mathrm{im}(\mathcal{U}(f)_*)$. Hence the sequence \eqref{GExact} is exact.
\\2. By definition, there exists $s : \mathcal{U}(Z) \to \mathcal{U}(Y)$ such that $\mathcal{U}(g)s\mathcal{U}(g) = \mathcal{U}(g)$. Let $\pi = \mathrm{id}_{\mathcal{U}(Y)} - s\,\mathcal{U}(g) : \mathcal{U}(Y) \to \mathcal{U}(Y)$. We have $\mathcal{U}(g)\pi = 0$, so by the universal property of the kernel we get $u : \mathcal{U}(Y) \to \mathcal{U}(K)$ such that $\mathcal{U}(k)u = \pi$. Now since the sequence is $G$-exact, we have $t : \mathcal{U}(K) \to \mathcal{U}(X)$ from item 1. Then $tu : \mathcal{U}(Y) \to \mathcal{U}(X)$ satisfies
\[ \mathcal{U}(f)tu\,\mathcal{U}(f) = \mathcal{U}(k)u\,\mathcal{U}(f) = \pi\,\mathcal{U}(f) = \mathcal{U}(f) - s\,\mathcal{U}(g)\,\mathcal{U}(f) = \mathcal{U}(f) \]
where for the last equality we used that $gf=0$. Hence $f$ is allowable.
\\3. Since $f$ is allowable, we have some $s : \mathcal{U}(Y) \to \mathcal{U}(X)$ such that $\mathcal{U}(f)s\,\mathcal{U}(f) = \mathcal{U}(f)$. Since the sequence is exact, $\ker(g) = (k : K \to Y) = \mathrm{im}(f)$, and by definition of the image there exists an epimorphism $e : X \to K$ such that $f = ke$. We have
\[ \mathcal{U}(f)\bigl(s\,\mathcal{U}(k)\bigr)\mathcal{U}(e) = \mathcal{U}(f)s\,\mathcal{U}(f) = \mathcal{U}(f) = \mathcal{U}(k)\,\mathcal{U}(e). \]
Since $e$ is an epimorphism, we deduce $\mathcal{U}(f)\bigl(s\,\mathcal{U}(k)\bigr) = \mathcal{U}(k)$ and by item 1 the sequence is $G$-exact.
\\4. We show that $\ker(\mathcal{U}(g)) = \mathrm{im}(\mathcal{U}(f))$, then it follows that $\ker(g) = \mathrm{im}(f)$ since $\mathcal{U}$ is faithful. First, $\mathcal{U}(k)$ is a monomorphism and by definition of the kernel, since $\mathcal{U}(f)\,\mathcal{U}(g) = 0$, there exists $u : \mathcal{U}(X) \to \mathcal{U}(K)$ such that $\mathcal{U}(f) = \mathcal{U}(k)\,\mathcal{U}(u)$. It remains to prove that $\mathcal{U}(k)$ is universal for these properties. So let $m : I \to \mathcal{U}(Y)$ be a monomorphism such that there exists $e : \mathcal{U}(X) \to I$ with $\mathcal{U}(f) = me$. We want to show that there exists $v : \mathcal{U}(K) \to I$ such that $\mathcal{U}(k) =mv$. Using item 1, we have $\mathcal{U}(k) = \mathcal{U}(f)t = met$, so that we can take $v=et$.
\end{proof}
\begin{proposition}\label{relExtAndComonadCohom}
Let $\xymatrix{\mathcal{A} \ar@<-.5ex>[r]_{\mathcal{U}} &  \ar@<-.5ex>[l]_{\mathcal{F}}  \mathcal{B}}$ be a resolvent pair of abelian categories and $G = \mathcal{F}\mathcal{U}$ be the associated comonad on $\mathcal{A}$.
\begin{enumerate}[itemsep=0em, topsep=.3em]
\item An object $P \in \mathcal{A}$ is $G$-projective if and only if it is relatively projective.
\item A sequence is a $G$-resolution if and only if it is a relatively projective resolution.
\item We have an equality of complexes $\mathrm{Bar}^{\bullet}_G\bigl(V,\Hom_{\mathcal{A}}(?,W)\bigr) = \mathrm{Bar}^{\bullet}_{\mathcal{A},\mathcal{B}}(V,W)$, for any $V,W \in \mathcal{A}$.
\item This implies $\Ext^n_{\mathcal{A},\mathcal{B}}(V,W) \cong H^n_G\bigl( V, \Hom_{\mathcal{A}}(?,W) \bigr)$ for all $n \geq 0$.
\end{enumerate}
\end{proposition}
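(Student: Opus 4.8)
The plan is to dispatch the four items in the stated order, leaning on the characterisation of $G$-projectivity from \cite[Lem.\,2.5]{GHS}, the characterisation of relative projectivity in Lemma \ref{lemmaBasicPropertiesRelProj}, and the comparison Lemma stated immediately before the Proposition (its parts 1--4 relating $G$-exactness to exactness and allowability).

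For item~1, I would observe that both notions reduce to being a retract of an object in the image of $\mathcal{F}\mathcal{U}$. On the one hand, $G$-projectivity means $P$ is a direct summand of some $G(M)=\mathcal{F}\mathcal{U}(M)$, which exhibits $P$ as a direct summand of $\mathcal{F}(X)$ with $X=\mathcal{U}(M)$; by Lemma \ref{lemmaBasicPropertiesRelProj}(2) this is exactly relative projectivity. On the other hand, the proof of Lemma \ref{lemmaBasicPropertiesRelProj}(2) already shows that a relatively projective $P$ is a direct summand of $\mathcal{F}\mathcal{U}(P)=G(P)$, whence $P$ is $G$-projective by \cite[Lem.\,2.5]{GHS}. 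This step is essentially immediate.

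For item~2, I would combine item~1 (which matches the two classes of objects in the resolutions) with the preceding Lemma to match the two exactness conditions, reading off the three-term pieces $P_{i+1}\to P_i\to P_{i-1}$ of the resolution. If the resolution is $G$-exact, then it is exact by part~4 of the Lemma, and the differentials are allowable by an inductive use of part~2: starting from the augmentation $P_0\xrightarrow{d_0}V\to 0$, the map $V\to 0$ is trivially allowable, so part~2 forces $d_0$ allowable, and then each $d_{i+1}$ is allowable because $d_i$ is. Conversely, exactness together with allowability of every differential feeds directly into part~3 of the Lemma to give $G$-exactness of each three-term piece (including the augmentation piece). I expect this inductive bootstrap of allowability from the tail of the resolution to be the one point requiring care; everything else is a direct translation.

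For items~3 and~4 I would argue that the identification is tautological once one notes that the comonad $G=\mathcal{F}\mathcal{U}$ has counit equal to the adjunction counit $\varepsilon$. Then $G^n(V)=(\mathcal{F}\mathcal{U})^n(V)$, so applying the contravariant functor $E=\Hom_{\mathcal{A}}(?,W)$ to the bar resolution \eqref{barResolutionG} produces exactly the terms $\Hom_{\mathcal{A}}\bigl((\mathcal{F}\mathcal{U})^n(V),W\bigr)$ of $\mathrm{Bar}^{\bullet}_{\mathcal{A},\mathcal{B}}(V,W)$, while the differentials coincide on the nose because the formulas \eqref{barDifferentialG} and \eqref{differentialRelativeBarResolution} are literally the same, so $E(d_n)=d_n^*$. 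This yields the equality of complexes asserted in item~3. Item~4 is then immediate: the cohomology of $\mathrm{Bar}^{\bullet}_{\mathcal{A},\mathcal{B}}(V,W)$ computes $\Ext^n_{\mathcal{A},\mathcal{B}}(V,W)$ by Definition \ref{defRelExt} (the bar resolution being an admissible choice via the fundamental lemma), the cohomology of $\mathrm{Bar}^{\bullet}_G(V,E)$ computes $H^n_G(V,E)$ by Definition \ref{defComonadCohomology}, and equal complexes have equal cohomology.
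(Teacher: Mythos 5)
Your proposal is correct, and for items 2--4 it coincides with the paper's proof: the same induction using parts 2 and 4 of the preceding comparison lemma (with part 3 for the converse) in item 2, and the same observation that the bar complexes are term-by-term and differential-by-differential identical for items 3 and 4. The one place you diverge is the converse direction of item 1. The paper passes through Lemma \ref{lemmaBasicPropertiesRelProj}(2) to write a relatively projective $P$ as a retract of $\mathcal{F}(X)$, and then uses the triangle identity $\varepsilon_{\mathcal{F}(X)}\,\mathcal{F}(\eta_X)=\mathrm{id}_{\mathcal{F}(X)}$ to upgrade this to a retract of $G(\mathcal{F}(X))$. You instead reuse the argument inside the proof of Lemma \ref{lemmaBasicPropertiesRelProj}(2): since $\varepsilon_P : G(P)\to P$ is an allowable epimorphism, relative projectivity lets you lift $\mathrm{id}_P$ through it, exhibiting $P$ directly as a retract of $G(P)$ (indeed the lift $s$ satisfies $\varepsilon_P s = \mathrm{id}_P$, which is literally the definition of $G$-projectivity, so even the appeal to \cite[Lem.\,2.5]{GHS} is optional). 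This shortcut avoids the zig-zag computation and is slightly cleaner; its only cost is that it quotes the interior of another proof rather than just its statement, and it still relies implicitly on the triangle identity through the allowability of $\varepsilon_P$, which the paper establishes separately.
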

\begin{proof}
1. If $P$ is $G$-projective then it is a direct summand of some $G(V) = \mathcal{F}(\mathcal{U}(V))$ and hence is relatively projective by Lemma \ref{lemmaBasicPropertiesRelProj}. Conversely, assume that $P$ is a direct summand of some $\mathcal{F}(X)$, with morphisms $\iota : P \to \mathcal{F}(X)$ and $\pi : \mathcal{F}(X) \to P$ such that $\pi\iota = \mathrm{id}_P$. Let $\eta : \mathrm{Id}_{\mathcal{B}} \to \mathcal{U}\mathcal{F}$ and $\varepsilon : \mathcal{F}\mathcal{U} \to \mathrm{Id}_{\mathcal{A}}$ be respectively the unit and the counit of the adjunction. Define
\begin{align*}
&\iota' : P \overset{\iota}{\longrightarrow} \mathcal{F}(X) \xrightarrow{\mathcal{F}(\eta_X)} \mathcal{F}\mathcal{U}\mathcal{F}(X) = G(\mathcal{F}(X)),\\
&\pi' : G(\mathcal{F}(X)) = \mathcal{F}\mathcal{U}\mathcal{F}(X) \xrightarrow{\varepsilon_{\mathcal{F}(X)}} \mathcal{F}(X) \overset{\pi}{\longrightarrow} P.
\end{align*}
Then $\pi'\iota' = \pi \varepsilon_{\mathcal{F}(X)} \mathcal{F}(\eta_X) \iota = \pi \iota = \mathrm{id}_P$, thanks to one of the defining properties of $\varepsilon$ and $\eta$. Hence $P$ is $G$-projective, as a direct summand of $G(\mathcal{F}(X))$.
\\2. Let $\ldots \overset{d_2}{\longrightarrow} P_1 \overset{d_1}{\longrightarrow} P_0 \overset{d_0}{\longrightarrow} V \longrightarrow 0$ be a $G$-projective resolution. Since $P_0 \overset{d_0}{\longrightarrow} V \longrightarrow 0$ is $G$-exact and $V \overset{0}{\longrightarrow} 0$ is allowable, so is $d_0$ by the previous lemma. Assume now that some $d_i$ is allowable. Since $P_{i+1} \overset{d_{i+1}}{\longrightarrow} P_i \overset{d_i}{\longrightarrow} P_{i-1}$ is $G$-exact, $d_{i+1}$ is allowable by the previous lemma. By induction, all the $d_i$'s are allowable. Since moreover we have seen that a $G$-exact sequence is exact, we have a relatively projective resolution.
\\The converse implication is obvious due to the third item in the previous lemma.
\\3. Trivial, the definitions are the same.
\\4. Follows immediately from any of the two previous items.
\end{proof}

\section{Relative $\Ext$ groups for tensor categories}\label{relativeExtTensorCategories}
\indent Let $k$ be a field. Recall that:
\begin{itemize}[itemsep=0em, topsep=.3em]
\item A {\em tensor category} is a $k$-linear abelian rigid monoidal category with $k$-bilinear tensor product, such that the monoidal unit $\boldsymbol{1}$ is a simple object and $\mathrm{End}_{\mathcal{C}}(\boldsymbol{1}) \cong k$.
\item A $k$-linear abelian category is {\em finite} if it is equivalent as a $k$-linear category to the category of finite-dimensional modules over some finite-dimensional $k$-algebra \cite[Def.\,1.8.5]{EGNO}.
\item A {\em finite tensor category} is a tensor category which is finite as a $k$-linear abelian category.
\end{itemize}
\noindent In a tensor category the monoidal product is exact, because of rigidity \cite[Prop.\,4.2.1]{EGNO}. We will work with strict tensor categories.

\smallskip

\indent Recall that a monoidal category $\mathcal{C}$ is rigid if every object $X \in \mathcal{C}$ has a right dual $X^{\vee}$ and a left dual $^{\vee}\!X$ together with left and right (co)evaluation morphisms
\[ \begin{array}{ll}
\mathrm{ev}_X : X^{\vee} \otimes X \to \boldsymbol{1}, & \mathrm{coev}_X : \boldsymbol{1} \to X \otimes X^{\vee},\\[3pt]
\widetilde{\mathrm{ev}}_X : X \otimes {^{\vee}\!X} \to \boldsymbol{1}, & \widetilde{\mathrm{coev}}_X : \boldsymbol{1} \to {^{\vee}\!X} \otimes X,
\end{array} \]
satisfying the standard axioms. For certain computations it will be convenient to represent morphisms with diagrams, following the usual rules:
\begin{center}
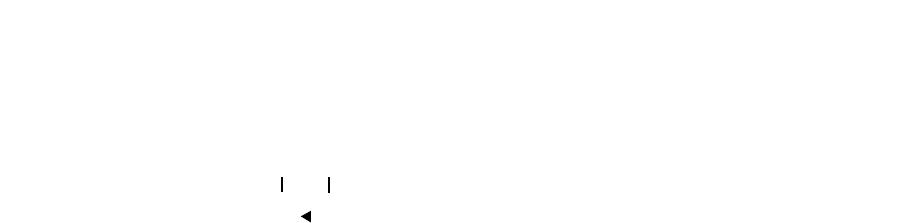
\end{center}
Note that we read diagrams from bottom to top. Let $(\mathcal{M}, \triangleright)$ be a strict left $\mathcal{C}$-module category; we extend to $\mathcal{M}$ the diagrammatic representation of morphisms as follows:
\begin{center}
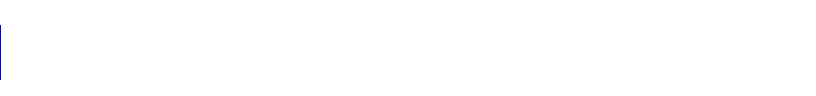
\end{center}
The objects and morphisms in $\mathcal{C}$ (resp. in $\mathcal{M}$) are in black (resp. in blue); the objects and morphisms in $\mathcal{M}$ are necessarily at the right of the diagrams.

\subsection{Monoidal resolvent pairs}\label{sectionMonoidalResolventPairs}
In this subsection, we consider a resolvent pair of categories as in \eqref{adjunction} but moreover we assume that $\mathcal{A},\mathcal{B}$ are tensor categories and that the exact, faithful and linear functor $\mathcal{U} : \mathcal{A} \to \mathcal{B}$ is monoidal.

\begin{lemma}\label{fTensgAllowable}
If $f \in \Hom_{\mathcal{A}}(X,X')$ and $g \in \Hom_{\mathcal{A}}(Y,Y')$ are allowable morphisms then $f \otimes g$ is an allowable morphism.
\end{lemma}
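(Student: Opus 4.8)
The plan is to build an explicit weak inverse for $\mathcal{U}(f\otimes g)$ out of the ones for $\mathcal{U}(f)$ and $\mathcal{U}(g)$, using only that $\otimes$ is a bifunctor and that $\mathcal{U}$ is monoidal. By the definition of allowability recalled in \S\ref{relExtGroups}, there exist $s_f \in \Hom_{\mathcal{B}}(\mathcal{U}(X'),\mathcal{U}(X))$ and $s_g \in \Hom_{\mathcal{B}}(\mathcal{U}(Y'),\mathcal{U}(Y))$ with
\[ \mathcal{U}(f)\,s_f\,\mathcal{U}(f) = \mathcal{U}(f), \qquad \mathcal{U}(g)\,s_g\,\mathcal{U}(g) = \mathcal{U}(g). \]
First I would reduce the claim to a purely $\mathcal{B}$-internal statement: call a morphism $\varphi$ of $\mathcal{B}$ \emph{regular} if it admits $s$ with $\varphi s\varphi = \varphi$; then allowability of a morphism of $\mathcal{A}$ is precisely regularity of its image under $\mathcal{U}$. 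Hence it suffices to show that regularity in $\mathcal{B}$ is stable under $\otimes$.

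Next I would bring in monoidality of $\mathcal{U}$. Let $(\mathcal{U}_2)_{-,-} : \mathcal{U}(-)\otimes\mathcal{U}(-) \overset{\sim}{\to} \mathcal{U}(-\otimes -)$ be the natural monoidal structure isomorphism of $\mathcal{U}$. Its naturality yields
\[ \mathcal{U}(f\otimes g)\circ (\mathcal{U}_2)_{X,Y} = (\mathcal{U}_2)_{X',Y'}\circ\bigl(\mathcal{U}(f)\otimes\mathcal{U}(g)\bigr), \]
so $\mathcal{U}(f\otimes g)$ is conjugate, through the isomorphisms $\mathcal{U}_2$, to $\mathcal{U}(f)\otimes\mathcal{U}(g)$. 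I would then set
\[ s = (\mathcal{U}_2)_{X,Y}\circ\bigl(s_f\otimes s_g\bigr)\circ (\mathcal{U}_2)_{X',Y'}^{-1} \;\in\; \Hom_{\mathcal{B}}\bigl(\mathcal{U}(X'\otimes Y'),\mathcal{U}(X\otimes Y)\bigr) \]
and check $\mathcal{U}(f\otimes g)\,s\,\mathcal{U}(f\otimes g)=\mathcal{U}(f\otimes g)$ directly: substituting the two conjugation identities, the isomorphisms $\mathcal{U}_2$ and $\mathcal{U}_2^{-1}$ cancel in the middle, and the interchange law for $\otimes$ collapses the remaining expression to $(\mathcal{U}(f)s_f\mathcal{U}(f))\otimes(\mathcal{U}(g)s_g\mathcal{U}(g))$. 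By the two displayed relations this is $\mathcal{U}(f)\otimes\mathcal{U}(g)$, and transporting back through $\mathcal{U}_2$ recovers $\mathcal{U}(f\otimes g)$, as desired.

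There is essentially no serious obstacle: the whole content is the bifunctoriality of $\otimes$ together with the two regularity relations, and the only bookkeeping is the conjugation by $\mathcal{U}_2$. Since we work with strict tensor categories and may take $\mathcal{U}$ strict monoidal, so that $(\mathcal{U}_2)_{-,-}=\mathrm{id}$ and $\mathcal{U}(f\otimes g)=\mathcal{U}(f)\otimes\mathcal{U}(g)$, the computation even simplifies to $\bigl(\mathcal{U}(f)\otimes\mathcal{U}(g)\bigr)\bigl(s_f\otimes s_g\bigr)\bigl(\mathcal{U}(f)\otimes\mathcal{U}(g)\bigr)=\mathcal{U}(f)\otimes\mathcal{U}(g)$, with weak inverse simply $s_f\otimes s_g$. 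The hard part, to the extent there is one, is thus purely presentational — tracking $\mathcal{U}_2$ in the non-strict case — and does not affect the argument.
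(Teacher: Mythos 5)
Your proposal is correct and follows essentially the same route as the paper's proof: the paper also takes the weak inverse $\mathcal{U}^{(2)}_{X,Y}(s\otimes t)(\mathcal{U}^{(2)}_{X',Y'})^{-1}$ and verifies the regularity identity $\mathcal{U}(f\otimes g)\,s\,\mathcal{U}(f\otimes g)=\mathcal{U}(f\otimes g)$ via naturality of the monoidal structure of $\mathcal{U}$ and the interchange law. Your closing remark about the strict case is just a presentational simplification of the same computation.
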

\begin{proof}
Let $s \in \Hom_{\mathcal{B}}(X',X), t \in \Hom_{\mathcal{B}}(Y',Y)$ be such that $\mathcal{U}(f)s\,\mathcal{U}(f) = \mathcal{U}(f)$ and $\mathcal{U}(g)\, t\,\mathcal{U}(g) = \mathcal{U}(g)$ and let $\mathcal{U}^{(2)}_{V,W} : \mathcal{U}(V) \otimes \mathcal{U}(W) \overset{\sim}{\to} \mathcal{U}(V \otimes W)$ be the monoidal structure of $\mathcal{U}$. Then
\begin{align*}
\mathcal{U}(f \otimes g)\left( \mathcal{U}^{(2)}_{X,Y} (s \otimes t) \, (\mathcal{U}^{(2)}_{X',Y'})^{-1} \right) \mathcal{U}(f \otimes g) &= \mathcal{U}^{(2)}_{X',Y'} \bigl(\mathcal{U}(f) \otimes \mathcal{U}(g)\bigr) (s \otimes t) \bigl(\mathcal{U}(f) \otimes \mathcal{U}(g)\bigr) (\mathcal{U}^{(2)}_{X,Y})^{-1}\\
&= \mathcal{U}^{(2)}_{X',Y'} \bigl(\mathcal{U}(f) \otimes \mathcal{U}(g)\bigr) (\mathcal{U}^{(2)}_{X,Y})^{-1} = \mathcal{U}(f \otimes g).\qedhere
\end{align*}
\end{proof}

\begin{proposition}\label{relProjTensorIdeal}
The full subcategory $\mathrm{Proj}_{\mathcal{A},\mathcal{B}}$ of relatively projective objects is a tensor ideal in $\mathcal{A}$, which is stable under the duality functors $P \mapsto P^{\vee}$ and $P \mapsto {^{\vee}\!P}$.
\end{proposition}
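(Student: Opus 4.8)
The plan is to treat the two assertions separately: first I would show that $\mathrm{Proj}_{\mathcal{A},\mathcal{B}}$ absorbs the tensor product with an arbitrary object on either side, and then deduce stability under duality as a formal consequence of the tensor ideal property. Throughout I would work with the characterization of relative projectivity through the functor $\Hom_{\mathcal{A}}(P,-)$ given by Lemma \ref{relProjAndHom}, rather than with the description as a direct summand of some $\mathcal{F}(X)$, because the former interacts transparently with rigidity.

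To prove that $X \otimes P$ is relatively projective for any $X \in \mathcal{A}$ and any relatively projective $P$, I would invoke the adjunction isomorphism coming from rigidity, namely the natural isomorphism $\Hom_{\mathcal{A}}(X \otimes P, -) \cong \Hom_{\mathcal{A}}(P, {}^{\vee}\!X \otimes -)$. Given an allowable short exact sequence $E$ in $\mathcal{A}$, tensoring on the left by ${}^{\vee}\!X$ yields a sequence ${}^{\vee}\!X \otimes E$ that is exact, since the tensor product is exact by rigidity, and whose arrows have the form $\mathrm{id}_{{}^{\vee}\!X} \otimes f$; these are allowable by Lemma \ref{fTensgAllowable}, once one observes that an identity morphism is trivially allowable. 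Thus ${}^{\vee}\!X \otimes E$ is again an allowable short exact sequence, and applying $\Hom_{\mathcal{A}}(P,-)$ to it produces a short exact sequence of abelian groups by Lemma \ref{relProjAndHom}. Transporting this along the natural isomorphism shows that $\Hom_{\mathcal{A}}(X \otimes P, -)$ sends $E$ to a short exact sequence; as $E$ is an arbitrary allowable short exact sequence, the converse direction of Lemma \ref{relProjAndHom} gives that $X \otimes P$ is relatively projective. The case of $P \otimes X$ is handled identically, using instead $\Hom_{\mathcal{A}}(P \otimes X, -) \cong \Hom_{\mathcal{A}}(P, - \otimes X^{\vee})$ and tensoring $E$ on the right by $X^{\vee}$. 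Together with the closure under finite direct sums and direct summands already recorded in Lemma \ref{lemmaBasicPropertiesRelProj}, this establishes that $\mathrm{Proj}_{\mathcal{A},\mathcal{B}}$ is a tensor ideal.

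For stability under duality I would avoid computing the dual of $\mathcal{F}(X)$ and instead use the snake identities to realize each dual as a retract of a triple tensor product containing $P$. Concretely, the composite
\[ P^{\vee} \xrightarrow{\mathrm{id} \otimes \mathrm{coev}_P} P^{\vee} \otimes P \otimes P^{\vee} \xrightarrow{\mathrm{ev}_P \otimes \mathrm{id}} P^{\vee} \]
equals $\mathrm{id}_{P^{\vee}}$, so $P^{\vee}$ is a direct summand of $P^{\vee} \otimes P \otimes P^{\vee}$, and symmetrically ${}^{\vee}\!P$ is a direct summand of ${}^{\vee}\!P \otimes P \otimes {}^{\vee}\!P$ via $\widetilde{\mathrm{coev}}_P$ and $\widetilde{\mathrm{ev}}_P$. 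Since $P$ is relatively projective, the tensor ideal property just proved guarantees that both triple tensor products are relatively projective; hence $P^{\vee}$ and ${}^{\vee}\!P$, being direct summands of relatively projective objects, are themselves relatively projective by Lemma \ref{lemmaBasicPropertiesRelProj}.

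I expect the only genuinely delicate point to be the bookkeeping in the first part: verifying that tensoring a fixed object into an allowable sequence preserves allowability (which is exactly what Lemma \ref{fTensgAllowable} supplies, once identities are seen to be allowable) and that the rigidity adjunction is natural enough to transport short-exactness of the Hom-complexes. Everything else is formal; in particular the duality statement becomes essentially immediate once the tensor ideal property is in hand.
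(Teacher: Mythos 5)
Your proof is correct, and it splits into one half that matches the paper and one half that genuinely differs. The tensor-ideal part is essentially the paper's own argument: the paper likewise tensors an allowable short exact sequence by a dual object (exactness of $\otimes$ plus Lemma \ref{fTensgAllowable}), applies Lemma \ref{relProjAndHom}, and transports exactness through the rigidity adjunction $\Hom_{\mathcal{A}}(- \otimes V, -) \cong \Hom_{\mathcal{A}}(-, - \otimes V^{\vee})$; your treatment of $X \otimes P$ via $\Hom_{\mathcal{A}}(X \otimes P, -) \cong \Hom_{\mathcal{A}}(P, {}^{\vee}\!X \otimes -)$ is the mirror image of the paper's treatment of $P \otimes V$. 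Where you diverge is the duality statement. The paper proves that $P^{\vee}$ is relatively projective by a second Hom-functor argument: it applies $\Hom_{\mathcal{A}}(\mathbf{1}, P \otimes -)$ to an allowable short exact sequence, gets left-exactness for free, obtains surjectivity on the right from a section of $\mathrm{id}_P \otimes \sigma$ (which exists because $P \otimes Z$ is by then known to be relatively projective and $\mathrm{id}_P \otimes \sigma$ is an allowable epimorphism), and then transports through $\Hom_{\mathcal{A}}(- \otimes P^{\vee}, -) \cong \Hom_{\mathcal{A}}(-, P \otimes -)$. You instead use the zigzag identity to exhibit $P^{\vee}$ as a retract of $P^{\vee} \otimes P \otimes P^{\vee}$, which is relatively projective by the ideal property just established, and conclude by closure under direct summands; note that the paper's footnote defines ``direct summand'' exactly as retract, so Lemma \ref{lemmaBasicPropertiesRelProj} applies verbatim. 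Your route is shorter, avoids the surjectivity step entirely, and costs nothing beyond the duality axioms, which hold since $\mathcal{A}$ is rigid; the paper's route has the mild advantage of not needing the ideal property as an input to the duality claim, but since you prove the ideal property first, your dependency order is sound. Both proofs are complete and correct.
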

\begin{proof}
The proofs are straightforward adaptations of the corresponding facts for usual projective objects \cite[Props.\,4.2.12 and 6.1.3]{EGNO}. Indeed, let $P$ be a relatively projective object and $V$ any object in $\mathcal{A}$, we want to show that $V \otimes P$ and $P \otimes V$ are relatively projective. So take $0 \to X \overset{j}{\to} Y \overset{\sigma}{\to} Z \to 0$ an allowable short exact sequence in $\mathcal{A}$. By exactness of $\otimes$ \cite[Prop.\,4.2.1]{EGNO}, the sequence
\[ 0 \longrightarrow X \otimes V^{\vee} \xrightarrow{j \otimes \mathrm{id}} Y \otimes V^{\vee} \xrightarrow{\sigma \otimes \mathrm{id}} Z \otimes V^{\vee} \longrightarrow 0 \]
is exact and it is allowable due to Lemma \ref{fTensgAllowable}. Hence by Lemma \ref{relProjAndHom} the sequence
\[ 0 \longrightarrow \Hom_{\mathcal{A}}(P, X \otimes V^{\vee}) \xrightarrow{(j \otimes \mathrm{id})_*} \Hom_{\mathcal{A}}(P, Y \otimes V^{\vee}) \xrightarrow{(\sigma \otimes \mathrm{id})_*} \Hom_{\mathcal{A}}(P,Z \otimes V^{\vee}) \longrightarrow 0 \]
is exact. Due to the natural isomorphism $\Hom_{\mathcal{A}}(- \otimes V, -) \cong \Hom_{\mathcal{A}}(-, - \otimes V^{\vee})$ \cite[Prop.\,2.10.8]{EGNO} it follows that
\[ 0 \longrightarrow \Hom_{\mathcal{A}}(P \otimes V, X) \overset{j_*}{\longrightarrow} \Hom_{\mathcal{A}}(P \otimes V, Y) \overset{\sigma_*}{\longrightarrow} \Hom_{\mathcal{A}}(P \otimes V, Z) \longrightarrow 0 \]
is exact. Thus by Lemma \ref{relProjAndHom}, $P \otimes V$ is relatively projective. The proof for $V \otimes P$ is similar. 
\\We now prove that $P^{\vee}$ is relatively projective. Take again $0 \to X \overset{j}{\to} Y \overset{\sigma}{\to} Z \to 0$ an allowable short exact sequence in $\mathcal{A}$. By exactness of $P \otimes -$ and left-exactness of $\Hom_{\mathcal{A}}(\boldsymbol{1}, -)$ the sequence
\begin{equation}\label{leftExactHomProofDual}
0 \longrightarrow \Hom_{\mathcal{A}}(\boldsymbol{1}, P \otimes X) \xrightarrow{( \mathrm{id} \otimes j)_*} \Hom_{\mathcal{A}}(\boldsymbol{1}, P \otimes Y) \xrightarrow{(\mathrm{id} \otimes \sigma)_*} \Hom_{\mathcal{A}}(\boldsymbol{1}, P \otimes Z)
\end{equation}
is exact. Since we have already seen that $P \otimes Z$ is relatively projective and $\mathrm{id} \otimes \sigma$ is an allowable epimorphism, there exists $s \in \Hom_{\mathcal{A}}\bigl(P \otimes Z, P \otimes Y\bigr)$ such that $(\mathrm{id} \otimes \sigma)\, s = \mathrm{id}$. Hence $(\mathrm{id} \otimes \sigma)_*$ is surjective and thus \eqref{leftExactHomProofDual} can be extended to a short exact sequence. The natural isomorphism $\Hom_{\mathcal{A}}(- \otimes P^{\vee}, -) \cong \Hom_{\mathcal{A}}(-, P \otimes -)$ implies that
\[ 0 \longrightarrow \Hom_{\mathcal{A}}(P^{\vee}, X) \overset{j_*}{\longrightarrow} \Hom_{\mathcal{A}}(P^{\vee}, Y) \overset{\sigma_*}{\longrightarrow} \Hom_{\mathcal{A}}(P^{\vee}, Z) \longrightarrow 0 \]
is exact as well and the result follows from Lemma \ref{relProjAndHom}. The proof for $^{\vee}\!P$ is similar.

\end{proof}

\begin{corollary}\label{switchFormulaCoeffsRelExt}
It holds 
\[ \Ext^n_{\mathcal{A},\mathcal{B}}(X, Y) \cong \Ext^n_{\mathcal{A},\mathcal{B}}(X \otimes {^{\vee}Y}, \boldsymbol{1}), \quad \Ext^n_{\mathcal{A},\mathcal{B}}(X, Y) \cong \Ext^n_{\mathcal{A},\mathcal{B}}(\boldsymbol{1}, Y \otimes X^{\vee}) \]
where $\boldsymbol{1}$ is the tensor unit of $\mathcal{A}$.
\end{corollary}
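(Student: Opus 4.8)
The plan is to prove both isomorphisms by tensoring a relatively projective resolution with a fixed object and then applying the rigidity adjunction at the level of Hom-complexes. The two statements are symmetric, so I would treat the first in detail and obtain the second by the same recipe. The essential input, namely that tensoring a relatively projective resolution by a fixed object again yields a relatively projective resolution, has already been assembled in the previous results, so the argument is a matter of combining them carefully.

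For the first isomorphism I would start from a relatively projective resolution $\ldots \to P_1 \to P_0 \to X \to 0$ of $X$, which exists by the bar resolution \eqref{relativeBarResolution}. I claim that tensoring on the right by the fixed object ${}^{\vee}Y$ produces a relatively projective resolution of $X \otimes {}^{\vee}Y$. Three facts are needed, all available: the sequence $\ldots \to P_1 \otimes {}^{\vee}Y \to P_0 \otimes {}^{\vee}Y \to X \otimes {}^{\vee}Y \to 0$ is exact because $-\otimes {}^{\vee}Y$ is exact by rigidity \cite[Prop.\,4.2.1]{EGNO}; each differential $d_i \otimes \mathrm{id}$ is allowable by Lemma \ref{fTensgAllowable}; and each $P_i \otimes {}^{\vee}Y$ is relatively projective because $\mathrm{Proj}_{\mathcal{A},\mathcal{B}}$ is a tensor ideal (Proposition \ref{relProjTensorIdeal}). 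Hence $\Ext^n_{\mathcal{A},\mathcal{B}}(X \otimes {}^{\vee}Y, \boldsymbol{1})$ may be computed as the cohomology of the complex $\Hom_{\mathcal{A}}(P_\bullet \otimes {}^{\vee}Y, \boldsymbol{1})$. I would then apply the rigidity adjunction $\Hom_{\mathcal{A}}(-\otimes V, -) \cong \Hom_{\mathcal{A}}(-, -\otimes V^{\vee})$ \cite[Prop.\,2.10.8]{EGNO} with $V = {}^{\vee}Y$ and the identification $({}^{\vee}Y)^{\vee} \cong Y$, obtaining $\Hom_{\mathcal{A}}(P_i \otimes {}^{\vee}Y, \boldsymbol{1}) \cong \Hom_{\mathcal{A}}(P_i, Y)$. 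Since this adjunction is natural in the first argument, it is compatible with the differentials $(d_i \otimes \mathrm{id})^*$ and $d_i^*$ and so gives an isomorphism of complexes $\Hom_{\mathcal{A}}(P_\bullet \otimes {}^{\vee}Y, \boldsymbol{1}) \cong \Hom_{\mathcal{A}}(P_\bullet, Y)$. Passing to cohomology and recalling that the right-hand complex computes $\Ext^n_{\mathcal{A},\mathcal{B}}(X,Y)$ (Definition \ref{defRelExt}) yields the first isomorphism.

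For the second isomorphism I would instead resolve the unit: take a relatively projective resolution $\ldots \to Q_1 \to Q_0 \to \boldsymbol{1} \to 0$ and tensor on the right by $X$. By the same three facts, $Q_\bullet \otimes X \to X$ is a relatively projective resolution of $X$, so $\Ext^n_{\mathcal{A},\mathcal{B}}(X,Y)$ is the cohomology of $\Hom_{\mathcal{A}}(Q_\bullet \otimes X, Y)$. The rigidity adjunction $\Hom_{\mathcal{A}}(-\otimes X, Y) \cong \Hom_{\mathcal{A}}(-, Y \otimes X^{\vee})$ then identifies this complex with $\Hom_{\mathcal{A}}(Q_\bullet, Y \otimes X^{\vee})$, whose cohomology is $\Ext^n_{\mathcal{A},\mathcal{B}}(\boldsymbol{1}, Y \otimes X^{\vee})$. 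That the resulting groups are independent of the chosen resolution is guaranteed by the fundamental lemma of relative homological algebra \cite[Chap.\,IX, Thm.\,4.3]{macLane}, so all these computations agree.

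I do not expect a deep obstacle here, since the key structural statement (preservation of relatively projective resolutions under tensoring) is already established. The points genuinely requiring care are the dual conventions, especially keeping left and right duals straight and using $({}^{\vee}Y)^{\vee} \cong Y$ in the correct slot, and verifying that the rigidity adjunction is natural precisely in the variable being resolved so that it descends to an isomorphism on cohomology rather than merely an abstract isomorphism in each degree. Once these bookkeeping issues are settled, both isomorphisms follow.
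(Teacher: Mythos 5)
Your proposal is correct and follows essentially the same route as the paper: tensor a relatively projective resolution by the fixed dual object, verify it remains a relatively projective resolution via exactness of $\otimes$, Lemma \ref{fTensgAllowable}, and Proposition \ref{relProjTensorIdeal}, and then identify the Hom-complexes through the (natural) rigidity adjunction. The only cosmetic difference is that the paper states the adjunction as $\Hom_{\mathcal{A}}(X,-) \cong \Hom_{\mathcal{A}}(X \otimes {^{\vee}(-)},\mathbf{1})$ rather than invoking $({^{\vee}Y})^{\vee} \cong Y$ explicitly, which amounts to the same thing.
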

\begin{proof}
We show the first isomorphism, the proof of the second being similar. Let $0 \longleftarrow X \overset{d_0}{\longleftarrow} P_0 \overset{d_1}{\longleftarrow} P_1 \overset{d_2}{\longleftarrow} \ldots$ be a relatively projective resolution of $X$. By exactness of $\otimes$, the sequence
\[ 0 \longleftarrow X \otimes {^{\vee}Y} \xleftarrow{d_0 \otimes \mathrm{id}} P_0 \otimes {^{\vee}Y} \xleftarrow{d_1 \otimes \mathrm{id}} P_1\otimes {^{\vee}Y} \xleftarrow{d_2 \otimes \mathrm{id}} \ldots \]
is exact. By the previous proposition, each $P_i \otimes {^{\vee}Y}$ is relatively projective and by Lemma \ref{fTensgAllowable}, $d_i \otimes \mathrm{id}$ is allowable. Hence it is a relatively projective resolution of $X \otimes {^{\vee}Y}$. The result follows from the natural isomorphism $\Hom_{\mathcal{A}}(X, -) \cong \Hom_{\mathcal{A}}(X \otimes {^{\vee}(-)},\mathbf{1})$.
\end{proof}
\noindent Note that thanks to the second isomorphism, it is enough to know a relatively projective resolution of the unit object $\boldsymbol{1}$ to compute any $\Ext$ group.

\smallskip

\indent In general it is difficult to determine a relatively projective resolution which is simple enough to make the computation of (the dimension of) the Ext groups feasible. The following proposition, which is a refinement of Corollary \ref{corollaryInductionFormulaExt}, replaces this problem by the computation of  the first step of a relatively projective resolution and of certain Hom spaces:
\begin{proposition}\label{propositionExtWithRelProjCovers}
Let $V \in \mathcal{A}$ and let 
\[ 0 \longrightarrow K \overset{j}{\longrightarrow} P \overset{\pi}{\longrightarrow} \mathbf{1} \longrightarrow 0, \qquad 0 \longrightarrow L \overset{i}{\longrightarrow} Q \overset{p}{\longrightarrow} V \longrightarrow 0 \]
be allowable short exact sequences in $\mathcal{A}$ where $P, Q$ are relatively projective objects. Then for all $n \geq 1$,
\[ \Ext^n_{\mathcal{A},\mathcal{B}}(V,W) \cong \Hom_{\mathcal{A}}\bigl( K, M_n \bigr)\big/\mathrm{im}(j^*) \]
where $M_n = W \otimes L^{\vee} \otimes (K^{\vee})^{\otimes (n-2)}$ for $n \geq 2$, $M_1 = W \otimes V^{\vee}$ and $j^*$ is the pullback $\Hom_{\mathcal{A}}\bigl( P, M_n \bigr) \to \Hom_{\mathcal{A}}\bigl( K, M_n \bigr)$.
\end{proposition}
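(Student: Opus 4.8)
The plan is to reduce the computation, through the two given allowable short exact sequences together with the duality isomorphisms of Corollary \ref{switchFormulaCoeffsRelExt}, down to a \emph{first} relative $\Ext$ group, where the second clause of Corollary \ref{corollaryInductionFormulaExt} produces exactly a quotient of Hom spaces. The whole argument is an iterated application of these two corollaries, with no new ingredient.

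First I would peel off $V$. Using the allowable sequence $0 \to L \xrightarrow{i} Q \xrightarrow{p} V \to 0$ with $Q$ relatively projective and the hypothesis $n \geq 2$, the inductive formula of Corollary \ref{corollaryInductionFormulaExt} gives $\Ext^n_{\mathcal{A},\mathcal{B}}(V,W) \cong \Ext^{n-1}_{\mathcal{A},\mathcal{B}}(L,W)$. Applying the second duality isomorphism of Corollary \ref{switchFormulaCoeffsRelExt} (with $X=L$, $Y=W$) then yields $\Ext^{n-1}_{\mathcal{A},\mathcal{B}}(L,W) \cong \Ext^{n-1}_{\mathcal{A},\mathcal{B}}(\mathbf{1}, W \otimes L^{\vee})$, so the problem is reduced to an extension group of the unit object.

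The heart of the proof is then an auxiliary induction on $m \geq 1$ establishing, for any coefficient $M \in \mathcal{A}$,
\[ \Ext^m_{\mathcal{A},\mathcal{B}}(\mathbf{1}, M) \cong \Hom_{\mathcal{A}}\bigl(K, M \otimes (K^{\vee})^{\otimes(m-1)}\bigr)\big/\mathrm{im}(j^*), \]
where $j^*$ is the pullback along $j : K \to P$. The base case $m=1$ is precisely the second clause of Corollary \ref{corollaryInductionFormulaExt} applied to the allowable sequence $0 \to K \xrightarrow{j} P \xrightarrow{\pi} \mathbf{1} \to 0$. For the inductive step, when $m+1 \geq 2$ the first clause of Corollary \ref{corollaryInductionFormulaExt} for the same $\mathbf{1}$-sequence gives $\Ext^{m+1}_{\mathcal{A},\mathcal{B}}(\mathbf{1}, M) \cong \Ext^m_{\mathcal{A},\mathcal{B}}(K, M)$; the duality isomorphism of Corollary \ref{switchFormulaCoeffsRelExt} converts this into $\Ext^m_{\mathcal{A},\mathcal{B}}(\mathbf{1}, M \otimes K^{\vee})$; and the induction hypothesis applied with coefficient $M \otimes K^{\vee}$ delivers $\Hom_{\mathcal{A}}(K, M \otimes (K^{\vee})^{\otimes m})/\mathrm{im}(j^*)$, as needed. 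Specializing to $m = n-1$ and $M = W \otimes L^{\vee}$ and combining with the previous paragraph yields the stated formula, since $(K^{\vee})^{\otimes((n-1)-1)} = (K^{\vee})^{\otimes(n-2)}$. At every stage the sequence fed into Corollary \ref{corollaryInductionFormulaExt} is one of the two given allowable sequences, so allowability and the relative projectivity of $P$ and $Q$ hold by hypothesis, while the duality switches are licensed by Corollary \ref{switchFormulaCoeffsRelExt} (itself resting on Proposition \ref{relProjTensorIdeal}).

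The computation is routine; the only real care needed is bookkeeping, namely keeping the degree shifts and the accumulating tensor powers of $K^{\vee}$ synchronized, and verifying that the map in the final quotient is genuinely the pullback $j^*$ of the statement and not some conjugate of it. This last point is automatic: the quotient by $\mathrm{im}(j^*)$ is introduced only at the base case $m=1$, with the fully accumulated coefficient $W \otimes L^{\vee} \otimes (K^{\vee})^{\otimes(n-2)}$, so no naturality of the intermediate duality isomorphisms is required beyond their mere existence.
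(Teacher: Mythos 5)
Your proposal is correct and is essentially the paper's own proof: the paper performs exactly this alternating application of Corollary \ref{corollaryInductionFormulaExt} and Corollary \ref{switchFormulaCoeffsRelExt}, peeling off $V$ once and then repeatedly trading a degree for a tensor factor $K^{\vee}$ until the second clause of Corollary \ref{corollaryInductionFormulaExt} produces the quotient $\Hom_{\mathcal{A}}\bigl(K, W \otimes L^{\vee} \otimes (K^{\vee})^{\otimes (n-2)}\bigr)\big/\mathrm{im}(j^*)$. The only difference is presentational: the paper writes the iteration as a chain of isomorphisms with an ellipsis, whereas you package it as a clean induction on $m$ with an arbitrary coefficient $M$, which is a perfectly faithful formalization of the same argument.
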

\begin{proof}
For $n \geq 2$ we use Corollary \ref{corollaryInductionFormulaExt} and Corollary \ref{switchFormulaCoeffsRelExt} several times:
\begin{align*}
\Ext^n_{\mathcal{A},\mathcal{B}}(V,W) &\cong \Ext^{n-1}_{\mathcal{A},\mathcal{B}}(L,W) \cong \Ext^{n-1}_{\mathcal{A},\mathcal{B}}(\mathbf{1}, W \otimes L^{\vee})\\
&\cong \Ext^{n-2}_{\mathcal{A},\mathcal{B}}(K, W \otimes L^{\vee}) \cong \Ext^{n-2}_{\mathcal{A},\mathcal{B}}(\mathbf{1}, W \otimes L^{\vee} \otimes K^{\vee})\\
& \cong \ldots\\
&\cong \Ext^1_{\mathcal{A},\mathcal{B}}\bigl(K, W \otimes L^{\vee} \otimes (K^{\vee})^{\otimes (n-3)}\bigr) \cong \Ext^1_{\mathcal{A},\mathcal{B}}\bigl(\mathbf{1}, W \otimes L^{\vee} \otimes (K^{\vee})^{\otimes (n-2)}\bigr)\\
&\cong \Hom_{\mathcal{A}}\bigl( K, W \otimes L^{\vee} \otimes (K^{\vee})^{\otimes (n-2)} \bigr)\big/\mathrm{im}(j^*).
\end{align*}
For $n=1$ we simply note that $\Ext^1_{\mathcal{A},\mathcal{B}}(V,W) \cong \Ext^1_{\mathcal{A},\mathcal{B}}(\boldsymbol{1}, W \otimes V^{\vee})$ and we use Corollary \ref{corollaryInductionFormulaExt}.
\end{proof}

\begin{corollary}\label{CoroDimExtWithHom}
With the notations of the previous proposition we have for all $n \geq 1$
\[ \dim \Ext^n_{\mathcal{A},\mathcal{B}}(V,W) = \dim \Hom_{\mathcal{A}}(K,M_n) - \dim \Hom_{\mathcal{A}}(P,M_n) + \dim \Hom_{\mathcal{A}}(\mathbf{1},M_n) \]
In particular
\begin{align*}
\dim \Ext^n_{\mathcal{A},\mathcal{B}}(\mathbf{1}, \mathbf{1}) = \dim \Hom_{\mathcal{A}}\bigl(K,(K^{\vee})^{\otimes (n-1)}\bigr) &- \dim \Hom_{\mathcal{A}}\bigl(P,(K^{\vee}\bigr)^{\otimes (n-1)}\bigr) 
\\&+ \dim \Hom_{\mathcal{A}}\bigl(\mathbf{1},(K^{\vee}\bigr)^{\otimes (n-1)}\bigr)
\end{align*}
with the convention $(K^{\vee})^{\otimes \,0} = \boldsymbol{1}$.
\end{corollary}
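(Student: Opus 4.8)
The plan is to start from the isomorphism of Proposition \ref{propositionExtWithRelProjCovers}, which identifies $\Ext^n_{\mathcal{A},\mathcal{B}}(V,W)$ with the quotient $\Hom_{\mathcal{A}}(K,M)/\mathrm{im}(j^*)$, where $M = W \otimes L^{\vee} \otimes (K^{\vee})^{\otimes(n-2)}$ and $j^* : \Hom_{\mathcal{A}}(P,M) \to \Hom_{\mathcal{A}}(K,M)$ is the pullback along $j$. Taking dimensions immediately gives
\[ \dim \Ext^n_{\mathcal{A},\mathcal{B}}(V,W) = \dim \Hom_{\mathcal{A}}(K,M) - \dim \mathrm{im}(j^*), \]
so the entire content of the corollary is to compute $\dim \mathrm{im}(j^*)$. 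Applying the rank--nullity theorem to the linear map $j^*$, I would rewrite this as $\dim \mathrm{im}(j^*) = \dim \Hom_{\mathcal{A}}(P,M) - \dim \ker(j^*)$, which reduces the problem to identifying the kernel of $j^*$.

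The key step is to apply the contravariant functor $\Hom_{\mathcal{A}}(-,M)$ to the allowable short exact sequence $0 \to K \overset{j}{\to} P \overset{\pi}{\to} \mathbf{1} \to 0$. Since this functor is left exact, it yields an exact sequence of $k$-vector spaces
\[ 0 \longrightarrow \Hom_{\mathcal{A}}(\mathbf{1}, M) \xrightarrow{\pi^*} \Hom_{\mathcal{A}}(P, M) \xrightarrow{j^*} \Hom_{\mathcal{A}}(K, M), \]
so that $\pi^*$ is injective and $\ker(j^*) = \mathrm{im}(\pi^*) \cong \Hom_{\mathcal{A}}(\mathbf{1},M)$. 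Hence $\dim \ker(j^*) = \dim \Hom_{\mathcal{A}}(\mathbf{1},M)$, and substituting the last two displayed identities into the first gives precisely the general dimension formula.

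For the special case $V = W = \mathbf{1}$, I would simply take the second allowable short exact sequence in Proposition \ref{propositionExtWithRelProjCovers} to coincide with the first, that is $Q = P$ and $L = K$. Then $M = \mathbf{1} \otimes K^{\vee} \otimes (K^{\vee})^{\otimes(n-2)} = (K^{\vee})^{\otimes(n-1)}$, and plugging this into the general formula yields the stated expression for $\dim \Ext^n_{\mathcal{A},\mathcal{B}}(\mathbf{1},\mathbf{1})$. I do not expect any genuine obstacle here: the proof is essentially a bookkeeping argument combining the isomorphism of Proposition \ref{propositionExtWithRelProjCovers} with rank--nullity. The only point demanding a little care is the direction and left-exactness of the contravariant $\Hom$ functor, which is exactly what guarantees the injectivity of $\pi^*$ and hence the clean identification $\ker(j^*) \cong \Hom_{\mathcal{A}}(\mathbf{1},M)$; one should also confirm that all Hom spaces involved are finite-dimensional so that the dimension arithmetic is meaningful, which holds in the finite tensor category setting under consideration.
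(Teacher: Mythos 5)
Your proposal is correct and follows essentially the same route as the paper: the paper's proof likewise applies the left-exact functor $\Hom_{\mathcal{A}}(-,M)$ to the allowable sequence $0 \to K \to P \to \mathbf{1} \to 0$ to get $\dim \mathrm{im}(j^*) = \dim \Hom_{\mathcal{A}}(P,M) - \dim \Hom_{\mathcal{A}}(\mathbf{1},M)$, and then combines this with the isomorphism of Proposition \ref{propositionExtWithRelProjCovers}. Your handling of the special case (taking $Q=P$, $L=K$, so $M = (K^{\vee})^{\otimes(n-1)}$) is also exactly what the paper intends.
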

\begin{proof}
Thanks to the exactness of
\[ 0 \longrightarrow \Hom_{\mathcal{A}}(\mathbf{1}, M_n) \overset{\pi^*}{\longrightarrow} \Hom_{\mathcal{A}}(P, M_n) \overset{j^*}{\longrightarrow} \Hom_{\mathcal{A}}(K, M_n) \]
we have $\dim \mathrm{im}(j^*) = \dim \Hom_{\mathcal{A}}(P,M_n) - \dim \Hom_{\mathcal{A}}(\mathbf{1},M_n)$ and then the isomorphism in the previous proposition gives the result for $n\geq 2$.
\end{proof}

\noindent Assume that $\mathcal{A}$ is finite as a $k$-linear abelian category, in particular every object in $\mathcal{A}$ has finite length. Then the relatively projective cover $R_{\mathbf{1}}$ of $\mathbf{1}$ exists by Proposition \ref{propPropertiesRelProjCover} and the second formula of Corollary \ref{CoroDimExtWithHom} admits a special case when $P = R_{\mathbf{1}}$:
\begin{corollary}\label{DimExt2WithHom}
Let $K_{\mathbf{1}} = \ker(p_{\mathbf{1}} : R_{\mathbf{1}} \twoheadrightarrow \mathbf{1})$. If $\mathcal{A}$ is finite and $k$ (the ground field of $\mathcal{A}$) has characteristic $0$ and is algebraically closed, we have
\[ \dim \Ext^2_{\mathcal{A},\mathcal{B}}(\mathbf{1}, \mathbf{1}) = \dim \Hom_{\mathcal{A}}(K_{\mathbf{1}},K_{\mathbf{1}}^{\vee}) - \dim \Hom_{\mathcal{A}}(R_{\mathbf{1}},K_{\mathbf{1}}^{\vee}). \]
\end{corollary}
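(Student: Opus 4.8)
The plan is to read Corollary~\ref{DimExt2WithHom} off the second formula of Corollary~\ref{CoroDimExtWithHom} in the special case $n=2$, $V=W=\mathbf{1}$, taking for the allowable short exact sequence $0\to K\to P\to \mathbf{1}\to 0$ the one attached to the relatively projective cover, i.e.\ $P=R_{\mathbf{1}}$ and $K=K_{\mathbf{1}}=\ker(p_{\mathbf{1}})$. With $M=K_{\mathbf{1}}^{\vee}$ that formula becomes
\[ \dim \Ext^2_{\mathcal{A},\mathcal{B}}(\mathbf{1},\mathbf{1}) = \dim\Hom_{\mathcal{A}}(K_{\mathbf{1}},K_{\mathbf{1}}^{\vee}) - \dim\Hom_{\mathcal{A}}(R_{\mathbf{1}},K_{\mathbf{1}}^{\vee}) + \dim\Hom_{\mathcal{A}}(\mathbf{1},K_{\mathbf{1}}^{\vee}), \]
so the entire content of the corollary is to show that the third term vanishes; everything else is a direct substitution.

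To kill that term I would first pass to the dual formulation via the natural isomorphism already used in the proof of Corollary~\ref{switchFormulaCoeffsRelExt}, namely $\Hom_{\mathcal{A}}(\mathbf{1},K_{\mathbf{1}}^{\vee})\cong\Hom_{\mathcal{A}}(K_{\mathbf{1}},\mathbf{1})$, and then prove $\Hom_{\mathcal{A}}(K_{\mathbf{1}},\mathbf{1})=0$. Applying $\Hom_{\mathcal{A}}(-,\mathbf{1})$ to the allowable sequence $0\to K_{\mathbf{1}}\xrightarrow{j}R_{\mathbf{1}}\xrightarrow{p_{\mathbf{1}}}\mathbf{1}\to 0$ and using the start of the long exact sequence of Theorem~\ref{thLongExactSequenceExtGroups} gives
\[ 0\to \Hom_{\mathcal{A}}(\mathbf{1},\mathbf{1}) \xrightarrow{p_{\mathbf{1}}^*} \Hom_{\mathcal{A}}(R_{\mathbf{1}},\mathbf{1}) \xrightarrow{j^*} \Hom_{\mathcal{A}}(K_{\mathbf{1}},\mathbf{1}) \xrightarrow{c^0} \Ext^1_{\mathcal{A},\mathcal{B}}(\mathbf{1},\mathbf{1}). \]

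Here I would feed in two facts. First, both left-hand Hom spaces are one-dimensional: $\Hom_{\mathcal{A}}(\mathbf{1},\mathbf{1})=k$ because the unit of a tensor category is simple, while $\Hom_{\mathcal{A}}(R_{\mathbf{1}},\mathbf{1})=k$ by item~3 of Proposition~\ref{propPropertiesRelProjCover}, which applies since $\mathcal{U}(\mathbf{1})$ is the (simple) unit of the tensor category $\mathcal{B}$ as $\mathcal{U}$ is monoidal, and $k$ is algebraically closed. Being an injection between two lines, $p_{\mathbf{1}}^*$ is an isomorphism, hence $j^*=0$ and $c^0$ is injective. Second, $\Ext^1_{\mathcal{A},\mathcal{B}}(\mathbf{1},\mathbf{1})=0$: the injection \eqref{injectionRelExt1FullExt1} embeds it into $\Ext^1_{\mathcal{A}}(\mathbf{1},\mathbf{1})$, which vanishes in a tensor category by \cite[Thm.\,4.4.1]{EGNO}. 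Injectivity of $c^0$ into the zero group then forces $\Hom_{\mathcal{A}}(K_{\mathbf{1}},\mathbf{1})=0$, completing the argument. (Equivalently, one may invoke Corollary~\ref{corollaryInductionFormulaExt} to get $\Ext^1_{\mathcal{A},\mathcal{B}}(\mathbf{1},\mathbf{1})\cong\Hom_{\mathcal{A}}(K_{\mathbf{1}},\mathbf{1})/\operatorname{im}(j^*)=\Hom_{\mathcal{A}}(K_{\mathbf{1}},\mathbf{1})$ and conclude from its vanishing.)

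The long-exact-sequence bookkeeping and the duality isomorphism are routine; the only substantive point is that the one-dimensionality of $\Hom_{\mathcal{A}}(R_{\mathbf{1}},\mathbf{1})$ is a property of the relatively projective \emph{cover} rather than of an arbitrary relatively projective object surjecting onto $\mathbf{1}$, and this is precisely what makes $j^*=0$ so that the third term drops out. I expect the main thing to check is that the hypotheses legitimately hold in this generality: that Proposition~\ref{propPropertiesRelProjCover}(3) applies (here $\mathcal{U}(\mathbf{1})$ simple and $k$ algebraically closed), and that $\Ext^1_{\mathcal{A},\mathcal{B}}(\mathbf{1},\mathbf{1})=0$ is available, both of which follow from the standing assumptions of this subsection together with \eqref{injectionRelExt1FullExt1}.
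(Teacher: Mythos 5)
Your proposal is correct and follows essentially the same route as the paper's proof: substitute the relatively projective cover sequence into Corollary~\ref{CoroDimExtWithHom}, then kill the term $\dim \Hom_{\mathcal{A}}(\mathbf{1},K_{\mathbf{1}}^{\vee}) \cong \dim\Hom_{\mathcal{A}}(K_{\mathbf{1}},\mathbf{1})$ using the long exact sequence of Theorem~\ref{thLongExactSequenceExtGroups}, the one-dimensionality of $\Hom_{\mathcal{A}}(\mathbf{1},\mathbf{1})$ and $\Hom_{\mathcal{A}}(R_{\mathbf{1}},\mathbf{1})$ (via Proposition~\ref{propPropertiesRelProjCover}(3) and monoidality of $\mathcal{U}$), and the vanishing of $\Ext^1_{\mathcal{A},\mathcal{B}}(\mathbf{1},\mathbf{1})$ from \eqref{injectionRelExt1FullExt1} together with \cite[Thm.\,4.4.1]{EGNO}. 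The only cosmetic difference is that you argue via injectivity of $c^0$ while the paper reads off $\Hom_{\mathcal{A}}(K_{\mathbf{1}},\mathbf{1})=0$ from the resulting short exact sequence $0 \to k \to k \to \Hom_{\mathcal{A}}(K_{\mathbf{1}},\mathbf{1}) \to 0$, which is the same argument.
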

\begin{proof}
By definition there is the short exact sequence $0 \longrightarrow K_{\mathbf{1}} \longrightarrow R_{\mathbf{1}} \longrightarrow \mathbf{1} \longrightarrow 0$. Hence Corollary \ref{CoroDimExtWithHom} gives
\[ \dim \Ext^2_{\mathcal{A},\mathcal{B}}(\mathbf{1}, \mathbf{1}) = \dim \Hom_{\mathcal{A}}(K_{\mathbf{1}},K_{\mathbf{1}}^{\vee}) - \dim \Hom_{\mathcal{A}}(R_{\mathbf{1}},K_{\mathbf{1}}^{\vee}) + \dim \Hom_{\mathcal{A}}(\mathbf{1},K_{\mathbf{1}}^{\vee}). \]
We want to show that the last term is equal to $0$. Using Corollary \ref{CoroDimExtWithHom} we find
\[ \dim \Hom_{\mathcal{A}}(\boldsymbol{1}, K_{\boldsymbol{1}}^{\vee}) = \dim \Hom_{\mathcal{A}}(K_{\boldsymbol{1}}, \boldsymbol{1}) = \dim \Ext_{\mathcal{A},\mathcal{B}}^1(\boldsymbol{1}, \boldsymbol{1}) + \dim \Hom_{\mathcal{A}}(R_{\boldsymbol{1}}, \boldsymbol{1}) - \dim \Hom_{\mathcal{A}}(\boldsymbol{1}, \boldsymbol{1}) \]
We have $\mathrm{End}_{\mathcal{A}}(\mathbf{1}) \cong k$ by our general assumptions made at the beginning of \S\ref{relativeExtTensorCategories}. Since the functor $\mathcal{U}$ is monoidal we also have $\mathrm{End}_{\mathcal{B}}\bigl(\mathcal{U}(\mathbf{1})\bigr) = \mathrm{End}_{\mathcal{B}}(\mathbf{1}) \cong k$ and thus $\Hom_{\mathcal{A}}(R_{\mathbf{1}},\mathbf{1}) \cong k$ by Proposition \ref{propPropertiesRelProjCover}. Finally $\Ext^1_{\mathcal{A},\mathcal{B}}(\mathbf{1}, \mathbf{1}) \subset \Ext^1_{\mathcal{A}}(\mathbf{1}, \mathbf{1}) = 0$, where we used \eqref{injectionRelExt1FullExt1} and the fact that in a finite tensor category over an algebraically closed field of characteristic $0$ the unit object does not have non-trivial self-extensions \cite[Thm.\,4.4.1]{EGNO}. As a result $\dim \Hom_{\mathcal{A}}(\boldsymbol{1}, K_{\boldsymbol{1}}^{\vee}) = 0$.
\end{proof}

\subsection{Tensor product of algebras in braided tensor categories}\label{sectionAlgebrasInBraidedTensorCategories}
\indent In this section we generalize Example \ref{exampleAtensBRelBInVect} to algebras in a braided finite tensor category $\mathcal{C}$. We assume that $\mathcal{C}$ is strict and we denote its braiding by $c$:
\[ c_{X,Y} : X \otimes Y \overset{\sim}{\to} Y \otimes X.  \]
Recall that a (unital) algebra $(A,m_A,\eta_A)$ in $\mathcal{C}$ is an object $A \in \mathcal{C}$ together with morphisms $m_A \in \Hom_{\mathcal{C}}(A \otimes A, A)$ and $\eta_A \in \Hom_{\mathcal{C}}(\mathbf{1}, A)$ satisfying the usual axioms. Similarly, a (left) $A$-module $\mathbb{V} = (V,\rho^A_V)$ in $\mathcal{C}$ is an object $V \in \mathcal{C}$ together with a morphism $\rho^A_V \in \Hom_{\mathcal{C}}(A \otimes V, V)$ satisfying the usual axioms. A morphism of $A$-modules is defined in the obvious way and we get the category $A\text{-}\mathrm{mod}_{\mathcal{C}}$ of left $A$-modules in $\mathcal{C}$. Note in particular that $(A,m_A) \in A\text{-}\mathrm{mod}_{\mathcal{C}}$. One can define similarly a right $A$-module and then has the category $\mathrm{mod}_{\mathcal{C}}\text{-}A$ of right $A$-modules in $\mathcal{C}$.

\smallskip

\indent If $(B,m_B,\eta_B)$ is another algebra in $\mathcal{C}$, then $A \otimes B$ is again an algebra in $\mathcal{C}$ thanks to the braiding, with
\[ m_{A \otimes B} =  (m_A \otimes m_B)(\mathrm{id}_A \otimes c_{B,A} \otimes \mathrm{id}_B), \quad \eta_{A \otimes B} = \eta_A \otimes \eta_B. \]

\smallskip We have the forgetful functor $\mathcal{U} : (A \otimes B)\text{-}\mathrm{mod}_{\mathcal{C}} \to B\text{-}\mathrm{mod}_{\mathcal{C}}$ induced by the pullback along the morphism $\eta_A \otimes \mathrm{id}_B \in \mathrm{Hom}_{\mathcal{C}}(B, A \otimes B)$, namely $\mathcal{U}(\mathbb{V}) = \mathcal{U}(V, \rho^{A \otimes B}_V) = \big(V, \rho^{A \otimes B}_V(\eta_A \otimes \mathrm{id}_{B \otimes V})\big)$ on objects and $\mathcal{U}(f) = f$ on morphisms. It is clear that $\mathcal{U}$ is exact and faithful; moreover, it has a left adjoint $\mathcal{F}$ given by
\[ \mathcal{F}(\mathbb{W}) = (A, m_A) \boxtimes_{\mathcal{C}} \mathbb{W}, \qquad \mathcal{F}(f) = \mathrm{id}_A \boxtimes_{\mathcal{C}} f \] 
for $\mathbb{W} \in B\text{-}\mathrm{mod}_{\mathcal{C}}$, where the bifunctor $\boxtimes_{\mathcal{C}} : A\text{-}\mathrm{mod}_{\mathcal{C}} \times B\text{-}\mathrm{mod}_{\mathcal{C}} \to (A \otimes B)\text{-}\mathrm{mod}_{\mathcal{C}}$ is defined on objects by
\[ (V,\rho^A_V) \boxtimes_{\mathcal{C}} (W,\rho^B_W) = (V \otimes W, \rho^{A \otimes B}_{V \otimes W}) \]
with
\begin{equation}\label{defActionTensorProduct}
\rho^{A \otimes B}_{V \otimes W} : A \otimes B \otimes V \otimes W \xrightarrow{\mathrm{id} \otimes c_{B,V} \otimes \mathrm{id}} A \otimes V \otimes B \otimes W \xrightarrow{\rho^A_V \otimes \rho^B_W} V \otimes W
\end{equation}
and on morphisms by $f \boxtimes_{\mathcal{C}} g = f \otimes g$ (it is straightforward to check that $f \otimes g$ commutes with the action). One checks easily that the adjunction property holds:
\[ \begin{array}{rcl}
\Hom_{(A \otimes B)\text{-}\mathrm{mod}_{\mathcal{C}}}(A \boxtimes_{\mathcal{C}} \mathbb{W}, \mathbb{M}) & \overset{\sim}{\to} & \Hom_{B\text{-}\mathrm{mod}_{\mathcal{C}}}(\mathbb{W}, \mathcal{U}(\mathbb{M}))\\
f & \mapsto & f(\eta_A \otimes \mathrm{id}_V)\\
\rho^{A \otimes B}_M(\mathrm{id}_A \otimes \eta_B \otimes g) & \rotatebox[origin=c]{180}{$\mapsto$} & g
\end{array} \]

\begin{remark}
Let us endow the category $A\text{-}\mathrm{mod}_{\mathcal{C}}$ with the usual structure of right $\mathcal{C}$-module category given by
\begin{equation*}
(V, \rho^A_V) \triangleleft X = (V \otimes X, \rho^A_V \otimes \mathrm{id}_X)
\end{equation*}
(where $X \in \mathcal{C}$) and the category $B\text{-}\mathrm{mod}_{\mathcal{C}}$ with a structure of left $\mathcal{C}$-module category by
\begin{equation}\label{defModuleStructureBmod}
X \triangleright (W, \rho^B_W) = \bigl(X \otimes W, (\mathrm{id}_X \otimes \rho^B_W)(c_{B,X} \otimes \mathrm{id}_W) \bigr).
\end{equation}
On morphisms, the functors $\triangleright$ and $\triangleleft$ are just the tensor product of morphisms in $\mathcal{C}$. This allows us to consider the relative Deligne product of the $\mathcal{C}$-module categories $A\text{-}\mathrm{mod}_{\mathcal{C}}$ and $B\text{-}\mathrm{mod}_{\mathcal{C}}$, denoted by $A\text{-}\mathrm{mod}_{\mathcal{C}} \boxtimes_{\mathcal{C}} B\text{-}\mathrm{mod}_{\mathcal{C}}$ (see e.g.\,\cite[Def.\,3.2]{DSPS}). In fact, the category $(A \otimes B)\text{-}\mathrm{mod}_{\mathcal{C}}$ is a realization of this relative Deligne product:
\[ (A \otimes B)\text{-}\mathrm{mod}_{\mathcal{C}} \cong A\text{-}\mathrm{mod}_{\mathcal{C}} \boxtimes_{\mathcal{C}} B\text{-}\mathrm{mod}_{\mathcal{C}}. \] 
This explains the notation $\boxtimes_{\mathcal{C}}$ for the bifunctor introduced above. To see this, let $B^{\mathrm{op}} = (B, m_B \, c_{B,B}^{-1}, \eta_B)$ be the algebra $B$ with opposite multiplication and endow the category $\mathrm{mod}_{\mathcal{C}}\text{-}B^{\mathrm{op}}$ of right $B^{\mathrm{op}}$-modules in $\mathcal{C}$ with the following structure of $\mathcal{C}$-module category:
\[ X \triangleright (V, \rho^{B^{\mathrm{op}}}_V) = (X \otimes V, \mathrm{id}_X \otimes \rho^{B^{\mathrm{op}}}_V). \]
Thanks to the braiding in $\mathcal{C}$ we can construct an isomorphism $B\text{-}\mathrm{mod}_{\mathcal{C}} \cong \mathrm{mod}_{\mathcal{C}}\text{-}B^{\mathrm{op}}$ of left $\mathcal{C}$-module-categories. Then we have
\[ (A \otimes B)\text{-}\mathrm{mod}_{\mathcal{C}} \cong A\text{-}\mathrm{mod}_{\mathcal{C}}\text{-}B^{\mathrm{op}} \cong A\text{-}\mathrm{mod}_{\mathcal{C}} \boxtimes_{\mathcal{C}} \mathrm{mod}_{\mathcal{C}}\text{-}B^{\mathrm{op}} \cong A\text{-}\mathrm{mod}_{\mathcal{C}} \boxtimes_{\mathcal{C}} B\text{-}\mathrm{mod}_{\mathcal{C}}. \]
The first equivalence simply uses the braiding to identify a left $(A \otimes B)$-module with a $(A, B^{\mathrm{op}})$-bimodule, while the second is \cite[Thm.\,3.3]{DSPS}.
\end{remark}

\indent We will see that the following resolvent pairs of categories are related:
\[ \xymatrix@R=.7em{
A\text{-}\mathrm{mod}_{\mathcal{C}}\ar@/^.7em/[dd]^{\mathfrak{U}}\\
\dashv\\
\ar@/^.7em/[uu]^{\mathfrak{F}} \mathcal{C}
}\qquad\quad
\xymatrix@R=.7em{
(A \otimes B)\text{-}\mathrm{mod}_{\mathcal{C}}\ar@/^.7em/[dd]^{\mathcal{U}}\\
\dashv\\
\ar@/^.7em/[uu]^{\mathcal{F}} B\text{-}\mathrm{mod}_{\mathcal{C}}
} \]
In the former $\mathfrak{U}$ is the forgetful functor $\mathfrak{U}(\mathbb{V}) = V$ and $\mathfrak{F}(X) = (A \otimes X, m_A \otimes \mathrm{id}_X)$.
\begin{lemma}\label{lemmaRelProjInAxB}
Let $\mathbb{W} \in B\text{-}\mathrm{mod}_{\mathcal{C}}$. If $\mathbb{P}$ is relatively projective in $A\text{-}\mathrm{mod}_{\mathcal{C}}$ with respect to the adjunction $\mathfrak{F} \dashv \mathfrak{U}$, then $\mathbb{P} \boxtimes_{\mathcal{C}} \mathbb{W}$ is relatively projective in $(A \otimes B)\text{-}\mathrm{mod}_{\mathcal{C}}$ with respect to the adjunction $\mathcal{F} \dashv \mathcal{U}$.
\end{lemma}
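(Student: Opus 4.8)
The plan is to reduce everything to the characterization of relatively projective objects as retracts of objects in the image of the left adjoint (Lemma~\ref{lemmaBasicPropertiesRelProj}, item~2), applied to both resolvent pairs, mirroring the $\mathcal{C} = \mathrm{Vect}$ computation of Example~\ref{exampleAtensBRelBInVect}. First I would invoke Lemma~\ref{lemmaBasicPropertiesRelProj}(2) for the adjunction $\mathfrak{F} \dashv \mathfrak{U}$: since $\mathbb{P}$ is relatively projective, there is an object $X \in \mathcal{C}$ together with morphisms $\iota \colon \mathbb{P} \to \mathfrak{F}(X)$ and $\pi \colon \mathfrak{F}(X) \to \mathbb{P}$ in $A\text{-}\mathrm{mod}_{\mathcal{C}}$ such that $\pi\iota = \mathrm{id}_{\mathbb{P}}$, so that $\mathbb{P}$ is a direct summand of $\mathfrak{F}(X) = (A \otimes X, m_A \otimes \mathrm{id}_X)$. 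Applying the functor $-\boxtimes_{\mathcal{C}} \mathbb{W}$, which sends a morphism $f$ to $f \boxtimes_{\mathcal{C}} \mathrm{id}_{\mathbb{W}} = f \otimes \mathrm{id}_W$, preserves this retraction because $(\pi \boxtimes_{\mathcal{C}} \mathrm{id}_{\mathbb{W}})(\iota \boxtimes_{\mathcal{C}} \mathrm{id}_{\mathbb{W}}) = (\pi\iota)\boxtimes_{\mathcal{C}}\mathrm{id}_{\mathbb{W}} = \mathrm{id}$. Hence $\mathbb{P} \boxtimes_{\mathcal{C}} \mathbb{W}$ is a direct summand of $\mathfrak{F}(X) \boxtimes_{\mathcal{C}} \mathbb{W}$.

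The crucial step is then to recognize $\mathfrak{F}(X) \boxtimes_{\mathcal{C}} \mathbb{W}$ as an object in the image of $\mathcal{F}$. I claim that
\[ \mathfrak{F}(X) \boxtimes_{\mathcal{C}} \mathbb{W} = \mathcal{F}(X \triangleright \mathbb{W}) = (A, m_A) \boxtimes_{\mathcal{C}} (X \triangleright \mathbb{W}), \]
where $X \triangleright \mathbb{W}$ is the left $\mathcal{C}$-module structure on $B\text{-}\mathrm{mod}_{\mathcal{C}}$ defined in \eqref{defModuleStructureBmod}. Both sides have underlying object $A \otimes X \otimes W$ by strictness, so the content is the equality of the two $(A \otimes B)$-actions. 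Writing both out with \eqref{defActionTensorProduct}, the $A$-factor acts through $m_A$ in either description, while matching the $B$-action amounts to the identity $c_{B, A \otimes X} = (\mathrm{id}_A \otimes c_{B,X})(c_{B,A} \otimes \mathrm{id}_X)$, which is exactly the hexagon axiom for the braiding of $\mathcal{C}$. I expect this bookkeeping of the braidings to be the only nontrivial part of the argument.

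Combining the two steps, $\mathbb{P} \boxtimes_{\mathcal{C}} \mathbb{W}$ is a direct summand of $\mathcal{F}(X \triangleright \mathbb{W})$, and is therefore relatively projective in $(A \otimes B)\text{-}\mathrm{mod}_{\mathcal{C}}$ with respect to $\mathcal{F} \dashv \mathcal{U}$ by the converse direction of Lemma~\ref{lemmaBasicPropertiesRelProj}(2).
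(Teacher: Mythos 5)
Your proposal is correct and follows essentially the same route as the paper's proof: express $\mathbb{P}$ as a retract of $\mathfrak{F}(X)$ via Lemma~\ref{lemmaBasicPropertiesRelProj}(2), apply $-\boxtimes_{\mathcal{C}}\mathbb{W}$ to transport the retraction, identify $\mathfrak{F}(X)\boxtimes_{\mathcal{C}}\mathbb{W} = \mathcal{F}(X \triangleright \mathbb{W})$, and conclude by the same lemma. Your explicit identification of the key step as the hexagon axiom $c_{B, A\otimes X} = (\mathrm{id}_A \otimes c_{B,X})(c_{B,A}\otimes \mathrm{id}_X)$ correctly pins down what the paper only calls \emph{the property of the braiding}.
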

\begin{proof}
By assumption $\mathbb{P}$ is a direct summand of $\mathfrak{F}(X)$ for some $X \in \mathcal{C}$, which means that we have a retract $ \mathrm{id}_{\mathbb{P}} : \mathbb{P} \overset{j}{\to} \mathfrak{F}(X) \overset{\pi}{\to} \mathbb{P}$ in $A\text{-}\mathrm{mod}_{\mathcal{C}}$. It follows that we have a retract
\[ \mathrm{id}_{\mathbb{P} \, \boxtimes_{\mathcal{C}} \mathbb{W}} : \mathbb{P} \boxtimes_{\mathcal{C}} \mathbb{W} \xrightarrow{j \boxtimes_{\mathcal{C}} \mathrm{id}_{\mathbb{W}}} \mathfrak{F}(X) \boxtimes_{\mathcal{C}} \mathbb{W} \xrightarrow{\pi \boxtimes_{\mathcal{C}} \mathrm{id}_{\mathbb{W}}} \mathbb{P} \boxtimes_{\mathcal{C}} \mathbb{W} \]
in $(A \otimes B)\text{-}\mathrm{mod}_{\mathcal{C}}$, so that $\mathbb{P} \boxtimes_{\mathcal{C}} \mathbb{W}$ is a direct summand of $\mathfrak{F}(X) \boxtimes_{\mathcal{C}} \mathbb{W}$. Observe that
\[ \mathcal{F}(X \triangleright \mathbb{W}) = \mathfrak{F}(X) \boxtimes_{\mathcal{C}} \mathbb{W} \]
where $\triangleright$ was defined in \eqref{defModuleStructureBmod}. Indeed:
\begin{align*}
\mathcal{F}(X \triangleright \mathbb{W}) &= \bigl( A \otimes (X \otimes W), \bigl(m_A \otimes ((\mathrm{id}_X \otimes \rho^B_W)(c_{B,X} \otimes \mathrm{id}_W))\bigr)(\mathrm{id}_A \otimes c_{B,A} \otimes \mathrm{id}_{X \otimes W}) \bigr),\\
\mathfrak{F}(X) \boxtimes_{\mathcal{C}} \mathbb{W} &= \bigl( (A \otimes X) \otimes W, (m_A \otimes \mathrm{id}_X \otimes \rho^B_W)(\mathrm{id}_A \otimes c_{B,A\otimes X} \otimes \mathrm{id}_W) \bigr)
\end{align*}
and the two actions coincide thanks to the property of the braiding. We thus get that $\mathbb{P} \boxtimes_{\mathcal{C}} \mathbb{W}$ is a direct summand of $\mathcal{F}(X \triangleright \mathbb{W})$ and hence that it is relatively projective by Lemma \ref{lemmaBasicPropertiesRelProj}.
\end{proof}

Let $(\mathcal{M}, \triangleright)$ be a (left) $\mathcal{C}$-module category. Recall from e.g.\,\cite[\S 7.9]{EGNO} that for $M_1, M_2 \in \mathcal{M}$ the internal Hom object $\underline{\Hom}(M_1,M_2) \in \mathcal{C}$, if it exists, is defined by the natural isomorphism
\begin{equation}\label{defInternalHomObject}
I_X^{M_1, M_2} : \Hom_{\mathcal{M}}(X \triangleright M_1, M_2) \cong \Hom_{\mathcal{C}}(X, \underline{\Hom}(M_1, M_2)).
\end{equation}
Below we will use the shorter notation $I_X$. The following facts will be needed in the proof of Proposition \ref{propATensBrelB} below.
\begin{lemma}\label{lemmaInternalHoms}
Assume that the $\mathcal{C}$-module category $\mathcal{M}$ is strict and that $\underline{\Hom}(M_1, M_2)$ exists.
\\ 1. We have an isomorphism
\[ \Psi_{X,Y} : \Hom_{\mathcal{M}}(X \triangleright M_1, Y \triangleright M_2) \overset{\sim}{\to} \Hom_{\mathcal{C}}\bigl( X, Y \otimes \underline{\Hom}(M_1, M_2) \bigr) \]
which is natural in $X$ and $Y$.
\\2. For $f \in \Hom_{\mathcal{M}}(X \triangleright M_1, Y \triangleright M_2)$ and $g \in \Hom_{\mathcal{C}}(Z,Z')$,
\[ \Psi_{Z \otimes X, Z' \otimes Y}(g \triangleright f) = g \otimes \Psi_{X,Y}(f).  \]
\end{lemma}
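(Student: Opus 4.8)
The plan is to build $\Psi_{X,Y}$ as a composite of three standard natural isomorphisms and then to isolate a single ``evaluation'' identity that characterises it; this makes the second statement essentially formal. Throughout write $H = \underline{\Hom}(M_1,M_2)$.

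First I would define $\Psi_{X,Y}$ as the composite
\[ \Hom_{\mathcal{M}}(X \triangleright M_1, Y \triangleright M_2) \xrightarrow{\ \sim\ } \Hom_{\mathcal{M}}\bigl((Y^{\vee} \otimes X) \triangleright M_1, M_2\bigr) \xrightarrow{\ I_{Y^{\vee}\otimes X}\ } \Hom_{\mathcal{C}}(Y^{\vee} \otimes X, H) \xrightarrow{\ \sim\ } \Hom_{\mathcal{C}}(X, Y \otimes H), \]
where the first map is the adjunction isomorphism coming from $Y^{\vee}\triangleright(-) \dashv Y \triangleright (-)$ in the module category, the middle one is the defining isomorphism \eqref{defInternalHomObject} of the internal Hom, and the last one is the rigidity adjunction $Y^{\vee}\otimes(-) \dashv Y\otimes(-)$ in $\mathcal{C}$. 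Each of these is natural in $X$ (by precomposition) and in $Y$: the contravariance of $(-)^{\vee}$ is cancelled by the contravariance of $\Hom$ in its first slot, so all three functors of $Y$ are covariant and the unit/counit zig-zags supply the naturality squares. This proves the isomorphism and its naturality, i.e.\ the first statement.

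The key step is to record that, writing $\mathrm{ev} := \mathrm{ev}^{M_1,M_2}\colon H\triangleright M_1 \to M_2$ for the counit associated with \eqref{defInternalHomObject}, the inverse of $\Psi_{X,Y}$ is given explicitly by $\psi \mapsto (\mathrm{id}_Y \triangleright \mathrm{ev})\circ(\psi \triangleright \mathrm{id}_{M_1})$; equivalently every $f$ satisfies the evaluation identity
\[ f = (\mathrm{id}_Y \triangleright \mathrm{ev}) \circ \bigl(\Psi_{X,Y}(f) \triangleright \mathrm{id}_{M_1}\bigr). \]
I would verify this by unfolding the three maps above: the rigidity adjunction inserts $\mathrm{coev}_Y$, the internal-Hom step replaces $I$ by its defining property $\mathrm{ev}\circ(I(-)\triangleright\mathrm{id})$, and the module adjunction contributes $\mathrm{ev}_Y$. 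After using bifunctoriality of $\triangleright$ to slide $f$ past the (co)evaluations, the scalar factor $(\mathrm{id}_Y\otimes\mathrm{ev}_Y)\circ(\mathrm{coev}_Y\otimes\mathrm{id}_Y)$ collapses to $\mathrm{id}_Y$ by the snake identity, leaving exactly $f$. For $Y=\mathbf{1}$ this is just the defining property of $I_X$, and the snake identity is the only nontrivial ingredient.

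For the second statement I would first reduce to the case $g = \mathrm{id}$. Bifunctoriality of $\triangleright$ gives $g \triangleright f = (\mathrm{id}_{Z'}\triangleright f)\circ\bigl((g\otimes\mathrm{id}_X)\triangleright\mathrm{id}_{M_1}\bigr)$, so naturality of $\Psi$ in the first variable yields $\Psi_{Z\otimes X, Z'\otimes Y}(g\triangleright f) = \Psi_{Z'\otimes X, Z'\otimes Y}(\mathrm{id}_{Z'}\triangleright f)\circ(g\otimes\mathrm{id}_X)$; combined with the interchange law $(\mathrm{id}_{Z'}\otimes\psi)\circ(g\otimes\mathrm{id}_X) = g\otimes\psi$, it suffices to prove the equivariance $\Psi_{Z'\otimes X, Z'\otimes Y}(\mathrm{id}_{Z'}\triangleright f) = \mathrm{id}_{Z'}\otimes\Psi_{X,Y}(f)$. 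Since $\Psi$ is invertible with inverse given by the evaluation formula, it is enough to apply that inverse to the right-hand side: using strictness of $\mathcal{M}$ to write $\mathrm{id}_{Z'\otimes Y}\triangleright\mathrm{ev} = \mathrm{id}_{Z'}\triangleright(\mathrm{id}_Y\triangleright\mathrm{ev})$ and $(\mathrm{id}_{Z'}\otimes\psi)\triangleright\mathrm{id}_{M_1} = \mathrm{id}_{Z'}\triangleright(\psi\triangleright\mathrm{id}_{M_1})$, functoriality of $Z'\triangleright(-)$ turns the evaluation formula for $\mathrm{id}_{Z'}\otimes\Psi_{X,Y}(f)$ into $\mathrm{id}_{Z'}\triangleright f$, as required. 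I expect the main obstacle to be purely bookkeeping: getting the variance and the side of each duality adjunction right, and organising the snake-identity computation behind the evaluation identity; once that identity is in place, the second statement is formal.
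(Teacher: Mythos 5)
Your proposal is correct, and for part 1 it coincides with the paper's proof: the same three-step composite (module-category duality adjunction, the defining isomorphism $I$ of the internal Hom, rigidity adjunction in $\mathcal{C}$), with naturality checked in the same way. For part 2, however, you take a genuinely different route. The paper works forward: it records the explicit formula $\Psi_{X,Y}(f) = \bigl( \mathrm{id}_Y \otimes I_{Y^{\vee} \otimes X}\bigl( (\mathrm{ev}_Y \triangleright \mathrm{id}_{M_2})(\mathrm{id}_{Y^{\vee}} \triangleright f) \bigr) \bigr) \bigl( \mathrm{coev}_Y \otimes \mathrm{id}_X \bigr)$, plugs $g \triangleright f$ into it, simplifies the inner morphism by a diagrammatic computation that expands $\mathrm{ev}_{Z' \otimes Y}$, pulls the resulting factor through $I$ by naturality, and finishes with the zig-zag axiom --- handling general $g$ in one pass. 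You instead work with the inverse: you characterize $\Psi_{X,Y}^{-1}$ as $\psi \mapsto (\mathrm{id}_Y \triangleright \mathrm{ev})\circ(\psi \triangleright \mathrm{id}_{M_1})$ via the internal-Hom evaluation (counit) map, reduce to $g = \mathrm{id}_{Z'}$ using bifunctoriality of $\triangleright$, naturality of $\Psi$ in the first variable and the interchange law, and then verify the remaining $Z'$-equivariance by applying $\Psi^{-1}$ and using strictness plus functoriality of $Z' \triangleright (-)$. Both arguments rest on the same ingredients (naturality of $I$, the snake identity, strictness of $\mathcal{M}$), but yours buys a cleaner bookkeeping: the snake identity enters exactly once (in establishing the evaluation identity, which I confirmed is correct --- $\Psi^{-1}_{X,Y}(\psi) = (\mathrm{id}_Y \triangleright \mathrm{ev})(\psi \triangleright \mathrm{id}_{M_1})$ does invert the paper's $\Psi$), the equivariance check becomes essentially formal, and the characterization of $\Psi^{-1}$ is a reusable statement of the universal property in counit form. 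The paper's computation buys directness and a self-contained graphical verification, at the cost of the heavier expansion of $\mathrm{ev}_{Z'\otimes Y}$ and the nested naturality-of-$I$ manipulation. The only soft spot in your write-up is that the evaluation identity is verified by a sketch rather than a full unfolding, but the steps you name (naturality of $I$ giving $I^{-1}(\alpha) = \mathrm{ev}\circ(\alpha \triangleright \mathrm{id}_{M_1})$, sliding $f$ past the coevaluation, one snake) are exactly the ones needed and they do close the argument.
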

\begin{proof}
1. We construct $\Psi_{X,Y}$ as the following composition:
\begin{align*}
\Hom_{\mathcal{M}}(X \triangleright M_1, Y \triangleright M_2) &\xrightarrow{\:\varphi^{X \triangleright M_1, M_2}_Y\:} \Hom_{\mathcal{M}}\bigl((Y^{\vee} \otimes X) \triangleright M_1, M_2\bigr)\\
&\xrightarrow{\:I_{Y^{\vee} \otimes X}\:} \Hom_{\mathcal{C}}\bigl(Y^{\vee} \otimes X, \underline{\Hom}(M_1, M_2) \bigr)\\
&\xrightarrow{\:\overline{\varphi}^{X, \underline{\Hom}(M_1, M_2)}_Y\:} \Hom_{\mathcal{C}}\bigl(X, Y \otimes \underline{\Hom}(M_1, M_2) \bigr)
\end{align*}
where $I$ is the natural isomorphism from \eqref{defInternalHomObject} while the isomorphisms $\varphi, \overline{\varphi}$ are well-known:
\begin{align*}
&\begin{array}{rcrcl}
\varphi_Y^{N_1, N_2} & : & \Hom_{\mathcal{M}}(N_1, Y \triangleright N_2) & \to & \Hom_{\mathcal{M}}(Y^{\vee} \triangleright N_1, N_2),\\
&& g & \mapsto & (\mathrm{ev}_Y \triangleright \mathrm{id}_{N_2})(\mathrm{id}_{Y^{\vee}} \triangleright f)
\end{array}\\[.7em]
&\begin{array}{rcrcl}
\overline{\varphi}_Y^{X_1, X_2} & : & \Hom_{\mathcal{C}}(Y^{\vee} \otimes X_1, X_2) & \to & \Hom_{\mathcal{C}}(X_1, Y \otimes X_2)\\
&& h & \mapsto & (\mathrm{id}_Y \otimes h)(\mathrm{coev}_Y \otimes \mathrm{id}_{X_1}).
\end{array}
\end{align*}
It is easy to check that $\varphi_Y^{N_1, N_2}$ (resp. $\overline{\varphi}_Y^{X_1, X_2}$) is natural in $Y, N_1, N_2$ (resp. in $Y, X_1, X_2$). Using these facts and the naturality of $I$, a straightforward computation shows that $\Psi$ is natural in $X,Y$.
\\2. By the very definition of $\Psi$, we have
\begin{equation}\label{defPsiHom}
\Psi_{X,Y}(f) = \biggl( \mathrm{id}_Y \otimes I_{Y^{\vee} \otimes X}\bigl( (\mathrm{ev}_Y \triangleright \mathrm{id}_{M_2})(\mathrm{id}_{Y^{\vee}} \triangleright f) \bigr) \biggr) \bigl( \mathrm{coev}_Y \otimes \mathrm{id}_X \bigr).
\end{equation}
Thus in order to compute $\Psi_{Z \otimes X, Z' \otimes Y}(g \triangleright f)$ we have to simplify this expression:
\begin{equation}\label{PsiHomfg}
\biggl( \mathrm{id}_{Z' \otimes Y} \otimes I_{Y^{\vee} \otimes (Z')^{\vee} \otimes Z \otimes X}\bigl( (\mathrm{ev}_{Z' \otimes Y} \triangleright \mathrm{id}_{M_2})(\mathrm{id}_{Y^{\vee} \otimes (Z')^{\vee}} \triangleright (g \triangleright f)) \bigr) \biggr) \bigl( \mathrm{coev}_{Z' \otimes Y} \otimes \mathrm{id}_{Z \otimes X} \bigr).
\end{equation}
Note that
\begin{center}
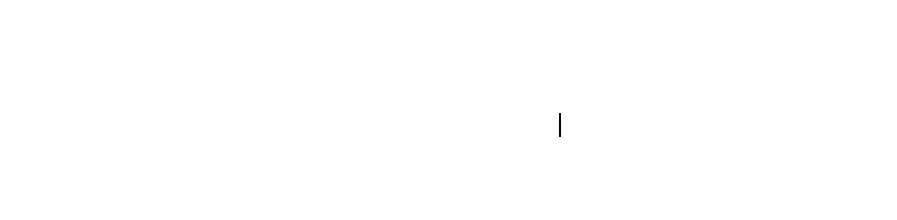
\end{center}
where the objects and morphisms in $\mathcal{C}$ (resp. in $\mathcal{M}$) are in black (resp. in blue). Then due to the naturality of $I$, we obtain:
\begin{equation}\label{simplificationI}
\begin{array}{rl}
&\!\!\!I_{Y^{\vee} \otimes (Z')^{\vee} \otimes Z \otimes X}\biggl( (\mathrm{ev}_Y \triangleright \mathrm{id}_{M_2}) (\mathrm{id}_{Y^{\vee}} \triangleright f) \bigl( \bigl(\mathrm{id}_{Y^{\vee}} \otimes \bigl( \mathrm{ev}_{Z'}(\mathrm{id}_{Z'} \otimes g) \bigr) \otimes \mathrm{id}_X \bigr) \triangleright \mathrm{id}_{M_1} \bigr) \biggr)\\[1em]
=& \!\!\!I_{Y^{\vee} \otimes X}\bigl( (\mathrm{ev}_Y \triangleright \mathrm{id}_{M_2})(\mathrm{id}_{Y^{\vee}} \triangleright f) \bigr) \bigl(\mathrm{id}_{Y^{\vee}} \otimes \bigl( \mathrm{ev}_{Z'}(\mathrm{id}_{Z'} \otimes g) \bigr) \otimes \mathrm{id}_X \bigr)
\end{array}
\end{equation}
and it follows that
\begin{center}
\begingroup%
  \makeatletter%
  \providecommand\color[2][]{%
    \errmessage{(Inkscape) Color is used for the text in Inkscape, but the package 'color.sty' is not loaded}%
    \renewcommand\color[2][]{}%
  }%
  \providecommand\transparent[1]{%
    \errmessage{(Inkscape) Transparency is used (non-zero) for the text in Inkscape, but the package 'transparent.sty' is not loaded}%
    \renewcommand\transparent[1]{}%
  }%
  \providecommand\rotatebox[2]{#2}%
  \newcommand*\fsize{\dimexpr\f@size pt\relax}%
  \newcommand*\lineheight[1]{\fontsize{\fsize}{#1\fsize}\selectfont}%
  \ifx\svgwidth\undefined%
    \setlength{\unitlength}{399.14070285bp}%
    \ifx\svgscale\undefined%
      \relax%
    \else%
      \setlength{\unitlength}{\unitlength * \real{\svgscale}}%
    \fi%
  \else%
    \setlength{\unitlength}{\svgwidth}%
  \fi%
  \global\let\svgwidth\undefined%
  \global\let\svgscale\undefined%
  \makeatother%
  \begin{picture}(1,0.24365132)%
    \lineheight{1}%
    \setlength\tabcolsep{0pt}%
    \put(0,0){\includegraphics[width=\unitlength,page=1]{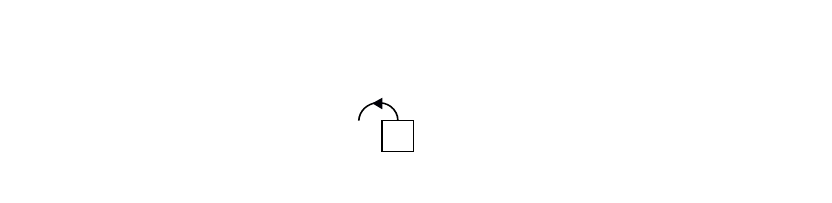}}%
    \put(0.47156195,0.07503479){\color[rgb]{0,0,0}\makebox(0,0)[lt]{\lineheight{1.25}\smash{\begin{tabular}[t]{l}$g$\end{tabular}}}}%
    \put(0,0){\includegraphics[width=\unitlength,page=2]{proofLemmaInternalHom2.pdf}}%
    \put(0.66349955,0.02331093){\color[rgb]{0,0,0}\makebox(0,0)[lt]{\lineheight{1.25}\smash{\begin{tabular}[t]{l}$_X$\end{tabular}}}}%
    \put(0.47108787,0.0189259){\color[rgb]{0,0,0}\makebox(0,0)[lt]{\lineheight{1.25}\smash{\begin{tabular}[t]{l}$_{Z}$\end{tabular}}}}%
    \put(0,0){\includegraphics[width=\unitlength,page=3]{proofLemmaInternalHom2.pdf}}%
    \put(0.3684162,0.15609865){\color[rgb]{0,0,0}\makebox(0,0)[lt]{\lineheight{1.25}\smash{\begin{tabular}[t]{l}$I_{Y^{\vee} \otimes X}\bigl( (\mathrm{ev}_Y \triangleright \mathrm{id}_{M_2})(\mathrm{id}_{Y^{\vee}} \triangleright f) \bigr)$\end{tabular}}}}%
    \put(0,0){\includegraphics[width=\unitlength,page=4]{proofLemmaInternalHom2.pdf}}%
    \put(0.31967672,0.23031617){\color[rgb]{0,0,0}\makebox(0,0)[lt]{\lineheight{1.25}\smash{\begin{tabular}[t]{l}$_{Y}$\end{tabular}}}}%
    \put(0,0){\includegraphics[width=\unitlength,page=5]{proofLemmaInternalHom2.pdf}}%
    \put(0.26330562,0.23031618){\color[rgb]{0,0,0}\makebox(0,0)[lt]{\lineheight{1.25}\smash{\begin{tabular}[t]{l}$_{Z'}$\end{tabular}}}}%
    \put(0,0){\includegraphics[width=\unitlength,page=6]{proofLemmaInternalHom2.pdf}}%
    \put(0.50008091,0.23506146){\color[rgb]{0,0,0}\makebox(0,0)[lt]{\lineheight{1.25}\smash{\begin{tabular}[t]{l}$_{\underline{\Hom}(M_1, M_2)}$\end{tabular}}}}%
    \put(-0.00043059,0.12831128){\color[rgb]{0,0,0}\makebox(0,0)[lt]{\lineheight{1.25}\smash{\begin{tabular}[t]{l}$\Psi_{Z \otimes X, Z' \otimes Y}(g \triangleright f) =$\end{tabular}}}}%
    \put(0.77776946,0.12831135){\color[rgb]{0,0,0}\makebox(0,0)[lt]{\lineheight{1.25}\smash{\begin{tabular}[t]{l}$=g \otimes \Psi_{X,Y}(f)$.\end{tabular}}}}%
  \end{picture}%
\endgroup%

\end{center}
For the first equality we used \eqref{PsiHomfg} and \eqref{simplificationI} while for the second equality we used the zig-zag axiom of ev/coev and \eqref{defPsiHom}.
\end{proof}
Internal Homs can be used to describe Hom spaces between objects of the form $\mathbb{V} \boxtimes_{\mathcal{C}} \mathbb{W}$, see \cite[Thm.\,3.3]{DSPS}. The next proposition shows that they can also be used to describe Ext spaces between such objects.
\begin{proposition}\label{propATensBrelB}
Let $\mathcal{C}$ be a braided finite tensor category and $A,B$ be algebras in $\mathcal{C}$. Let $\mathbb{V}, \mathbb{V}'$ be objects in $A\text{-}\mathrm{mod}_{\mathcal{C}}$ and $\mathbb{W}, \mathbb{W}'$ be objects in $B\text{-}\mathrm{mod}_{\mathcal{C}}$ such that $\underline{\Hom}(\mathbb{W},\mathbb{W}')$ exists. Then we have for all $n \geq 0$:
\[ \Ext^n_{(A \otimes B)\text{-}\mathrm{mod}_{\mathcal{C}}, \, B\text{-}\mathrm{mod}_{\mathcal{C}}}(\mathbb{V} \boxtimes_{\mathcal{C}} \mathbb{W}, \mathbb{V}' \boxtimes_{\mathcal{C}} \mathbb{W}') \cong \Ext_{A\text{-}\mathrm{mod}_{\mathcal{C}},\,\mathcal{C}}^n\bigl(\mathbb{V}, \mathbb{V}' \triangleleft \underline{\Hom}(\mathbb{W},\mathbb{W}')\bigr). \]
\end{proposition}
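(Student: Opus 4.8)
The plan is to adapt the argument of Example~\ref{exampleAtensBRelBInVect} from $\mathcal{C} = \mathrm{Vect}_k$ to a general braided $\mathcal{C}$: construct a relatively projective resolution of $\mathbb{V} \boxtimes_{\mathcal{C}} \mathbb{W}$ from one of $\mathbb{V}$, and then identify the two $\Hom$-complexes. First I would take a relatively projective resolution $0 \leftarrow \mathbb{V} \overset{d_0}{\leftarrow} \mathbb{P}_0 \overset{d_1}{\leftarrow} \mathbb{P}_1 \leftarrow \cdots$ of $\mathbb{V}$ in $A\text{-}\mathrm{mod}_{\mathcal{C}}$ for the adjunction $\mathfrak{F} \dashv \mathfrak{U}$ (which exists, e.g.\ the bar resolution \eqref{relativeBarResolution}), and apply $- \boxtimes_{\mathcal{C}} \mathbb{W}$ to get the candidate resolution $\mathbb{P}_\bullet \boxtimes_{\mathcal{C}} \mathbb{W}$ of $\mathbb{V} \boxtimes_{\mathcal{C}} \mathbb{W}$ in $(A \otimes B)\text{-}\mathrm{mod}_{\mathcal{C}}$ for $\mathcal{F} \dashv \mathcal{U}$.

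Then I would check the three conditions making this a relatively projective resolution. Each $\mathbb{P}_i \boxtimes_{\mathcal{C}} \mathbb{W}$ is relatively projective by Lemma~\ref{lemmaRelProjInAxB}; exactness is preserved because on underlying objects $- \boxtimes_{\mathcal{C}} \mathbb{W}$ is $- \otimes W$ and $\otimes$ is exact by rigidity \cite[Prop.\,4.2.1]{EGNO}. For allowability of $d_i \boxtimes \mathrm{id}$ with respect to $\mathcal{U}$, the starting point is the identification $\mathcal{U}(\mathbb{P}_i \boxtimes_{\mathcal{C}} \mathbb{W}) = \mathfrak{U}(\mathbb{P}_i) \triangleright \mathbb{W}$ and $\mathcal{U}(d_i \boxtimes \mathrm{id}) = \mathfrak{U}(d_i) \triangleright \mathrm{id}_{\mathbb{W}}$, obtained by unwinding \eqref{defActionTensorProduct} using the unit axioms; a splitting of $\mathfrak{U}(d_i)$ in $\mathcal{C}$ (which exists since $d_i$ is $\mathfrak{U}$-allowable) then produces, via the functor $- \triangleright \mathbb{W}$ of \eqref{defModuleStructureBmod}, a splitting in $B\text{-}\mathrm{mod}_{\mathcal{C}}$.

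The core is a natural isomorphism $\Hom_{(A \otimes B)\text{-}\mathrm{mod}_{\mathcal{C}}}(\mathbb{V} \boxtimes_{\mathcal{C}} \mathbb{W}, \mathbb{V}' \boxtimes_{\mathcal{C}} \mathbb{W}') \cong \Hom_{A\text{-}\mathrm{mod}_{\mathcal{C}}}(\mathbb{V}, \mathbb{V}' \triangleleft \underline{\Hom}(\mathbb{W},\mathbb{W}'))$. By the factorization \eqref{defActionTensorProduct}, a morphism in $\mathcal{C}$ underlies a morphism in $(A \otimes B)\text{-}\mathrm{mod}_{\mathcal{C}}$ iff it is both $A$-linear and $B$-linear; hence, using the identification above, the left side is the space of $A$-linear maps inside $\Hom_{B\text{-}\mathrm{mod}_{\mathcal{C}}}(V \triangleright \mathbb{W}, V' \triangleright \mathbb{W}')$, and the right side is the space of $A$-linear maps inside $\Hom_{\mathcal{C}}(V, V' \otimes \underline{\Hom}(\mathbb{W},\mathbb{W}'))$. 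The isomorphism $\Psi_{V,V'}$ of Lemma~\ref{lemmaInternalHoms}(1) matches these two ambient spaces, so it remains to prove that, writing $\psi = \Psi_{V,V'}(\phi)$, the map $\phi$ is $A$-linear iff $\psi$ is. This is where Lemma~\ref{lemmaInternalHoms} is essential: naturality of $\Psi$ in its first argument sends $\phi \circ (\rho^A_V \triangleright \mathrm{id}_{\mathbb{W}})$ to $\psi \circ \rho^A_V$, while naturality in the second argument together with the tensor-compatibility formula of Lemma~\ref{lemmaInternalHoms}(2) sends $(\rho^A_{V'} \triangleright \mathrm{id}_{\mathbb{W}'}) \circ (\mathrm{id}_A \triangleright \phi)$ to $(\rho^A_{V'} \otimes \mathrm{id}) \circ (\mathrm{id}_A \otimes \psi)$. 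Since $\Psi_{A \otimes V, V'}$ is a bijection, the two $A$-linearity equations are equivalent.

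Finally, I would apply $\Hom_{(A \otimes B)\text{-}\mathrm{mod}_{\mathcal{C}}}(-, \mathbb{V}' \boxtimes_{\mathcal{C}} \mathbb{W}')$ to $\mathbb{P}_\bullet \boxtimes_{\mathcal{C}} \mathbb{W}$; by naturality of the above isomorphism in the first variable (compatible with the differentials $d_i \boxtimes \mathrm{id}$), the resulting complex is isomorphic to $\Hom_{A\text{-}\mathrm{mod}_{\mathcal{C}}}(\mathbb{P}_\bullet, \mathbb{V}' \triangleleft \underline{\Hom}(\mathbb{W},\mathbb{W}'))$. Taking cohomology and using Definition~\ref{defRelExt} on both sides yields the claimed isomorphism, since $\mathbb{P}_\bullet$ computes the right-hand Ext and $\mathbb{P}_\bullet \boxtimes_{\mathcal{C}} \mathbb{W}$ the left-hand one. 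I expect the third paragraph to be the main obstacle: the resolution-theoretic steps are routine once Lemma~\ref{lemmaRelProjInAxB} is in hand, but showing that $\Psi$ intertwines the two $A$-linearity conditions requires carefully combining both naturality statements and the tensor-compatibility of Lemma~\ref{lemmaInternalHoms}, and tracking how the $A$-action threads through the internal-Hom adjunction for the $B$-action.
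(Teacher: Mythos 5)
Your proposal is correct and follows essentially the same route as the paper's proof: apply $- \boxtimes_{\mathcal{C}} \mathbb{W}$ to a relatively projective resolution of $\mathbb{V}$, use Lemma~\ref{lemmaRelProjInAxB} and the identification $\mathcal{U}(\mathbb{P}_i \boxtimes_{\mathcal{C}} \mathbb{W}) = P_i \triangleright \mathbb{W}$ for relative projectivity and allowability, and then transport the Hom-complexes through the isomorphism $\Psi$ of Lemma~\ref{lemmaInternalHoms}, using its naturality in both arguments together with part~(2) to match the $A$-linearity conditions. Your explicit ``iff'' argument (via injectivity of $\Psi_{A \otimes V, V'}$) is a slightly more careful rendering of the bijectivity that the paper's proof leaves implicit, but it is the same mechanism.
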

\begin{proof}
Let $0 \longleftarrow \mathbb{V} \overset{d_0}{\longleftarrow} \mathbb{P}_0 \overset{d_1}{\longleftarrow} \mathbb{P}_1 \overset{d_2}{\longleftarrow} \ldots$ be a relatively projective resolution in $A\text{-}\mathrm{mod}_{\mathcal{C}}$ and since $d_i$ is allowable let $t_i \in \Hom_{\mathcal{C}}(P_{i-1},P_i)$ be such that $d_i \, t_i \, d_i = d_i$ (we write $d_i$ instead of $\mathfrak{U}(d_i)$ because $\mathfrak{U}$ is the identity on morphisms). By the Lemma \ref{lemmaRelProjInAxB}
\begin{equation}\label{relProjResProofAxB}
0 \longleftarrow \mathbb{V} \boxtimes_{\mathcal{C}} \mathbb{W} \xleftarrow{d_0 \otimes \mathrm{id}_{\mathbb{W}}} \mathbb{P}_0 \boxtimes_{\mathcal{C}} \mathbb{W} \xleftarrow{d_1 \otimes \mathrm{id}_{\mathbb{W}}} \mathbb{P}_1 \boxtimes_{\mathcal{C}} \mathbb{W} \xleftarrow{d_2 \otimes \mathrm{id}_{\mathbb{W}}} \ldots
\end{equation}
is an exact sequence of relatively projective modules in $(A \otimes B)\text{-}\mathrm{mod}_{\mathcal{C}}$. Let us show that it is allowable (with respect to the adjunction $\mathcal{F} \dashv \mathcal{U}$). Note that $\mathcal{U}(\mathbb{P}_i \boxtimes_{\mathcal{C}} \mathbb{W}) = P_i \triangleright \mathbb{W}$ with the notation from \eqref{defModuleStructureBmod}. Hence $t_i \otimes \mathrm{id}_{W} = t_i \triangleright \mathrm{id}_{\mathbb{W}} \in \Hom_{B\text{-}\mathrm{mod}_{\mathcal{C}}}(\mathcal{U}(\mathbb{P}_{i-1} \boxtimes_{\mathcal{C}} \mathbb{W}), \mathcal{U}(\mathbb{P}_i \boxtimes_{\mathcal{C}} \mathbb{W}))$ and $(d_i \otimes \mathrm{id}_{W}) (t_i \otimes \mathrm{id}_{W}) (d_i \otimes \mathrm{id}_{W}) = (d_i \otimes \mathrm{id}_{W})$, which shows that $d_i \otimes \mathrm{id}_{W}$ is allowable (again, we do not write $\mathcal{U}(d_i \otimes \mathrm{id}_{W})$ because $\mathcal{U}$ is the identity on morphisms). Hence \eqref{relProjResProofAxB} is a relatively projective resolution.

\smallskip

\indent Let us now construct an isomorphism of complexes which will prove the result. From Lemma \ref{lemmaInternalHoms} we have an isomorphism
\[ \Psi_{P,V'} : \Hom_{B\text{-}\mathrm{mod}_{\mathcal{C}}}(P \triangleright \mathbb{W}, V' \triangleright \mathbb{W}') \overset{\sim}{\to} \Hom_{\mathcal{C}}\bigl( P, V' \otimes \underline{\Hom}(\mathbb{W}, \mathbb{W}') \bigr) \]
which is natural in $P$ and $V'$. We define a natural isomorphism
\begin{equation*}
\widetilde{\Psi}_{\mathbb{P},\mathbb{V}'} : \Hom_{(A \otimes B)\text{-}\mathrm{mod}_{\mathcal{C}}}(\mathbb{P} \boxtimes_{\mathcal{C}} \mathbb{W}, \mathbb{V}' \boxtimes_{\mathcal{C}} \mathbb{W}') \overset{\sim}{\to} \Hom_{A\text{-}\mathrm{mod}_{\mathcal{C}}}\bigl( \mathbb{P}, \mathbb{V}' \triangleleft \underline{\Hom}(\mathbb{W}, \mathbb{W}') \bigr).
\end{equation*}
by letting $\widetilde{\Psi}_{\mathbb{P},\mathbb{V}'}(f) = \Psi_{P,V'}\bigl( \mathcal{U}(f) \bigr)$. We use for instance that $\mathcal{U}(\mathbb{P} \boxtimes_{\mathcal{C}} \mathbb{W}) = P \triangleright \mathbb{W}$ for any $\mathbb{P} = (P, \rho^P) \in A\text{-}\mathrm{mod}_{\mathcal{C}}$ \textit{etc}. Recall that $\mathcal{U}(f) = f$. Let us check that $\widetilde{\Psi}_{\mathbb{P},\mathbb{V}'}(f) = \Psi_{P,V'}(f)$ is indeed $A$-linear; this is based on two basic observations. First, any $f \in \Hom_{(A \otimes B)\text{-}\mathrm{mod}_{\mathcal{C}}}(\mathbb{P} \boxtimes_{\mathcal{C}} \mathbb{W}, \mathbb{V}' \boxtimes_{\mathcal{C}} \mathbb{W}')$ satisfies
\begin{equation}\label{fIsALinear}
\begin{array}{rl}
f(\rho^A_P \otimes \mathrm{id}_W)\!\!\! &= f \, \rho^{A \otimes B}_{P \otimes W} \, (\mathrm{id}_A \otimes \eta_B \otimes \mathrm{id}_{P \otimes W}) = \rho^{A \otimes B}_{V' \otimes W'} \, (\mathrm{id}_{A \otimes B} \otimes f) (\mathrm{id}_A \otimes \eta_B \otimes \mathrm{id}_{P \otimes W})\\[.6em]
&= \rho^{A \otimes B}_{V' \otimes W'} \, (\mathrm{id}_A \otimes \eta_B \otimes \mathrm{id}_{V' \otimes W'}) (\mathrm{id}_A \otimes f) = (\rho^A_{V'} \otimes \mathrm{id}_{W'}) (\mathrm{id}_A \otimes f)
\end{array}
\end{equation}
where for the first and last equalities we used the definition \eqref{defActionTensorProduct} together with the fact that the unit $\eta_B$ acts as the identity on any $B$-module. Second, the naturality of $\Psi$ means that for $g \in \Hom_{\mathcal{C}}(P,Q)$ and $h \in \Hom_{\mathcal{C}}(V'_1, V'_2)$ we have
\[ \Psi_{P,V'} \circ (g \triangleright \mathrm{id}_W)^* = g^* \circ \Psi_{Q,V'}, \qquad  (h \otimes \mathrm{id}_{\underline{\Hom}(\mathbb{W}, \mathbb{W}')})_* \circ \Psi_{P,V'_1} = \Psi_{P,V'_2} \circ (h \triangleright \mathrm{id}_{W'})_* \]
where $(-)^*$ and $(-)_*$ respectively denote the pullback and the push-forward; more explicitly for $f \in  \Hom_{B\text{-}\mathrm{mod}_{\mathcal{C}}}(P \triangleright \mathbb{W}, V' \triangleright \mathbb{W}')$:
\[ \Psi_{P,V'}\bigl( f(g \otimes \mathrm{id}_W) \bigr) = \Psi_{Q,V'}(f)\,g, \qquad (h \otimes \mathrm{id}_{\underline{\Hom}(\mathbb{W}, \mathbb{W}')}) \Psi_{P,V'_1}(f) = \Psi_{P,V'_2}\bigl( (h \otimes \mathrm{id}_{W'})f \bigr). \]
Applying this to $g = \rho^A_P \in \Hom_{\mathcal{C}}(A \otimes P, P)$ and $h = \rho^A_{V'} \in \Hom_{\mathcal{C}}(A \otimes V', V')$, we get for $f \in \Hom_{(A \otimes B)\text{-}\mathrm{mod}_{\mathcal{C}}}(\mathbb{P} \boxtimes_{\mathcal{C}} \mathbb{W}, \mathbb{V}' \boxtimes_{\mathcal{C}} \mathbb{W}')$:
\begin{align*}
\Psi_{P,V'}(f) \, \rho^A_P &= \Psi_{A \otimes P, V'}\bigl( f \, (\rho^A_P \otimes \mathrm{id}_{W}) \bigr) = \Psi_{A \otimes P, V'}\bigl( (\rho^A_{V'} \otimes \mathrm{id}_{W'}) (\mathrm{id}_A \otimes f) \bigr)\\
&= \bigl(\rho^A_{V'} \otimes \mathrm{id}_{\underline{\Hom}(\mathbb{W},\mathbb{W}')}\bigr) \Psi_{A \otimes P, A \otimes V'}\bigl( \mathrm{id}_A \otimes f \bigr) = \bigl(\rho^A_{V'} \otimes \mathrm{id}_{\underline{\Hom}(\mathbb{W},\mathbb{W}')}\bigr) \bigl( \mathrm{id}_A \otimes \Psi_{P, V'}(f) \bigr)
\end{align*}
where the second equality is from \eqref{fIsALinear} and the last equality is an application of the second item in Lemma \ref{lemmaInternalHoms}. This means that $\widetilde{\Psi}_{\mathbb{P},\mathbb{V}'}(f)$ is $A$-linear. Thanks to naturality in $\mathbb{P}$, the family $\bigl(\widetilde{\Psi}_{\mathbb{P}_i,\mathbb{V}'}\bigr)_{i \in \mathbb{N}}$ is an isomorphism of complexes.
\end{proof}

\begin{remark}
Let $H$ be a finite-dimensional quasi-triangular Hopf algebra (in $\mathrm{Vect}_k$) and take $\mathcal{C} = H\text{-}\mathrm{mod}$. Let $A$ and $B$ be finite-dimensional $H$-module algebras (or in other words algebras in $\mathcal{C}$). Then $A\text{-}\mathrm{mod}_{\mathcal{C}} \cong (A \# H)\text{-}\mathrm{mod}$ where $A\# H$ is the smash product of $A$ and $H$, and similarly for $B$. Let $A \, \widetilde{\otimes} B$ be the braided tensor product of $A$ and $B$ (in other words, their tensor product in $\mathcal{C}$). Proposition \ref{propATensBrelB} is rewritten as
\[ \Ext^n_{(A \widetilde{\otimes} B) \# H, B\#H}(\mathbb{V} \boxtimes_{\mathcal{C}} \mathbb{W}, \mathbb{V}' \boxtimes_{\mathcal{C}} \mathbb{W}') \cong \Ext_{A\#H, H}^n\bigl(\mathbb{V}, \mathbb{V}' \triangleleft \Hom_B(\mathbb{W},\mathbb{W}')\bigr). \]
Indeed it is not difficult to see that in this situation, the internal Hom object $\underline{\Hom}(\mathbb{W},\mathbb{W}')$ is $\Hom_B(\mathbb{W},\mathbb{W}')$ endowed with the $H$-action given by $(h \cdot f)(w) = h'' \cdot f(S^{-1}(h') \cdot w)$, where $\Delta(h) = h' \otimes h''$ is the coproduct and $S$ is the antipode.
\end{remark}

\smallskip

Recall that a morphism $\beta \in \Hom_{\mathcal{C}}(B,\mathbf{1})$ is called an augmentation for the algebra $B$ if it satisfies $\beta \otimes \beta = \beta \, m_B$ and $\beta \, \eta_B = \mathrm{id}_{\mathbf{1}}$. If $B$ has an augmentation $\beta$ then the tensor unit $\mathbf{1} \in \mathcal{C}$ can be endowed with a structure of $B$-module internal to $\mathcal{C}$: $\mathbf{1}_{\beta} = (\mathbf{1}, \beta) \in B\text{-}\mathrm{mod}_{\mathcal{C}}$.

\begin{corollary}\label{relATensBsemisimple}
1. With the notations of Proposition \ref{propATensBrelB} and assuming moreover that the category $\mathcal{C}$ is semisimple, we have
\[ \Ext^n_{(A \otimes B)\text{-}\mathrm{mod}_{\mathcal{C}}, B\text{-}\mathrm{mod}_{\mathcal{C}}}(\mathbb{V} \boxtimes_{\mathcal{C}} \mathbb{W}, \mathbb{V}' \boxtimes_{\mathcal{C}} \mathbb{W}') \cong \Ext^n_{A\text{-}\mathrm{mod}_{\mathcal{C}}}\bigl(\mathbb{V}, \mathbb{V}' \triangleleft \underline{\Hom}(\mathbb{W},\mathbb{W}')\bigr). \]
2. If the algebra $B$ has an augmentation $\beta$ then
\[ \Ext^n_{(A \otimes B)\text{-}\mathrm{mod}_{\mathcal{C}}, \, B\text{-}\mathrm{mod}_{\mathcal{C}}}(\mathbb{V} \boxtimes_{\mathcal{C}} \mathbf{1}_{\beta}, \mathbb{V}' \boxtimes_{\mathcal{C}} \mathbf{1}_{\beta}) \cong \Ext_{A\text{-}\mathrm{mod}_{\mathcal{C}}, \, \mathcal{C}}^n(\mathbb{V}, \mathbb{V}') \]
where here we do not assume that $\mathcal{C}$ is semisimple.
\end{corollary}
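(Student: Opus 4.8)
The plan is to deduce both statements from Proposition \ref{propATensBrelB}, which identifies the relative Ext groups on the left with $\Ext^n_{A\text{-}\mathrm{mod}_{\mathcal{C}},\,\mathcal{C}}\bigl(\mathbb{V}, \mathbb{V}' \triangleleft \underline{\Hom}(\mathbb{W},\mathbb{W}')\bigr)$, and then to simplify the right-hand side in each of the two situations separately. No new homological input is needed; everything is a matter of recognizing the right special case.

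For the first item, the right-hand side of Proposition \ref{propATensBrelB} is a relative Ext group for the resolvent pair $A\text{-}\mathrm{mod}_{\mathcal{C}} \leftrightarrows \mathcal{C}$, whose bottom category is precisely $\mathcal{C}$. Since $\mathcal{C}$ is assumed semisimple, Example \ref{exampleSemisimpleBottom} applies verbatim: every morphism in $A\text{-}\mathrm{mod}_{\mathcal{C}}$ is allowable, relatively projective objects coincide with projective objects, and hence $\Ext^n_{A\text{-}\mathrm{mod}_{\mathcal{C}},\,\mathcal{C}} = \Ext^n_{A\text{-}\mathrm{mod}_{\mathcal{C}}}$. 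Substituting this into Proposition \ref{propATensBrelB} yields the first isomorphism with no further work.

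For the second item, I would specialize Proposition \ref{propATensBrelB} to $\mathbb{W} = \mathbb{W}' = \mathbf{1}_\beta$ and claim that $\underline{\Hom}(\mathbf{1}_\beta, \mathbf{1}_\beta) \cong \mathbf{1}$; granting this, $\mathbb{V}' \triangleleft \underline{\Hom}(\mathbf{1}_\beta, \mathbf{1}_\beta) \cong \mathbb{V}' \triangleleft \mathbf{1} = \mathbb{V}'$ and the desired isomorphism follows immediately. The core computation is therefore the identification of this internal Hom, which I would carry out directly from the defining adjunction \eqref{defInternalHomObject}. Unwinding \eqref{defModuleStructureBmod}, the module $X \triangleright \mathbf{1}_\beta$ has underlying object $X$ and $B$-action $(\mathrm{id}_X \otimes \beta)(c_{B,X} \otimes \mathrm{id}_{\mathbf{1}})$; applying naturality of the braiding to $\beta \in \Hom_{\mathcal{C}}(B,\mathbf{1})$ together with $c_{\mathbf{1},X} = \mathrm{id}_X$, this action simplifies to $\beta \otimes \mathrm{id}_X$. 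A short calculation using the interchange law then shows that the $B$-linearity condition for a morphism $f \in \Hom_{\mathcal{C}}(X,\mathbf{1})$, namely $f(\beta \otimes \mathrm{id}_X) = \beta(\mathrm{id}_B \otimes f)$, holds automatically, both sides being equal to $\beta \otimes f$. Hence the forgetful map $\Hom_{B\text{-}\mathrm{mod}_{\mathcal{C}}}(X \triangleright \mathbf{1}_\beta, \mathbf{1}_\beta) \to \Hom_{\mathcal{C}}(X,\mathbf{1})$ is a bijection, and it is natural in $X$, which by \eqref{defInternalHomObject} says precisely that $\underline{\Hom}(\mathbf{1}_\beta, \mathbf{1}_\beta) \cong \mathbf{1}$.

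The first part is essentially free, so the only real content — and the main thing to get right — is the internal-Hom computation in the second part. The potential pitfall there is keeping track of the module structure $\triangleright$ on $B\text{-}\mathrm{mod}_{\mathcal{C}}$ defined via the braiding in \eqref{defModuleStructureBmod}, and using the augmentation axioms for $\beta$ (in particular $\beta\,\eta_B = \mathrm{id}_{\mathbf{1}}$, which guarantees that $\mathbf{1}_\beta$ is a genuine unital $B$-module) to make $\mathbf{1}_\beta$ behave as a rank-one module with trivial internal endomorphism object. I do not expect the naturality argument itself to be delicate, but one must confirm that the bijection above is natural in $X$ — so that it genuinely computes the internal Hom in the sense of \eqref{defInternalHomObject} — rather than merely a bijection for each fixed $X$.
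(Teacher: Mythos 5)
Your proposal is correct and follows essentially the same route as the paper: part 1 is the direct combination of Example \ref{exampleSemisimpleBottom} (semisimple bottom category) with Proposition \ref{propATensBrelB}, and part 2 reduces to showing $\underline{\Hom}(\mathbf{1}_{\beta},\mathbf{1}_{\beta}) \cong \mathbf{1}$ via the identification $\Hom_{B\text{-}\mathrm{mod}_{\mathcal{C}}}(X \triangleright \mathbf{1}_{\beta}, \mathbf{1}_{\beta}) = \Hom_{\mathcal{C}}(X,\mathbf{1})$, which is exactly the paper's argument. Your version merely spells out the braiding-naturality and interchange-law steps that the paper leaves implicit.
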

\begin{proof}
1. Since $\mathcal{C}$ is semisimple, we know from Example \ref{exampleSemisimpleBottom} that $\Ext_{A\text{-}\mathrm{mod}_{\mathcal{C}},\,\mathcal{C}}^n(\mathbb{V}, \mathbb{M}) = \Ext_{A\text{-}\mathrm{mod}_{\mathcal{C}}}^n(\mathbb{V}, \mathbb{M})$ and the claim follows directly from Proposition \ref{propATensBrelB}.
\\2. We have $\Hom_{B\text{-}\mathrm{mod}_{\mathcal{C}}}(X \triangleright \mathbf{1}_{\beta}, \mathbf{1}_{\beta}) = \Hom_{\mathcal{C}}(X,\mathbf{1})$. Indeed as an object in $\mathcal{C}$, $X \triangleright \mathbf{1}_{\beta}$ is equal to $X$ and one sees that a morphism $f : X \to \mathbf{1}$ in $\mathcal{C}$ is the same thing as a $B$-linear morphism $f = f \otimes \mathrm{id}_{\mathbf{1}} : X \triangleright \mathbf{1}_{\beta} = X \to \mathbf{1}_{\beta}$. Hence by definition $\underline{\Hom}(\mathbf{1}_{\beta}, \mathbf{1}_{\beta}) = \mathbf{1}$ and the claim follows from Proposition \ref{propATensBrelB}.
\end{proof}

\section{Davydov--Yetter cohomology and relative $\Ext$ groups}\label{sectionDYcohomologyAndRelExt}
\indent Davydov--Yetter (DY) cohomology classifies infinitesimal deformations of tensor structures, as reviewed in the Introduction. Here we will work with the version of this cohomology theory with coefficients introduced in \cite{GHS}.

\smallskip

\indent In \S \ref{sectionRelationDYCohomology} we relate DY cohomology to the relative Ext groups of a certain adjunction, as a consequence of the work of \cite{GHS} and of Proposition \ref{relExtAndComonadCohom}. In \S \ref{sectionFirstConsequences} we derive the easiest consequences of this relation; then in \S \ref{subsectionProductOnDY} we obtain a product on DY cohomology thanks to the Yoneda product on the relative Ext groups and we determine an explicit formula for it; finally in \S \ref{subsectionLongExactSequenceDYCohom} we obtain a long exact sequence for DY cohomology thanks to the long exact sequence for relative Ext groups. \S \ref{sectionAdjunctionCentralizerFunctor} contains properties of the underlying adjunction which are used to prove some of the results.

\medskip

\indent Recall from \cite[Chap.\,IX, \S 6]{macLaneCategories} the notion of a coend. Let $\mathcal{X}, \mathcal{Y}$ be categories and let $T : \mathcal{X}^{\mathrm{op}} \times \mathcal{X} \to \mathcal{Y}$ be a bifunctor. The coend of $T$, if it exists, is a universal pair $(C,i)$, where $C$ is an object in $\mathcal{Y}$ and $i = ( i_X : T(X,X) \to C )_{X \in \mathcal{X}}$ is a dinatural transformation. This means that for any dinatural transformation $\psi_X : T(X,X) \to V$, there exists a unique morphism $\Psi \in \Hom_{\mathcal{Y}}(C,V)$ such that $\psi_X = \Psi \, i_X$ for all $X \in \mathcal{X}$. The object $C$ is denoted by $\int^{X \in \mathcal{X}} T(X,X)$. Note in particular that for any object $W \in \mathcal{Y}$, two morphisms $f, g \in \Hom_{\mathcal{Y}}(C, W)$ are equal if and only if $f \, i_X = g \, i_X$ for all $X \in \mathcal{X}$.

\smallskip

\indent In this section, $\mathcal{C}$ and $\mathcal{D}$ denote $k$-linear finite strict tensor categories (see definitions at the beginning of section \S \ref{relativeExtTensorCategories}) and $F : \mathcal{C} \to \mathcal{D}$ is a $k$-linear strict monoidal functor, with $k$ a field.

\begin{remark}\label{remarkStrictification}
The strictness assumptions are not a loss of generality because any monoidal functor admits a strictification, which is a strict monoidal functor between strict monoidal categories \cite{JS}; in a future paper we will show that the DY cohomology with coefficients of a functor is isomorphic to the DY cohomology with coefficients of its strictification. 
\end{remark}

\subsection{The centralizer of a functor and the associated adjunction}\label{sectionAdjunctionCentralizerFunctor}
\indent Recall that there exists a monoidal natural isomorphism $d_X : F(X^{\vee}) \overset{\sim}{\to} F(X)^{\vee}$ (see \textit{e.g.}\,\cite[Lem.\,1.1]{NS}); in the sequel for simplicity we identify $F(X)^{\vee}$ with $F(X^{\vee})$.

\begin{itemize}
\item A half-braiding relative to $F$ is a natural isomorphism $\rho^V : V \otimes F(-) \to F(-) \otimes V$ such that 
\begin{equation}\label{halfBraidingOnTensorProducts}
\rho^V_{X \otimes Y} = (\mathrm{id}_{F(X)} \otimes \rho^V_Y)(\rho^V_X \otimes \mathrm{id}_{F(Y)})
\end{equation}
for all $X,Y \in \mathcal{C}$.
\item The centralizer $\mathcal{Z}(F)$ of the functor $F$ is the category whose objects are pairs $\mathsf{V} = (V,\rho^V)$ where $V \in \mathcal{D}$ and $\rho^V$ is a half-braiding relative to $F$ and whose morphisms $f : (V,\rho^V) \to (W,\rho^W)$ are morphisms $f \in \Hom_{\mathcal{D}}(V,W)$ such that the diagram
\[\xymatrix{
V \otimes F(X) \ar[d]_{f \otimes \mathrm{id}_{F(X)}}\ar[r]^{\rho^V_X} & F(X) \otimes V \ar[d]^{\mathrm{id}_{F(X)}\otimes f}\\
W \otimes F(X) \ar[r]_{\rho^W_X} & F(X) \otimes W
}\]
commutes for all $X \in \mathcal{C}$.
\end{itemize}
\noindent $\mathcal{Z}(F)$ is a (strict) tensor category \cite{majid}, \cite[\S 3.2]{shimizu} with tensor product
\[ (V, \rho^V) \otimes (W,\rho^W) = \bigl(V \otimes W, \, \rho^{V \otimes W} = (\rho^V \otimes \mathrm{id}_W)(\mathrm{id}_V \otimes \rho^W) \bigr) \]
and with left and right duals given by $(V, \rho^V)^{\vee} = (V^{\vee}, \rho^{V^{\vee}})$, $^{\vee}(V, \rho^V) = ({^{\vee}V}, \rho^{^{\vee}V})$, where
\begin{equation}\label{defDualityZF}
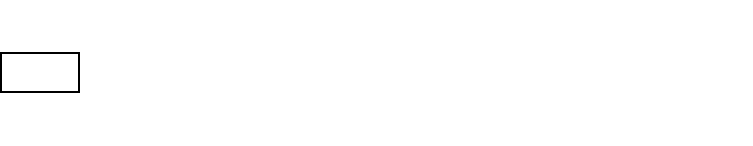
\end{equation}
\noindent In particular, if $F = \mathrm{Id}_{\mathcal{C}}$ is the identity functor, then $\mathcal{Z}(\mathrm{Id}_{\mathcal{C}})$ is $\mathcal{Z}(\mathcal{C})$, the usual Drinfeld center of $\mathcal{C}$.

\smallskip

\noindent \textbf{Assumption:} We assume from now on that $F$ is an exact functor.

\smallskip

\noindent Under this assumption, $\mathcal{Z}(F)$ is a {\em finite} tensor category, see \cite[\S 3.3]{shimizu}\footnote{\cite{shimizu} assumes that $k$ is algebraically closed and that $\boldsymbol{1}$ is a simple object. Here we also assume that $\boldsymbol{1}$ is a simple object but not that $k$ is algebraically closed; instead we add the assumption that $\mathrm{End}_{\mathcal{C}}(\boldsymbol{1}) \cong k$. The fact that $\mathcal{Z}(F)$ is finite remains true under these assumptions.}.

\smallskip

\indent Let $\mathcal{U} : \mathcal{Z}(F) \to \mathcal{D}$ be the forgetful functor defined by $\mathcal{U}(V,\rho^V) = V$, $\mathcal{U}(f) = f$. Let us recall the explicit construction of its left adjoint $\mathcal{F} : \mathcal{D} \to \mathcal{Z}(F)$ (\cite{DS, BV, shimizu}, here we use the conventions of \cite[\S 3.3]{GHS}). First, consider
\begin{equation}\label{ZFcoend}
Z_F(V) = \int^{X \in \mathcal{C}} F(X)^{\vee} \otimes V \otimes F(X)
\end{equation}
and denote by 
\[ i_X(V) : F(X)^{\vee} \otimes V \otimes F(X) \to Z_F(V) \]
the associated universal dinatural transformation. The coend $Z_F(V)$ exists for all $V$ by exactness of $F$ and of $\otimes$, thanks to \cite[Cor.\,5.1.8.]{KL}. Moreover, $Z_F$ gives a functor $\mathcal{D} \to \mathcal{D}$; for a morphism $f : V \to W$, $Z_F(f)$ is defined to be the unique morphism such that
\begin{equation}\label{defZFf}
Z_F(f) \, i_X(V) = i_X(W) \bigl(\mathrm{id}_{F(X)^{\vee}} \otimes f \otimes \mathrm{id}_{F(X)}\bigr)
\end{equation}
for all $X \in \mathcal{C}$. Here we use the universality of the coend, the right hand side of \eqref{defZFf} being dinatural in $X$. In the sequel we often implicitly use universality of the coend to define morphisms. For any $Y \in \mathcal{C}$, we define $\rho^{Z_F(V)}_Y : Z_F(V) \otimes F(Y) \to F(Y) \otimes Z_F(V)$ as the unique morphism in $\mathcal{D}$ such that
\begin{equation}\label{defRhoZF}
\rho_Y^{Z_F(V)} \bigl( i_X(V) \otimes \mathrm{id}_{F(Y)} \bigr) = \bigl( \mathrm{id}_{F(Y)} \otimes i_{X \otimes Y}(V) \bigr) \bigl( \mathrm{coev}_{F(Y)} \otimes \mathrm{id}_{F(X)^{\vee} \otimes V \otimes F(X) \otimes F(Y)} \bigr)
\end{equation}
for all $X \in \mathcal{C}$ (again we use the universality of the coend). Then one can show that $\rho^{Z_F(V)}$ is a half-braiding relative to $F$ and that the left adjoint to $\mathcal{U}$ is given by
\[ \mathcal{F}(V) = \bigl( Z_F(V), \rho^{Z_F(V)} \bigr). \]
The forgetful functor $\mathcal{U}$ is clearly additive, exact and faithful, and it has the left adjoint $\mathcal{F}$, so we have a resolvent pair of categories:
\begin{equation}\label{relSystemZF}
\xymatrix@R=.7em{
\mathcal{Z}(F)\ar@/^.7em/[dd]^{\mathcal{U}}\\
\dashv\\
\ar@/^.7em/[uu]^{\mathcal{F}}\mathcal{D}
}
\end{equation}
Since $\mathcal{U} : \mathcal{Z}(F) \to \mathcal{D}$ is obviously monoidal, Proposition \ref{relProjTensorIdeal} applies:
\begin{corollary}
The relatively projective objects in $\mathcal{Z}(F)$ form a tensor ideal.
\end{corollary}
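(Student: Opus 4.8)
The plan is to obtain the statement as a direct instance of Proposition \ref{relProjTensorIdeal}, applied to the resolvent pair \eqref{relSystemZF}. That proposition assumes a \emph{monoidal} resolvent pair, i.e.\ a resolvent pair $\mathcal{A} \leftrightarrows \mathcal{B}$ in which $\mathcal{A}, \mathcal{B}$ are tensor categories and the right adjoint $\mathcal{U}$ is exact, faithful, linear and monoidal. So all that is needed is to verify that these hypotheses hold for $\mathcal{A} = \mathcal{Z}(F)$, $\mathcal{B} = \mathcal{D}$ and the forgetful functor $\mathcal{U}$.

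First I would recall that $\mathcal{D}$ is a finite tensor category by our standing assumptions, and that $\mathcal{Z}(F)$ is a finite tensor category precisely because $F$ is assumed exact (as recalled above, following \cite[\S 3.3]{shimizu}). The adjunction \eqref{relSystemZF} is genuinely a resolvent pair, since $\mathcal{U}$ is additive, exact and faithful and admits the left adjoint $\mathcal{F}$ furnished by the coend construction \eqref{ZFcoend}.

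It then remains to check that $\mathcal{U}$ is monoidal, which is the only point requiring a (brief) argument. This is immediate from the definition of the tensor structure on $\mathcal{Z}(F)$: since $(V,\rho^V) \otimes (W,\rho^W) = (V \otimes W, \rho^{V \otimes W})$ and the tensor unit of $\mathcal{Z}(F)$ lies over $\mathbf{1} \in \mathcal{D}$, the functor $\mathcal{U}$ sends $\otimes$ to $\otimes$ and the unit to the unit on the nose, so it is in fact \emph{strictly} monoidal. With all hypotheses of Proposition \ref{relProjTensorIdeal} verified, that proposition yields at once that the relatively projective objects of $\mathcal{Z}(F)$ form a tensor ideal (and moreover one stable under both dualities $\mathsf{P} \mapsto \mathsf{P}^{\vee}$ and $\mathsf{P} \mapsto {^{\vee}\mathsf{P}}$). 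Since the argument is purely a matter of matching the hypotheses, there is no real obstacle here; the only content is the observation that the forgetful functor strictly preserves the monoidal structure.
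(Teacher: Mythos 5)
Your proposal is correct and is exactly the paper's argument: the paper deduces the corollary in one line by noting that the forgetful functor $\mathcal{U} : \mathcal{Z}(F) \to \mathcal{D}$ is (strictly) monoidal for the adjunction \eqref{relSystemZF}, so that Proposition \ref{relProjTensorIdeal} applies. Your additional checks (that $\mathcal{Z}(F)$ is a finite tensor category by exactness of $F$, and that the adjunction is a resolvent pair) merely spell out the standing hypotheses already established before the corollary in the paper.
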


\smallskip

\indent In the sequel we will use intensively the bar resolution \eqref{barResolutionG} (or equivalently \eqref{relativeBarResolution}) for the adjunction \eqref{relSystemZF}, so in the rest of this section we describe it in detail. First, let $G$ be the comonad on $\mathcal{Z}(F)$ for this adjunction:
\begin{equation}\label{defComonadOnZF}
G = \mathcal{F}\mathcal{U} : \mathcal{Z}(F) \to \mathcal{Z}(F), \quad \:\:  G(\mathsf{V}) = \bigl(Z_F(V), \rho^{Z_F(V)} \bigr), \quad \:\: G(f) = Z_F(f)
\end{equation}
where $\mathsf{V} = (V, \rho^V) \in \mathcal{Z}(F)$ and $f$ is a morphism (recall that a morphism $f \in \Hom_{\mathcal{Z}(F)}(\mathsf{V}, \mathsf{W})$ is just a morphism $f \in \Hom_{\mathcal{D}}(V,W)$ which commutes with the half-braidings $\rho^V, \rho^W$). For any object $\mathsf{V}$, let $\varepsilon_{\mathsf{V}} \in \Hom_{\mathcal{D}}\bigl( Z_F(V), V \bigr)$ be the unique morphism such that
\begin{equation}\label{defCounitG}
\varepsilon_{\mathsf{V}} \, i_X(V) = \bigl( \mathrm{ev}_{F(X)} \otimes \mathrm{id}_V \bigr) \bigl( \mathrm{id}_{F(X)^{\vee}} \otimes \rho^V_X \bigr). 
\end{equation}
One can show that $\varepsilon_{\mathsf{V}} \in \Hom_{\mathcal{Z}(F)}\bigl( G(\mathsf{V}), \mathsf{V} \bigr)$ and that $\varepsilon : G \to \mathrm{Id}$ is the counit of $G$.

\smallskip

\indent The chain objects of the bar resolution of $\mathsf{V}$ are
\begin{equation*}
\mathrm{Bar}^n_{\mathcal{Z}(F), \mathcal{D}}(\mathsf{V}) = \mathrm{Bar}^n_G(\mathsf{V}) = G^{n+1}(\mathsf{V}) = \biggl( Z_F^{n+1}(V), \rho^{Z_F^{n+1}(V)} \biggr)
\end{equation*}
for $\mathsf{V} = (V,\rho^V) \in \mathcal{Z}(F)$, and where the first equality simply reminds the two possible notations. Using the Fubini theorem for coends \cite[Prop.\,IX.8]{macLaneCategories}, we have
\[ Z_F^n(V) = \int^{(X_1, \ldots, X_n) \in \mathcal{C}^n} F(X_n)^{\vee} \otimes \ldots \otimes F(X_1)^{\vee} \otimes V \otimes F(X_1) \otimes \ldots \otimes F(X_n) \]
with the universal dinatural transformation $i^{(n)}_{X_1, \ldots, X_n}(V)$ defined inductively by
\begin{align*}
i^{(n)}_{X_1, \ldots, X_n}(V) &= i_{X_n}\bigl( Z_F^{n-1}(V) \bigr) \bigl( \mathrm{id}_{F(X_n)^{\vee}} \otimes i^{(n-1)}_{X_1, \ldots, X_{n-1}}(V) \otimes \mathrm{id}_{F(X_n)} \bigr)\\
&= i^{(n-1)}_{X_2, \ldots, X_n}\bigl( Z_F(V) \bigr) \bigl( \mathrm{id}_{F(X_n)^{\vee} \otimes \ldots \otimes F(X_2)^{\vee}} \otimes i_{X_1}(V) \otimes \mathrm{id}_{F(X_2) \otimes \ldots \otimes F(X_n)} \bigr).
\end{align*}
We can characterize the bar differential $d^{\mathsf{V}}_n : \mathrm{Bar}_G^n(\mathsf{V}) \to \mathrm{Bar}_G^{n-1}(\mathsf{V})$ thanks to the universal dinatural transformations:
\begin{lemma}\label{lemmaBarDifferentialZF}
For the comonad $G$ on $\mathcal{Z}(F)$, the differential of the bar resolution for an object $\mathsf{V} = (V,\rho^V)$ satisfies
\begin{align*}
d^{\mathsf{V}}_n \, i^{(n+1)}_{X_1, \ldots, X_{n+1}}(V) =\:\, & i^{(n)}_{X_2, \ldots, X_{n+1}}(V) \bigl( \mathrm{id}_{F(X_{n+1})^{\vee} \otimes \ldots \otimes F(X_2)^{\vee}} \otimes \varepsilon_{\mathsf{V}}i_{X_1}(V) \otimes \mathrm{id}_{F(X_2) \otimes \ldots \otimes F(X_{n+1})}\bigr)\\
&+ \sum_{j=1}^n (-1)^j \, i^{(n)}_{X_1, \ldots, X_j \otimes X_{j+1}, \ldots, X_{n+1}}(V)
\end{align*}
where $\varepsilon_{\mathsf{V}}$ is the counit of $G$, defined in \eqref{defCounitG}.
\end{lemma}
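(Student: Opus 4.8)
The plan is to start from the general formula for the bar differential of a comonad, namely \eqref{barDifferentialG}, which for our comonad $G$ reads $d^{\mathsf{V}}_n = \sum_{i=0}^n (-1)^i G^{n-i}\bigl(\varepsilon_{G^i(\mathsf{V})}\bigr)$, and to evaluate each summand after precomposition with the universal dinatural transformation $i^{(n+1)}_{X_1,\dots,X_{n+1}}(V)$. Since a morphism out of the coend $Z_F^{n+1}(V)$ is determined by its composites with the $i^{(n+1)}$, it suffices to check the claimed identity after this precomposition. I expect the summand $i=0$ to produce the first term of the statement (the one carrying $\varepsilon_{\mathsf{V}} i_{X_1}(V)$), and each summand $1\le i\le n$ to produce the merging term $(-1)^i\, i^{(n)}_{X_1,\dots,X_i\otimes X_{i+1},\dots,X_{n+1}}(V)$; summing over $i$ and relabelling $j=i$ then yields exactly the asserted formula.

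For the term $i=0$, I would use the second inductive presentation of $i^{(n+1)}$, which isolates the innermost variable $X_1$ and exhibits $i^{(n+1)}_{X_1,\dots,X_{n+1}}(V)$ as $i^{(n)}_{X_2,\dots,X_{n+1}}\bigl(Z_F(V)\bigr)$ composed with $\mathrm{id}\otimes i_{X_1}(V)\otimes\mathrm{id}$. Since $G^n=Z_F^n$ on objects and $i^{(n)}$ is natural in its coefficient, applying $G^n(\varepsilon_{\mathsf{V}})$ and the naturality relation (the analogue of \eqref{defZFf} for $i^{(n)}$) moves $\varepsilon_{\mathsf{V}}$ inward, giving $i^{(n)}_{X_2,\dots,X_{n+1}}(V)\bigl(\mathrm{id}\otimes \varepsilon_{\mathsf{V}} i_{X_1}(V)\otimes\mathrm{id}\bigr)$, which is the first term. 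For a term with $1\le i\le n$, I would instead peel off the $n-i$ outermost layers: using the first inductive presentation of $i^{(n+1)}$ together with the naturality of $i^{(n-i)}$ in its coefficient (now a morphism $\varepsilon_{G^i(\mathsf{V})}\colon Z_F^{i+1}(V)\to Z_F^i(V)$), the outer variables $X_{i+2},\dots,X_{n+1}$ are simply transported along, and the computation reduces to the local identity $\varepsilon_{G^i(\mathsf{V})}\, i^{(i+1)}_{X_1,\dots,X_{i+1}}(V) = i^{(i)}_{X_1,\dots,X_i\otimes X_{i+1}}(V)$.

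This local identity is the technical heart. I would establish it by writing $i^{(i+1)}_{X_1,\dots,X_{i+1}}(V) = i_{X_{i+1}}\bigl(Z_F^i(V)\bigr)\bigl(\mathrm{id}_{F(X_{i+1})^{\vee}}\otimes i^{(i)}_{X_1,\dots,X_i}(V)\otimes\mathrm{id}_{F(X_{i+1})}\bigr)$, then applying the definition \eqref{defCounitG} of the counit at the object $G^i(\mathsf{V})=\bigl(Z_F^i(V),\rho^{Z_F^i(V)}\bigr)$, which introduces $\mathrm{ev}_{F(X_{i+1})}$ and the half-braiding $\rho^{Z_F^i(V)}_{X_{i+1}}$. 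Feeding $i^{(i)}_{X_1,\dots,X_i}(V)=i_{X_i}\bigl(Z_F^{i-1}(V)\bigr)(\cdots)$ into the defining relation \eqref{defRhoZF} of the half-braiding merges the outermost coend variable $X_i$ with $X_{i+1}$, producing $i_{X_i\otimes X_{i+1}}$ together with a $\mathrm{coev}_{F(X_{i+1})}$; the zig-zag identity then cancels this coevaluation against the $\mathrm{ev}_{F(X_{i+1})}$ coming from the counit, leaving precisely $i^{(i)}_{X_1,\dots,X_i\otimes X_{i+1}}(V)$ (this is the $i=1$ case applied with coefficient $Z_F^{i-1}(V)$). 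Reinserting the outer layers gives $i^{(n)}_{X_1,\dots,X_i\otimes X_{i+1},\dots,X_{n+1}}(V)$ with sign $(-1)^i$, as required. The main obstacle is purely organisational rather than conceptual: keeping track of the iterated coend structure and the long strings of identity tensor factors, and correctly identifying which adjacent pair $X_i,X_{i+1}$ is merged by $\rho^{Z_F^i(V)}$; once the bookkeeping is set up via the two inductive presentations of $i^{(n)}$ and the naturality/snake identities, each step is a routine verification.
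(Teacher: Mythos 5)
Your proposal is correct and follows exactly the route the paper indicates (and then skips): expand the bar differential \eqref{barDifferentialG} term by term against the universal dinatural transformation, using the definition \eqref{defZFf} of $Z_F$ on morphisms for the $i=0$ term and the definitions \eqref{defCounitG}, \eqref{defRhoZF} of the counit and half-braiding, plus the zig-zag identity, for the merging terms. Your local identity $\varepsilon_{G^i(\mathsf{V})}\, i^{(i+1)}_{X_1,\dots,X_{i+1}}(V) = i^{(i)}_{X_1,\dots,X_i\otimes X_{i+1}}(V)$ and the bookkeeping via the two inductive presentations of $i^{(n)}$ are precisely the details the paper omits, and they check out.
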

\begin{proof}
Recall the general definition of the bar differential in \eqref{barDifferentialG}. The result is obtained by a computation based on the definitions of the counit and of the value of $G$ on a morphism  (see \eqref{defComonadOnZF} and \eqref{defZFf}); we skip the details.
\end{proof}

\indent Let $\mathsf{V} = (V,\rho^V), \mathsf{W} = (W, \rho^W) \in \mathcal{Z}(F)$ and for $n \in \mathbb{N}$ let 
\[ \Psi^{\mathsf{V}, \mathsf{W}}_n \in \Hom_{\mathcal{D}}\big( Z_F^n(V \otimes W), Z_F^n(V) \otimes W \big) \]
be the unique morphism such that $\Psi^{\mathsf{V}, \mathsf{W}}_n \, i^{(n)}_{X_1, \ldots, X_n}(V \otimes W)$ is equal to the following dinatural transformation
\begin{align*}
&F(X_n)^{\vee} \otimes \ldots \otimes F(X_1)^{\vee} \otimes V \otimes W \otimes F(X_1) \otimes \ldots \otimes F(X_n)\\ &\xrightarrow{\mathrm{id}_{F(X_n)^{\vee} \otimes \ldots \otimes F(X_1)^{\vee} \otimes V} \otimes \rho^W_{X_1 \otimes \ldots \otimes X_n}} F(X_n)^{\vee} \otimes \ldots \otimes F(X_1)^{\vee} \otimes V \otimes F(X_1) \otimes \ldots \otimes F(X_n) \otimes W \\
&\xrightarrow{i^{(n)}_{X_1, \ldots, X_n}(V) \otimes \mathrm{id}_W} Z_F(V) \otimes W.
\end{align*}
\begin{proposition}\label{propGVtensW}
For each $n$, $\Psi^{\mathsf{V}, \mathsf{W}}_n$ is an isomorphism in $\mathcal{Z}(F)$:
\begin{equation*}
\Psi^{\mathsf{V}, \mathsf{W}}_n : G^n(\mathsf{V} \otimes \mathsf{W}) \overset{\sim}{\to} G^n(\mathsf{V}) \otimes \mathsf{W}.
\end{equation*}
Moreover, the family $\big(\Psi^{\mathsf{V}, \mathsf{W}}_n\big)_{n \in \mathbb{N}}$ is an isomorphism of complexes from $\mathrm{Bar}^{\bullet}_G(\mathsf{V} \otimes \mathsf{W})$ to $\mathrm{Bar}^{\bullet}_G(\mathsf{V}) \otimes \mathsf{W}$.
\end{proposition}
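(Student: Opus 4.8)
The plan is to reduce everything to the universal property of the coends $Z_F^n$ and to the two structural identities satisfied by the half-braiding of $\mathsf{W}$: its naturality and the hexagon identity \eqref{halfBraidingOnTensorProducts}. Throughout I use that tensoring a coend with a fixed object is again a coend (since $-\otimes W$ is a left adjoint in a finite tensor category and hence preserves colimits), so that $Z_F^n(V)\otimes W$ carries the universal dinatural family $i^{(n)}_{X_1,\ldots,X_n}(V)\otimes\mathrm{id}_W$; consequently a morphism out of $Z_F^n(V)\otimes W$ is determined by, and may be defined through, a dinatural family out of $F(X_n)^\vee\otimes\cdots\otimes V\otimes\cdots\otimes F(X_n)\otimes W$.

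First I would treat the base case $n=1$. I construct a candidate inverse $\Phi_1\colon Z_F(V)\otimes W\to Z_F(V\otimes W)$ by setting $\Phi_1\,(i_X(V)\otimes\mathrm{id}_W)=i_X(V\otimes W)\,(\mathrm{id}_{F(X)^\vee\otimes V}\otimes(\rho^W_X)^{-1})$; the right-hand side is dinatural in $X$ because of the dinaturality of $i(V\otimes W)$ combined with the naturality of $\rho^W$. Composing on universal dinatural families and cancelling $\rho^W_X(\rho^W_X)^{-1}$ and $(\rho^W_X)^{-1}\rho^W_X$ shows $\Psi_1\Phi_1=\mathrm{id}$ and $\Phi_1\Psi_1=\mathrm{id}$, so $\Psi_1$ is an isomorphism in $\mathcal{D}$. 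To see that it is a morphism in $\mathcal{Z}(F)$ I verify that it intertwines the half-braiding $\rho^{Z_F(V\otimes W)}$ of $G(\mathsf{V}\otimes\mathsf{W})$ with the tensor-product half-braiding $(\rho^{Z_F(V)}\otimes\mathrm{id}_W)(\mathrm{id}_{Z_F(V)}\otimes\rho^W)$ of $G(\mathsf{V})\otimes\mathsf{W}$; this is a direct computation that evaluates both composites on $i_X(V\otimes W)\otimes\mathrm{id}_{F(Y)}$, expands the two half-braidings through formula \eqref{defRhoZF}, and uses \eqref{halfBraidingOnTensorProducts} to recombine the factors.

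For general $n$ I would avoid repeating the coend bookkeeping by establishing the recursion $\Psi^{\mathsf{V},\mathsf{W}}_n=\Psi^{G^{n-1}(\mathsf{V}),\mathsf{W}}_1\circ G(\Psi^{\mathsf{V},\mathsf{W}}_{n-1})$. One proves this by evaluating both sides on $i^{(n)}_{X_1,\ldots,X_n}(V\otimes W)$, using the inductive description of $i^{(n)}$, the defining formula \eqref{defZFf} for $G$ on morphisms, the definitions of $\Psi_{n-1}$ and $\Psi_1$, and finally the hexagon identity \eqref{halfBraidingOnTensorProducts} to combine the inner half-braiding $\rho^W_{X_1\otimes\cdots\otimes X_{n-1}}$ (coming from $\Psi_{n-1}$) with the outer one $\rho^W_{X_n}$ (coming from $\Psi_1$) into $\rho^W_{X_1\otimes\cdots\otimes X_n}$. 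Since $\Psi_1$ is an isomorphism in $\mathcal{Z}(F)$ for every pair of objects, since $G$ preserves isomorphisms and morphisms of $\mathcal{Z}(F)$, and since $\Psi_{n-1}$ is such by induction, it follows that each $\Psi_n$ is an isomorphism in $\mathcal{Z}(F)$.

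Finally, for the chain-map statement I record two properties of $\Psi_1$: it is natural in $\mathsf{V}$ (a one-line coend computation), and it intertwines the counits, $(\varepsilon_{\mathsf{V}}\otimes\mathrm{id}_W)\circ\Psi_1=\varepsilon_{\mathsf{V}\otimes\mathsf{W}}$. The latter I check on $i_X(V\otimes W)$ using \eqref{defCounitG} and the fact that the tensor-product half-braiding on $\mathsf{V}\otimes\mathsf{W}$ is $(\rho^V_X\otimes\mathrm{id}_W)(\mathrm{id}_V\otimes\rho^W_X)$, whereupon the two expressions coincide on the nose. Because every bar differential is the alternating sum $\sum_i(-1)^iG^{n-i}(\varepsilon_{G^i(-)})$, naturality and counit-compatibility of $\Psi_1$ propagate through the whiskering by $G$, so each square $(d^{\mathsf{V}}_n\otimes\mathrm{id}_W)\,\Psi_{n+1}=\Psi_n\,d^{\mathsf{V}\otimes\mathsf{W}}_n$ commutes; equivalently one checks this square termwise against Lemma \ref{lemmaBarDifferentialZF}, where the merging terms $i^{(n)}_{\ldots,X_j\otimes X_{j+1},\ldots}$ match trivially (the string $X_1\otimes\cdots\otimes X_{n+1}$ is unchanged) and the counit term is settled by the identity just mentioned. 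The main obstacle throughout is purely bookkeeping: tracking where $W$ sits relative to the long tensor strings and ensuring that each use of \eqref{halfBraidingOnTensorProducts} recombines exactly the adjacent half-braidings; the single most delicate point is the verification that $\Psi_1$ respects the half-braidings, where both \eqref{defRhoZF} and \eqref{halfBraidingOnTensorProducts} are needed simultaneously.
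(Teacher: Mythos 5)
Your proposal is correct and follows essentially the same route as the paper's proof: the same explicit inverse for $\Psi_1^{\mathsf{V},\mathsf{W}}$ built from $(\rho^W_X)^{-1}$ via the universal property of the coend, the same verification that $\Psi_1$ intertwines the half-braidings using \eqref{defRhoZF} and \eqref{halfBraidingOnTensorProducts}, and the same recursion $\Psi_{n+1}^{\mathsf{V},\mathsf{W}} = \Psi_1^{G^n(\mathsf{V}),\mathsf{W}} \circ G\bigl(\Psi_n^{\mathsf{V},\mathsf{W}}\bigr)$ driving the induction. Your treatment of the chain-map property (naturality in $\mathsf{V}$ and counit-compatibility of $\Psi_1$, propagated through whiskering and checked termwise against Lemma \ref{lemmaBarDifferentialZF}) simply fills in the computation that the paper leaves to the reader.
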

\begin{proof}
Let us first show that $\Psi_1^{\mathsf{V}, \mathsf{W}}$ is an isomorphism in $\mathcal{Z}(F)$. The inverse of $\Psi_1^{\mathsf{V}, \mathsf{W}}$ is simply the unique morphism in $\Hom_{\mathcal{D}}\bigl( Z_F(V) \otimes W, Z_F(V \otimes W) \bigr)$ determined by the following dinatural transformation:
\[ F(X)^{\vee} \otimes V \otimes F(X) \otimes W \xrightarrow{\mathrm{id}_{F(X)^{\vee}} \otimes (\rho^W_X)^{-1}} F(X)^{\vee} \otimes V \otimes W \otimes F(X) \xrightarrow{i_X(V \otimes W)} Z_F(V \otimes W). \]
A morphism in $\mathcal{Z}(F)$ is a morphism in $\mathcal{D}$ which commutes with the half-braidings, so we have to check that the following diagram is commuative:
\begin{equation}\label{diagramPsiAndHalfBraidings}
\xymatrix@C=6em{
Z_F(V \otimes W) \otimes F(Y) \ar[r]^{\rho^{Z_F(V \otimes W)}_Y} \ar[d]_{\Psi_1^{\mathsf{V}, \mathsf{W}} \otimes \mathrm{id}_{F(Y)}}& F(Y) \otimes Z_F(V \otimes W) \ar[d]^{\mathrm{id}_{F(Y)} \otimes \Psi_1^{\mathsf{V}, \mathsf{W}}}\\
Z_F(V) \otimes W \otimes F(Y) \ar[r]_{\rho^{Z_F(V) \otimes W}_Y}& F(Y) \otimes Z_F(V) \otimes W
}\end{equation}
for all $Y \in \mathcal{C}$. The computation is displayed in Figure \ref{proofPsiCommutesHalfBraidings}. The first equality uses the definition of $\Psi_1^{\mathsf{V}, \mathsf{W}}$ and $\rho^{Z_F(V) \otimes W}_Y = \big(\rho^{Z_F(V)}_Y \otimes \mathrm{id}_W\big) \big(\mathrm{id}_{Z_F(V)} \otimes \rho^W_Y\big)$, the second equality uses the definition of $\rho^{Z_F(V)}$ (see \eqref{defRhoZF}) and $(\mathrm{id}_{F(X)} \otimes \rho^W_Y)(\rho^W_X \otimes \mathrm{id}_{F(Y)}) = \rho^W_{X \otimes Y}$, and the third and fourth equalities are the definitions of $\Psi_1^{\mathsf{V}, \mathsf{W}}$ and $\rho^{Z_F(V \otimes W)}$ respectively. Since this holds for any $X \in \mathcal{C}$, the diagram is commutative. Now for general $n$, it is not difficult to show that $\Psi_{n+1}^{\mathsf{V}, \mathsf{W}}$ can be constructed as follows:
\[ Z_F^{n+1}(V \otimes W) \xrightarrow{Z_F(\Psi_n^{\mathsf{V},\mathsf{W}})} Z_F\big( Z_F^n(V) \otimes W \big) \xrightarrow{\Psi_1^{Z_F^n(V),W}} Z_F^{n+1}(V) \otimes W. \]
Hence by induction we obtain that $\Psi_n^{\mathsf{V}, \mathsf{W}}$ is an isomorphism in $\mathcal{Z}(F)$ for all $n$. To prove the last claim, we must check that
\[ \xymatrix@C=4em{
G^{n+1}(\mathsf{V} \otimes \mathsf{W}) \ar[r]^{d^{\mathsf{V \otimes W}}_n} \ar[d]_{\Psi_{n+1}^{\mathsf{V}, \mathsf{W}}}& G^n(\mathsf{V} \otimes \mathsf{W}) \ar[d]^{\Psi_n^{\mathsf{V},\mathsf{W}}}\\
G^{n+1}(\mathsf{V}) \otimes \mathsf{W} \ar[r]^{d^{\mathsf{V}}_n \otimes \mathrm{id}_{\mathsf{W}}} & G^n(\mathsf{V}) \otimes \mathsf{W}
} \]
is commutative, where $d^{\mathsf{V} \otimes \mathsf{W}}, d^{\mathsf{V}}$ are the bar differentials for $\mathsf{V} \otimes \mathsf{W}$ and $\mathsf{V}$ respectively. This follows from a straightforward computation using Lemma \ref{lemmaBarDifferentialZF} and is left to the reader.
\begin{figure}
\centering
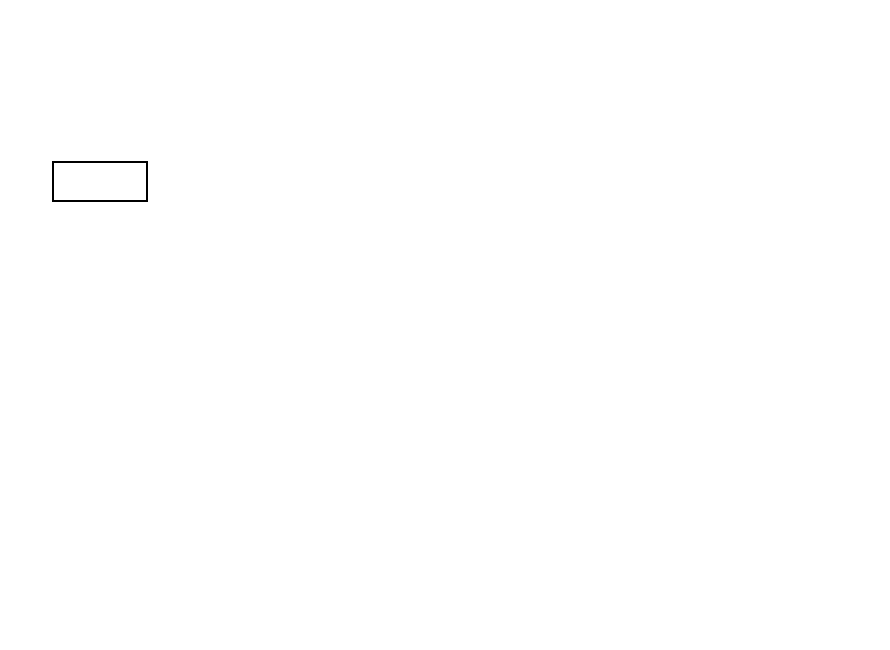
\caption{Proof of the commutation of the diagram \eqref{diagramPsiAndHalfBraidings}.}
\label{proofPsiCommutesHalfBraidings}
\end{figure}
\end{proof}

\begin{remark}
One can similarly construct isomorphisms $G^n(\mathsf{V} \otimes \mathsf{W}) \overset{\sim}{\to} \mathsf{V} \otimes G^n(\mathsf{W})$ which yield an isomorphism of complexes from $\mathrm{Bar}^{\bullet}_G(\mathsf{V} \otimes \mathsf{W})$ to $ \mathsf{V} \otimes \mathrm{Bar}^{\bullet}_G(\mathsf{W})$.
\end{remark}

\indent Recall that for coefficients $\mathsf{V}, \mathsf{W}$, the complex associated to the bar resolution of $\mathsf{V}$ is
\[ \mathrm{Bar}^n_{\mathcal{Z}(F), \mathcal{D}}(\mathsf{V}, \mathsf{W}) = \Hom_{\mathcal{Z}(F)}\!\big( \mathrm{Bar}^n_G(\mathsf{V}), \mathsf{W} \big) = \Hom_{\mathcal{Z}(F)}\!\big( G^{n+1}(\mathsf{V}), \mathsf{W} \big). \]
The isomorphism in Proposition \ref{propGVtensW} allows us to write down explicitly the first isomorphism from Corollary \ref{switchFormulaCoeffsRelExt} (and a similar description applies for the second). Indeed,
\[ \Hom_{\mathcal{Z}(F)}\!\big( G^{n+1}(\mathsf{V}), \mathsf{W} \big) \overset{\sim}{\longrightarrow} \Hom_{\mathcal{Z}(F)}\!\big( G^{n+1}(\mathsf{V}) \otimes {^{\vee}\mathsf{W}}, \boldsymbol{1} \big)  \xrightarrow{(\Psi^{\mathsf{V}, {^{\vee}\!\mathsf{W}}}_{n+1})^{*}} \Hom_{\mathcal{Z}(F)}\!\big( G^{n+1}(\mathsf{V} \otimes {^{\vee}\mathsf{W}}), \boldsymbol{1} \big) \]
is an isomorphism and due to the naturality of the first arrow and to Proposition \ref{propGVtensW} it yields an isomorphism of complexes
\begin{equation}\label{isoXiDualBar}
\flecheIso{\mathrm{Bar}^n_{\mathcal{Z}(F), \mathcal{D}}(\mathsf{V},\mathsf{W})}{\mathrm{Bar}^n_{\mathcal{Z}(F), \mathcal{D}}(\mathsf{V} \otimes {^{\vee}\mathsf{W}}, \boldsymbol{1})}{\alpha}{\widetilde{\mathrm{ev}}_W(\alpha \otimes \mathrm{id}_{^{\vee}\!W})\Psi_{n+1}^{\mathsf{V}, {^{\vee}\!\mathsf{W}}}}
\end{equation}
which in turn gives an isomorphism $\Ext^{\bullet}_{\mathcal{Z}(F), \mathcal{D}}(\mathsf{V},\mathsf{W}) \overset{\sim}{\to} \Ext^{\bullet}_{\mathcal{Z}(F), \mathcal{D}}(\mathsf{V} \otimes {^{\vee}\mathsf{W}}, \boldsymbol{1})$.

\subsection{Relation to Davydov-Yetter cohomology}\label{sectionRelationDYCohomology}
We recall the Davydov--Yetter (DY) cochain complex with coefficients, as it was introduced in \cite{GHS}:
\begin{itemize}
\item Let $\mathsf{U} = (U,\rho^U), \mathsf{V} = (V,\rho^V) \in \mathcal{Z}(F)$. The $n$-th Davydov--Yetter cochain space of $F$ with coefficients $\mathsf{U}, \mathsf{V}$, denoted by $C^n_{\mathrm{DY}}(F,\mathsf{U},\mathsf{V})$, is the space of all natural transformations $f$ of the form
\[ f_{X_1,\ldots,X_n} : U \otimes F(X_1) \otimes \ldots \otimes F(X_n) \to F(X_1) \otimes \ldots \otimes F(X_n) \otimes V. \]
The case $n=0$ is $C^0_{\mathrm{DY}}(F,\mathsf{U},\mathsf{V}) = \Hom_{\mathcal{D}}(U,V)$.
\item The Davydov--Yetter differential $\delta^n : C^n_{\mathrm{DY}}(F,\mathsf{U},\mathsf{V}) \to C^{n+1}_{\mathrm{DY}}(F,\mathsf{U},\mathsf{V})$ is defined by
\begin{equation}\label{defDYdifferential}
\begin{array}{rl}
\displaystyle \delta^n(f)_{X_0, \ldots, X_n} = &\displaystyle \!\!\!(\mathrm{id}_{F(X_0)} \otimes f_{X_1, \ldots, X_n})(\rho^U_{X_0} \otimes \mathrm{id}_{F(X_1) \, \otimes \ldots \otimes F(X_n)})\\[.3em]
&\displaystyle \!\!\!+ \: \sum_{i=1}^n (-1)^i f_{X_0, \ldots, X_{i-1} \otimes X_i, \ldots, X_n}\\[1.2em]
&\displaystyle \!\!\!+ \:(-1)^{n+1} (\mathrm{id}_{F(X_0) \otimes \ldots \otimes F(X_{n-1})} \otimes \rho^V_{X_n})(f_{X_0, \ldots, X_{n-1}} \otimes \mathrm{id}_{F(X_n)}).
\end{array}
\end{equation}
The case $n=0$ is $\delta^0(f)_{X_0} = (\mathrm{id}_{F(X_0)} \otimes f)\rho^U_{X_0} - \rho^V_{X_0}(f \otimes \mathrm{id}_{F(X_0)})$.
\item The $n$-th Davydov--Yetter cohomology\footnote{We will also use the shortand ``DY cohomology''.} group of $F$ with coefficients $\mathsf{U}, \mathsf{V}$, denoted by $H^n_{\mathrm{DY}}(F,\mathsf{U},\mathsf{V})$, is $\ker(\delta^n)/\mathrm{im}(\delta^{n-1})$. The case $n=0$ is $H^0_{\mathrm{DY}}(F,\mathsf{U},\mathsf{V}) = \ker(\delta^0)$.
\item If coefficients are trivial (\textit{i.e.}\,$\mathsf{U} = \mathsf{V} = \mathbf{1}$), then we write simply $C^n_{\mathrm{DY}}(F)$ and $H^n_{\mathrm{DY}}(F)$. If the functor $F$ is the identity functor $\mathrm{Id}_{\mathcal{C}}$, then we write $C^n_{\mathrm{DY}}(\mathcal{C},\mathsf{U},\mathsf{V})$ and $H^n_{\mathrm{DY}}(\mathcal{C},\mathsf{U},\mathsf{V})$. Finally, $C^n_{\mathrm{DY}}(\mathcal{C})$ and $H^n_{\mathrm{DY}}(\mathcal{C})$ mean respectively the DY cochains and DY cohomology of the identity functor with trivial coefficients.
\end{itemize}

\indent Let $G = \mathcal{F}\mathcal{U}$ be the comonad on $\mathcal{Z}(F)$ associated to the adjunction \eqref{relSystemZF} discussed in the previous subsection and recall that the bar complex of a comonad is defined in \eqref{barComplexCoefficients}. We have the key result:
\begin{theorem}\label{thGHS}{\em \cite[Thm.\,3.11]{GHS}}
Let $\mathsf{U} = (U,\rho^U), \mathsf{V} = (V,\rho^V) \in \mathcal{Z}(F)$. Under the previous assumptions, there is an isomorphism of cochain complexes
\[ C^{\bullet}_{\mathrm{DY}}(F,\mathsf{U},\mathsf{V}) \cong \mathrm{Bar}^{\bullet}_G\bigl(\mathsf{U},\mathrm{Hom}_{\mathcal{Z}(F)}(?,\mathsf{V})\bigr). \]
It follows that \[H^n_{\mathrm{DY}}(F,\mathsf{U},\mathsf{V}) \cong H^n_G\bigl(\mathsf{U},\mathrm{Hom}_{\mathcal{Z}(F)}(?,\mathsf{V})\bigr).\]
\end{theorem}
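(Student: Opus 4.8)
The plan is to exhibit an explicit degreewise isomorphism $\Phi^\bullet : C^\bullet_{\mathrm{DY}}(F,\mathsf{U},\mathsf{V}) \to \mathrm{Bar}^\bullet_G(\mathsf{U},\Hom_{\mathcal{Z}(F)}(?,\mathsf{V}))$ and then verify that it is a chain map; the cohomology statement follows at once by passing to cohomology. Recall that $\mathrm{Bar}^n_G(\mathsf{U},\Hom_{\mathcal{Z}(F)}(?,\mathsf{V})) = \Hom_{\mathcal{Z}(F)}(G^{n+1}(\mathsf{U}),\mathsf{V})$ by \eqref{barComplexCoefficients}. First I would factor $\Phi^n$ through the intermediate space $\Hom_{\mathcal{D}}(Z_F^n(U),V)$. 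The first leg is a rigidity/coend uncurrying: given a DY $n$-cochain $f_{X_1,\dots,X_n} : U \otimes F(X_1)\otimes\cdots\otimes F(X_n) \to F(X_1)\otimes\cdots\otimes F(X_n)\otimes V$, I tensor on the left with $F(X_n)^\vee\otimes\cdots\otimes F(X_1)^\vee$, apply $\mathrm{id}\otimes f_{X_1,\dots,X_n}$, and cap the dual legs against the output legs with the evaluations $\mathrm{ev}_{F(X_i)}$, producing a family $F(X_n)^\vee\otimes\cdots\otimes F(X_1)^\vee\otimes U\otimes F(X_1)\otimes\cdots\otimes F(X_n)\to V$ that is dinatural in $(X_1,\dots,X_n)$ and hence factors uniquely through the coend $Z_F^n(U)$ of \eqref{ZFcoend}. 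This is a bijection because the rigidity adjunction turns natural families into dinatural ones and the coend universal property turns dinatural families into morphisms. The second leg is the adjunction isomorphism $\mathcal{F}\dashv\mathcal{U}$ of the resolvent pair \eqref{relSystemZF}: since $Z_F^n(U) = \mathcal{U}(G^n(\mathsf{U}))$ and $\mathcal{F}\mathcal{U}(G^n(\mathsf{U})) = G^{n+1}(\mathsf{U})$, it gives $\Hom_{\mathcal{D}}(Z_F^n(U),V) \cong \Hom_{\mathcal{Z}(F)}(G^{n+1}(\mathsf{U}),\mathsf{V})$. The degree discrepancy between the $n$-cochain (with $n$ arguments) and $G^{n+1}$ (with $n+1$ coend legs) is exactly absorbed by this adjunction.

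Being a composite of two isomorphisms, each $\Phi^n$ is bijective, so the entire content reduces to showing that $\Phi^\bullet$ is a chain map, i.e. that $\Phi^{n+1}\circ\delta^n = d_{n+1}^*\circ\Phi^n$, where $\delta^n$ is the DY differential \eqref{defDYdifferential} and $d_{n+1}^*$ is precomposition with the bar differential computed in Lemma \ref{lemmaBarDifferentialZF}. The strategy is a term-by-term comparison: both $\delta^n(f)$ and $d_{n+1}$ are alternating sums of $n+2$ terms. The $n$ interior ``merging'' terms $(-1)^i f_{X_0,\dots,X_{i-1}\otimes X_i,\dots,X_n}$ of $\delta^n$ should match the interior terms $(-1)^j i^{(n+1)}_{X_1,\dots,X_j\otimes X_{j+1},\dots}$ of Lemma \ref{lemmaBarDifferentialZF} (up to the evident reindexing coming from the order reversal of the dual legs), since under the uncurrying both simply record the tensoring of two adjacent $F$-legs, an operation already visible on the universal dinatural transformations $i^{(n)}$. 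The two remaining boundary terms, each carrying a half-braiding, are the real work.

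The hard part will be matching these two boundary terms. For the first term of $\delta^n$, namely $(\mathrm{id}_{F(X_0)}\otimes f_{X_1,\dots,X_n})(\rho^U_{X_0}\otimes\mathrm{id})$, I would unwind the comonad counit via \eqref{defCounitG}: since $\varepsilon_{\mathsf{U}}\,i_{X_0}(U) = (\mathrm{ev}_{F(X_0)}\otimes\mathrm{id}_U)(\mathrm{id}_{F(X_0)^\vee}\otimes\rho^U_{X_0})$, precomposing $\Phi^n(f)$ with the counit term of the bar differential produces precisely the $\rho^U_{X_0}$-twisted uncurrying, so the counit term corresponds unambiguously to the $\rho^U$-term of $\delta^n$ (it is intrinsically tied to the half-braiding of the source $\mathsf{U}$). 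For the last term $(-1)^{n+1}(\mathrm{id}\otimes\rho^V_{X_n})(f_{X_0,\dots,X_{n-1}}\otimes\mathrm{id})$, the half-braiding $\rho^V$ of the target enters through the interaction of the adjunction with the $\mathcal{Z}(F)$-structure of $\mathsf{V}$ together with the definition \eqref{defRhoZF} of $\rho^{Z_F(V)}$; concretely, I expect the boundary merging term opposite to the counit to pick up exactly the $\rho^V_{X_n}$ factor after passing through the adjunction. Verifying these two identifications cleanly is a graphical coend computation of the same flavor as in the proof of Proposition \ref{propGVtensW} — routine but bookkeeping-heavy — and it is the place where all the structure (counit \eqref{defCounitG}, half-braiding \eqref{defRhoZF}, and the adjunction unit and counit) must be reconciled. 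Once the chain-map property is established, $\Phi^\bullet$ is an isomorphism of complexes, and $H^n_{\mathrm{DY}}(F,\mathsf{U},\mathsf{V}) \cong H^n_G(\mathsf{U},\Hom_{\mathcal{Z}(F)}(?,\mathsf{V}))$ is immediate.
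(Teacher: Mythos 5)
Your proposal is correct and follows essentially the same route as the paper (which cites \cite[Thm.\,3.11]{GHS} and recalls the isomorphism rather than reproving it): your composite of coend uncurrying with the adjunction $\mathcal{F} \dashv \mathcal{U}$ is exactly the map $\Gamma$ of \eqref{isoDYG}--\eqref{defGamma}, where the $\rho^V_{X_{n+1}}$-plus-evaluation on the last leg in \eqref{defGamma} is precisely the adjunction counit \eqref{defCounitG} you invoke. Your term-by-term matching of the differentials, including the identification of the counit term with the $\rho^U$-term and of the last merging term (via $\rho^V_{X_n \otimes X_{n+1}} = (\mathrm{id} \otimes \rho^V_{X_{n+1}})(\rho^V_{X_n} \otimes \mathrm{id})$) with the $\rho^V$-term of $\delta^n$, is the same bookkeeping underlying Lemma \ref{lemmaBarDifferentialZF} and the cited proof.
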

\noindent Now, since the adjunction \eqref{relSystemZF} is a resolvent pair of categories, this theorem can be restated as follows, thanks to Proposition \ref{relExtAndComonadCohom}:
\begin{corollary}\label{DYrelExt}
With the same notations, there is an isomorphism of cochain complexes
\[ C^{\bullet}_{\mathrm{DY}}(F,\mathsf{U},\mathsf{V}) \cong \mathrm{Bar}^{\bullet}_{\mathcal{Z}(F),\mathcal{D}}(\mathsf{U},\mathsf{V}). \]
It follows that \[H^n_{\mathrm{DY}}(F,\mathsf{U},\mathsf{V}) \cong \Ext^n_{\mathcal{Z}(F),\mathcal{D}}(\mathsf{U},\mathsf{V}).\]
\end{corollary}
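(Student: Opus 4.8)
The plan is to obtain the statement by simply chaining together two results that are already in place: the isomorphism of cochain complexes in Theorem \ref{thGHS}, which identifies the DY complex with the bar complex of the comonad $G = \mathcal{F}\mathcal{U}$, and the equality of complexes in Proposition \ref{relExtAndComonadCohom}(3), which identifies that comonad bar complex with the relative bar complex. Since the heavy lifting is done by these two inputs, the role of the corollary is essentially to observe that the adjunction \eqref{relSystemZF} fits into the framework where both apply.

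First I would verify that the adjunction \eqref{relSystemZF} is a resolvent pair of abelian categories. This is immediate from what was recorded in \S\ref{sectionAdjunctionCentralizerFunctor}: under the standing exactness assumption on $F$, the category $\mathcal{Z}(F)$ is a finite tensor category, the forgetful functor $\mathcal{U}:\mathcal{Z}(F)\to\mathcal{D}$ is additive, exact and faithful, and $\mathcal{F}$ is its left adjoint. Hence the hypotheses of Proposition \ref{relExtAndComonadCohom} are satisfied with $\mathcal{A}=\mathcal{Z}(F)$, $\mathcal{B}=\mathcal{D}$ and $G=\mathcal{F}\mathcal{U}$.

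Next I would invoke Proposition \ref{relExtAndComonadCohom}(3) with $V=\mathsf{U}$ and $W=\mathsf{V}$ to get the equality of complexes
\[ \mathrm{Bar}^{\bullet}_G\bigl(\mathsf{U},\mathrm{Hom}_{\mathcal{Z}(F)}(?,\mathsf{V})\bigr) = \mathrm{Bar}^{\bullet}_{\mathcal{Z}(F),\mathcal{D}}(\mathsf{U},\mathsf{V}). \]
Composing this equality with the isomorphism of cochain complexes from Theorem \ref{thGHS} yields the asserted isomorphism $C^{\bullet}_{\mathrm{DY}}(F,\mathsf{U},\mathsf{V}) \cong \mathrm{Bar}^{\bullet}_{\mathcal{Z}(F),\mathcal{D}}(\mathsf{U},\mathsf{V})$. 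Taking cohomology in each degree then gives the isomorphism of Ext groups, because by Definition \ref{defRelExt} the cohomology of $\mathrm{Bar}^{\bullet}_{\mathcal{Z}(F),\mathcal{D}}(\mathsf{U},\mathsf{V})$ computes precisely $\Ext^n_{\mathcal{Z}(F),\mathcal{D}}(\mathsf{U},\mathsf{V})$ (the bar resolution being a legitimate relatively projective resolution, so the fundamental lemma guarantees the answer is independent of the chosen resolution).

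There is no genuine obstacle at the level of the corollary itself: all the content resides in the two cited inputs, and the argument is a one-line composition once the resolvent-pair hypothesis is checked. The only point deserving a moment's care is that verification, which is where the exactness of $F$ is used — it is exactly this assumption that makes $\mathcal{U}$ exact and $\mathcal{Z}(F)$ finite, so that \eqref{relSystemZF} is a bona fide resolvent pair and Proposition \ref{relExtAndComonadCohom} is applicable.
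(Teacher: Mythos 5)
Your proposal is correct and follows exactly the paper's own route: the corollary is obtained by composing the isomorphism of cochain complexes from Theorem \ref{thGHS} with the equality of complexes in Proposition \ref{relExtAndComonadCohom}, after noting that the adjunction \eqref{relSystemZF} is a resolvent pair (which is where the exactness of $F$ enters). Your additional remark that the bar resolution is a legitimate relatively projective resolution, so that its cohomology computes $\Ext^n_{\mathcal{Z}(F),\mathcal{D}}(\mathsf{U},\mathsf{V})$ independently of the choice of resolution, is precisely the content of Proposition \ref{relExtAndComonadCohom}(4) as used implicitly in the paper.
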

\noindent Note that this corollary is an extended version of the Theorem \ref{thIntroIsoDYRelExt} stated in the Introduction.

\smallskip

\indent Since we will need it in the sequel, we recall from \cite{GHS} the explicit form of the isomorphism of cochain complexes in Corollary \ref{DYrelExt}, which we denote by $\Gamma$:
\begin{equation}\label{isoDYG}
\forall \, n, \:\: \Gamma : C^n_{\mathrm{DY}}(F,\mathsf{U},\mathsf{V}) \overset{\sim}{\to} \mathrm{Bar}^n_{\mathcal{Z}(F),\mathcal{D}}(\mathsf{U},\mathsf{V}) = \Hom_{\mathcal{Z}(F)}\bigl( G^{n+1}(\mathsf{U}), \mathsf{V} \bigr).
\end{equation}
For $f \in C^n_{\mathrm{DY}}(F,\mathsf{U},\mathsf{V})$, $\Gamma(f) : G^{n+1}(\mathsf{U}) \to \mathsf{V}$ is the unique morphism such that
\begin{equation}\label{defGamma}
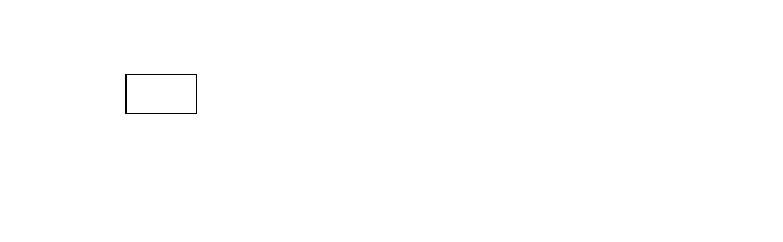
\end{equation}
where as before $\mathsf{U} = (U,\rho^U)$ and $\mathsf{V} = (V,\rho^V)$. Conversely, for $g \in \Hom_{\mathcal{Z}(F)}\bigl( G^{n+1}(\mathsf{U}), \mathsf{V} \bigr)$, the components of the natural transformation $\Gamma^{-1}(g)$ are given by
\begin{equation}\label{defGammaInverse}
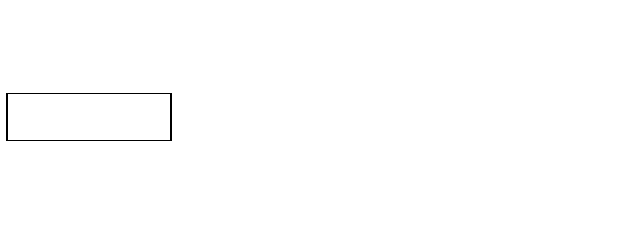
\end{equation}
where $\mathbf{1}$ is the tensor unit of $\mathcal{C}$. To show that $\Gamma$ and $\Gamma^{-1}$ are inverse to each other, one uses the following equality, which is an easy consequence of the definitions and which will also be used in the next section:
\begin{equation}\label{propertyHalfBraidingWihUnit}
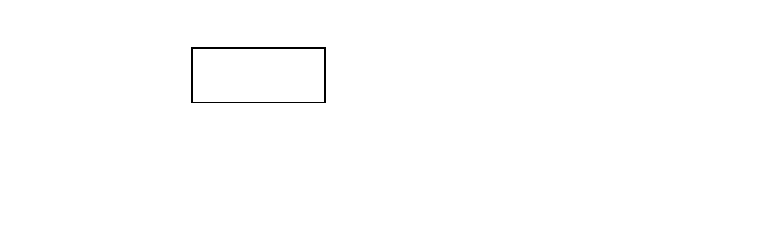
\end{equation}

\subsection{First consequences}\label{sectionFirstConsequences}
An easy consequence of Corollary \ref{DYrelExt} is:
\begin{proposition}\label{propH1Equals0}
Let $F : \mathcal{C} \to \mathcal{D}$ be an exact tensor functor. If $\mathrm{Ext}^1_{\mathcal{Z}(F)}(\boldsymbol{1},\boldsymbol{1}) = 0$ then $H^1_{\mathrm{DY}}(F) = 0$. In particular if the ground field $k$ of $\mathcal{C}$ and $\mathcal{D}$ has characteristic $0$ and is algebraically closed then $H^1_{\mathrm{DY}}(F) = 0$.
\end{proposition}
\begin{proof}
The first claim is due to \eqref{injectionRelExt1FullExt1}:
\[ H^1_{\mathrm{DY}}(F) \cong \Ext^1_{\mathcal{Z}(F),\mathcal{D}}(\boldsymbol{1},\boldsymbol{1}) \subset  \Ext^1_{\mathcal{Z}(F)}(\boldsymbol{1},\boldsymbol{1}). \]
The second claim follows from the fact that under these assumptions on $k$ the unit object $\boldsymbol{1}$ of a finite tensor category does not have non-trivial self-extensions \cite[Thm.\,4.4.1]{EGNO}.
\end{proof}
\begin{remark}\label{remarkFDerivations}
A cocycle in $C^1_{\mathrm{DY}}(F)$ is a $F$-derivation, that is an element $f \in \mathrm{Nat}(F,F)$ such that $f_{X \otimes Y} = f_X \otimes \mathrm{id}_{F(Y)} + \mathrm{id}_{F(X)} \otimes f_Y$. For trivial coefficients we have $\delta_0 = 0$, and thus $H^1_{\mathrm{DY}}(F) = \ker(\delta_1)$. Hence the proposition means that non-zero $F$-derivations do not exist under the above assumptions on the ground field $k$. Note that if $\mathcal{C} = H\text{-}\mathrm{mod}$ and if $F = U :\mathcal{C} \to \mathrm{Vect}_k$ is the forgetful functor it means that a finite-dimensional Hopf $k$-algebra $H$ does not have any non-zero primitive elements, thanks to the description of the DY complex for $U$ given in \eqref{DYForgetfulHModTrivialCoeffs} in section \ref{sectionFinDimHopfAlgebras} below. This fact is actually true for finite-dimensional bialgebras, see for example \cite[Cor.\,5.9.1]{EGNO}.
\end{remark}

\smallskip

\indent Next, combining Corollary \ref{DYrelExt} and Corollary \ref{switchFormulaCoeffsRelExt}, we obtain
\begin{equation}\label{AbstractIsosDualDY}
H^n_{\mathrm{DY}}(F; \mathsf{V}, \mathsf{W}) \cong H^n_{\mathrm{DY}}(F; \mathsf{V} \otimes {^{\vee}\mathsf{W}}, \boldsymbol{1}), \quad H^n_{\mathrm{DY}}(F; \mathsf{V}, \mathsf{W}) \cong H^n_{\mathrm{DY}}(F; \boldsymbol{1}, \mathsf{W} \otimes \mathsf{V}^{\vee})
\end{equation}
but even better we can transport the explicit isomorphism from \eqref{isoXiDualBar} through $\Gamma$, which defines
\begin{equation}\label{isoDualDY}
\forall \, n, \:\: \xi : C^n_{\mathrm{DY}}(F; \mathsf{V}, \mathsf{W}) \overset{\Gamma}{\longrightarrow} \mathrm{Bar}^n_{\mathcal{Z}(F), \mathcal{D}}(\mathsf{V},\mathsf{W}) \overset{\text{\eqref{isoXiDualBar}}}{\longrightarrow} \mathrm{Bar}^n_{\mathcal{Z}(F), \mathcal{D}}(\mathsf{V} \otimes {^{\vee}\mathsf{W}}, \boldsymbol{1}) \overset{\Gamma^{-1}}{\longrightarrow} C^n_{\mathrm{DY}}(F; \mathsf{V} \otimes {^{\vee}\mathsf{W}}, \boldsymbol{1}).
\end{equation}
Then for all $n$, $\xi$ descends to an isomorphism $\overline{\xi} : H^n_{\mathrm{DY}}(F; \mathsf{V}, \mathsf{W}) \cong H^n_{\mathrm{DY}}(F; \mathsf{V} \otimes {^{\vee}\mathsf{W}}, \boldsymbol{1})$. A similar description applies for the second isomorphism in \eqref{AbstractIsosDualDY}. A simple diagrammatic computation reveals that for $f \in C^n_{\mathrm{DY}}(F; \mathsf{V}, \mathsf{W})$, the components of the natural transformation $\xi(f)$ are
\begin{equation}\label{isoXiDualDY}
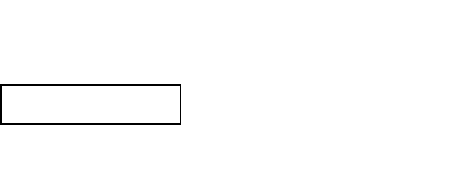
\end{equation}

\smallskip

\indent Finally, the dimension formulas from Corollaries \ref{CoroDimExtWithHom} and \ref{DimExt2WithHom} translate into dimension formulas for the DY cohomology groups:
\begin{corollary}\label{corollaryDimensionDYGroups}
Let \[ 0 \longrightarrow \mathsf{K} \overset{j}{\longrightarrow} \mathsf{P} \overset{\pi}{\longrightarrow} \mathbf{1} \longrightarrow 0, \qquad 0 \longrightarrow \mathsf{L} \overset{i}{\longrightarrow} \mathsf{Q} \overset{p}{\longrightarrow} \mathsf{V} \longrightarrow 0 \]
be allowable short exact sequences in $\mathcal{Z}(F)$ where $\mathsf{P}, \mathsf{Q}$ are relatively projective objects. Then for $n \geq 2$,
\[ \dim H_{\mathrm{DY}}^n(F;\mathsf{V},\mathsf{W}) = \dim \Hom_{\mathcal{Z}(F)}(\mathsf{K},\mathsf{M}) - \dim \Hom_{\mathcal{Z}(F)}(\mathsf{P},\mathsf{M}) + \dim \Hom_{\mathcal{Z}(F)}(\mathbf{1},\mathsf{M}) \]
where $\mathsf{M} = \mathsf{W} \otimes \mathsf{L}^{\vee} \otimes (\mathsf{K}^{\vee})^{\otimes (n-2)}$. In particular,
\begin{align*}
\dim H_{\mathrm{DY}}^n(F) = \dim \Hom_{\mathcal{Z}(F)}\bigl(\mathsf{K},(\mathsf{K}^{\vee})^{\otimes (n-1)}\bigr) &- \dim \Hom_{\mathcal{Z}(F)}\bigl(\mathsf{P},(\mathsf{K}^{\vee}\bigr)^{\otimes (n-1)}\bigr)\\
&+ \dim \Hom_{\mathcal{Z}(F)}\bigl(\mathbf{1},(\mathsf{K}^{\vee}\bigr)^{\otimes (n-1)}\bigr).
\end{align*}
If moreover $\mathsf{P}$ is the relatively projective cover (see Definition \ref{defRelProjCover}) of $\mathbf{1}$ and the ground field $k$ has characteristic $0$ and is algebraically closed, then
\[ \dim H_{\mathrm{DY}}^2(F) = \dim \Hom_{\mathcal{Z}(F)}\bigl(\mathsf{K},\mathsf{K}^{\vee}\bigr) - \dim \Hom_{\mathcal{Z}(F)}\bigl(\mathsf{P},\mathsf{K}^{\vee}\bigr). \]
\end{corollary}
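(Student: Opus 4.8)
The plan is to deduce these formulas by transporting the dimension formulas of Corollaries \ref{CoroDimExtWithHom} and \ref{DimExt2WithHom} to Davydov--Yetter cohomology through the isomorphism of Corollary \ref{DYrelExt}. First I would check that the standing hypotheses of \S\ref{sectionMonoidalResolventPairs} are met by the resolvent pair \eqref{relSystemZF}, i.e.\ with $\mathcal{A} = \mathcal{Z}(F)$ and $\mathcal{B} = \mathcal{D}$: the category $\mathcal{D}$ is a finite tensor category by assumption, $\mathcal{Z}(F)$ is a finite tensor category because $F$ is exact (as recalled in \S\ref{sectionAdjunctionCentralizerFunctor}), and the forgetful functor $\mathcal{U}$ is exact, faithful and monoidal. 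Hence \eqref{relSystemZF} is a \emph{monoidal} resolvent pair and every result of \S\ref{sectionMonoidalResolventPairs} applies to it.

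Next I would invoke Corollary \ref{DYrelExt} to rewrite $H^n_{\mathrm{DY}}(F;\mathsf{V},\mathsf{W}) \cong \Ext^n_{\mathcal{Z}(F),\mathcal{D}}(\mathsf{V},\mathsf{W})$ and observe that the two allowable short exact sequences in the statement are exactly the data entering Corollary \ref{CoroDimExtWithHom}. Feeding them in, with $\mathsf{M} = \mathsf{W} \otimes \mathsf{L}^{\vee} \otimes (\mathsf{K}^{\vee})^{\otimes(n-2)}$, gives for $n \geq 2$
\[ \dim \Ext^n_{\mathcal{Z}(F),\mathcal{D}}(\mathsf{V},\mathsf{W}) = \dim\Hom_{\mathcal{Z}(F)}(\mathsf{K},\mathsf{M}) - \dim\Hom_{\mathcal{Z}(F)}(\mathsf{P},\mathsf{M}) + \dim\Hom_{\mathcal{Z}(F)}(\boldsymbol{1},\mathsf{M}), \]
which is the first claimed formula; specialising to $\mathsf{V} = \mathsf{W} = \boldsymbol{1}$ (so that one may take $\mathsf{L} = \mathsf{K}$, $\mathsf{Q}=\mathsf{P}$, whence $\mathsf{M} = (\mathsf{K}^{\vee})^{\otimes(n-1)}$) yields the formula for $\dim H^n_{\mathrm{DY}}(F)$.

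Finally, for the closed form of $\dim H^2_{\mathrm{DY}}(F)$ I would use that $\mathcal{Z}(F)$, being a finite tensor category, is equivalent as a $k$-linear category to $A\text{-}\mathrm{mod}$ for a finite-dimensional algebra $A$; this is exactly the hypothesis under which Proposition \ref{propPropertiesRelProjCover} guarantees the existence of the relatively projective cover $\mathsf{P} = R_{\boldsymbol{1}}$ of $\boldsymbol{1}$ (with $\mathsf{K} = \ker(p_{\boldsymbol{1}})$, in the notation of Definition \ref{defRelProjCover}). Corollary \ref{DimExt2WithHom} then gives the stated two-term formula directly. I expect no genuine obstacle here: the entire content is already packaged in the cited results, and the only points requiring care are bookkeeping ones---correctly identifying $\mathcal{A},\mathcal{B}$ and $\mathsf{M}$, and verifying the monoidality and finiteness hypotheses so that the duality arguments behind Corollary \ref{CoroDimExtWithHom} (via Proposition \ref{relProjTensorIdeal}) and the vanishing $\Ext^1_{\mathcal{Z}(F)}(\boldsymbol{1},\boldsymbol{1})=0$ used in Corollary \ref{DimExt2WithHom} are legitimate.
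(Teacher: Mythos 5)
Your proposal is correct and follows essentially the same route as the paper: the paper derives this corollary precisely by transporting Corollaries \ref{CoroDimExtWithHom} and \ref{DimExt2WithHom} through the isomorphism $H^n_{\mathrm{DY}}(F;\mathsf{V},\mathsf{W}) \cong \Ext^n_{\mathcal{Z}(F),\mathcal{D}}(\mathsf{V},\mathsf{W})$ of Corollary \ref{DYrelExt}, exactly as you do. Your explicit verification of the hypotheses (monoidality of $\mathcal{U}$, finiteness of $\mathcal{Z}(F)$ for exact $F$, and $\mathcal{Z}(F) \cong A\text{-}\mathrm{mod}$ so that the relatively projective cover of $\boldsymbol{1}$ exists) is exactly the bookkeeping the paper leaves implicit.
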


\subsection{The Yoneda product on DY cohomology}\label{subsectionProductOnDY}
\indent As recalled in section \ref{relExtGroups}, there is the Yoneda product $\circ$ on relative Ext cohomology. Thanks to the isomorphism of cochain complexes $\Gamma$ in \eqref{isoDYG} we get a product on the DY side, which we still denote by $\circ$:
\begin{equation}\label{defYonedaProductDY}
f \circ g = \Gamma^{-1}\bigl( \Gamma(f) \circ \Gamma(g) \bigr).
\end{equation}
This is an associative product on DY cocycles which descends to the cohomology groups.

\smallskip

\indent From \eqref{IsoYExtExtOnCocycles} and \eqref{defYonedaProduct}, we know that any $n$-cocycle can be written as a Yoneda product of $n$ $1$-cocycles. Hence the same is true for DY cocycles, through the isomorphism $\Gamma$:
\begin{lemma}\label{factorization1CocyclesDY}
Let $f \in C^n_{\mathrm{DY}}(F,\mathsf{U},\mathsf{W})$ be a cocycle. Then there exists cocycles 
\[ g_1 \in C^1_{\mathrm{DY}}(F,\mathsf{V}_1,\mathsf{W}), \: g_2 \in C^1_{\mathrm{DY}}(F,\mathsf{V}_2,\mathsf{V}_1), \ldots, \: g_n \in C^1_{\mathrm{DY}}(F,\mathsf{U},\mathsf{V}_{n-1}) \]
such that $f = g_1 \circ \ldots \circ g_n$.
\end{lemma}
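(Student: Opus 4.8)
The plan is to transport, through the cochain isomorphism $\Gamma$ of \eqref{isoDYG}, the analogous factorization statement for relative $\Ext$ cocycles that was recalled just before the lemma. The first thing I would record is that, because the Davydov--Yetter Yoneda product is \emph{defined} by transport in \eqref{defYonedaProductDY}, the map $\Gamma$ is multiplicative for the two Yoneda products: for DY $1$-cocycles $h_1,\dots,h_n$ one has $\Gamma(h_1 \circ \cdots \circ h_n) = \Gamma(h_1) \circ \cdots \circ \Gamma(h_n)$, the right-hand product being the Yoneda product \eqref{yonedaProductOnExt} on relative $\Ext$ cocycles; this uses only the associativity of both products, which is already granted. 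Consequently it suffices to factor the single relative $\Ext$ cocycle $\alpha := \Gamma(f) \in \Hom_{\mathcal{Z}(F)}\bigl(G^{n+1}(\mathsf{U}),\mathsf{W}\bigr)$ as a Yoneda product of $n$ relative $\Ext$ $1$-cocycles and then pull the factors back by $\Gamma^{-1}$.

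To produce this factorization \emph{at the level of cocycles}, I would pass to the Yoneda description. The cocycle $\alpha$ represents a class in $\Ext^n_{\mathcal{Z}(F),\mathcal{D}}(\mathsf{U},\mathsf{W})$, and the inverse of the isomorphism $\overline{\eta}$ from \eqref{defBarEta} is built on representatives (by iterated pushout of the chosen relatively projective resolution of $\mathsf{U}$ along $\alpha$), so there is an allowable $n$-fold exact sequence $S$ with $\eta(S) = \alpha$ on the nose, not merely $\overline{\eta}([S]) = [\alpha]$. By the factorization of $n$-fold exact sequences recalled after \eqref{defYonedaProduct}, I can then write $S = S_1 \circ \cdots \circ S_n$ as a Yoneda product of allowable \emph{short} exact sequences $S_i = \bigl(0 \to \mathsf{V}_{i-1} \to X_i \to \mathsf{V}_i \to 0\bigr)$, obtained by cutting $S$ at its images, with the conventions $\mathsf{V}_0 = \mathsf{W}$ and $\mathsf{V}_n = \mathsf{U}$, so that $S_i \in \YExt^1_{\mathcal{Z}(F),\mathcal{D}}(\mathsf{V}_i,\mathsf{V}_{i-1})$. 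Applying $\eta$ and invoking its cocycle-level multiplicativity $\eta(S \circ S') = \eta(S)\circ\eta(S')$ recorded after \eqref{diagramLiftYoneda}, I obtain $\alpha = \eta(S_1)\circ\cdots\circ\eta(S_n)$, where each $\alpha_i := \eta(S_i) \in \Hom_{\mathcal{Z}(F)}\bigl(G^2(\mathsf{V}_i),\mathsf{V}_{i-1}\bigr)$ is a relative $\Ext$ $1$-cocycle.

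Finally I would set $g_i := \Gamma^{-1}(\alpha_i) \in C^1_{\mathrm{DY}}(F,\mathsf{V}_i,\mathsf{V}_{i-1})$; these are DY $1$-cocycles of exactly the claimed shape, ranging from $g_1 \in C^1_{\mathrm{DY}}(F,\mathsf{V}_1,\mathsf{W})$ to $g_n \in C^1_{\mathrm{DY}}(F,\mathsf{U},\mathsf{V}_{n-1})$. Multiplicativity of $\Gamma$ from the first paragraph then yields $\Gamma(g_1 \circ \cdots \circ g_n) = \alpha_1 \circ \cdots \circ \alpha_n = \alpha = \Gamma(f)$, whence $g_1 \circ \cdots \circ g_n = f$ after applying $\Gamma^{-1}$. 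The only genuinely delicate point is the second step: one must secure the factorization at the level of cocycles rather than of cohomology classes, and this is precisely why I rely on the representative-level identity $\eta(S) = \alpha$ together with the cocycle-level multiplicativity of $\eta$, instead of the weaker class-level statement for $\overline{\eta}$; everything else is a formal transport of structure across the isomorphism $\Gamma$.
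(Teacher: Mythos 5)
Your proposal is correct and follows essentially the same route as the paper, whose entire proof is the remark that, by \eqref{IsoYExtExtOnCocycles} and \eqref{defYonedaProduct}, any relative $\Ext$ $n$-cocycle is a Yoneda product of $n$ $1$-cocycles, and that this transports through $\Gamma$. Your write-up simply makes explicit the two ingredients the paper leaves implicit — the representative-level identity $\eta(S)=\alpha$ coming from the pushout construction behind $\overline{\eta}^{-1}$, and the cocycle-level multiplicativity $\eta(S\circ S')=\eta(S)\circ\eta(S')$ — so it is a faithful elaboration rather than a different argument.
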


\smallskip

\indent The product \eqref{defYonedaProductDY} has the following simple expression:

\begin{theorem}\label{propYonedaProductDY}
Let $f \in C^n_{\mathrm{DY}}(F,\mathsf{V},\mathsf{W})$ and $g \in C^m_{\mathrm{DY}}(F,\mathsf{U},\mathsf{V})$ be cocycles. Then the components of the cocycle $f \circ g \in C^{m+n}_{\mathrm{DY}}(F,\mathsf{U},\mathsf{W})$ are
\[ (f \circ g)_{X_1, \ldots, X_m, Y_1, \ldots, Y_n} = (-1)^{nm}\bigl(\mathrm{id}_{F(X_1) \otimes \ldots \otimes F(X_m)} \otimes f_{Y_1,\ldots,Y_n}\bigr)\bigl(g_{X_1,\ldots,X_m} \otimes \mathrm{id}_{F(Y_1) \otimes \ldots \otimes F(Y_n)}\bigr). \]
\end{theorem}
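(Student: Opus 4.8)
The plan is to compute the Yoneda product on the relative Ext side using the explicit bar resolution of $\mathcal{Z}(F)$ described in \S\ref{sectionAdjunctionCentralizerFunctor}, and then transport the answer back to the DY complex via the isomorphism $\Gamma$ from \eqref{defGamma}--\eqref{defGammaInverse}. Recall that by \eqref{defYonedaProductDY} we have $f\circ g = \Gamma^{-1}\bigl(\Gamma(f)\circ\Gamma(g)\bigr)$, so the task reduces to (i) understanding $\Gamma(f)\circ\Gamma(g)$ as cocycles in $\Hom_{\mathcal{Z}(F)}\bigl(G^{\bullet+1}(-),-\bigr)$, and (ii) applying $\Gamma^{-1}$. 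The formula \eqref{yonedaProductOnExt} for the Yoneda product on cocycles requires lifting the chain map $\widetilde{\beta}_\bullet$ in the diagram \eqref{diagramLiftYoneda}; the crucial simplification here is that the bar resolution is especially well-behaved, so I expect the lift can be written down explicitly rather than merely asserted to exist.

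First I would recall that $\Gamma(g)\in\Hom_{\mathcal{Z}(F)}\bigl(G^{m+1}(\mathsf{U}),\mathsf{V}\bigr)$, and I need a lift $\widetilde{g}\colon G^{m+n+1}(\mathsf{U})\to G^{n}(\mathsf{V})$ covering $\Gamma(g)$ in the sense of \eqref{diagramLiftYoneda}, where the two resolutions are both bar resolutions (of $\mathsf{U}$ and of $\mathsf{V}$). The natural candidate is $\widetilde{g} = G^{n}\bigl(\Gamma(g)\bigr)\circ \text{(a canonical comparison)}$; more precisely, since $G^{n}$ applied to the bar augmentation $G^{m+1}(\mathsf{U})\to\mathsf{U}$ relates $G^{m+n+1}(\mathsf{U})$ to $G^{n+1}(\mathsf{U})$, I expect the lift to be given by applying $G^{n}$ to $\Gamma(g)$ after identifying the relevant pieces. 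The Yoneda product on cocycles is then $\Gamma(f)\circ\Gamma(g) = \Gamma(f)\circ\widetilde{g}_n$, and I would compute its precomposition with the universal dinatural transformation $i^{(m+n+1)}_{X_1,\ldots,X_m,Y_1,\ldots,Y_n}(U)$, using Lemma \ref{lemmaBarDifferentialZF} to keep track of the coend structure and the defining relation \eqref{defGamma} for $\Gamma(f)$ and $\Gamma(g)$.

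The main computational step is then applying $\Gamma^{-1}$ via \eqref{defGammaInverse}: the components of $\Gamma^{-1}\bigl(\Gamma(f)\circ\widetilde g\bigr)$ are obtained by feeding the composite morphism through the dinatural transformation $i^{(m+n+1)}_{X_1,\ldots,X_m,Y_1,\ldots,Y_n,\boldsymbol{1}}(U)$ and using the evaluation/coevaluation caps exactly as in the definition of $\Gamma^{-1}$. Here the half-braiding manipulations should collapse: the $\rho$-morphisms introduced when lifting $g$ interact with those coming from $\Gamma(f)$ and $\Gamma^{-1}$ so that, after using \eqref{propertyHalfBraidingWihUnit} and the zig-zag identities for the (co)evaluations, what survives is precisely the tensor-juxtaposition $\bigl(\mathrm{id}_{F(X_1)\otimes\cdots\otimes F(X_m)}\otimes f_{Y_1,\ldots,Y_n}\bigr)\bigl(g_{X_1,\ldots,X_m}\otimes\mathrm{id}_{F(Y_1)\otimes\cdots\otimes F(Y_n)}\bigr)$. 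The sign $(-1)^{nm}$ should emerge from the degree shift in the comparison map $\widetilde g$, tracking how the bar differential's alternating signs propagate when $G^{n}$ is applied; I would verify this by comparing degrees carefully, since sign errors are the easiest mistake.

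I expect the hard part to be the explicit construction and verification of the lifting map $\widetilde g$ and the bookkeeping of the coend dinatural transformations: one must check that $G^{n}(\Gamma(g))$, suitably interpreted, genuinely makes the square in \eqref{diagramLiftYoneda} commute against the bar differentials, which requires the compatibility already proved in the form of Lemma \ref{lemmaBarDifferentialZF} together with naturality of $G$ and of $\varepsilon$. Once the lift is pinned down, the remaining diagrammatic identity is a routine but lengthy graphical calculation of the type already performed in Figure \ref{proofPsiCommutesHalfBraidings}; the isomorphism $\Psi$ of Proposition \ref{propGVtensW}, which encodes exactly how $G^{n}$ interacts with tensoring by a fixed object, is likely the clean tool that lets me avoid re-deriving these half-braiding identities by hand. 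I would phrase the final verification as an equality of morphisms out of the coend, checking it after precomposition with each $i^{(m+n+1)}$, which by the universal property suffices.
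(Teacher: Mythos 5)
Your overall reduction---compute $\Gamma(f)\circ\Gamma(g)$ on the bar resolutions and push through $\Gamma^{-1}$---is the right starting point, but the proposal has a genuine gap at exactly the place you flag as the hard part: the chain lift. Your candidate $\widetilde g = G^{n}(\Gamma(g))$ cannot work. First, it has the wrong degree: with $P_l = G^{l+1}(\mathsf{U})$ and $Q_l = G^{l+1}(\mathsf{V})$, the map $G^{l}(\Gamma(g))$ goes $P_{m+l}\to Q_{l-1}$, whereas the lift $\widetilde{\beta}_l$ in \eqref{diagramLiftYoneda} must go $P_{m+l}\to Q_{l}$; no purely functorial composite of $G$'s applied to $\Gamma(g)$ fixes this. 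Second, even after adjusting, such functorial images of a cocycle do not commute with the bar differentials: writing $d$ as in \eqref{barDifferentialG}, the terms of $d^{\mathsf{U}}$ whose counits sit \emph{inside} the $G^{m+1}(\mathsf{U})$ core are killed by the cocycle condition $\Gamma(g)\,d^{\mathsf{U}}_{m+1}=0$, but the remaining terms reproduce only part of $d^{\mathsf{V}}$, with a sign offset and a missing $\varepsilon$-term, so naturality of $G$ and $\varepsilon$ together with Lemma \ref{lemmaBarDifferentialZF} are not sufficient, contrary to what you assert. The correct lift (the paper's Lemma \ref{lemmaYonedaProductBarGResolution}) is of Alexander--Whitney type: it is defined on the coend by inserting the unit object into a slot, $\widetilde{\beta}_l\, i^{(l+2)}_{X,Y_1,\ldots,Y_{l+1}}(U) = (-1)^l\, i^{(l+1)}_{Y_1,\ldots,Y_{l+1}}(V)\bigl(\mathrm{id}\otimes \beta\, i^{(2)}_{X,\mathbf{1}}(U)\otimes\mathrm{id}\bigr)$, carries a degree-dependent sign, and its chain-map property is verified using the cocycle identity $\beta i^{(2)}_{Y_1,\mathbf{1}}(\mathrm{id}\otimes\varepsilon_{\mathsf{U}}i_X(U)\otimes\mathrm{id}) - \beta i^{(2)}_{X\otimes Y_1,\mathbf{1}}(U) = -\beta i^{(2)}_{X,Y_1}(U)$. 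None of this machinery (unit-padding, sign, use of the cocycle condition) appears in your proposal.

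There is also a structural device you are missing that makes the computation feasible: the paper constructs the lift \emph{only for $1$-cocycles} $g$, computes the product $f\circ g$ in that case diagrammatically (Lemma \ref{lemmaDYYonedaProductFor1Cocycle}), and then obtains the general $(m,n)$ formula by writing an arbitrary $m$-cocycle as a Yoneda product of $m$ $1$-cocycles (Lemma \ref{factorization1CocyclesDY}) and inducting on $m$ using associativity of $\circ$. Your plan to treat general $(m,n)$ in one pass would force you to construct and verify a general-degree lift, which is precisely what you have not done; and the tool you propose to lean on instead, the isomorphism $\Psi$ of Proposition \ref{propGVtensW}, plays no role in this argument (it is used for the duality isomorphism \eqref{isoXiDualBar} and for Proposition \ref{propositionDYGroupsAreModules}, not here). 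So the ``routine but lengthy graphical calculation'' you defer to cannot even be set up from the data in your proposal.
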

\noindent The proof of this formula is divided in three steps:
\begin{itemize}
\item We first determine the Yoneda product of a $n$-cocycle $\alpha \in \mathrm{Bar}^n_{\mathcal{Z}(F),\mathcal{D}}(\mathsf{V},\mathsf{W})$, which is an element in $\Hom_{\mathcal{Z}(F)}\bigl( G^{n+1}(\mathsf{V}), \mathsf{W} \bigr)$, with a $1$-cocycle $\beta \in \mathrm{Bar}^1_{\mathcal{Z}(F),\mathcal{D}}(\mathsf{U},\mathsf{V}) = \Hom_{\mathcal{Z}(F)}\bigl( G^2(\mathsf{U}), \mathsf{V} \bigr)$. See Lemma \ref{lemmaYonedaProductBarGResolution}.
\item From this we compute the product \eqref{defYonedaProductDY} for a DY $n$-cocycle $f \in C^n_{\mathrm{DY}}(F,\mathsf{V},\mathsf{W})$ and a DY $1$-cocycle $g \in C^1_{\mathrm{DY}}(F,\mathsf{U},\mathsf{V})$. See Lemma \ref{lemmaDYYonedaProductFor1Cocycle}.
\item Then we deduce the product \eqref{defYonedaProductDY} for general DY cocycles $f,g$ by induction on $m$ thanks to Lemma \ref{factorization1CocyclesDY}.
\end{itemize}
Restricting $g$ to be a $1$-cocycle in the two first steps makes the proofs of the corresponding lemmas less cumbersome. We write as usual $\mathsf{U} = (U, \rho^U), \mathsf{V} = (V, \rho^V), \mathsf{W} = (W, \rho^W)$.

\begin{lemma}\label{lemmaYonedaProductBarGResolution}
Let $\alpha \in \mathrm{Bar}^n_{\mathcal{Z}(F),\mathcal{D}}(\mathsf{V},\mathsf{W}), \beta \in \mathrm{Bar}^1_{\mathcal{Z}(F),\mathcal{D}}(\mathsf{U},\mathsf{V})$ be cocycles. The Yoneda product $\alpha \circ \beta \in \mathrm{Bar}^{n+1}_{\mathcal{Z}(F),\mathcal{D}}(\mathsf{U},\mathsf{W})$ is the unique morphism such that
\[ (\alpha \circ \beta) \, i^{(n+2)}_{X,Y_1,\ldots, Y_{n+1}}(U) = (-1)^n \, \alpha \, i_{Y_1, \ldots, Y_{n+1}}^{(n+1)}(V)\bigl( \mathrm{id}_{F(Y_{n+1})^{\vee} \otimes \ldots \otimes F(Y_1)^{\vee}} \otimes \beta \, i^{(2)}_{X,\mathbf{1}}(U) \otimes \mathrm{id}_{F(Y_1) \otimes \ldots \otimes F(Y_{n+1})}\bigr) \]
where $\mathbf{1}$ is the tensor unit of $\mathcal{C}$.
\end{lemma}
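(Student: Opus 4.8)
The plan is to compute $\alpha \circ \beta$ straight from the definition of the Yoneda product on cocycles given after \eqref{yonedaProductOnExt} (diagram \eqref{diagramLiftYoneda}). Resolving both $\mathsf{U}$ and $\mathsf{V}$ by their bar resolutions, so that $P_i = G^{i+1}(\mathsf{U})$ and $Q_i = G^{i+1}(\mathsf{V})$, and specializing to $m=1$ since $\beta$ is a $1$-cocycle, I must lift $\beta \in \Hom_{\mathcal{Z}(F)}(G^2(\mathsf{U}),\mathsf{V})$ to a chain map $(\widetilde{\beta}_j)_{0 \le j \le n}$, after which $\alpha \circ \beta = \alpha\,\widetilde{\beta}_n$. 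I would guess the lift $\widetilde{\beta}_j : G^{j+2}(\mathsf{U}) \to G^{j+1}(\mathsf{V})$ by universality of the coend $G^{j+2}(\mathsf{U}) = Z_F^{j+2}(U)$, setting
\[ \widetilde{\beta}_j\, i^{(j+2)}_{X,Y_1,\ldots,Y_{j+1}}(U) = (-1)^j\, i^{(j+1)}_{Y_1,\ldots,Y_{j+1}}(V)\bigl(\mathrm{id}_{F(Y_{j+1})^\vee \otimes \ldots \otimes F(Y_1)^\vee} \otimes \beta\,i^{(2)}_{X,\mathbf{1}}(U) \otimes \mathrm{id}_{F(Y_1) \otimes \ldots \otimes F(Y_{j+1})}\bigr), \]
reading $\beta\,i^{(2)}_{X,\mathbf{1}}(U)$ as a map $F(X)^\vee \otimes U \otimes F(X) \to V$ via $F(\mathbf{1}) = \mathbf{1}$. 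Since the $X$-variable sits innermost in the Fubini-ordered coend, it matches the slot into which $\beta\,i^{(2)}_{X,\mathbf{1}}(U)$ is inserted, so the source is correct; the dinaturality in $X,Y_1,\ldots,Y_{j+1}$ legitimizing the definition is routine, the $X$- and $Y$-dinaturalities living in disjoint tensor factors.

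Next I would verify the triangle $\varepsilon_{\mathsf{V}}\widetilde{\beta}_0 = \beta$. Evaluating on $i^{(2)}_{X,Z}(U)$ and using the counit formula \eqref{defCounitG}, this reduces to the claim
\[ (\mathrm{ev}_{F(Z)} \otimes \mathrm{id}_V)(\mathrm{id}_{F(Z)^\vee} \otimes \rho^V_Z)(\mathrm{id}_{F(Z)^\vee} \otimes \beta\,i^{(2)}_{X,\mathbf{1}}(U) \otimes \mathrm{id}_{F(Z)}) = \beta\,i^{(2)}_{X,Z}(U). \]
I would use that $\beta$ is a morphism in $\mathcal{Z}(F)$ to commute it past $\rho^V_Z$, then invoke \eqref{propertyHalfBraidingWihUnit} (the instance of \eqref{defRhoZF} that merges the trailing unit $\mathbf{1}$ with $Z$) to rewrite $\rho^{Z_F^2(U)}_Z(i^{(2)}_{X,\mathbf{1}}(U) \otimes \mathrm{id})$ as $i^{(2)}_{X,Z}(U)$ precomposed with a coevaluation; a zig-zag identity then cancels the created $F(Z)^\vee \otimes F(Z)$ pair, leaving $\beta\,i^{(2)}_{X,Z}(U)$.

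The bulk of the argument, and the main obstacle, is the verification of the squares $d^{\mathsf{V}}_\ell \widetilde{\beta}_\ell = \widetilde{\beta}_{\ell-1} d^{\mathsf{U}}_{\ell+1}$ for $1 \le \ell \le n$. I would expand both sides on $i^{(\ell+2)}_{X,Y_1,\ldots,Y_{\ell+1}}(U)$ via Lemma \ref{lemmaBarDifferentialZF}. On the right, the merge faces of $d^{\mathsf{U}}_{\ell+1}$ involving only the $Y$-variables (indices $j \ge 2$), after applying $\widetilde{\beta}_{\ell-1}$ and reindexing $j \mapsto j-1$, match term-by-term the merge faces of $d^{\mathsf{V}}_\ell$ on the left, the sign discrepancy $(-1)^{\ell-1}(-1)^{j} = (-1)^{\ell}(-1)^{j-1}$ being absorbed precisely by the degree shift in the lift. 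What remains are three terms: the $\varepsilon$-face on the left, and on the right the $\varepsilon$-face of $d^{\mathsf{U}}_{\ell+1}$ (feeding $\varepsilon_{\mathsf{U}}i_X(U)$ into the centre) together with the $j=1$ merge face (merging $X \otimes Y_1$). Their coincidence is exactly the cocycle identity $\beta\,d^{\mathsf{U}}_2 = 0$ evaluated at $(X,Y_1,\mathbf{1})$, combined once more with the Step-2 triangle computation applied to $\varepsilon_{\mathsf{V}}i_{Y_1}(V)$. The difficulty is purely bookkeeping: tracking faces, their signs, and the positions of the inserted morphisms while recognizing that the only nontrivial input is the cocycle condition; the sign $(-1)^j$ in $\widetilde{\beta}_j$ is forced by these squares.

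Finally, with the chain map established, $\alpha \circ \beta = \alpha\,\widetilde{\beta}_n$, and evaluating on $i^{(n+2)}_{X,Y_1,\ldots,Y_{n+1}}(U)$ through the defining formula of $\widetilde{\beta}_n$ yields the asserted identity, the factor $(-1)^n$ being exactly the sign carried by $\widetilde{\beta}_n$. Uniqueness of the resulting morphism is automatic from the universal property of the coend $Z_F^{n+2}(U)$.
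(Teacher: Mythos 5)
Your proposal is correct and follows essentially the same route as the paper's proof: the same lift $\widetilde{\beta}_j$ with the sign $(-1)^j$ defined via universality of the coend, the same verification of the triangle using that $\beta$ is a morphism in $\mathcal{Z}(F)$ together with \eqref{propertyHalfBraidingWihUnit} and a zig-zag identity, and the same check of the squares via Lemma \ref{lemmaBarDifferentialZF} where the only nontrivial input is the cocycle condition $\beta\,d_2^{\mathsf{U}}=0$ combined with the triangle identity. The paper likewise leaves the dinaturality and the compatibility of $\widetilde{\beta}_l$ with the half-braidings to the reader, so nothing essential is missing from your outline.
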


\begin{proof}
Since we use the bar resolution, the diagram in \eqref{diagramLiftYoneda} becomes
\begin{equation}\label{diagramBarGResolutionForYoneda}
\xymatrix@C=2em@R=2em{
G^{n+2}(\mathsf{U}) \ar@{-->}[d]^{\widetilde{\beta}_n} \ar[r]^{\:\:\:\: d^{\mathsf{U}}_{n+1}} & \: \ldots \: \ar[r]^{\!\!\!\!\!\!\!\!\!d^{\mathsf{U}}_{l+3}} & G^{l+3}(\mathsf{U}) \ar@{-->}[d]^{\widetilde{\beta}_{l+1}} \ar[r]^{\!\!\!d^{\mathsf{U}}_{l+2}} & G^{l+2}(\mathsf{U}) \ar@{-->}[d]^{\widetilde{\beta}_l} \ar[r]^{\:\:\:\:d^{\mathsf{U}}_{l+1}} & \:\ldots \: \ar[r]^{\!\!\!\!\!\!d^{\mathsf{U}}_2} & G^{2}(\mathsf{U}) \ar@{-->}[d]^{\widetilde{\beta}_0} \ar[rd]^{\beta} & & \\
G^{n+1}(\mathsf{V}) \ar[r]^{\:\:\:\:\:\:d^{\mathsf{V}}_n} & \: \ldots \: \ar[r]^{\!\!\!\!\!\!\!\!\!d^{\mathsf{U}}_{l+2}} & G^{l+2}(\mathsf{V}) \ar[r]^{d^{\mathsf{V}}_{l+1}} & G^{l+1}(\mathsf{V}) \ar[r]^{\quad d^{\mathsf{V}}_l} & \: \ldots \: \ar[r]^{\!\!\!\!d^{\mathsf{V}}_1} & G(\mathsf{V}) \ar[r]^{\varepsilon_{\mathsf{V}}} & \mathsf{V} \ar[r] & 0
}
\end{equation}
where $G$ is the comonad on $\mathcal{Z}(F)$ defined in \eqref{defComonadOnZF}, $d^{\mathsf{U}}, d^{\mathsf{V}}$ are the differential of the bar resolutions of $\mathsf{U}, \mathsf{V}$ respectively and $\varepsilon_{\mathsf{V}}$ is the counit of $G$. We have to find the dashed arrows and the Yoneda product $\alpha \circ \beta$ is then $\alpha \widetilde{\beta}_n$. Define $\widetilde{\beta}_l$ as the unique morphism such that
\[ \widetilde{\beta}_l \, i^{(l+2)}_{X,Y_1,\ldots, Y_{l+1}}(U) = (-1)^l \, i_{Y_1, \ldots, Y_{l+1}}^{(l+1)}(V)\bigl( \mathrm{id}_{F(Y_{l+1})^{\vee} \otimes \ldots \otimes F(Y_1)^{\vee}} \otimes \beta \, i^{(2)}_{X,\mathbf{1}}(U) \otimes \mathrm{id}_{F(Y_1) \otimes \ldots \otimes F(Y_{l+1})}\bigr). \]
We left to the reader to check that $\widetilde{\beta}_l$ is a morphism in $\mathcal{Z}(F)$, \textit{i.e.}\,that it commutes with the half-braidings relative to $F$. We first show that the triangle in \eqref{diagramBarGResolutionForYoneda} commutes:
\begin{center}
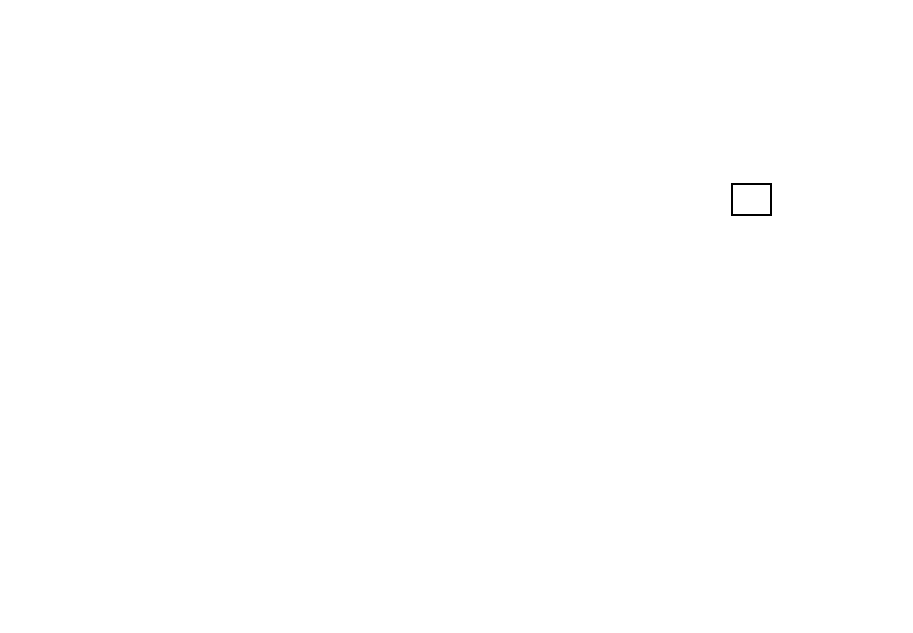
\end{center}
and the claim follows since this holds for any $X,Y \in \mathcal{C}$. The first and second equalities are just the definitions of $\widetilde{\beta}_0$ and $\varepsilon_{\mathsf{V}}$ (recall \eqref{defCounitG}), for the third equality we used that $\beta$ is a morphism in $\mathcal{Z}(F)$ and for the fourth equality we used \eqref{propertyHalfBraidingWihUnit}. 

Now we show that the squares in \eqref{diagramBarGResolutionForYoneda} commute. Recall the expression of the bar differential in Lemma \ref{lemmaBarDifferentialZF}. On the one hand we have

\begin{align*}
&\widetilde{\beta}_l \, d_{l+2}^{\mathsf{U}} \, i^{(l+3)}_{X,Y_1, \ldots, Y_{l+2}} = \widetilde{\beta}_l \, i^{(l+2)}_{Y_1, \ldots, Y_{l+2}}(U) \bigl( \mathrm{id}_{F(Y_{l+2})^{\vee} \otimes \ldots \otimes F(Y_1)^{\vee}} \otimes \varepsilon_{\mathsf{U}} \, i_X(U) \otimes \mathrm{id}_{F(Y_1) \otimes \ldots \otimes F(Y_{l+2})}\bigr)\\
& \qquad \qquad \qquad \qquad \:\: - \widetilde{\beta}_l \, i^{(l+2)}_{X \otimes Y_1, Y_2, \ldots, Y_{l+2}}(V) + \sum_{j=2}^{l+2} (-1)^j \, \widetilde{\beta}_l \, i^{(l+2)}_{X, Y_1, \ldots, Y_{j-1} \otimes Y_j, \ldots, Y_{l+2}}(V)\\
=\: & (-1)^l \, i_{Y_2, \ldots, Y_{l+2}}^{(l+1)}(V)\bigg( \mathrm{id}_{F(Y_{l+2})^{\vee} \otimes \ldots \otimes F(Y_2)^{\vee}} \otimes \beta \, i^{(2)}_{Y_1,\mathbf{1}}(U) \bigl( \mathrm{id}_{F(Y_1)^{\vee}} \otimes \varepsilon_{\mathsf{U}} \, i_X(U) \otimes \mathrm{id}_{F(Y_1)} \bigr)\\
&\hphantom{(-1)^l \, i_{Y_2, \ldots, Y_{l+2}}^{(l+1)}(V)\bigg( \mathrm{id}_{F(Y_{l+2})^{\vee} \otimes \ldots \otimes F(Y_2)^{\vee}}}\qquad\qquad\qquad\qquad\qquad\qquad\qquad\qquad \otimes \mathrm{id}_{F(Y_2) \otimes \ldots \otimes F(Y_{l+2})}\bigg)\\
&- (-1)^l \, i_{Y_2, \ldots, Y_{l+2}}^{(l+1)}(V)\bigl( \mathrm{id}_{F(Y_{l+2})^{\vee} \otimes \ldots \otimes F(Y_2)^{\vee}} \otimes \beta \, i^{(2)}_{X \otimes Y_1,\mathbf{1}}(U) \otimes \mathrm{id}_{F(Y_2) \otimes \ldots \otimes F(Y_{l+2})}\bigr)\\
& + (-1)^l \, \sum_{j=2}^{l+2} (-1)^j \, i^{(l+1)}_{Y_1, \ldots, Y_{j-1} \otimes Y_j, \ldots, Y_{l+2}}(V) \bigl( \mathrm{id}_{F(Y_{l+2})^{\vee} \otimes \ldots \otimes F(Y_1)^{\vee}} \otimes \beta \, i^{(2)}_{X,\mathbf{1}}(U) \otimes \mathrm{id}_{F(Y_1) \otimes \ldots \otimes F(Y_{l+2})}\bigr)\\
= \:& (-1)^{l+1} \, i_{Y_2, \ldots, Y_{l+2}}^{(l+1)}(V)\bigl( \mathrm{id}_{F(Y_{l+2})^{\vee} \otimes \ldots \otimes F(Y_2)^{\vee}} \otimes \beta \, i^{(2)}_{X, Y_1}(U) \otimes \mathrm{id}_{F(Y_2) \otimes \ldots \otimes F(Y_{l+2})}\bigr)\\
& + (-1)^l \sum_{j=2}^{l+2} (-1)^j \, i^{(l+1)}_{Y_1, \ldots, Y_{j-1} \otimes Y_j, \ldots, Y_{l+2}}(V) \bigl( \mathrm{id}_{F(Y_{l+2})^{\vee} \otimes \ldots \otimes F(Y_1)^{\vee}} \otimes \beta \, i^{(2)}_{X,\mathbf{1}}(U) \otimes \mathrm{id}_{F(Y_1) \otimes \ldots \otimes F(Y_{l+2})}\bigr).
\end{align*}
For the last equality we used $\beta i^{(2)}_{Y_1,\mathbf{1}} \bigl( \mathrm{id}_{F(Y_1)^{\vee}} \otimes \varepsilon_{\mathsf{U}} i_X(U) \otimes \mathrm{id}_{F(Y_1)} \bigr) - \beta i^{(2)}_{X \otimes Y_1,\mathbf{1}}(U) = - \beta i_{X,Y_1}^{(2)}(U)$, due to the assumption that $\beta$ is a cocycle ($\beta d_2^{\mathsf{U}} = 0$). On the other hand and still using Lemma \ref{lemmaBarDifferentialZF} we get
\begin{align*}
&d_{l+1}^{\mathsf{V}} \, \widetilde{\beta}_{l+1} \, i^{(l+3)}_{X,Y_1, \ldots, Y_{l+2}} = (-1)^{l+1} \, d_{l+1}^{\mathsf{V}} \, i_{Y_1, \ldots, Y_{l+2}}^{(l+2)}(V)\bigl( \mathrm{id}_{F(Y_{l+2})^{\vee} \otimes \ldots \otimes F(Y_1)^{\vee}} \otimes \beta \, i^{(2)}_{X,\mathbf{1}}(U) \otimes \mathrm{id}_{F(Y_1) \otimes \ldots \otimes F(Y_{l+2})}\bigr)\\
=\: &  (-1)^{l+1} \, i^{(n)}_{Y_2, \ldots, Y_{l+2}}(V) \bigg( \mathrm{id}_{F(Y_{l+2})^{\vee} \otimes \ldots \otimes F(Y_2)^{\vee}} \otimes \varepsilon_{\mathsf{V}}i_{Y_1}(V) \bigl( \mathrm{id}_{F(Y_1)^{\vee}} \otimes \beta \, i^{(2)}_{X,\mathbf{1}}(U) \otimes \mathrm{id}_{F(Y_1)}\bigr)\\
& \hphantom{(-1)^{l+1} \, i^{(n)}_{Y_2, \ldots, Y_{l+2}}(V) \bigg( \mathrm{id}_{F(Y_{l+2})^{\vee} \otimes \ldots \otimes F(Y_2)^{\vee}}} \qquad\qquad\qquad\qquad\qquad\qquad\qquad\qquad\otimes \mathrm{id}_{F(Y_2) \otimes \ldots \otimes F(Y_{l+2})}\bigg)\\
& + (-1)^{l+1} \, \sum_{j=1}^{l+1} (-1)^j \, i^{(l+1)}_{Y_1, \ldots, Y_j \otimes Y_{j+1}, \ldots, Y_{n+1}}(V) \bigl( \mathrm{id}_{F(Y_{l+2})^{\vee} \otimes \ldots \otimes F(Y_1)^{\vee}} \otimes \beta \, i^{(2)}_{X,\mathbf{1}}(U) \otimes \mathrm{id}_{F(Y_1) \otimes \ldots \otimes F(Y_{l+2})}\bigr)\\
=\: & (-1)^{l+1} \, i_{Y_2, \ldots, Y_{l+2}}^{(l+1)}(V)\bigl( \mathrm{id}_{F(Y_{l+2})^{\vee} \otimes \ldots \otimes F(Y_2)^{\vee}} \otimes \beta \, i^{(2)}_{X, Y_1}(U) \otimes \mathrm{id}_{F(Y_2) \otimes \ldots \otimes F(Y_{l+2})}\bigr)\\
& +(-1)^l \sum_{j=2}^{l+2} (-1)^j \, i^{(l+1)}_{Y_1, \ldots, Y_{j-1} \otimes Y_j, \ldots, Y_{n+1}}(V) \bigl( \mathrm{id}_{F(Y_{l+2})^{\vee} \otimes \ldots \otimes F(Y_1)^{\vee}} \otimes \beta \, i^{(2)}_{X,\mathbf{1}}(U) \otimes \mathrm{id}_{F(Y_1) \otimes \ldots \otimes F(Y_{l+2})}\bigr)
\end{align*}
which agrees with the result of the previous computation; since this holds for any $X, Y_1, \ldots, Y_{l+2} \in \mathcal{C}$, the desired equality follows. Note that for the last equality we used that
\[ \varepsilon_{\mathsf{V}} \, i_{Y_1}(V) \, \bigl( \mathrm{id}_{F(Y_1)^{\vee}} \otimes \beta \, i^{(2)}_{X,\mathbf{1}}(U) \otimes \mathrm{id}_{F(Y_1)}\bigr) = \varepsilon_{\mathsf{V}} \, \widetilde{\beta}_0 \, i^{(2)}_{X,Y_1}(U) = \beta \, i^{(2)}_{X,Y_1}(U) \]
due to the commutation of the triangle in \eqref{diagramBarGResolutionForYoneda}.
\end{proof}

\begin{figure}[h]
\centering
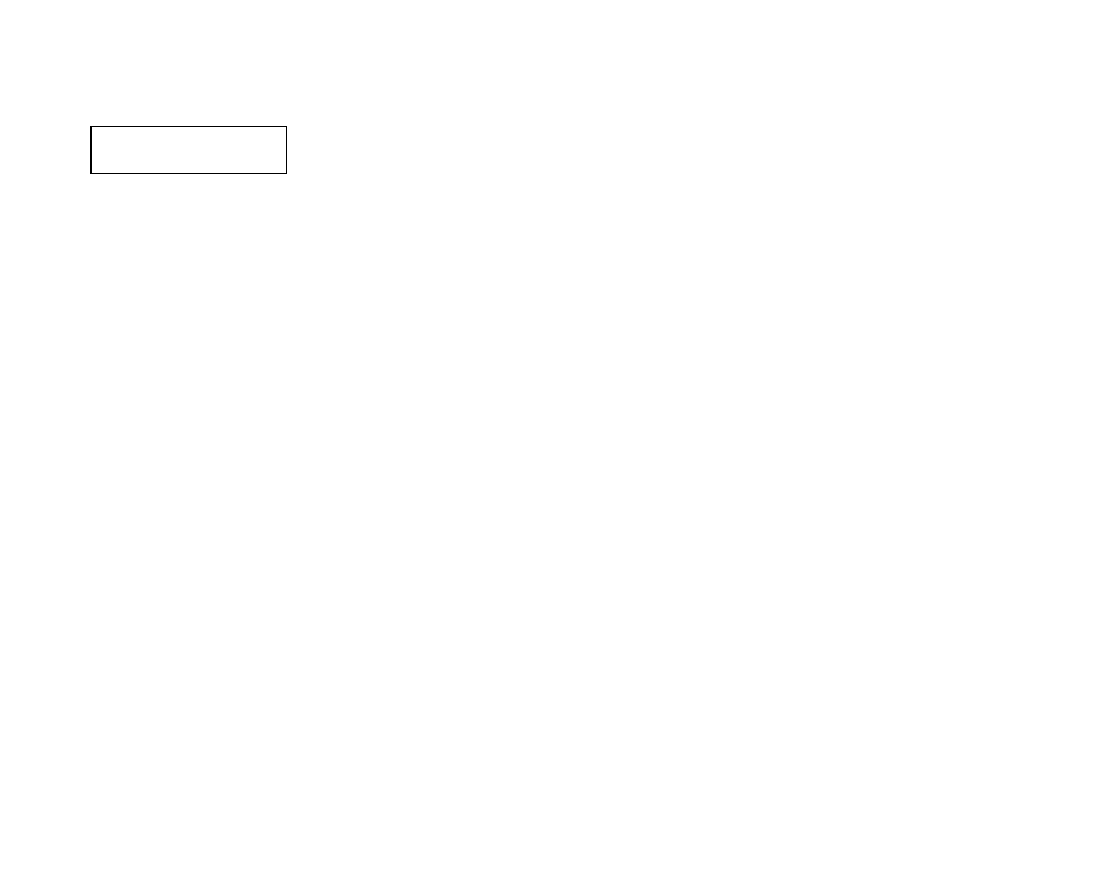
\caption{Proof of Lemma \ref{lemmaDYYonedaProductFor1Cocycle}}
\label{proofYonedaProduct1Cocycle}
\end{figure}
\begin{lemma}\label{lemmaDYYonedaProductFor1Cocycle}
Let $f \in C^n_{\mathrm{DY}}(F,\mathsf{V},\mathsf{W})$ and $g \in C^1_{\mathrm{DY}}(F,\mathsf{U},\mathsf{V})$ be cocycles. Then it holds
\[ (f \circ g)_{X, Y_1, \ldots, Y_n} = (-1)^n \, \bigl(\mathrm{id}_{F(X)} \otimes f_{Y_1,\ldots,Y_n}\bigr)\bigl(g_X \otimes \mathrm{id}_{F(Y_1) \otimes \ldots \otimes F(Y_n)}\bigr). \]
\end{lemma}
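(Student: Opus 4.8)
The plan is to compute the Yoneda product directly through the isomorphism $\Gamma$ from \eqref{isoDYG}, unwinding the definition \eqref{defYonedaProductDY} of $f \circ g$ and applying Lemma \ref{lemmaYonedaProductBarGResolution} for the special case where the second factor is a $1$-cocycle. First I would set $\alpha = \Gamma(f) \in \Hom_{\mathcal{Z}(F)}\bigl(G^{n+1}(\mathsf{V}),\mathsf{W}\bigr)$ and $\beta = \Gamma(g) \in \Hom_{\mathcal{Z}(F)}\bigl(G^2(\mathsf{U}),\mathsf{V}\bigr)$, so that by definition $(f \circ g) = \Gamma^{-1}(\alpha \circ \beta)$. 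The strategy is then to evaluate the natural transformation $\Gamma^{-1}(\alpha \circ \beta)$ on its components via \eqref{defGammaInverse}, which requires precomposing $\alpha \circ \beta$ with the universal dinatural transformation $i^{(n+2)}_{X, Y_1, \ldots, Y_n, \mathbf{1}}(U)$ and then capping off with $\rho^W$-morphisms and (co)evaluations as in the formula for $\Gamma^{-1}$.

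The key computational step is to substitute the explicit description of $\alpha \circ \beta \, i^{(n+2)}_{X,Y_1,\ldots,Y_n,\mathbf{1}}(U)$ provided by Lemma \ref{lemmaYonedaProductBarGResolution}; here the sign $(-1)^n$ appears, which will ultimately produce the $(-1)^{nm}$ in Theorem \ref{propYonedaProductDY} (with $m=1$ at this stage). After this substitution, I would recognize the morphism $\beta \, i^{(2)}_{X,\mathbf{1}}(U)$ as essentially $g_X$ up to the half-braiding $\rho^V$ evaluated at $\mathbf{1}$, using the explicit form of $\Gamma$ in \eqref{defGamma}, and likewise $\alpha \, i^{(n+1)}_{Y_1,\ldots,Y_n,\mathbf{1}}(V)$ as $f_{Y_1,\ldots,Y_n}$ up to $\rho^W$ at $\mathbf{1}$. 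The whole computation is displayed in Figure \ref{proofYonedaProduct1Cocycle}: the first equality is just the definition of $\Gamma^{-1}$ in \eqref{defGammaInverse}; the second applies Lemma \ref{lemmaYonedaProductBarGResolution} together with the definitions of $\Gamma(f)$ and $\Gamma(g)$ from \eqref{defGamma}; and the final equality uses that $\rho^V_{\mathbf{1}} = \mathrm{id}_V$ and $\rho^W_{\mathbf{1}} = \mathrm{id}_W$ (half-braidings are trivial on the unit object $\mathbf{1} \in \mathcal{C}$), together with the zig-zag axioms to cancel the (co)evaluation morphisms that were introduced by $\Gamma$ and $\Gamma^{-1}$.

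The main obstacle I anticipate is bookkeeping in the graphical calculus: one must carefully track how the universal dinatural transformations $i^{(k)}$ at various levels interact with the half-braidings $\rho^{Z_F^{k}(U)}$, and verify that all the auxiliary $F(X)^{\vee}$, $F(Y_j)^{\vee}$ strands introduced by the coends cancel against the corresponding coevaluations after applying $\varepsilon_{\mathsf{U}}$ and the zig-zag identities. The crucial simplification is \eqref{propertyHalfBraidingWihUnit}, which governs how the half-braiding on $Z_F^{n+1}(U)$ behaves with respect to the tensor unit slot, and the fact that evaluating a half-braiding relative to $F$ at the unit object $\mathbf{1}$ yields the identity. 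Once these reductions are in place, the remaining diagram collapses to exactly $(-1)^n \bigl(\mathrm{id}_{F(X)} \otimes f_{Y_1,\ldots,Y_n}\bigr)\bigl(g_X \otimes \mathrm{id}_{F(Y_1) \otimes \ldots \otimes F(Y_n)}\bigr)$, which is the claimed formula; since the identity holds for all $X, Y_1, \ldots, Y_n \in \mathcal{C}$ and the components determine the natural transformation, this completes the proof.
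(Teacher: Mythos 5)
Your proposal is correct and follows essentially the same route as the paper: the paper's proof is precisely the diagrammatic computation you describe, namely unwinding $(f\circ g)=\Gamma^{-1}\bigl(\Gamma(f)\circ\Gamma(g)\bigr)$ via \eqref{defGammaInverse}, substituting the formula of Lemma \ref{lemmaYonedaProductBarGResolution} (which supplies the sign $(-1)^n$), identifying the pieces $\Gamma(g)\,i^{(2)}_{X,\mathbf{1}}(U)$ and $\Gamma(f)\,i^{(n+1)}_{Y_1,\ldots,Y_n,\mathbf{1}}(V)$ through \eqref{defGamma}, and collapsing the remaining diagram using $\rho^V_{\mathbf{1}}=\mathrm{id}_V$, $\rho^W_{\mathbf{1}}=\mathrm{id}_W$ and the zig-zag identities.
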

\begin{proof}
The proof is the diagrammatic computation displayed in Figure \ref{proofYonedaProduct1Cocycle}. We use the previous lemma, the formulas for $\Gamma$ in \eqref{defGamma} and its inverse in \eqref{defGammaInverse}, and the fact that $\rho_{\mathbf{1}}^V = \mathrm{id}_V$.
\end{proof}

\begin{proof}[Proof of Theorem \ref{propYonedaProductDY}]
We will conclude by induction on $m$ thanks to the previous lemma. Due to Lemma \ref{factorization1CocyclesDY}, we can write $g$ as $g' \circ h$ with $g' \in C^{m-1}_{\mathrm{DY}}(F,\mathsf{U'},\mathsf{V})$ and $h \in C^1_{\mathrm{DY}}(F,\mathsf{U},\mathsf{U}')$. Then by associativity of $\circ$ we get
\begin{align*}
&(f \circ (g' \circ h))_{X_1, \ldots, X_m, Y_1, \ldots, Y_n} = \bigl( (f \circ g') \circ h \bigr)_{X_1, X_2, \ldots, X_m, Y_1, \ldots, Y_n}\\
= \: & (-1)^{n+m-1} \bigl( \mathrm{id}_{F(X_1)} \otimes (f \circ g')_{X_2, \ldots, X_m, Y_1, \ldots, Y_n} \bigr) \bigl( h_{X_1} \otimes \mathrm{id}_{F(X_2) \otimes \ldots \otimes F(X_m) \otimes F(Y_1) \otimes \ldots \otimes F(Y_n)} \bigr)\\
= \: & (-1)^{nm + m - 1}\bigl( \mathrm{id}_{F(X_1) \otimes \ldots \otimes F(X_m)} \otimes f_{Y_1, \ldots, Y_n} \bigr) \bigl( \mathrm{id}_{F(X_1)} \otimes g'_{X_2, \ldots, X_m} \otimes \mathrm{id}_{F(Y_1) \otimes \ldots \otimes F(Y_n)} \bigr)\\
&\qquad\qquad\qquad\qquad\qquad\qquad\qquad\qquad\quad\:\:\: \bigl( h_{X_1} \otimes \mathrm{id}_{F(X_2) \otimes \ldots \otimes F(X_m) \otimes F(Y_1) \otimes \ldots \otimes F(Y_n)} \bigr)\\
= \: & (-1)^{nm}\bigl( \mathrm{id}_{F(X_1) \otimes \ldots \otimes F(X_m)} \otimes f_{Y_1, \ldots, Y_n} \bigr) \bigl( (g' \circ h)_{X_1, \ldots, X_m} \otimes \mathrm{id}_{F(Y_1) \otimes \ldots \otimes F(Y_n)} \bigr). \qedhere
\end{align*}
\end{proof}

\smallskip

\indent For $\mathsf{U} = (U, \rho^U), \mathsf{V} = (V, \rho^V) \in \mathcal{Z}(F)$, we write as usual
\[ C^{\bullet}_{\mathrm{DY}}(F; \mathsf{U}, \mathsf{V}) = \bigoplus_{n=0}^{\infty} C^n_{\mathrm{DY}}(F; \mathsf{U}, \mathsf{V}), \quad H^{\bullet}_{\mathrm{DY}}(F; \mathsf{U}, \mathsf{V}) = \bigoplus_{n=0}^{\infty} H^n_{\mathrm{DY}}(F; \mathsf{U}, \mathsf{V}). \]
Then $C^{\bullet}_{\mathrm{DY}}(F) = C^{\bullet}_{\mathrm{DY}}(F; \boldsymbol{1}, \boldsymbol{1})$ and $H^{\bullet}_{\mathrm{DY}}(F) = H^{\bullet}_{\mathrm{DY}}(F; \boldsymbol{1}, \boldsymbol{1})$ are associative graded $k$-algebras with respect to the product $\circ$. In \cite[eq.\,(3)]{DE} and \cite[\S 3.1]{BD}, a product $\cup$ is defined by
\[ (f \cup g)_{X_1, \ldots, X_n, Y_1, \ldots, Y_m} = (-1)^{(n-1)(m-1)} f_{X_1, \ldots, X_n} \otimes g_{Y_1, \ldots, Y_m} \]
for $f \in C^n_{\mathrm{DY}}(F)$, $g \in C^m_{\mathrm{DY}}(F)$ and is shown to be commutative (up to sign) on $H^{\bullet}_{\mathrm{DY}}(F)$: $f \cup g = (-1)^{mn}g \cup f$. Thanks to Theorem \ref{propYonedaProductDY}, we see that $f \cup g = (-1)^{m+n+1}g \circ f$, and thus $\circ$ is commutative (up to sign) on $H^{\bullet}_{\mathrm{DY}}(F)$ as well. 

\smallskip

\indent Recall the isomorphism $\xi$ in \eqref{isoDualDY}; for any $\mathsf{U}, \mathsf{V} \in \mathcal{Z}(F)$ we define the map
\begin{align}
\triangleright : C^n_{\mathrm{DY}}(F) \otimes C^m_{\mathrm{DY}}(F; \mathsf{U}, \mathsf{V}) &\xrightarrow{\mathrm{id} \otimes \xi} C^n_{\mathrm{DY}}(F; \boldsymbol{1}, \boldsymbol{1}) \otimes C^m_{\mathrm{DY}}(F; \mathsf{U} \otimes {^{\vee}\mathsf{V}}, \boldsymbol{1})\nonumber\\
&\overset{\circ}{\longrightarrow} C^{n+m}_{\mathrm{DY}}(F; \mathsf{U} \otimes {^{\vee}\mathsf{V}}, \boldsymbol{1}) \overset{\!\xi^{-1}}{\longrightarrow} C^{n+m}_{\mathrm{DY}}(F; \mathsf{U}, \mathsf{V}) \label{defActionTriangleYoneda}
\end{align}

\begin{proposition}\label{propositionDYGroupsAreModules}
For $f \in C^n_{\mathrm{DY}}(F)$ and $g \in C^m_{\mathrm{DY}}(F; \mathsf{U}, \mathsf{V})$, the components of $f \triangleright g \in C^{n+m}_{\mathrm{DY}}(F; \mathsf{U}, \mathsf{V})$ are
\[ (f \triangleright g)_{X_1, \ldots, X_m, Y_1, \ldots, Y_n} = (-1)^{mn} \big(\mathrm{id}_{F(X_1) \otimes \ldots \otimes F(X_m)} \otimes \rho^V_{Y_1 \otimes \ldots \otimes Y_n}\big) \big( g_{X_1, \ldots, X_m} \otimes f_{Y_1, \ldots, Y_n} \big) \]
This endows $C^{\bullet}_{\mathrm{DY}}(F; \mathsf{U}, \mathsf{V})$ with a structure of graded $C^{\bullet}_{\mathrm{DY}}(F)$-module which descends to a structure of graded $H^{\bullet}_{\mathrm{DY}}(F)$-module on $H^{\bullet}_{\mathrm{DY}}(F; \mathsf{U}, \mathsf{V})$.
\end{proposition}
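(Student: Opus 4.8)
The plan is to take the definition \eqref{defActionTriangleYoneda} of $\triangleright$ at face value and reduce everything to two ingredients already at hand: the explicit formula for the Yoneda product on DY cochains (Theorem \ref{propYonedaProductDY}) and the explicit form of $\xi$ together with its inverse. First I would record $\xi^{-1}$ explicitly. Inverting \eqref{isoXiDualDY} by the usual zig-zag argument gives, for $h \in C^{k}_{\mathrm{DY}}(F;\mathsf{U}\otimes{^{\vee}\mathsf{V}},\boldsymbol{1})$ and $\vec{Z}=(Z_1,\dots,Z_k)$,
\[ \xi^{-1}(h)_{\vec{Z}} = (h_{\vec{Z}}\otimes\mathrm{id}_V)\,(\mathrm{id}_{U\otimes{^{\vee}V}}\otimes\rho^V_{Z_1\otimes\cdots\otimes Z_k})\,(\mathrm{id}_U\otimes\widetilde{\mathrm{coev}}_V\otimes\mathrm{id}_{F(Z_1)\otimes\cdots\otimes F(Z_k)}), \]
which one checks is a two-sided inverse of $\xi$ using the snake identities for the left duality of $\mathsf{V}$.

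The module structure is then essentially formal, so I would dispatch it first. Since $\xi$ is an isomorphism of cochain complexes (being built from $\Gamma$ and \eqref{isoXiDualBar}, both cochain isomorphisms) and the Yoneda product $\circ$ is associative with two-sided unit $\mathrm{id}_{\boldsymbol{1}}\in C^0_{\mathrm{DY}}(F)$, the identities
\[ \phi\triangleright(\psi\triangleright g)=\xi^{-1}\big(\phi\circ(\psi\circ\xi(g))\big)=\xi^{-1}\big((\phi\circ\psi)\circ\xi(g)\big)=(\phi\circ\psi)\triangleright g,\qquad \mathrm{id}_{\boldsymbol{1}}\triangleright g=g \]
follow at once from $\xi\xi^{-1}=\mathrm{id}$ and associativity/unitality of $\circ$. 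As $\triangleright$ is a composite of the chain maps $\xi,\xi^{-1}$ and the chain-level product $\circ$, it descends to cohomology and yields the graded $H^{\bullet}_{\mathrm{DY}}(F)$-module structure on $H^{\bullet}_{\mathrm{DY}}(F;\mathsf{U},\mathsf{V})$.

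The substantive part is the explicit formula, which I would obtain by computing $f\triangleright g=\xi^{-1}\big(f\circ\xi(g)\big)$ directly. Writing $\vec{X}=(X_1,\dots,X_m)$ and $\vec{Y}=(Y_1,\dots,Y_n)$, Theorem \ref{propYonedaProductDY} applied to the degree-$n$ factor $f$ (trivial coefficients) and the degree-$m$ factor $\xi(g)$ gives
\[ (f\circ\xi(g))_{\vec{X},\vec{Y}}=(-1)^{nm}(\mathrm{id}_{F(\vec{X})}\otimes f_{\vec{Y}})(\xi(g)_{\vec{X}}\otimes\mathrm{id}_{F(\vec{Y})}), \]
into which I substitute the formula \eqref{isoXiDualDY} for $\xi(g)_{\vec{X}}$ and then apply the expression for $\xi^{-1}$ above. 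This produces a single string diagram in which a copy of $V$ and a copy of ${^{\vee}V}$ are created by $\widetilde{\mathrm{coev}}_V$, transported past the $F(X_i)$ and the $F(Y_j)$ by the half-braidings, and annihilated against the output of $g$ by $\widetilde{\mathrm{ev}}_V$. The simplification rests on three inputs: the half-braiding axiom \eqref{halfBraidingOnTensorProducts} to resolve $\rho^{^{\vee}V}$ and $\rho^V$ strand by strand; the fact that $\widetilde{\mathrm{ev}}_V$ and $\widetilde{\mathrm{coev}}_V$ are morphisms in $\mathcal{Z}(F)$, i.e.\ the description \eqref{defDualityZF} of the duality; and the zig-zag identities, which collapse the coevaluation/evaluation pair, trivialize the contribution of the $F(X_i)$-strands, and leave $\rho^V_{Y_1\otimes\cdots\otimes Y_n}$ acting on the $F(Y_j)$-strands.

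The step I expect to be the main obstacle is the bookkeeping of the half-braidings on the $\vec{Y}$-strands: the computation naturally produces $f_{\vec{Y}}$ applied \emph{after} $V$ has been braided past $F(Y_1)\otimes\cdots\otimes F(Y_n)$, whereas the target formula applies $f_{\vec{Y}}$ to the inputs, \emph{before} that braiding. Reconciling the two requires the commutation
\[ (f_{\vec{Y}}\otimes\mathrm{id}_V)\,\rho^V_{Y_1\otimes\cdots\otimes Y_n}=\rho^V_{Y_1\otimes\cdots\otimes Y_n}\,(\mathrm{id}_V\otimes f_{\vec{Y}}), \]
expressing that the components of a trivial-coefficient cochain slide through the half-braiding. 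For $F=\mathrm{Id}_{\mathcal{C}}$, the case relevant to all the applications, this is immediate from the naturality of $\rho^V$ applied to the $\mathcal{C}$-morphism $f_{\vec{Y}}\colon Y_1\otimes\cdots\otimes Y_n\to Y_1\otimes\cdots\otimes Y_n$; in general it is the one point that must be extracted from the naturality of $f$ together with that of the half-braiding. Once this commutation is in hand the two diagrams coincide and the stated formula follows, with the sign $(-1)^{mn}$ inherited from Theorem \ref{propYonedaProductDY}. An equivalent route, once the answer is guessed, is to verify $\xi(f\triangleright g)=f\circ\xi(g)$; this reduces to the same commutation plus a single use of the $\mathcal{Z}(F)$-linearity of $\widetilde{\mathrm{ev}}_V$.
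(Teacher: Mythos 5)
Your proposal follows essentially the same route as the paper's proof: both unwind $f\triangleright g=\xi^{-1}\big(f\circ\xi(g)\big)$ using the explicit product formula of Theorem \ref{propYonedaProductDY} and the diagrammatic form \eqref{isoXiDualDY} of $\xi$, and both then simplify via the half-braiding axiom \eqref{halfBraidingOnTensorProducts} and the duality structure. Two organizational differences are worth recording, both reasonable: you obtain the module axioms formally from associativity of $\circ$ and the fact that $\xi$ is a chain isomorphism (the paper instead checks $(f\circ f')\triangleright g=f\triangleright(f'\triangleright g)$ from the explicit formula at the very end), and your form of $\xi^{-1}$ (coevaluation created on the left, the copy of $V$ transported by $\rho^V$) absorbs into the statement that $\widetilde{\mathrm{coev}}_{\mathsf{V}}$ is a morphism of $\mathcal{Z}(F)$ what the paper isolates as its separate displayed identity expressing $(\rho^{{}^{\vee}V}_Y)^{-1}$, conjugated by $\widetilde{\mathrm{ev}}_V/\widetilde{\mathrm{coev}}_V$, as $\rho^V_Y$.

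The genuine weak point is exactly the step you flag as the main obstacle, and your proposed resolution of it in general is not valid. The commutation $(f_{\vec{Y}}\otimes\mathrm{id}_V)\,\rho^V_{Y_1\otimes\cdots\otimes Y_n}=\rho^V_{Y_1\otimes\cdots\otimes Y_n}\,(\mathrm{id}_V\otimes f_{\vec{Y}})$ is indeed immediate for $F=\mathrm{Id}_{\mathcal{C}}$, since there $f_{\vec{Y}}$ is a morphism of $\mathcal{C}$ and naturality of $\rho^V$ applies verbatim. But for a general exact tensor functor it cannot be "extracted from the naturality of $f$ together with that of the half-braiding": $\rho^V$ is natural only against morphisms of the form $F(u)$, and a trivial-coefficient DY cochain $f_{\vec{Y}}\in\mathrm{End}\big(F(Y_1)\otimes\cdots\otimes F(Y_n)\big)$ need not lie in the image of $F$, nor commute with half-braidings. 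Concretely, for the forgetful functor $U:H\text{-}\mathrm{mod}\to\mathrm{Vect}_k$ one has $C^n_{\mathrm{DY}}(U)\cong H^{\otimes n}$ with no centralizer condition (see \eqref{DYForgetfulHModTrivialCoeffs}); for $H=B_1$ the element $\delta^1(x)=x\otimes(1-g)$ is a $2$-cocycle, yet $[\,x\otimes(1-g),\Delta(x)\,]=2\,x\otimes xg\neq 0$, so it does not commute with $\Delta(H)$ and hence not with the half-braidings of $\mathcal{Z}(U)$; for such $f$ the two sides of your commutation genuinely differ. In fairness, the paper's own proof performs the identical slide of $f_Y$ through $(\rho^{{}^{\vee}V}_Y)^{-1}$ under the label "naturality of the half-braiding", so it carries the same limitation: both arguments are complete for $F=\mathrm{Id}_{\mathcal{C}}$ (the case used in all of the paper's applications), or more generally for cochains $f$ commuting with the half-braidings, but for arbitrary exact $F$ this commutation must either be added as a hypothesis or the formula must be stated with $f_{\vec{Y}}$ on the other side of $\rho^V_{Y_1\otimes\cdots\otimes Y_n}$.
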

\begin{proof}
For simplicity of notation, take for instance $f$ to be a $1$-cochain (for the general case simply replace $Y$ by a tensor product $Y_1 \otimes \ldots \otimes Y_n$ in the pictures below). We have:
\begin{center}
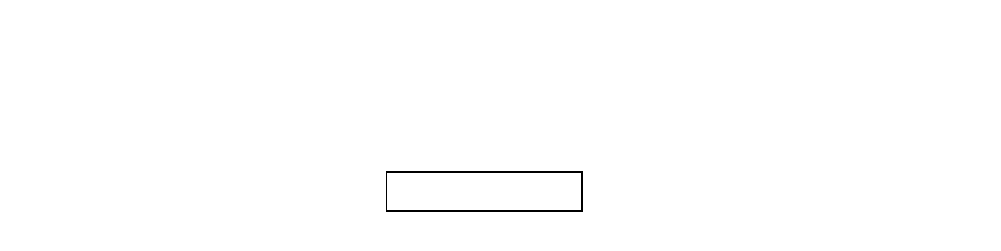
\end{center}
where the first equality is the output of a straightforward diagrammatic computation using \eqref{isoXiDualDY} and the formula in Theorem \ref{propYonedaProductDY}, while for the second we used \eqref{halfBraidingOnTensorProducts} and the naturality of the half-braiding. To obtain the desired expression, observe that
\begin{center}
\begingroup%
  \makeatletter%
  \providecommand\color[2][]{%
    \errmessage{(Inkscape) Color is used for the text in Inkscape, but the package 'color.sty' is not loaded}%
    \renewcommand\color[2][]{}%
  }%
  \providecommand\transparent[1]{%
    \errmessage{(Inkscape) Transparency is used (non-zero) for the text in Inkscape, but the package 'transparent.sty' is not loaded}%
    \renewcommand\transparent[1]{}%
  }%
  \providecommand\rotatebox[2]{#2}%
  \newcommand*\fsize{\dimexpr\f@size pt\relax}%
  \newcommand*\lineheight[1]{\fontsize{\fsize}{#1\fsize}\selectfont}%
  \ifx\svgwidth\undefined%
    \setlength{\unitlength}{139.02786254bp}%
    \ifx\svgscale\undefined%
      \relax%
    \else%
      \setlength{\unitlength}{\unitlength * \real{\svgscale}}%
    \fi%
  \else%
    \setlength{\unitlength}{\svgwidth}%
  \fi%
  \global\let\svgwidth\undefined%
  \global\let\svgscale\undefined%
  \makeatother%
  \begin{picture}(1,0.40314401)%
    \lineheight{1}%
    \setlength\tabcolsep{0pt}%
    \put(0,0){\includegraphics[width=\unitlength,page=1]{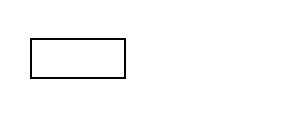}}%
    \put(0.13002703,0.17058201){\color[rgb]{0,0,0}\makebox(0,0)[lt]{\lineheight{1.25}\smash{\begin{tabular}[t]{l}$(\rho_Y^{^{\vee}\!V})^{-1}$\end{tabular}}}}%
    \put(0,0){\includegraphics[width=\unitlength,page=2]{halfBraidingDualInverse.pdf}}%
    \put(-0.00123619,0.00786776){\color[rgb]{0,0,0}\makebox(0,0)[lt]{\lineheight{1.25}\smash{\begin{tabular}[t]{l}$_{V}$\end{tabular}}}}%
    \put(0,0){\includegraphics[width=\unitlength,page=3]{halfBraidingDualInverse.pdf}}%
    \put(0.31592416,0.3784828){\color[rgb]{0,0,0}\makebox(0,0)[lt]{\lineheight{1.25}\smash{\begin{tabular}[t]{l}$_{F(Y)}$\end{tabular}}}}%
    \put(0,0){\includegraphics[width=\unitlength,page=4]{halfBraidingDualInverse.pdf}}%
    \put(0.09164922,0.00765505){\color[rgb]{0,0,0}\makebox(0,0)[lt]{\lineheight{1.25}\smash{\begin{tabular}[t]{l}$_{F(Y)}$\end{tabular}}}}%
    \put(0,0){\includegraphics[width=\unitlength,page=5]{halfBraidingDualInverse.pdf}}%
    \put(0.49913678,0.37760903){\color[rgb]{0,0,0}\makebox(0,0)[lt]{\lineheight{1.25}\smash{\begin{tabular}[t]{l}$_{V}$\end{tabular}}}}%
    \put(0,0){\includegraphics[width=\unitlength,page=6]{halfBraidingDualInverse.pdf}}%
    \put(0.81687552,0.1710639){\color[rgb]{0,0,0}\makebox(0,0)[lt]{\lineheight{1.25}\smash{\begin{tabular}[t]{l}$\rho_Y^V$\end{tabular}}}}%
    \put(0,0){\includegraphics[width=\unitlength,page=7]{halfBraidingDualInverse.pdf}}%
    \put(0.74749231,0.37848303){\color[rgb]{0,0,0}\makebox(0,0)[lt]{\lineheight{1.25}\smash{\begin{tabular}[t]{l}$_{F(Y)}$\end{tabular}}}}%
    \put(0,0){\includegraphics[width=\unitlength,page=8]{halfBraidingDualInverse.pdf}}%
    \put(0.93070498,0.37760926){\color[rgb]{0,0,0}\makebox(0,0)[lt]{\lineheight{1.25}\smash{\begin{tabular}[t]{l}$_{V}$\end{tabular}}}}%
    \put(0,0){\includegraphics[width=\unitlength,page=9]{halfBraidingDualInverse.pdf}}%
    \put(0.78098115,0.00786776){\color[rgb]{0,0,0}\makebox(0,0)[lt]{\lineheight{1.25}\smash{\begin{tabular}[t]{l}$_{V}$\end{tabular}}}}%
    \put(0,0){\includegraphics[width=\unitlength,page=10]{halfBraidingDualInverse.pdf}}%
    \put(0.87386654,0.00765505){\color[rgb]{0,0,0}\makebox(0,0)[lt]{\lineheight{1.25}\smash{\begin{tabular}[t]{l}$_{F(Y)}$\end{tabular}}}}%
    \put(0.57337763,0.18308139){\color[rgb]{0,0,0}\makebox(0,0)[lt]{\lineheight{1.25}\smash{\begin{tabular}[t]{l}$=$\end{tabular}}}}%
  \end{picture}%
\endgroup%

\end{center}
Indeed:
\begin{center}
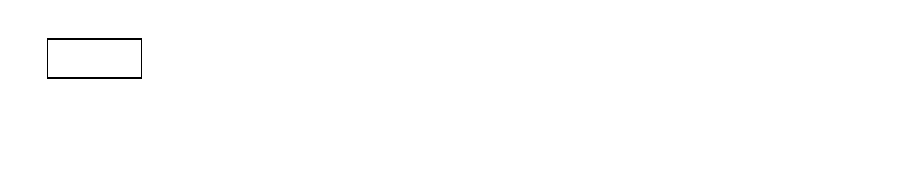
\end{center}
where the first equality is by definition of $\rho^{^{\vee}V}$ \eqref{defDualityZF}, for the second equality we used the half-braiding property \eqref{halfBraidingOnTensorProducts} and the third equality is by naturality of the half-braiding. 
With the explicit expression of $\triangleright$ just obtained, it is easy to check that $(f \circ f') \triangleright g = f \triangleright (f' \triangleright g)$. Moreover, $\xi$ and $\circ$ descend to cohomology, so that the representation $\triangleright$ descends to cohomology as well.
\end{proof}

\subsection{The long exact sequence for DY cohomology}\label{subsectionLongExactSequenceDYCohom}
We can transport the long exact sequence of relative $\Ext$ groups from Theorem \ref{thLongExactSequenceExtGroups} through the isomorphism $\Gamma$ from \eqref{defGamma} in order to obtain a long exact sequence for the Davydov--Yetter cohomology groups, which we now describe.

\smallskip

\indent Let $f : \mathsf{U} \to \mathsf{V}$ be a morphism in $\mathcal{Z}(F)$. It induces linear maps
\[ \forall \, n, \:\: f^* : \mathrm{Bar}^n_{\mathcal{Z}(F),\mathcal{D}}(\mathsf{V},\mathsf{W}) \to \mathrm{Bar}^n_{\mathcal{Z}(F),\mathcal{D}}(\mathsf{U},\mathsf{W}) \]
as defined at the beginning of \S \ref{sectionLongExactSequenceExtGroups}. Here we choose the bar resolution $\ldots \to G^2(\mathsf{X}) \to G(\mathsf{X}) \to \mathsf{X} \to 0$, which yields the bar complex $\mathrm{Bar}^n_{\mathcal{Z}(F),\mathcal{D}}(\mathsf{X},\mathsf{W}) = \Hom_{\mathcal{Z}(F)}\bigl( G^{n+1}(\mathsf{X}),\mathsf{W} \bigr)$, where $\mathsf{X}$ is $\mathsf{U}$ or $\mathsf{V}$ and $G$ is the comonad on $\mathcal{Z}(F)$ associated to the adjunction \eqref{relSystemZF}. Thanks to the isomorphism $\Gamma : C^n_{\mathrm{DY}}(F;\mathsf{X},\mathsf{W}) \to \mathrm{Bar}^n_{\mathcal{Z}(F),\mathcal{D}}(\mathsf{X},\mathsf{W})$, we have the corresponding linear maps $\Gamma^{-1}f^*\Gamma$ on DY cochains, which we also denote by $f^*$ for brevity:
\[ \forall \, n, \:\: f^* : C^n_{\mathrm{DY}}(F;\mathsf{V},\mathsf{W}) \to C^n_{\mathrm{DY}}(F;\mathsf{U},\mathsf{W}). \]

\begin{lemma}\label{pullbackDY}
For $g \in C^n_{\mathrm{DY}}(F;\mathsf{V},\mathsf{W})$, the components of $f^*(g) \in C^n_{\mathrm{DY}}(F;\mathsf{U},\mathsf{W})$ are
\[ f^*(g)_{X_1, \ldots, X_n} = g_{X_1, \ldots, X_n} \, \bigl( f \otimes \mathrm{id}_{F(X_1) \otimes \ldots \otimes F(X_n)} \bigr). \]
\end{lemma}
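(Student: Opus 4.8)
The plan is to chase the definition of $f^*$ on cochains through the isomorphism $\Gamma$. By definition we have $f^* = \Gamma^{-1} \circ (\text{pullback on }\mathrm{Bar}^n) \circ \Gamma$, so first I would identify what the pullback $f^* : \mathrm{Bar}^n_{\mathcal{Z}(F),\mathcal{D}}(\mathsf{V},\mathsf{W}) \to \mathrm{Bar}^n_{\mathcal{Z}(F),\mathcal{D}}(\mathsf{U},\mathsf{W})$ does on the bar resolution. Since both $\mathsf{U}$ and $\mathsf{V}$ are resolved by their respective bar resolutions $\ldots \to G^2(\mathsf{X}) \to G(\mathsf{X}) \to \mathsf{X} \to 0$, I need to produce a chain map $\widetilde{f}_\bullet : \mathrm{Bar}^{\bullet}_G(\mathsf{U}) \to \mathrm{Bar}^{\bullet}_G(\mathsf{V})$ lifting $f : \mathsf{U} \to \mathsf{V}$, as in the diagram preceding Theorem \ref{thLongExactSequenceExtGroups}. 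The key simplification here is that the comonad $G$ is functorial, so the natural choice of lift is $\widetilde{f}_n = G^{n+1}(f) : G^{n+1}(\mathsf{U}) \to G^{n+1}(\mathsf{V})$; I would check that this commutes with the bar differentials (which follows from naturality of the counit $\varepsilon$ and hence of the differentials $d_n$ in \eqref{barDifferentialG}) and with the augmentations. Then for a cocycle $\alpha \in \Hom_{\mathcal{Z}(F)}(G^{n+1}(\mathsf{V}),\mathsf{W})$ the pullback is $\alpha \circ G^{n+1}(f)$.

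Next I would transport this through $\Gamma$. Writing $g \in C^n_{\mathrm{DY}}(F;\mathsf{V},\mathsf{W})$ and $\alpha = \Gamma(g)$, I must compute $\Gamma^{-1}\bigl(\alpha \, G^{n+1}(f)\bigr)$ using the explicit formula \eqref{defGammaInverse} for $\Gamma^{-1}$. The components of $f^*(g)$ are obtained by precomposing the string diagram in \eqref{defGammaInverse} with the universal dinatural transformation and then postcomposing $\alpha$ with $G^{n+1}(f) = Z_F^{n+1}(f)$. The crucial input is the defining relation \eqref{defZFf} for $Z_F$ on morphisms: $Z_F(f)\, i_X(V) = i_X(W)(\mathrm{id} \otimes f \otimes \mathrm{id})$, iterated $n+1$ times to give $G^{n+1}(f)\, i^{(n+1)}_{X_1,\ldots,X_{n+1}}(U) = i^{(n+1)}_{X_1,\ldots,X_{n+1}}(V)\,(\mathrm{id} \otimes f \otimes \mathrm{id})$. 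Feeding this into \eqref{defGammaInverse} (with the last strand being the tensor unit $\mathbf{1}$) should collapse the dinatural transformations and leave exactly $g_{X_1,\ldots,X_n}$ precomposed with $f \otimes \mathrm{id}_{F(X_1) \otimes \ldots \otimes F(X_n)}$, which is the claimed formula.

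A cleaner and essentially equivalent route, which I would prefer to present, is to bypass $Z_F$ entirely and verify the formula directly on DY cochains. Define a candidate map $\widehat{f}^*(g)_{X_1,\ldots,X_n} = g_{X_1,\ldots,X_n}(f \otimes \mathrm{id}_{F(X_1) \otimes \ldots \otimes F(X_n)})$; one checks it lands in $C^n_{\mathrm{DY}}(F;\mathsf{U},\mathsf{W})$, i.e. that $\widehat{f}^*(g)$ is a natural transformation of the required type, using that $f$ is a morphism in $\mathcal{Z}(F)$ (it intertwines $\rho^U$ and $\rho^V$). The real content is that $\widehat{f}^*$ is a chain map, i.e. $\delta^n \widehat{f}^* = \widehat{f}^* \delta^n$; this is a short computation comparing the differential formula \eqref{defDYdifferential} for the two coefficient data, where the only term needing the intertwining property of $f$ is the first one, $(\mathrm{id}_{F(X_0)} \otimes f_{X_1,\ldots,X_n})(\rho^{\mathsf{U}}_{X_0} \otimes \mathrm{id})$ versus $(\rho^{\mathsf{V}}_{X_0} \otimes \mathrm{id})$. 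Because $\Gamma$ is an isomorphism of complexes and the pullback on $\mathrm{Bar}^\bullet$ is the unique chain map induced by $f$ up to homotopy, it suffices to confirm that $\Gamma \widehat{f}^* \Gamma^{-1}$ agrees with the bar pullback on some convenient element, e.g. by the dinaturality argument above. The main obstacle I anticipate is purely bookkeeping: tracking the coend data and the placement of the unit strand $\mathbf{1}$ in \eqref{defGammaInverse} carefully enough to see the cancellation, rather than any conceptual difficulty. Once the identification $G^{n+1}(f)\, i^{(n+1)} = i^{(n+1)}(\mathrm{id} \otimes f \otimes \mathrm{id})$ is in hand, the result drops out.
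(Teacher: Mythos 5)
Your first route is exactly the paper's proof: the paper notes that $G^{n+1}(f)$ is a valid lift of $f$ between the bar resolutions (commutativity of the ladder diagram follows from naturality of the counit $\varepsilon$, hence of the differentials \eqref{barDifferentialG}), so the bar-level pullback is $\alpha \mapsto \alpha\, G^{n+1}(f) = \alpha\, Z_F^{n+1}(f)$, and the claimed formula then drops out of a diagrammatic computation through $\Gamma$ and $\Gamma^{-1}$ from \eqref{defGamma}--\eqref{defGammaInverse} using the defining relation \eqref{defZFf}. That part of your proposal is correct and complete in outline.

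The route you say you would \emph{prefer} to present, however, has a genuine gap in its last step. The lemma is a statement about cochains, not cohomology classes: $f^*$ on $C^n_{\mathrm{DY}}$ is by definition $\Gamma^{-1}(- \circ \widetilde f_n)\Gamma$ for a lift $\widetilde f_\bullet$ of $f$, and the asserted formula is the one produced by the specific lift $G^{n+1}(f)$. The fundamental lemma of relative homological algebra only guarantees that two lifts are chain homotopic, so ``uniqueness up to homotopy'' identifies $\Gamma \widehat f^* \Gamma^{-1}$ with the bar pullback on \emph{cohomology}, not as maps of cochain complexes; and checking agreement ``on some convenient element'' cannot establish equality of two chain maps either. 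To obtain the cochain-level identity claimed in the lemma you cannot avoid verifying $\Gamma\bigl(\widehat f^*(g)\bigr) = \Gamma(g)\, G^{n+1}(f)$ for all $g$, which is precisely the dinaturality computation of your first route --- so the ``cleaner'' route either collapses back into the first one or proves only the weaker cohomological statement (which, to be fair, is all that is needed for the long exact sequence of Corollary \ref{coroLongExactSequenceDYCohomology}, but it is not what the lemma asserts). Two minor points: naturality of $\widehat f^*(g)$ needs only that $f$ is a morphism in $\mathcal{D}$, while the half-braiding intertwining enters, as you correctly observe via \eqref{defDYdifferential}, only in the first term of the chain-map verification.
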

\begin{proof}
First note a general fact. Take a resolvent pair of abelian categories as in \eqref{adjunction} and let $G$ be the associated comonad on $\mathcal{A}$. Then for any objects $U, V \in \mathcal{A}$, the diagram
\[ \xymatrix{
G^{n+1}(U) \ar[r]^{d_n^U} \ar@{-->}[d]^{G^{n+1}(f)} & G^n(U) \ar[r]^{d_{n-1}^U} \ar@{-->}[d]^{G^n(f)} & \ldots \ar[r]^{\!\!\!\!\!\!d_1^U} & G(U) \ar[r]^{\quad d_0^U} \ar@{-->}[d]^{G(f)}& U \ar[r] \ar[d]^f & 0\\
G^{n+1}(V) \ar[r]_{d_n^V} & G^n(V) \ar[r]_{d_{n-1}^V} & \ldots \ar[r]_{\!\!\!\!\!\!d_1^V} & G(V) \ar[r]_{\quad d_0^V} & V \ar[r] & 0
} \]
commutes. The first (resp. second) row is the bar resolution of $U$ (resp. $V$), whose differential is defined in \eqref{barDifferentialG}. This is based on an easy computation using the naturality of the counit $\varepsilon : G \to \mathrm{Id}$. Then by definition (see \S \ref{sectionLongExactSequenceExtGroups}), $f^* : \Hom_{\mathcal{A}}\bigl( G^{n+1}(V),W \bigl) \to \Hom_{\mathcal{A}}\bigl( G^{n+1}(U),W \bigl)$ is simply given by $f^*(\alpha) = \alpha G^{n+1}(f)$.
\\In our case, $G^{n+1}(f) = Z_F^{n+1}(f)$ and the result is the outcome of an easy diagrammatic computation using the definition of $\Gamma$ and $\Gamma^{-1}$ from \eqref{defGamma}--\eqref{defGammaInverse} and the definition of $Z_F(f)$ from \eqref{defZFf}.
\end{proof}

\begin{corollary}\label{coroLongExactSequenceDYCohomology}
Let $S = \big(0 \longrightarrow \mathsf{U} \overset{j}{\longrightarrow} \mathsf{V} \overset{\pi}{\longrightarrow} \mathsf{W} \longrightarrow 0 \big)$
be an allowable short exact sequence in $\mathcal{Z}(F)$ and let $\mathsf{N}$ be any object in $\mathcal{Z}(F)$. Then we have a long exact sequence of $k$-vector spaces
\[\xymatrix{
0 \ar[r] & H^0_{\mathrm{DY}}(F;\mathsf{W},\mathsf{N}) \ar[r]^{\pi^*} & H^0_{\mathrm{DY}}(F;\mathsf{V},\mathsf{N}) \ar[r]^{j^*} & H^0_{\mathrm{DY}}(F;\mathsf{U},\mathsf{N}) \ar[lld]_{c^0} &\\
& H^1_{\mathrm{DY}}(F;\mathsf{W},\mathsf{N}) \ar[r]^{\pi^*} & H^1_{\mathrm{DY}}(F;\mathsf{V},\mathsf{N}) \ar[r]^{j^*} & H^1_{\mathrm{DY}}(F;\mathsf{U},\mathsf{N}) \ar[lld]_{c^1} & \\
& H^2_{\mathrm{DY}}(F;\mathsf{W},\mathsf{N}) \ar[r]^{\pi^*} & H^2_{\mathrm{DY}}(F;\mathsf{V},\mathsf{N}) \ar[r]^{j^*} & H^2_{\mathrm{DY}}(F;\mathsf{U},\mathsf{N}) & \!\!\!\!\!\!\!\!\!\!\!\!\!\!\!\!\!\!\!\ldots
} \]
\end{corollary}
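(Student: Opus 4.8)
The plan is to derive Corollary~\ref{coroLongExactSequenceDYCohomology} by transporting the long exact sequence of relative $\Ext$ groups from Theorem~\ref{thLongExactSequenceExtGroups} along the isomorphism $\Gamma$ of Corollary~\ref{DYrelExt}. Concretely, apply Theorem~\ref{thLongExactSequenceExtGroups} to the resolvent pair $\mathcal{Z}(F) \leftrightarrows \mathcal{D}$ from \eqref{relSystemZF}, with the given allowable short exact sequence $S$ and the object $N = \mathsf{N}$. This directly yields a long exact sequence of $k$-vector spaces built from the groups $\Ext^n_{\mathcal{Z}(F),\mathcal{D}}(\mathsf{W},\mathsf{N})$, $\Ext^n_{\mathcal{Z}(F),\mathcal{D}}(\mathsf{V},\mathsf{N})$, $\Ext^n_{\mathcal{Z}(F),\mathcal{D}}(\mathsf{U},\mathsf{N})$, linked by the pullbacks $\pi^*$, $j^*$ and the connecting morphisms $c^n$. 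Note that these are genuine exact sequences of $k$-vector spaces and not merely abelian groups: each $\Ext$ group carries a $k$-vector space structure coming from the $k$-linearity of $\mathcal{Z}(F)$, and every map in Theorem~\ref{thLongExactSequenceExtGroups} is visibly $k$-linear.

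Next I would apply the isomorphism $H^n_{\mathrm{DY}}(F;\mathsf{X},\mathsf{N}) \cong \Ext^n_{\mathcal{Z}(F),\mathcal{D}}(\mathsf{X},\mathsf{N})$ from Corollary~\ref{DYrelExt} for $\mathsf{X} \in \{\mathsf{U},\mathsf{V},\mathsf{W}\}$ to replace every relative $\Ext$ term by the corresponding DY cohomology group. Since $\Gamma$ is an isomorphism of cochain complexes, it induces an isomorphism on cohomology in each degree, and these isomorphisms are $k$-linear. To conclude that the resulting diagram is still a long exact sequence with the maps labelled $\pi^*$, $j^*$, $c^n$, I must check that the transported maps agree with the ones named in the statement. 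The maps $\pi^*$ and $j^*$ on the DY side are precisely $\Gamma^{-1}(-)^*\Gamma$, which is exactly how the pullback $f^*$ on DY cochains was defined just above Lemma~\ref{pullbackDY}; so by construction the squares relating the $\Ext$-version and the DY-version of $\pi^*$ and $j^*$ commute, and exactness is preserved because it is a property transported across a commuting ladder of isomorphisms. The connecting maps $c^n$ are simply \emph{defined} to be the transports $\Gamma^{-1} c^n_{\Ext} \Gamma$ of the connecting morphisms of Theorem~\ref{thLongExactSequenceExtGroups}; no separate verification of their shape is needed here.

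The one genuinely substantive point is the identification of the start of the sequence. Theorem~\ref{thLongExactSequenceExtGroups} begins with the $\Hom$ terms $\Hom_{\mathcal{Z}(F)}(\mathsf{W},\mathsf{N})$, $\Hom_{\mathcal{Z}(F)}(\mathsf{V},\mathsf{N})$, $\Hom_{\mathcal{Z}(F)}(\mathsf{U},\mathsf{N})$, whereas the corollary begins with $H^0_{\mathrm{DY}}(F;\mathsf{W},\mathsf{N})$ etc. Thus I must use the identification $H^0_{\mathrm{DY}}(F;\mathsf{X},\mathsf{N}) \cong \Ext^0_{\mathcal{Z}(F),\mathcal{D}}(\mathsf{X},\mathsf{N}) \cong \Hom_{\mathcal{Z}(F)}(\mathsf{X},\mathsf{N})$. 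The first isomorphism is the $n=0$ instance of Corollary~\ref{DYrelExt}; for the second, recall from Definition~\ref{defRelExt} that $\Ext^0_{\mathcal{A},\mathcal{B}}(V,W)$ is the $0$-th cohomology of the $\Hom$-complex, i.e. $\ker(d_1^*) \subset \Hom_{\mathcal{A}}(P_0,W)$, which for any projective resolution equals $\Hom_{\mathcal{A}}(V,W)$ by left-exactness of $\Hom$. I would record this identification explicitly so that the degree-zero terms of the transported sequence match the stated $H^0_{\mathrm{DY}}$ terms and the map $\pi^*$ out of $H^0_{\mathrm{DY}}(F;\mathsf{W},\mathsf{N})$ coincides with the honest precomposition by $\pi$.

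The main obstacle, such as it is, is bookkeeping rather than mathematics: one must verify that the transported $\pi^*$ and $j^*$ really are the DY pullbacks of Lemma~\ref{pullbackDY} and not some twisted variant, and in particular that the degree-$0$ maps reduce to ordinary precomposition $g \mapsto g\pi$, $g \mapsto gj$. This follows immediately from the definition of $f^*$ on DY cochains given before Lemma~\ref{pullbackDY} together with the explicit formula $f^*(g)_{X_1,\ldots,X_n} = g_{X_1,\ldots,X_n}(f \otimes \mathrm{id})$ of that lemma, specialized to $n=0$. Once these identifications are in place, the long exact sequence of the corollary is exactly the image of the long exact sequence of Theorem~\ref{thLongExactSequenceExtGroups} under the degreewise isomorphism $\Gamma$, and exactness is inherited term by term.
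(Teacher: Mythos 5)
Your proposal is correct and follows essentially the same route as the paper: the corollary is obtained by applying Theorem~\ref{thLongExactSequenceExtGroups} to the resolvent pair $\mathcal{Z}(F) \leftrightarrows \mathcal{D}$ and transporting the resulting long exact sequence degreewise through the cochain-complex isomorphism $\Gamma$ of Corollary~\ref{DYrelExt}, with the maps $\pi^*$, $j^*$, $c^n$ on the DY side defined precisely as the $\Gamma$-conjugates described before Lemma~\ref{pullbackDY}. Your extra care with the degree-zero identification $H^0_{\mathrm{DY}}(F;\mathsf{X},\mathsf{N}) \cong \Ext^0_{\mathcal{Z}(F),\mathcal{D}}(\mathsf{X},\mathsf{N}) \cong \Hom_{\mathcal{Z}(F)}(\mathsf{X},\mathsf{N})$ and with $k$-linearity versus mere additivity is a correct filling-in of details the paper leaves implicit.
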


\noindent A nice feature of this long exact sequence is that its arrows are easy to describe. Indeed, the pullback morphisms $\pi^*, j^*$ have a very simple expression (Lemma \ref{pullbackDY}). The connecting morphisms $c^n$ are obtained by transporting the connecting morphisms from Theorem \ref{thLongExactSequenceExtGroups} through the isomorphism $\Gamma$ from \eqref{defGamma}. Let
\[ S_{\mathrm{DY}}= \Gamma^{-1}\bigl(\eta(S)\bigl) \in H^1_{\mathrm{DY}}(F;\mathsf{W},\mathsf{U}) \]
be the DY cocycle associated to $S \in \YExt_{\mathcal{Z}(F),\mathcal{D}}^1(\mathsf{W}, \mathsf{U})$ (recall \eqref{defBarEta}). Then $c^n(g) = (-1)^n g \circ S_{\mathrm{DY}}$, where the product $\circ$ is given by the simple formula in Theorem \ref{propYonedaProductDY}. The only non-trivial point is to determine the DY cocycle $S_{\mathrm{DY}}$ associated to $S$.

\smallskip

\indent To conclude, we reformulate Corollary \ref{corollaryInductionFormulaExt} for DY cohomology. Let $S = \bigl( 0 \rightarrow \mathsf{K} \rightarrow \mathsf{P} \rightarrow \mathsf{W} \rightarrow 0 \bigr)$ be an allowable short exact sequence in $\mathcal{Z}(F)$, with $\mathsf{P}$ a relatively projective object. Then for $n \geq 1$ the connecting morphism is an isomorphism:
\begin{equation}\label{connectingMorphismIsoDY}
\foncIso{c^{n}}{H^{n}_{\mathrm{DY}}(F;\mathsf{K},\mathsf{N})}{H^{n+1}_{\mathrm{DY}}(F;\mathsf{W},\mathsf{N})}{g}{(-1)^n g \circ S_{\mathrm{DY}}}.
\end{equation}

\section{Finite-dimensional Hopf algebras}\label{sectionFinDimHopfAlgebras}
In this section, $H$ is a finite-dimensional Hopf algebra over a field $k$, with coproduct $\Delta : H \to H \otimes H$, counit $\varepsilon : H \to k$ and antipode $S : H \to H$. We will specialize the general results from the previous sections to the case $\mathcal{C} = H\text{-}\mathrm{mod}$ (which is a finite $k$-linear tensor category). Moreover, in \S \ref{methodDYCocyclesThanksToSequences} we explain a method to compute explicit DY cocycles for the identity functor of $H$-mod and in \S \ref{sectionFactoHopfAlg} we give further results in the case of factorizable Hopf algebras.

\smallskip

\indent In the sequel we denote the iterated coproduct by $\Delta^{(n)}$:
\begin{equation}\label{iteratedDelta}
\Delta^{(-1)} = \varepsilon, \quad \Delta^{(0)} = \mathrm{id}, \quad \Delta^{(1)} = \Delta, \quad \Delta^{(2)} = (\Delta \otimes \mathrm{id})\Delta, \: \ldots
\end{equation}
and we use Sweedler's notation with implicit summation:
\[ \Delta(h) = h' \otimes h'', \quad \Delta^{(2)}(h) = h' \otimes h'' \otimes h''', \quad \ldots, \quad \Delta^{(n)}(h) = h^{(1)} \otimes \ldots \otimes h^{(n+1)}. \]

\indent We are mainly interested in DY cohomology for the identity functor on $H\text{-}\mathrm{mod}$. However, another interesting functor is the forgetful functor $U : H\text{-}\mathrm{mod} \to \mathrm{Vect}_k$ which we study first.

\subsection{DY cohomology of the forgetful functor}

\indent Let $(H^*)^{\mathrm{op}}$ be $H^*$ with the product $\varphi\psi = (\psi \otimes \varphi)\Delta$ and the coproduct defined by $\Delta(\varphi)(x \otimes y) = \varphi(xy)$. It has been shown in \cite[Prop.\,7]{davydov} that $C^{\bullet}_{\mathrm{DY}}(U)$ is isomorphic to the Hochschild cochain complex of $(H^*)^{\mathrm{op}}$ with the trivial coefficient. As noted in \cite[Rem.\,5.11]{GHS} this implies that $H^{\bullet}_{\mathrm{DY}}(U) \cong \Ext_{(H^*)^{\mathrm{op}}}(\mathbb{C}, \mathbb{C})$. Here we generalize this isomorphism to non-trivial coefficients, which is relevant e.g. because of the factorization property in Lemma \ref{factorization1CocyclesDY})
\begin{lemma}\label{DYForgetfulExt}
1. We have an equivalence of tensor categories $\mathcal{Z}(U) \cong (H^*)^{\mathrm{op}}\text{-}\mathrm{mod}$.
\\2. For all $V, W \in (H^*)^{\mathrm{op}}\text{-}\mathrm{mod}$, $H^n_{\mathrm{DY}}(U; V,W) \cong \Ext^n_{(H^*)^{\mathrm{op}}}(V,W)$.
\end{lemma}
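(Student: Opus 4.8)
The plan is to realize the forgetful functor $U : H\text{-}\mathrm{mod} \to \mathrm{Vect}_k$ as the right adjoint $\mathcal{U}$ in a resolvent pair whose centralizer $\mathcal{Z}(U)$ is identified with $(H^*)^{\mathrm{op}}\text{-}\mathrm{mod}$, and then to invoke Corollary \ref{DYrelExt} together with Example \ref{exampleSemisimpleBottom}. For the first item, I would recall the standard fact that a half-braiding relative to $U$ on an object $V \in \mathrm{Vect}_k$ is a natural isomorphism $\rho^V_X : V \otimes U(X) \to U(X) \otimes V$ satisfying \eqref{halfBraidingOnTensorProducts}. Since $U$ is the forgetful functor, evaluating the half-braiding on the regular representation $X = H$ and using naturality, one extracts from $\rho^V$ precisely the data of an action of $H^*$ on $V$; the compatibility condition \eqref{halfBraidingOnTensorProducts} translates into the associativity/unitality of this action, and the ``op'' appears because of the order in which the half-braiding composes tensor factors. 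Concretely, I would define $\rho^V_X(v \otimes x) = \sum x'' \otimes (\varphi \cdot v)$-type formulas tracking the $H^*$-coaction dual to the $H$-action, and check that morphisms in $\mathcal{Z}(U)$ are exactly $(H^*)^{\mathrm{op}}$-linear maps, and that the tensor product of half-braidings matches the tensor product of $(H^*)^{\mathrm{op}}$-modules. This gives the equivalence of tensor categories $\mathcal{Z}(U) \cong (H^*)^{\mathrm{op}}\text{-}\mathrm{mod}$.

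For the second item, I would combine this equivalence with the adjunction \eqref{relSystemZF}, where now $\mathcal{U} : \mathcal{Z}(U) \to \mathrm{Vect}_k$ becomes, under the identification of item 1, the forgetful functor $(H^*)^{\mathrm{op}}\text{-}\mathrm{mod} \to \mathrm{Vect}_k$, with left adjoint the induction (free module) functor. Corollary \ref{DYrelExt} gives
\[ H^n_{\mathrm{DY}}(U; V, W) \cong \Ext^n_{\mathcal{Z}(U), \mathrm{Vect}_k}(V, W) \cong \Ext^n_{(H^*)^{\mathrm{op}}\text{-}\mathrm{mod}, \, \mathrm{Vect}_k}(V, W). \]
The decisive point is that the bottom category of the resolvent pair is $\mathrm{Vect}_k$, which is semisimple. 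By Example \ref{exampleSemisimpleBottom}, every morphism is allowable, every relatively projective object is projective, and the relative Ext groups coincide with the ordinary Ext groups. Hence
\[ \Ext^n_{(H^*)^{\mathrm{op}}\text{-}\mathrm{mod}, \, \mathrm{Vect}_k}(V, W) \cong \Ext^n_{(H^*)^{\mathrm{op}}}(V, W), \]
which is the claimed formula.

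The main obstacle I anticipate is the first item: making the identification $\mathcal{Z}(U) \cong (H^*)^{\mathrm{op}}\text{-}\mathrm{mod}$ genuinely precise as an equivalence of \emph{tensor} categories, with the correct ``op'' convention. One must verify not merely a bijection of objects but that the half-braiding data is faithfully and fully captured by the $(H^*)^{\mathrm{op}}$-module structure, that this assignment is functorial and essentially surjective, and crucially that it is \emph{monoidal}, i.e.\ that $\rho^{V \otimes W} = (\rho^V \otimes \mathrm{id}_W)(\mathrm{id}_V \otimes \rho^W)$ corresponds under the equivalence to the diagonal $H^*$-action coming from the coproduct of $H^*$ (which is the product of $H$). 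Tracking the several dualizations and the reversal induced by passing between the $H$-action on $X$ and the resulting $H^*$-structure on $V$ is where sign/convention errors naturally creep in, and this is precisely what forces the appearance of $(H^*)^{\mathrm{op}}$ rather than $H^*$. Once this dictionary is set up correctly, the second item is essentially immediate from the general machinery already established in the excerpt.
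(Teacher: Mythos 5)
Your proposal is correct and takes essentially the same route as the paper: item 1 is proved by extracting the $(H^*)^{\mathrm{op}}$-action from the half-braiding evaluated on the regular representation (and conversely reconstructing $\rho^V$ via dual bases and naturality, exactly as you sketch), and item 2 is the same chain $H^n_{\mathrm{DY}}(U;V,W) \cong \Ext^n_{\mathcal{Z}(U),\mathrm{Vect}_k}(V,W) \cong \Ext^n_{(H^*)^{\mathrm{op}}\text{-}\mathrm{mod},\mathrm{Vect}_k}(V,W) = \Ext^n_{(H^*)^{\mathrm{op}}}(V,W)$ using Corollary \ref{DYrelExt}, the equivalence of item 1, and Example \ref{exampleSemisimpleBottom}. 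The only cosmetic difference is that the paper writes down the explicit formulas $\varphi \cdot v = (\varphi \otimes \mathrm{id}_V)\bigl(\rho^V_H(v \otimes 1)\bigr)$ and $\rho^V_H(v \otimes x) = h_i x \otimes h^i \cdot v$ where you leave the dictionary slightly schematic, and it remarks that item 2 only needs the equivalence of abelian (not tensor) categories.
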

\begin{proof}
1. An object in $\mathcal{Z}(U)$ is a pair $(V,\rho^V)$, where $V$ is simply a vector space. For such a pair, the formula
\[ \varphi \cdot v = (\varphi \otimes \mathrm{id}_V)\big(\rho_H^V(v \otimes 1)\big) \]
with $\varphi \in H^*$, $v \in V$ endows $V$ with a structure of $(H^*)^{\mathrm{op}}$-module. Conversely if $(V, \cdot)$ is a $(H^*)^{\mathrm{op}}$-module, the formula
\[ \rho_H^V(v \otimes x) = h_ix \otimes h^i\cdot v \]
(where $(h_i)$ is a basis of $H$ and $(h^i)$ is its dual basis) can be extended by naturality to a half-braiding relative to $U$. It is straightforward to check that this correspondence defines a strict tensor equivalence.
\\2. We have
\begin{equation*}
H^n_{\mathrm{DY}}(U; V,W) \cong \Ext^n_{\mathcal{Z}(U), \mathrm{Vect}_k}(V,W) \cong \Ext^n_{(H^*)^{\mathrm{op}}\text{-}\mathrm{mod}, \mathrm{Vect}_k}(V,W)  = \Ext^n_{(H^*)^{\mathrm{op}}}(V,W).
\end{equation*}
The first isomorphism is due to Corollary \ref{DYrelExt}, the second isomorphism is due to the previous item (actually we only use the equivalence of abelian categories) and the equality is due to the semisimplicity of $\mathrm{Vect}_k$ (see Example \ref{exampleSemisimpleBottom}).
\end{proof}

Therefore, in order to compute $H^n_{\mathrm{DY}}(U; V,W)$, or equivalently $\Ext^n_{(H^*)^{\mathrm{op}}}(V,W)$, one just needs to find a usual projective resolution of the $(H^*)^{\mathrm{op}}$-module $V$, apply $\Hom_{(H^*)^{\mathrm{op}}}(-,W)$ and calculate cohomologies of the resulting complex.

\subsection{The resolvent pair $\mathcal{Z}(\mathrm{Id}_{H\text{-}\mathrm{mod}}) \leftrightarrows H\text{-}\mathrm{mod}$}
\indent Let $(H^*)^{\mathrm{op}}$ be $H^*$ with the product $\varphi \psi = (\psi \otimes \varphi)\Delta$. The Drinfeld double of $H$, denoted $D(H)$, is $(H^*)^{\mathrm{op}} \otimes H$ as a coalgebra and has the product
\[ (\varphi \otimes h)(\psi \otimes g) = \varphi \psi\bigl( S(h')?h''' \bigr) \otimes h''g \]
where $\psi\bigl( S(h')?h''' \bigr)$ is the linear form $x \mapsto \psi\bigl( S(h')xh''' \bigr)$ and $\varphi \psi\bigl( S(h')?h''' \bigr)$ is the linear form $x \mapsto \varphi(x'') \,\psi\bigl( S(h')x'h''' \bigr)$. As usual, we identify $\varphi \in (H^*)^{\mathrm{op}}$ with $\varphi \otimes 1 \in D(H)$ and $h \in H$ with $1 \otimes h \in D(H)$, so that $(H^*)^{\mathrm{op}}$ and $H$ become subalgebras of $D(H)$. Then $\varphi \otimes h$ can be written as $\varphi h$ and $h\psi = \psi\bigl( S(h') ? h''' \bigr)h''$.

\smallskip

\indent Recall that $\mathcal{Z}(\mathrm{Id}_{H\text{-}\mathrm{mod}}) = \mathcal{Z}(H\text{-}\mathrm{mod}) \cong {^H_H}\mathcal{YD}$. 
Indeed, if $(U,\rho^U) \in \mathcal{Z}(H\text{-}\mathrm{mod})$ then 
\[ \lambda_U = \rho^U_H(? \otimes 1) : U \to H \otimes U \]
is a left comodule structure and $(U,\lambda_U)$ is a left-left Yetter--Drinfeld module. Moreover, it is well-known that ${^H_H}\mathcal{YD} \cong D(H)\text{-}\mathrm{mod}$, since one can define a $(H^*)^{\mathrm{op}}$-action on $(V, \lambda_V) \in {^H_H}\mathcal{YD}$ by
\[ \varphi \cdot v = (\varphi \otimes \mathrm{id}_V)(\lambda_V(v)) \]
and a left coaction on $W \in D(H)\text{-}\mathrm{mod}$ by 
\begin{equation}\label{comoduleStructureOnDHModule}
\lambda_W(w) = h_i \otimes h^i \cdot w
\end{equation}
where $(h_i)$ is a basis of $H$ with dual basis $(h^ i)$. In the sequel we identify all these categories with $D(H)\text{-}\mathrm{mod}$.

\smallskip

\indent Let us rephrase the resolvent pair $\mathcal{Z}(\mathrm{Id}_{H\text{-}\mathrm{mod}}) \leftrightarrows H\text{-}\mathrm{mod}$ (recall \eqref{relSystemZF}) under the identification $\mathcal{Z}(\mathrm{Id}_{H\text{-}\mathrm{mod}}) = D(H)\text{-}\mathrm{mod}$, following \cite[\S 4]{GHS}. The forgetful functor $\mathcal{U} : D(H)\text{-}\mathrm{mod} \to H\text{-}\mathrm{mod}$ is induced by the obvious injective morphism $H \to D(H)$ and forgets the $(H^*)^{\mathrm{op}}$-action. The associated induction functor $\mathcal{F} : H\text{-}\mathrm{mod} \to D(H)\text{-}\mathrm{mod}$ is given by
\begin{equation}\label{inductionFunctorForDH}
\mathcal{F}(X) = D(H) \otimes_H X \cong (H^* \otimes_k X)_{\mathrm{coad}}, \qquad \mathcal{F}(f) = \mathrm{id}_{(H^*)^{\mathrm{op}}} \otimes f
\end{equation}
where the $D(H)$-module structure on $(H^* \otimes X)_{\mathrm{coad}}$ is
\begin{equation}\label{inductionFunctorHDH}
h \cdot (\psi \otimes x) = \psi\bigl( S(h') ? h''' \bigr) \otimes h''x, \qquad \varphi \cdot (\psi \otimes x) = (\varphi\psi) \otimes x
\end{equation}
with $h \in H, \varphi \in H^*$ and we recall that $\varphi \psi$ denotes the product in $(H^*)^{\mathrm{op}}$. Then we get the resolvent pair $D(H)\text{-}\mathrm{mod} \leftrightarrows H\text{-}\mathrm{mod}$ and Corollary \ref{DYrelExt} gives
\begin{equation}\label{DYandRelExtDHH}
H^n_{\mathrm{DY}}(H\text{-}\mathrm{mod}; V, W) \cong \Ext^n_{D(H),H}(V, W)
\end{equation}
where $V, W$ are $D(H)$-modules.

\subsection{DY cohomology of the identity functor for $H\text{-}\mathrm{mod}$}\label{sectionDYCohomologyIdentityFunctorHMod}
\indent Here we explain that the DY complex of $H\text{-}\mathrm{mod}$ with coefficients $V,W \in D(H)\text{-}\mathrm{mod}$ can be encoded with more tractable data than in the general definition with natural transformations. The point is that a natural transformation is entirely determined by its value on the regular representation.

\smallskip

\indent Let $(H^{\otimes n} \otimes W)_{\mathrm{ad}}$ be the vector space $H^{\otimes n} \otimes W$ equipped with the left $H$-module structure defined by
\[ h\cdot (x_1 \otimes \ldots \otimes x_n \otimes w) = h^{(1)}x_1S(h^{(2n+1)}) \otimes \ldots \otimes h^{(n)}x_nS(h^{(n+2)}) \otimes h^{(n+1)}\cdot w.  \]
We have a linear map
\begin{equation}\label{isoDYHomH}
\fonc{\forall \, n, \:\: \Psi}{C^n_{\mathrm{DY}}(H\text{-}\mathrm{mod};V,W)}{\Hom_H\bigl(V, (H^{\otimes n} \otimes W)_{\mathrm{ad}}\bigr)}{f}{\bigl(v \mapsto f_{H,\ldots,H}(v \otimes 1 \otimes \ldots \otimes 1)\bigr)}
\end{equation}
where on the right-hand side the natural transformation $f$ is evaluated on the regular representation $H$ on each slot. Let us show that $\Psi(f)$ is indeed $H$-linear. For $a \in H$, let $r_a \in \mathrm{End}_H(H)$ be the right multiplication by $a$: $r_a(x) = xa$. Write $\Psi(f) = \Psi(f)^1_H \otimes \ldots \otimes \Psi(f)^n_H \otimes \Psi(f)_W$ (with implicit summation); then by $H$-linearity and naturality of $f$ we get
\begin{align*}
\Psi(f)(h \cdot v) &= f_{H,\ldots,H}(h \cdot v \otimes 1 \otimes \ldots \otimes 1) = f_{H,\ldots,H}\!\left(h^{(1)}\cdot v \otimes h^{(2)}S(h^{(2n+1}) \otimes \ldots \otimes h^{(n+1)}S(h^{(n+2)})\right)\\
&= \bigl(h^{(1)} \otimes \ldots \otimes h^{(n+1)}\bigr) \cdot f_{H,\ldots,H}\bigl(\mathrm{id}_V \otimes r_{S(h^{(2n+1)})} \otimes \ldots \otimes r_{S(h^{(n+2)})}\bigr)(v \otimes 1 \otimes \ldots \otimes 1)\\
&= \bigl(h^{(1)} \otimes \ldots \otimes h^{(n+1)}\bigr) \cdot \bigl(r_{S(h^{(2n+1)})} \otimes \ldots \otimes r_{S(h^{(n+2)})} \otimes \mathrm{id}_W\bigr)f_{H,\ldots,H}(v \otimes 1 \otimes \ldots \otimes 1)\\
&= h^{(1)} \Psi(f)^1_H(v) S(h^{(2n+1)}) \otimes \ldots \otimes h^{(n)} \Psi(f)^n_H(v) S(h^{(n+2)}) \otimes h^{(n+1)}\cdot \Psi(f)_W(v)
\end{align*}
which shows that $\Psi(f) \in \Hom_H\bigl(V, (H^{\otimes n} \otimes W)_{\mathrm{ad}}\bigr)$.

\smallskip

\indent For each $n$, $\Psi$ is an isomorphism of vector spaces, whose inverse is given by
\begin{equation*}
\forall \, n, \:\: \Psi^{-1}(\varphi)_{X_1, \ldots, X_n}(v \otimes x_1 \otimes \ldots \otimes x_n) = \varphi^1_H(v) \cdot x_1 \otimes \ldots \otimes \varphi^n_H(v) \cdot x_n \otimes \varphi_W(v)
\end{equation*}
where again we write $\varphi = \varphi^1_H \otimes \ldots \otimes \varphi^n_H \otimes \varphi_W$ with implicit summation. We can thus transport the Davydov--Yetter differential through the isomorphism $\Psi$ and an easy computation gives:
\begin{lemma}\label{reformulationDYHmod}
The Davydov--Yetter complex of $H\text{-}\mathrm{mod}$ with coefficients $V,W \in D(H)$ is isomorphic to the cochain complex of vector spaces
\[ 0 \longrightarrow \Hom_H\bigl(V, W\bigr) \overset{\delta^0}{\longrightarrow} \Hom_H\bigl(V, (H \otimes W)_{\mathrm{ad}}\bigr) \overset{\delta^1}{\longrightarrow} \Hom_H\bigl(V, (H^{\otimes 2} \otimes W)_{\mathrm{ad}}\bigr) \overset{\delta^2}{\longrightarrow} \ldots \]
with differential
\[ \delta^n(\varphi) = \bigl(\mathrm{id}_H \otimes \varphi\bigr)\lambda_V + \sum_{i=1}^n (-1)^i \bigl(\mathrm{id}^{\otimes (i-1)} \otimes \Delta \otimes \mathrm{id}^{\otimes (n-i)} \otimes \mathrm{id}_W\bigr)\varphi + (-1)^{n+1}\bigl(\mathrm{id}_H^{\otimes n} \otimes \lambda_W\bigr)\varphi \]
where $\lambda_V, \lambda_W$ are the left $H$-coactions defined in \eqref{comoduleStructureOnDHModule}.
\end{lemma}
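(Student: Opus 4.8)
The plan is to compute the differential transported through the isomorphism $\Psi$ of \eqref{isoDYHomH} directly, and then to match the outcome summand by summand with the three terms in the claimed formula. Since $\Psi : C^n_{\mathrm{DY}}(H\text{-}\mathrm{mod};V,W) \to \Hom_H\bigl(V,(H^{\otimes n}\otimes W)_{\mathrm{ad}}\bigr)$ is already known to be a linear isomorphism with the explicit inverse recorded above, the transported differential $\widehat{\delta}^n := \Psi\,\delta^n\,\Psi^{-1}$ is well defined, and computing $\widehat{\delta}^n(\varphi)$ amounts, by the very definition of $\Psi$, to evaluating $\delta^n\bigl(\Psi^{-1}(\varphi)\bigr)_{H,\ldots,H}$ at $(v\otimes 1\otimes\cdots\otimes 1)$ for $v\in V$, where $\delta^n$ is the Davydov--Yetter differential \eqref{defDYdifferential} specialized to $F=\mathrm{Id}_{H\text{-}\mathrm{mod}}$ and coefficients $V,W$. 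In particular no separate ``a natural transformation is determined by its value on the regular representation'' argument is needed: it is built into the bijectivity of $\Psi$, and the fact that the result lands in $\Hom_H\bigl(V,(H^{\otimes(n+1)}\otimes W)_{\mathrm{ad}}\bigr)$ is automatic because $\delta^n(\Psi^{-1}\varphi)\in C^{n+1}_{\mathrm{DY}}$.

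First I would treat the two outer summands of \eqref{defDYdifferential}. For the leading summand $(\mathrm{id}_{X_0}\otimes f_{X_1,\ldots,X_n})(\rho^V_{X_0}\otimes\mathrm{id})$, setting every $X_j=H$ and every input to $1$, I would use that $\rho^V_H(-\otimes 1)$ is precisely the left coaction $\lambda_V$ of \eqref{comoduleStructureOnDHModule}; this is exactly the Yetter--Drinfeld identification recalled at the beginning of the section, so after applying $\rho^V_H$ one is left with $\varphi$ evaluated on the $V$-leg, yielding $(\mathrm{id}_H\otimes\varphi)\lambda_V(v)$. Symmetrically, the trailing summand $(-1)^{n+1}(\mathrm{id}\otimes\rho^W_{X_n})(f_{X_0,\ldots,X_{n-1}}\otimes\mathrm{id}_{X_n})$ first produces $\varphi(v)\in H^{\otimes n}\otimes W$ on the first $n+1$ legs and then applies $\rho^W_H(-\otimes 1)=\lambda_W$ to the $W$-leg, giving $(-1)^{n+1}(\mathrm{id}_H^{\otimes n}\otimes\lambda_W)\varphi(v)$. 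These reproduce the first and last terms of the stated differential.

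Next I would handle the middle sum $\sum_{i=1}^n(-1)^i f_{X_0,\ldots,X_{i-1}\otimes X_i,\ldots,X_n}$. Here I would invoke the explicit formula for $\Psi^{-1}(\varphi)$ on the tensor-product module $H\otimes H$ occupying the merged slot, evaluated at the input $1\otimes 1$. Writing $\varphi=\varphi^1_H\otimes\cdots\otimes\varphi^n_H\otimes\varphi_W$ with implicit summation, the only slot that changes is the merged one, where the diagonal $H$-action gives $\varphi^i_H(v)\cdot(1\otimes 1)=\varphi^i_H(v)'\otimes\varphi^i_H(v)''=\Delta\bigl(\varphi^i_H(v)\bigr)$; after relabelling, the $i$-th term becomes $\bigl(\mathrm{id}^{\otimes(i-1)}\otimes\Delta\otimes\mathrm{id}^{\otimes(n-i)}\otimes\mathrm{id}_W\bigr)\varphi(v)$, with the sign $(-1)^i$ carried along unchanged. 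Summing the three contributions gives exactly the differential in the statement.

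The computation is mechanical, and the only place demanding care — hence the main obstacle — is the bookkeeping of the middle terms: one must correctly align the merging of two adjacent $H$-slots via $\otimes$ with the insertion of $\Delta$ at the right tensor position, and confirm that the adjoint module structure $(H^{\otimes n}\otimes W)_{\mathrm{ad}}$ is compatible with this insertion, so that the three matched terms indeed assemble into an $H$-linear map with values in $(H^{\otimes(n+1)}\otimes W)_{\mathrm{ad}}$. A secondary point to double-check is that the sign conventions of \eqref{defDYdifferential} are preserved throughout the evaluation.
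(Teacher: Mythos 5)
Your proposal is correct and follows essentially the same route as the paper: the paper defines $\Psi$, verifies $H$-linearity of $\Psi(f)$ and exhibits the explicit inverse, and then obtains the lemma by transporting the DY differential through $\Psi$ via exactly the term-by-term evaluation you describe (outer terms via $\rho^V_H(-\otimes 1)=\lambda_V$ and $\rho^W_H(-\otimes 1)=\lambda_W$, middle terms via the diagonal action on the merged slot $H\otimes H$ producing $\Delta$). The paper compresses this into ``an easy computation gives''; your write-up supplies precisely that computation, with the correct signs and slot bookkeeping.
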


For trivial coefficients ($V = W = k$), this can be made even more explicit. Since in this case the $H$-action on $(H^{\otimes n})_{\mathrm{ad}}$ is $h \cdot (x_1 \otimes \ldots \otimes x_n) = \Delta^{(n-1)}\bigl(h'\bigr) (x_1 \otimes \ldots \otimes x_n) \Delta^{(n-1)}\bigl(S(h'')\bigr)$, we see that
\begin{equation}\label{DYHmodForTrivialCoeffs}
C^n_{\mathrm{DY}}(H\text{-}\mathrm{mod}) \underset{\Psi}{\overset{\sim}{\to}} \Hom_H\bigl(k, (H^{\otimes n})_{\mathrm{ad}}\bigr)  =  \mathcal{Z}\bigl(\Delta^{(n-1)}(H)\bigr)
\end{equation}
where $\mathcal{Z}\bigl(\Delta^{(n-1)}(H)\bigr) \subset H^{\otimes n}$ is the centralizer of the image of the iterated coproduct \eqref{iteratedDelta}. The isomorphism of Lemma \ref{reformulationDYHmod} is then
\begin{equation}\label{DYComplexExplicitFormWithCentralizers}
C_{\mathrm{DY}}^{\bullet}(H\text{-}\mathrm{mod}) \cong \Big( 0 \longrightarrow k \overset{\delta^0}{\longrightarrow} \mathcal{Z}(H) \overset{\delta^1}{\longrightarrow} \mathcal{Z}(\Delta(H)) \overset{\delta^2}{\longrightarrow} \ldots \Big)
\end{equation}
where the complex on the right has the differential
\begin{equation}\label{differentialDYHmodForTrivialCoeffs}
\delta^n(\mathbf{x}) = 1 \otimes \mathbf{x} + \sum_{i=1}^n (-1)^i (\mathrm{id}^{\otimes (i-1)} \otimes \Delta \otimes \mathrm{id}^{\otimes (n-i)})(\mathbf{x}) + (-1)^{n+1}\mathbf{x} \otimes 1.
\end{equation}
Hence $C^{\bullet}_{\mathrm{DY}}(H\text{-}\mathrm{mod})$ is a subcomplex of the Cartier complex for the coalgebra $H$. This was already noted in \cite[Prop.\,8]{davydov} and taken as a definition in \cite[\S6]{ENO}. Note that $\delta_0 = 0$.
\begin{remark}\label{remarkConcreteDY}
As noted in \cite[Prop.\,7]{davydov}, for the forgetful functor $U : H\text{-}\mathrm{mod} \to \mathrm{Vect}_k$ the centralizer condition in \eqref{DYComplexExplicitFormWithCentralizers} disappears and $C_{\mathrm{DY}}^{\bullet}(U)$ is isomorphic to the complex
\begin{equation}\label{DYForgetfulHModTrivialCoeffs}
0 \longrightarrow k \overset{\delta^0}{\longrightarrow} H \overset{\delta^1}{\longrightarrow} H^{\otimes 2} \overset{\delta^2}{\longrightarrow} \ldots
\end{equation}
with the differential \eqref{differentialDYHmodForTrivialCoeffs}. This gives an injective morphism of complexes $\iota_n : C_{\mathrm{DY}}^n(H\text{-}\mathrm{mod}) \hookrightarrow C_{\mathrm{DY}}^n(U)$; the induced morphisms $\overline{\iota}_n : H_{\mathrm{DY}}^n(H\text{-}\mathrm{mod}) \hookrightarrow H_{\mathrm{DY}}^n(U)$ are not injective in general (see e.g.\,Remark \ref{remarkRelExtNot0UsualExt0BarUi} below), except for $n=1$  because $\delta_0 = 0$.
\end{remark}

\indent We use the isomorphism \eqref{isoDYHomH} to transport the product of Theorem \ref{propYonedaProductDY}: if $\varphi \in \Hom_H\big( V, (H^{\otimes n} \otimes W)_{\mathrm{ad}} \big)$ and $\psi \in \Hom_H\big( U, (H^{\otimes m} \otimes V)_{\mathrm{ad}} \big)$ are cocycles then $\varphi \circ \psi = \Psi\big( \Psi^{-1}(\varphi) \circ \Psi^{-1}(\psi) \big)$ is given by
\begin{equation}\label{YonedaProductHomH}
\varphi \circ \psi = (-1)^{nm}(\mathrm{id}_{H^{\otimes m}} \otimes \varphi)\psi \in \Hom_H\big(U, (H^{\otimes (n + m)} \otimes W)_{\mathrm{ad}} \big).
\end{equation}

\smallskip

\indent In view of the following section, let us restate the isomorphism $\Gamma : C^n_{\mathrm{DY}}(H\text{-}\mathrm{mod};V,W) \overset{\sim}{\to} \mathrm{Bar}^n_{D(H),H}(V,W)$ defined in \eqref{defGamma}. We use the desciption of the bar resolution $\mathrm{Bar}^{\bullet}_{D(H),H}(V)$ for the adjunction $D(H)\text{-}\mathrm{mod} \leftrightarrows H\text{-}\mathrm{mod}$ given in \cite[Cor.\,4.6]{GHS}. Let $\bigl( (H^*)^{\otimes n} \otimes V \bigr)_{\mathrm{coad}}$ be the $D(H)$-module defined by
\begin{align*}
h \cdot (\varphi_1 \otimes \ldots \otimes \varphi_n \otimes v) &= \varphi_1\bigl( S(h^{(1)}) ? h^{(2n+1)} \bigr) \otimes \ldots \otimes \varphi_n\bigl( S(h^{(n)}) ? h^{(n+2)} \bigr) \otimes h^{(n+1)}v\\
\psi \cdot (\varphi_1 \otimes \ldots \otimes \varphi_n \otimes v) &= (\psi\varphi_1) \otimes \ldots \otimes \varphi_n \otimes v
\end{align*}
where we recall that $\psi\varphi_1$ denotes the product in $(H^*)^{\mathrm{op}}$. Then 
\begin{equation}\label{barResolutionDHH}
\mathrm{Bar}^n_{D(H),H}(V) = \bigl( (H^*)^{\otimes (n+1)} \otimes V \bigr)_{\mathrm{coad}}
\end{equation}
with the differential
\[ d_n(\varphi_1 \otimes \ldots \otimes \varphi_{n+1} \otimes v) = \varphi_1 \otimes \ldots \otimes \varphi_n \otimes \varphi_{n+1} v + \sum_{i=1}^n (-1)^{n-i+1} \varphi_1 \otimes \ldots \otimes (\varphi_i\varphi_{i+1}) \otimes \ldots \otimes \varphi_{n+1} \otimes v. \]
We have the isomorphism of complexes
\begin{equation}\label{defGammaTilde}
\forall \, n, \quad \widetilde{\Gamma} : \Hom_H\bigl( V, (H^{\otimes n} \otimes W)_{\mathrm{ad}} \bigr) \underset{\Psi^{-1}}{\overset{\sim}{\longrightarrow }} C^n_{\mathrm{DY}}(H\text{-}\mathrm{mod};V,W) \underset{\Gamma}{\overset{\sim}{\longrightarrow}} \mathrm{Bar}^n_{D(H),H}(V,W)
\end{equation}
between the complex of Lemma \ref{reformulationDYHmod} and $\mathrm{Bar}^{\bullet}_{D(H),H}(V,W) = \Hom_{D(H)}\!\Big( \bigl((H^*)^{\otimes (\bullet+1)} \otimes V \bigr)_{\mathrm{coad}}, W\Big)$. In the sequel we will need its inverse, which is simply given by
\begin{equation}\label{isoBarGammaInverse}
\forall \, n, \quad \widetilde{\Gamma}^{-1}(\alpha) : v \mapsto h_{i_1} \otimes \ldots \otimes h_{i_n} \otimes \alpha\bigl( \varepsilon \otimes h^{i_n} \otimes \ldots \otimes h^{i_1} \otimes v \bigr)
\end{equation}
where $(h_i)$ is a basis of $H$ with dual basis $(h^ i)$ and $\varepsilon$ is the counit of $H$. Finally, note that by definition $\widetilde{\Gamma}$ is compatible with the products $\circ$: $\widetilde{\Gamma}(f \circ g) = \widetilde{\Gamma}(f) \circ \widetilde{\Gamma}(g)$.

\subsection{DY cocycles from allowable exact sequences}\label{methodDYCocyclesThanksToSequences}
\indent The isomorphism between the groups $\Ext^{\bullet}_{D(H),H}$ and the DY cohomology groups \eqref{DYandRelExtDHH} allows us to determine the dimension of the latter thanks to a relatively projective resolution. If some dimension is not $0$, one would like to find explicit DY cocycles. A possible way to achieve this without too much computations is to find allowable $n$-fold exact sequences of $D(H)$-modules. Indeed, there is a DY cocycle associated to such a sequence, thanks to the maps
\begin{align*}
&\big\{ \text{allowable } n \text{-fold exact sequences from } W \text{ to } V \big\}\\
& \overset{\eta}{\longrightarrow} \: \big\{ \text{cocycles in } \mathrm{Bar}^n_{D(H),H}(V,W) \big\} \: \overset{\widetilde{\Gamma}^{-1}}{\longrightarrow} \: \big\{ \text{cocycles in } \Hom_H\bigl( V, (H^{\otimes n} \otimes W)_{\mathrm{ad}} \bigr) \big\}
\end{align*}
defined in \eqref{IsoYExtExtOnCocycles} and \eqref{isoBarGammaInverse} respectively. We denote by
\begin{equation}\label{defSDY}
S_{\mathrm{DY}} = \widetilde{\Gamma}^{-1}\eta(S).
\end{equation}
the DY cocycle associated to a sequence $S$.

\smallskip

\indent So assume that we have found an allowable $n$-fold exact sequence $S = \bigl(0 \to W \to X_n \to \ldots \to X_1 \to V \to 0\bigr)$ and let us explain how to compute $S_{\mathrm{DY}}$. Since we are using the bar resolution descibed explicitly in \eqref{barResolutionDHH}, computing $\eta(S)$ amounts to fill the following diagram:
\[ \xymatrix{
\big((H^*)^{\otimes (n+1)} \otimes V\big)_{\mathrm{coad}} \ar@{-->}[d]^{\eta(S)} \ar[r]^{d_{n+1}} & \big((H^*)^{\otimes n} \otimes V\big)_{\mathrm{coad}} \ar@{-->}[d] \ar[r]^{\qquad \quad \:\:\:\:\: d_n} & \ldots \ar[r]^{\!\!\!\!\!\!\!\!\!\!\!\!\!\!\!\!\!\!\!\!d_1} & \big(H^* \otimes V\big)_{\mathrm{coad}} \ar@{-->}[d] \ar[r]^{\qquad \:\: d_0} & V \ar@{=}[d]\ar[r] & 0\\
W \ar[r] & X_n \ar[r] & \ldots \ar[r] & X_1 \ar[r] & V \ar[r] & 0
} \]
The longer $S$ is, the harder it becomes to fill the diagram. So we decompose $S$ as the Yoneda product of $n$ short exact sequences $S = S_1 \circ \ldots \circ S_n$ and we compute $\eta(S_i)$ and $(S_i)_{\mathrm{DY}} = \widetilde{\Gamma}\eta(S_i)$ for each $i$. Then we note that due to the compatibility of $\eta$ and $\widetilde{\Gamma}$ with the products $\circ$, we have
\begin{equation}\label{SDYYoneda}
S_{\mathrm{DY}} = (S_1)_{\mathrm{DY}} \circ \ldots \circ (S_n)_{\mathrm{DY}}
\end{equation}
where on the right hand side $\circ$ is the product \eqref{YonedaProductHomH}. Since this product has a very simple expression, the computation becomes feasible and we get an explicit DY $n$-cocycle. Note moreover that due to the formula \eqref{isoBarGammaInverse} for $\widetilde{\Gamma}^{-1}$, it is sufficient to determine the values of the form $\eta(S_i)(\varepsilon \otimes ? \otimes ?)$. This method will be demonstrated on the examples below.

\smallskip

\indent After this, one would like to show that the so obtained cocycle is not cohomologous to $0$. It is equivalent to show that the exact sequence $S$ is not congruent to $0$. For $2$-fold exact sequences one can use Lemma \ref{lemma2FoldSequences} or Corollary \ref{remarkCriterion2FoldSequences}.

\subsection{Factorizable Hopf algebras}\label{sectionFactoHopfAlg}
\indent Let $H \otimes H$ be endowed with its standard Hopf algebra structure. We have the forgetful functor $\mathcal{U}_{\Delta} : (H \otimes H)\text{-}\mathrm{mod} \to H\text{-}\mathrm{mod}$ induced by the coproduct $\Delta : H \to H \otimes H$. It is easy to check that its left adjoint $\mathcal{F}_{\Delta} : H\text{-}\mathrm{mod} \to (H \otimes H)\text{-}\mathrm{mod}$ is given by
\[ \mathcal{F}_{\Delta}(X) = H \otimes X \quad \text{with action } (a \otimes b)\cdot (h \otimes x) = ahS(b') \otimes b''\cdot x \]
on objects and $\mathcal{F}_{\Delta}(f) = \mathrm{id}_H \otimes f$ on morphisms. Since $\mathcal{U}_{\Delta}$ is obviously additive, exact and faithful, the adjunction $\mathcal{F}_{\Delta} \dashv \mathcal{U}_{\Delta}$ is a resolvent pair. We denote by $\Ext^n_{H \otimes H, \Delta(H)}$ the corresponding relative Ext groups.

\smallskip

\indent Recall the isomorphism of categories $(H \otimes H)\text{-}\mathrm{mod} \cong H\text{-}\mathrm{mod}\text{-}H$ where the action of $H \otimes 1$ gives a left action and the action of $1 \otimes H$ composed with $S^{-1}$ gives a right action because $S$ is an isomorphism of algebras $H \to H^{\mathrm{op}}$.
The indecomposable direct summands of the regular bimodule $_HH_H$ are called the blocks of $H$. Let $k$ be the trivial $H$-module. Since $\mathcal{F}_{\Delta}(k) = H$ with action $(a \otimes b) \cdot h = ahS(b)$, the indecomposable direct summands of $\mathcal{F}_{\Delta}(k)$ are exactly the blocks. By item 2 in Lemma \ref{lemmaBasicPropertiesRelProj}, it follows that the blocks are relatively projective objects for $\mathcal{F}_{\Delta} \dashv \mathcal{U}_{\Delta}$.

\smallskip

\indent From now on we assume that the ground field $k$ is algebraically closed. Let $P_1$ be the projective cover of the trivial $H$-module $k$. Let $Q_1$ be the block which has $P_1$ as a direct summand when it is considered as a $H$-module by left multiplication. It is also known as the principal block of $H$. Note that the direct summand $Q_1$ has multiplicity $1$ in $\mathcal{F}_{\Delta}(k)$ because $P_1$ has multiplicity $1$ in the regular module $_HH$ by \cite[Thm. 9.2.1(ii)]{repTheory}.
\begin{lemma}\label{lemmaHomBlockToTrivial}
Let $\varepsilon|_{Q_1}$ be the restriction to $Q_1$ of the counit $\varepsilon : H \to k$ and let $B$ be some block. Then
\[ \Hom_{H \otimes H}(B, k \boxtimes k) = \begin{cases}
k\,\varepsilon|_{Q_1} & \text{if } B = Q_1\\
0 & \text{otherwise.}
\end{cases} \]
\end{lemma}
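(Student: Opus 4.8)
The plan is to compute $\Hom_{H \otimes H}(B, k \boxtimes k)$ via the adjunction $\mathcal{F}_\Delta \dashv \mathcal{U}_\Delta$ together with the block decomposition. First I would recall the identification $(H\otimes H)\text{-}\mathrm{mod} \cong H\text{-}\mathrm{mod}\text{-}H$ under which $k\boxtimes k$ corresponds to the trivial bimodule $k = {}_\varepsilon k_\varepsilon$ and $\mathcal{F}_\Delta(k) = H$ is the regular bimodule ${}_H H_H$ with action $(a\otimes b)\cdot h = a h S(b)$. A morphism $B \to k\boxtimes k$ is a bimodule map from the block $B$ to the one-dimensional bimodule $k$; since every block is a direct summand of ${}_H H_H = \bigoplus_j Q_j$, it suffices to understand $\Hom_{H\otimes H}({}_HH_H, k)$ and then restrict to each summand.

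The key computational step is the chain
\[
\Hom_{H\otimes H}(H, k\boxtimes k) = \Hom_{H\otimes H}\bigl(\mathcal{F}_\Delta(k), k\boxtimes k\bigr) \cong \Hom_H\bigl(k, \mathcal{U}_\Delta(k\boxtimes k)\bigr) = \Hom_H(k,k) \cong k,
\]
where the middle isomorphism is the adjunction and the last follows because $k$ is simple over the algebraically closed field $k$. Unwinding the adjunction isomorphism from \S\ref{sectionAlgebrasInBraidedTensorCategories} (the correspondence $f \mapsto f(\eta\otimes\mathrm{id})$, equivalently precomposition with the unit $k \to \mathcal{U}_\Delta\mathcal{F}_\Delta(k)$), the generator of the one-dimensional space $\Hom_{H\otimes H}(H,k\boxtimes k)$ is precisely the counit $\varepsilon : H \to k$: it is a bimodule map because $\varepsilon(ahS(b)) = \varepsilon(a)\varepsilon(h)\varepsilon(S(b)) = \varepsilon(a)\varepsilon(b)\varepsilon(h)$, matching the trivial bimodule action, and it is nonzero.

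Next I would transfer this to the block decomposition. Writing ${}_HH_H = \bigoplus_j Q_j$ as bimodules, we get
\[
k \cong \Hom_{H\otimes H}(H, k\boxtimes k) \cong \bigoplus_j \Hom_{H\otimes H}(Q_j, k\boxtimes k),
\]
so exactly one block $B$ satisfies $\Hom_{H\otimes H}(B,k\boxtimes k)\cong k$ and all others give $0$. It remains to identify that distinguished block as the principal block $Q_1$ and its generator as $\varepsilon|_{Q_1}$. Since the unique copy of $\varepsilon$ is the restriction $\varepsilon = \bigoplus_j \varepsilon|_{Q_j}$ and $\varepsilon|_{Q_j}$ is nonzero if and only if $\varepsilon$ does not vanish on $Q_j$, I would argue that $\varepsilon$ is nonzero exactly on the summand containing the projective cover $P_1$ of $k$: as a left $H$-module $\varepsilon : {}_HH \to k$ is surjective and factors through the top of $P_1$, which by \cite[Thm.\,9.2.1(ii)]{repTheory} occurs with multiplicity one in ${}_HH$ and hence lies in a single block, namely $Q_1$ by definition. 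Thus $\varepsilon|_{Q_j} = 0$ for $j\neq 1$ while $\varepsilon|_{Q_1}\neq 0$, giving $B = Q_1$ as the unique block with nonzero Hom.

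The main obstacle I anticipate is the bookkeeping in the third step: making precise that the one-dimensional Hom corresponds to the principal block rather than merely to \emph{some} block. The cleanest way around this is to observe that $\varepsilon|_B$ is a left-$H$-module map onto $k$, so $\Hom_{H\otimes H}(B,k\boxtimes k)\neq 0$ forces $k$ to appear in the socle (equivalently, the head, after dualizing) of $B$ viewed as a left module, which pins down $B = Q_1$ via the multiplicity-one statement for $P_1$. Everything else is a direct application of the adjunction and Schur's lemma, both already available in the excerpt.
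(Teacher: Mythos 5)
Your proof is correct, but it takes a genuinely different route from the paper's. The paper never uses the adjunction $\mathcal{F}_\Delta \dashv \mathcal{U}_\Delta$ in this lemma: it argues block by block, embedding $\Hom_{H \otimes H}(B, k \boxtimes k) \subset \Hom_H({}_H B, k)$ and invoking $\dim \Hom_H(P_i, S_j) = \delta_{i,j}$ twice --- once to kill every block $B \neq Q_1$ (since ${}_H B$ contains no copy of $P_1$), and once to get the upper bound $\dim \Hom_{H \otimes H}(Q_1, k \boxtimes k) \leq \dim \Hom_H({}_H Q_1, k) = 1$ from the multiplicity-one of $P_1$ in ${}_H Q_1$; the lower bound comes from observing that $\varepsilon$, being a bimodule map that vanishes on all blocks other than $Q_1$, must restrict to something nonzero on $Q_1$. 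You instead obtain the global count $\dim \Hom_{H \otimes H}(H, k \boxtimes k) = 1$ in one stroke from Frobenius reciprocity, $\Hom_{H \otimes H}\bigl(\mathcal{F}_\Delta(k), k \boxtimes k\bigr) \cong \Hom_H(k, k)$, identify the generator as $\varepsilon$ by unwinding the unit of the adjunction, and only then bring in the block decomposition; the representation-theoretic input (multiplicity one of $P_1$ in ${}_H H$, and $\Hom_H({}_H B, k) \neq 0$ forcing $P_1$ to be a direct summand of ${}_H B$) enters solely at the final identification of the distinguished block as $Q_1$. What each buys: your version makes the dimension count automatic and exhibits $\varepsilon$ as the canonical generator produced by the adjunction (consonant with the paper's subsequent Proposition \ref{lemmaRelProjCoverFactorizable}, where the counit of the adjunction at $k \boxtimes k$ is exactly the Hopf counit), while the paper's version is entirely elementary, staying inside classical representation theory of finite-dimensional algebras. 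One small slip in your wording: a nonzero $H$-linear map ${}_H B \to k$ exhibits $k$ as a quotient of ${}_H B$, i.e.\ in its head, not its socle; the fact you actually use --- that for the projective left module ${}_H B$ this forces $P_1 \mid {}_H B$ --- is the head statement, and it is correct.
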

\begin{proof}
Recall from e.g. \cite[Thm. 9.2.1(i)]{repTheory} that if $\{S_i\}_i$ is the list of simple $H$-modules and $\{P_i\}_i$ is the list of their projective covers, we have $\dim \mathrm{Hom}_H(P_i,S_j) = \delta_{i,j}$. Let $_HB$ be $B$ seen as a left $H$-module by left multiplication. If $B \neq Q_1$, we have $\Hom_{H \otimes H}(B, k \boxtimes k) \subset \Hom_H({_HB}, k) = 0$ because $_HB$ does not contain $P_1$ as direct summand. The counit $\varepsilon : H \to k \boxtimes k$ is a morphism of $(H \otimes H)$-modules, so it vanishes on any $B\neq Q_1$. Since $\varepsilon \neq 0$ it follows that $\varepsilon|_{Q_1} \neq 0$. As a result
\[ 1 \leq \dim \Hom_{H \otimes H}\bigl( Q_1, k \boxtimes k \bigr) \leq \dim \Hom_H\bigl( {_HQ_1}, k \bigr) = 1 \]
where the last equality is due to the fact that $P_1$ has multiplicity $1$ in $_HQ_1$.
\end{proof}
\begin{proposition}\label{lemmaRelProjCoverFactorizable}
$Q_1$ is the relatively projective cover of the trivial $(H \otimes H)$-module $k \boxtimes k$ for $\mathcal{F}_{\Delta} \dashv \mathcal{U}_{\Delta}$.
\end{proposition}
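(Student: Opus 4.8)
The plan is to prove that $Q_1$, equipped with the restriction $\varepsilon|_{Q_1}$ of the counit, is the relatively projective cover of $k \boxtimes k$ for the resolvent pair $\mathcal{F}_{\Delta} \dashv \mathcal{U}_{\Delta}$. By Definition \ref{defRelProjCover} and Lemma \ref{lemmaMinimalEssential}, it suffices to exhibit a relatively projective object $Q_1$ together with a \emph{minimal} allowable epimorphism $Q_1 \twoheadrightarrow k \boxtimes k$. Since the relatively projective cover is unique up to isomorphism by Proposition \ref{propPropertiesRelProjCover}, establishing these properties for $Q_1$ identifies it with the cover.

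First I would check that $Q_1$ is relatively projective and that $\varepsilon|_{Q_1} : Q_1 \to k \boxtimes k$ is an allowable epimorphism. Relative projectivity of $Q_1$ is already established in the paragraph preceding Lemma \ref{lemmaHomBlockToTrivial}: the blocks of $H$ are exactly the indecomposable direct summands of $\mathcal{F}_{\Delta}(k) = H$, hence relatively projective by item 2 of Lemma \ref{lemmaBasicPropertiesRelProj}. That $\varepsilon|_{Q_1} \neq 0$ (and hence is an epimorphism onto the simple object $k \boxtimes k$) follows from Lemma \ref{lemmaHomBlockToTrivial}. For allowability, I would use that $\mathcal{U}_{\Delta}(k \boxtimes k) = k$ is a direct summand of $\mathcal{U}_{\Delta}(Q_1) = {_HQ_1}$: indeed $\varepsilon : H \to k$ splits in $H\text{-}\mathrm{mod}$ via the unit $\eta_H : k \to H$, and since $P_1$ is a direct summand of $_HQ_1$ carrying the copy of $k$ in its head, the restriction $\varepsilon|_{Q_1}$ admits an $H$-linear section; equivalently one may invoke the fact that $\varepsilon_{k\boxtimes k} = \varepsilon|_{Q_1}$ is the counit of the comonad composed with the projection onto $Q_1$, which is allowable as in the discussion after \eqref{relativeProjectiveResolution}.

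The heart of the argument is minimality. I would show directly that any $\phi \in \mathrm{End}_{H \otimes H}(Q_1)$ satisfying $\varepsilon|_{Q_1} \circ \phi = \varepsilon|_{Q_1}$ is an isomorphism. Since $Q_1$ is a block, it is an indecomposable $(H\otimes H)$-module, so its endomorphism algebra is local; thus $\phi$ is either invertible or nilpotent. If $\phi$ were not an isomorphism it would be nilpotent, and then $\varepsilon|_{Q_1} \circ \phi^m = \varepsilon|_{Q_1}$ for all $m$ would force $\varepsilon|_{Q_1} = \varepsilon|_{Q_1} \circ \phi^m \to 0$ (taking $m$ large enough that $\phi^m = 0$), contradicting $\varepsilon|_{Q_1} \neq 0$. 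Hence $\phi$ is an isomorphism and $\varepsilon|_{Q_1}$ is minimal. Combining this with Lemma \ref{lemmaMinimalEssential}, which applies since $\mathcal{A} = (H\otimes H)\text{-}\mathrm{mod}$ is a category of finite-dimensional modules, $\varepsilon|_{Q_1}$ is relatively essential, and $(Q_1, \varepsilon|_{Q_1})$ is a relatively projective cover of $k \boxtimes k$.

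The main obstacle I anticipate is the verification of allowability of $\varepsilon|_{Q_1}$ cleanly, since one must produce an explicit $H$-linear splitting of $\mathcal{U}_{\Delta}(\varepsilon|_{Q_1})$; the cleanest route is to observe that $\varepsilon : H \to k$ is the counit $\varepsilon_{k\boxtimes k}$ of the adjunction (up to the identification $\mathcal{F}_{\Delta}(k) = H$), which is allowable for purely formal reasons as recalled below \eqref{relativeProjectiveResolution}, and that restricting an allowable epimorphism to a direct summand on which it remains surjective preserves allowability. Once this is in place, the indecomposability of $Q_1$ and the non-vanishing of $\varepsilon|_{Q_1}$ from Lemma \ref{lemmaHomBlockToTrivial} make the minimality argument immediate.
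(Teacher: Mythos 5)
Your overall architecture is sound and genuinely different from the paper's. The paper identifies the adjunction counit $\varepsilon_{k\boxtimes k}$ with the Hopf counit $\varepsilon : \mathcal{F}_{\Delta}(k) = H \to k$ and then simply invokes Remark \ref{remarkHowToComputeRelProjCover} together with Lemma \ref{lemmaHomBlockToTrivial}: since $\mathcal{U}_{\Delta}(k \boxtimes k) = k$ is simple, the cover is the minimal direct summand of $\mathcal{F}_{\Delta}\mathcal{U}_{\Delta}(k \boxtimes k)$ on which the counit does not vanish, and that summand is $Q_1$. You instead verify Definition \ref{defRelProjCover} directly, and your minimality argument is correct and attractive: $\mathrm{End}_{H \otimes H}(Q_1)$ is local because $Q_1$ is a finite-dimensional indecomposable module, so any $\phi$ with $\varepsilon|_{Q_1}\phi = \varepsilon|_{Q_1}$ is invertible or nilpotent, and nilpotency is excluded since $\varepsilon|_{Q_1}\phi^m = \varepsilon|_{Q_1} \neq 0$. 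This buys a self-contained proof that does not route through the general existence machinery of Proposition \ref{propPropertiesRelProjCover}, at the cost of having to check allowability of $\varepsilon|_{Q_1}$ by hand --- and that is where the proposal has a genuine gap.

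Both of your justifications of allowability (which you need before Lemma \ref{lemmaMinimalEssential} or Definition \ref{defRelProjCover} can even be applied) are flawed. First, $\mathcal{U}_{\Delta}(Q_1)$ is \emph{not} $_HQ_1$: restriction along $\Delta$ equips $Q_1$ with the adjoint action $a \cdot h = a'hS(a'')$, not left multiplication, so the argument via $P_1$ sitting in $_HQ_1$ ``carrying the copy of $k$ in its head'' concerns the wrong module structure; moreover it would not give a section anyway, since an essential surjection $P_1 \twoheadrightarrow k$ splits only if $k$ is projective. Second, the general principle you fall back on --- that an allowable epimorphism stays allowable when restricted to a direct summand on which it remains surjective --- is false. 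For instance in the resolvent pair $A\text{-}\mathrm{mod} \leftrightarrows A\text{-}\mathrm{mod}$ (where allowable epimorphism means split epimorphism), take $A = k[x]/(x^2)$: the map $A \oplus k \to k$ which is the radical quotient on the first summand and the identity on the second is split, and its restriction to the summand $A$ is surjective but not split. The fix is short and uses an ingredient you already cite: by Lemma \ref{lemmaHomBlockToTrivial}, $\varepsilon$ vanishes on every block other than $Q_1$, hence $\varepsilon = \varepsilon|_{Q_1} \circ \mathrm{pr}_{Q_1}$ where $\mathrm{pr}_{Q_1} : H \to Q_1$ is the block projection; composing a section of $\mathcal{U}_{\Delta}(\varepsilon)$ (which exists for formal reasons, as below \eqref{relativeProjectiveResolution}) with $\mathcal{U}_{\Delta}(\mathrm{pr}_{Q_1})$ gives an $H$-linear section of $\mathcal{U}_{\Delta}(\varepsilon|_{Q_1})$. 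Concretely this section is $1 \mapsto e_1$, the block idempotent of $Q_1$: it is invariant for the adjoint action because $e_1$ is central, so $a'e_1S(a'') = \varepsilon(a)e_1$, and $\varepsilon(e_1) = 1$ because $\varepsilon$ kills the remaining block idempotents. With this repair, your proof is complete.
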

\begin{proof}
The counit of the adjunction $\varepsilon_V : \mathcal{F}_{\Delta}\mathcal{U}_{\Delta}(V) \to V$ is given by $\varepsilon_V(h \otimes v) = (h \otimes 1)\cdot v$ with $h \in H$ and $v \in V$. In particular for $V = k \boxtimes k$, $\varepsilon_V$ is the counit $\varepsilon : \mathcal{F}_{\Delta}(k) = H \to k$ of the Hopf algebra $H$.  By Lemma \ref{lemmaHomBlockToTrivial}, $Q_1$ is the smallest direct summand of $\mathcal{F}_{\Delta}(k)$ on which $\varepsilon$ does not vanish and the result then follows from Remark \ref{remarkHowToComputeRelProjCover}.
\end{proof}

\smallskip

\indent Assume now that $H$ is quasitriangular with $R$-matrix $R = \sum_i R^1_i \otimes R^2_i \in H^{\otimes 2}$. Recall that $H$ is called {\em factorizable} if the linear map $H^* \to H$ defined by $\varphi \mapsto (\varphi \otimes \mathrm{id})(R'R)$ is an isomorphism of vector spaces, where $R' = \sum_i R^2_i \otimes R^1_i$. Adapting the arguments of \cite[\S 4]{schneider}, where different conventions for $D(H)$ are used, we have for factorizable $H$ an isomorphism of algebras
\[ \foncIso{\Xi}{D(H)}{H \otimes H}{\varphi \, h}{\sum_{i,j} \varphi\bigl(S(R^1_i)R^2_j  \bigr) R^2_ih' \otimes R^1_jh''} \]
which by pullback induces an isomorphism of linear categories
\[ \Xi^* : (H \otimes H)\text{-}\mathrm{mod} \to D(H)\text{-}\mathrm{mod}. \]
\begin{proposition}\label{propFactorizable}
If $H$ is factorizable then
\[ \Ext^n_{D(H),H}\bigl( \Xi^*(V), \Xi^*(W) \bigr) \cong \Ext^n_{H \otimes H, \Delta(H)}(V,W) \]
for all $V, W \in (H \otimes H)\text{-}\mathrm{mod}$ and $n \geq 0$.
\end{proposition}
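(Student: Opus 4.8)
The plan is to upgrade the algebra isomorphism $\Xi : D(H) \overset{\sim}{\to} H \otimes H$ into an isomorphism of resolvent pairs, so that the two relative $\Ext$ computations become literally the same computation transported through the equivalence $\Xi^*$. The only genuine input is a compatibility between $\Xi$ and the two forgetful functors to $H\text{-}\mathrm{mod}$; everything afterwards is formal, relying on the characterizations of allowable morphisms and relatively projective objects recalled in \S\ref{relExtGroups} and Lemma \ref{lemmaBasicPropertiesRelProj}.

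First I would record the key identity. Let $\iota_H : H \to D(H)$, $h \mapsto 1 \otimes h = \varepsilon h$, be the inclusion inducing $\mathcal{U}$. Plugging $\varphi = \varepsilon$ into the formula for $\Xi$ and using $\varepsilon \circ S = \varepsilon$ together with the normalizations $(\varepsilon \otimes \mathrm{id})(R) = 1 = (\mathrm{id} \otimes \varepsilon)(R)$, one computes
\[ \Xi(\varepsilon h) = \Bigl(\sum_i \varepsilon(R^1_i)R^2_i\Bigr)\, h' \otimes \Bigl(\sum_j \varepsilon(R^2_j) R^1_j\Bigr)\, h'' = h' \otimes h'' = \Delta(h), \]
so that $\Xi \circ \iota_H = \Delta$. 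Since $\Xi^*$ restricts modules along $\Xi$ and $\mathcal{U}$ restricts along $\iota_H$, the composite $\mathcal{U} \circ \Xi^*$ restricts along $\Xi \circ \iota_H = \Delta$, which is exactly $\mathcal{U}_{\Delta}$. Hence the square of forgetful functors commutes on the nose: $\mathcal{U} \circ \Xi^* = \mathcal{U}_{\Delta}$. This is the crux of the argument and the only place where factorizability (through the explicit form of $\Xi$) and the Hopf-algebraic identities enter.

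From $\mathcal{U} \circ \Xi^* = \mathcal{U}_{\Delta}$ the rest follows formally. Because $\Xi^*$ is the identity on underlying linear maps, a morphism $f$ in $(H \otimes H)\text{-}\mathrm{mod}$ satisfies the splitting condition $\mathcal{U}_{\Delta}(f)\, s\, \mathcal{U}_{\Delta}(f) = \mathcal{U}_{\Delta}(f)$ if and only if $\Xi^*(f)$ is allowable for $\mathcal{F} \dashv \mathcal{U}$, so $\Xi^*$ identifies allowable morphisms on the two sides. Next, comparing adjunctions,
\[ \Hom_{D(H)}\bigl(\Xi^*\mathcal{F}_{\Delta}(X), \Xi^*(N)\bigr) \cong \Hom_{H \otimes H}\bigl(\mathcal{F}_{\Delta}(X), N\bigr) \cong \Hom_H\bigl(X, \mathcal{U}_{\Delta}(N)\bigr) = \Hom_H\bigl(X, \mathcal{U}\Xi^*(N)\bigr) \cong \Hom_{D(H)}\bigl(\mathcal{F}(X), \Xi^*(N)\bigr) \]
naturally in $N$. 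As $\Xi^*$ is an isomorphism of categories, every object of $D(H)\text{-}\mathrm{mod}$ is of the form $\Xi^*(N)$, so Yoneda gives $\Xi^* \circ \mathcal{F}_{\Delta} \cong \mathcal{F}$. Since relatively projective objects are precisely the direct summands of objects in the image of the induction functor (Lemma \ref{lemmaBasicPropertiesRelProj}), and $\Xi^*$ is an exact equivalence of abelian categories preserving direct summands, it carries relatively projective objects for $\mathcal{F}_{\Delta} \dashv \mathcal{U}_{\Delta}$ to relatively projective objects for $\mathcal{F} \dashv \mathcal{U}$.

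Finally I would transport resolutions. Given a relatively projective resolution $0 \leftarrow V \leftarrow P_0 \leftarrow P_1 \leftarrow \cdots$ in $(H \otimes H)\text{-}\mathrm{mod}$, applying the exact functor $\Xi^*$ produces $0 \leftarrow \Xi^*(V) \leftarrow \Xi^*(P_0) \leftarrow \Xi^*(P_1) \leftarrow \cdots$, which by the previous paragraph is again an allowable exact sequence of relatively projective objects, hence a relatively projective resolution of $\Xi^*(V)$. As $\Xi^*$ is fully faithful it induces isomorphisms $\Hom_{H \otimes H}(P_n, W) \cong \Hom_{D(H)}(\Xi^*(P_n), \Xi^*(W))$ compatible with the differentials induced by the resolutions, i.e. an isomorphism of the two $\Hom$-cochain complexes; passing to cohomology yields $\Ext^n_{D(H),H}(\Xi^*(V), \Xi^*(W)) \cong \Ext^n_{H \otimes H, \Delta(H)}(V, W)$ for all $n \geq 0$. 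The main (and essentially only) obstacle is the verification $\Xi \circ \iota_H = \Delta$; once this compatibility is in hand, the identification of the two relative homological-algebra setups is automatic.
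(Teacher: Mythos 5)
Your proposal is correct and follows essentially the same route as the paper: both proofs transport everything through the pullback functor $\Xi^*$, using the identity $\mathcal{U}_{\Delta} = \mathcal{U} \circ \Xi^*$ and the isomorphism $\mathcal{F} \cong \Xi^* \circ \mathcal{F}_{\Delta}$ to match allowable morphisms and relatively projective objects, and then identifying the $\Hom$-complexes since $\Xi^*$ is the identity on morphisms. The only difference is that you make explicit two verifications the paper leaves to the reader (the computation $\Xi \circ \iota_H = \Delta$ and the uniqueness-of-left-adjoints/Yoneda argument for $\mathcal{F} \cong \Xi^* \circ \mathcal{F}_{\Delta}$), both of which are sound.
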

\begin{proof}
The pullback functor $\Xi^*$ relates the two resolvent pairs:
\[ \xymatrix@R=.7em{
(H \otimes H)\text{-}\mathrm{mod} \ar@/^.7em/[ddr]^-{\mathcal{U}_{\Delta}} \ar[rr]^{\Xi^*}& & D(H)\text{-}\mathrm{mod} \ar@/^.7em/[ddl]^-{\mathcal{U}}\\
 & & \\
& H\text{-}\mathrm{mod} \ar@/^.7em/[uul]-^{\mathcal{F}_{\Delta}}_{\text{\normalsize $\dashv$}} \ar@/^.7em/[uur]^-{\mathcal{F}}_{\text{\normalsize $\dashv$}}&
} \]
We have $\mathcal{U}_{\Delta} = \mathcal{U} \circ \Xi^*$, from which it follows that if $f \in \Hom_{H \otimes H}(X,Y)$ is allowable for $\mathcal{F}_{\Delta} \dashv \mathcal{U}_{\Delta}$ then $\Xi^*(f) = f \in \Hom_{D(H)}\bigl(\Xi^*(X),\Xi^*(Y)\bigr)$ is allowable for $\mathcal{F} \dashv \mathcal{U}$. Moreover one checks that $\mathcal{F} \cong \Xi^* \circ \mathcal{F}_{\Delta}$, from which it follows that if $P$ is relatively projective for $\mathcal{F}_{\Delta} \dashv \mathcal{U}_{\Delta}$ then $\Xi^*(P)$ is relatively projective for $\mathcal{F} \dashv \mathcal{U}$. Thus $\Xi^*$ transforms a relatively projective resolution of $V$ into a relatively projective resolution of $\Xi^*(V)$. To conclude, note that $\Hom_{H \otimes H}(P,W) = \Hom_{D(H)}\bigl(\Xi^*(P),\Xi^*(W)\bigr)$ for all $(H \otimes H)$-module $P$ because $\Xi^*$ is the identity on morphisms, and therefore we have isomorphic complexes for both relative Ext groups.
\end{proof}
\noindent We note that Proposition \ref{propFactorizable} does not need the assumption that $k$ is algebraically closed.

\smallskip

\indent The functor $\mathcal{U}_{\Delta} : (H \otimes H)\text{-}\mathrm{mod} \to H\text{-}\mathrm{mod}$ is monoidal, with
\[ \foncIso{\bigl(\mathcal{U}_{\Delta}^{(2)}\bigr)_{V,W}}{\mathcal{U}_{\Delta}(V \otimes W)}{\mathcal{U}_{\Delta}(V) \otimes \mathcal{U}_{\Delta}(W)}{v \otimes w}{\sum_i\, (1 \otimes S(R^1_i))\cdot v \otimes (R^2_i \otimes 1)\cdot w} \]
where we use that $(S \otimes \mathrm{id})(R) = R^{-1}$ and $R\Delta = \Delta^{\mathrm{op}}R$. Thus the results of \S \ref{sectionMonoidalResolventPairs} apply to the resolvent pair $\mathcal{F}_{\Delta} \dashv \mathcal{U}_{\Delta}$. In particular, the dimension formula of Corollary \ref{CoroDimExtWithHom} applies.

\smallskip

We finish with a reformulation of Corollary \ref{corollaryDimensionDYGroups} in the case of factorizable Hopf algebras. Recall the block $Q_1$ introduced before Lemma \ref{lemmaHomBlockToTrivial}.

\begin{proposition}\label{propPropertiesQ1Facto}
If $H$ is factorizable then $Q_1$ has a unique $(H \otimes H)$-submodule isomorphic to $k \boxtimes k$, and $Q_1$  is self-dual.
Moreover $Q_1/(k \boxtimes k) \cong K_1^{\vee}$ as $(H \otimes H)$-modules, where $K_1 = \ker(\varepsilon|_{Q_1} : Q_1 \to k)$.
\end{proposition}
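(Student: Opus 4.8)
The plan is to establish the three claims about $Q_1$ separately, relying on the factorizability of $H$ to move between the $D(H)$-module picture and the $(H\otimes H)$-module picture via the algebra isomorphism $\Xi$.

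First I would prove self-duality. Under $\Xi^*$ the block $Q_1$ corresponds to the principal block of $D(H)$, i.e.\ the relatively projective cover $\mathsf{P} = R_{\mathbf{1}}$ of the unit in $\mathcal{Z}(\mathrm{Id}_{H\text{-}\mathrm{mod}}) = D(H)\text{-}\mathrm{mod}$. By Proposition~\ref{relProjTensorIdeal} the relatively projective objects form a duality-stable tensor ideal, so $\mathsf{P}^{\vee}$ is again relatively projective. Since $\mathbf{1}^{\vee}\cong\mathbf{1}$ and $\mathcal{U}(\mathbf{1})$ is simple, dualizing the covering epimorphism $\mathsf{P}\twoheadrightarrow\mathbf{1}$ yields an allowable monomorphism $\mathbf{1}\hookrightarrow\mathsf{P}^{\vee}$, hence (by rigidity) a relatively projective object mapping onto $\mathbf{1}$; by the uniqueness of the relatively projective cover (Proposition~\ref{propPropertiesRelProjCover}) one gets $\mathsf{P}^{\vee}\cong\mathsf{P}$. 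Transporting back along $\Xi^*$ gives $Q_1^{\vee}\cong Q_1$ as $(H\otimes H)$-modules. Alternatively, and perhaps more cleanly in the bimodule language, one notes that the duality on $(H\otimes H)$-mod sends blocks to blocks and fixes the block containing the cointegral.

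Next I would handle the uniqueness of the trivial submodule. By Lemma~\ref{lemmaHomBlockToTrivial} we have $\dim\Hom_{H\otimes H}(Q_1, k\boxtimes k)=1$; applying this to the dual and using self-duality of $Q_1$ together with $(k\boxtimes k)^{\vee}\cong k\boxtimes k$ gives $\dim\Hom_{H\otimes H}(k\boxtimes k, Q_1)=\dim\Hom_{H\otimes H}(Q_1^{\vee},(k\boxtimes k)^{\vee})=\dim\Hom_{H\otimes H}(Q_1,k\boxtimes k)=1$. A nonzero element of this one-dimensional Hom space is automatically injective: $k\boxtimes k$ is simple (its image is either $0$ or all of $k\boxtimes k$), so it embeds $k\boxtimes k$ as the unique copy inside $Q_1$. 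This establishes that $Q_1$ has a unique $(H\otimes H)$-submodule isomorphic to $k\boxtimes k$.

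Finally, for the quotient statement $Q_1/(k\boxtimes k)\cong K_1^{\vee}$, I would dualize the defining short exact sequence. By definition $K_1=\ker(\varepsilon|_{Q_1})$, giving $0\to K_1\to Q_1\xrightarrow{\varepsilon|_{Q_1}} k\boxtimes k\to 0$. Applying the (exact, contravariant) duality functor $(-)^{\vee}$ yields $0\to (k\boxtimes k)^{\vee}\to Q_1^{\vee}\to K_1^{\vee}\to 0$, that is $0\to k\boxtimes k\to Q_1\to K_1^{\vee}\to 0$ after using $Q_1^{\vee}\cong Q_1$ and $(k\boxtimes k)^{\vee}\cong k\boxtimes k$. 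The injection $k\boxtimes k\hookrightarrow Q_1$ here must be the unique trivial submodule found above, so the cokernel of this embedding is both $Q_1/(k\boxtimes k)$ and $K_1^{\vee}$, giving the desired isomorphism. I expect the main obstacle to be the self-duality: one must argue carefully that dualizing the cover produces an object that is again the cover rather than merely some relatively projective object admitting a map to $\mathbf{1}$, which is where the tensor-ideal and uniqueness results (Proposition~\ref{relProjTensorIdeal} and Proposition~\ref{propPropertiesRelProjCover}), together with the simplicity of the unit and $\dim\Hom=1$ bookkeeping, do the real work.
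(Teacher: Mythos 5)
Your uniqueness and quotient steps are sound and essentially match the paper's (the paper also obtains $Q_1/(k\boxtimes k)\cong K_1^{\vee}$ by composing the self-duality isomorphism with $j^{\vee}$, where $j : K_1 \to Q_1$), but both are conditional on self-duality, and that is exactly where your argument has a genuine gap. Dualizing the allowable cover epimorphism $\mathsf{P}\twoheadrightarrow\mathbf{1}$ gives an allowable \emph{monomorphism} $\mathbf{1}\hookrightarrow\mathsf{P}^{\vee}$, and neither rigidity nor Proposition \ref{relProjTensorIdeal} converts this into an epimorphism $\mathsf{P}^{\vee}\twoheadrightarrow\mathbf{1}$, which is what you would need in order to invoke the uniqueness of the relatively projective cover (Proposition \ref{propPropertiesRelProjCover}). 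Indeed, $\Hom(\mathsf{P}^{\vee},\mathbf{1})\cong\Hom(\mathbf{1},\mathsf{P})$, so producing such an epimorphism is \emph{equivalent} to showing that $\mathbf{1}$ embeds into $\mathsf{P}$ --- i.e.\ to the existence of a trivial submodule of $Q_1$, which is one of the assertions being proved. Your scheme is therefore circular: self-duality is used to get the trivial submodule, while getting self-duality by your route presupposes it. In a non-semisimple situation there is no general reason for the head and the socle of a (relatively) projective cover to agree, so this cannot be repaired by formal arguments alone.

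The missing input is precisely what the paper's proof supplies and what your proposal never uses: factorizability implies that $H$ is \emph{unimodular}, hence has a two-sided cointegral $\Lambda$, and Radford's isomorphism $H^*\cong H$ of $H$-bimodules, corrected by the Drinfeld element $u$, gives an $(H\otimes H)$-linear isomorphism $\mathcal{F}_{\Delta}(k)^{\vee}\cong\mathcal{F}_{\Delta}(k)$. One then argues with blocks: $Q_1^{\vee}$ must be a block of $H$, and since $P_1$ is self-dual (again by unimodularity) and occurs with multiplicity one, only the block $Q_1$ can receive it, forcing $Q_1^{\vee}\cong Q_1$; moreover $\Lambda=\Phi'(\varepsilon|_{Q_1})$ lies in $Q_1$ and spans the trivial submodule, whose uniqueness follows from uniqueness of the cointegral up to scalar. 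Your closing remark about ``the block containing the cointegral'' gestures at this, but without establishing unimodularity, the membership $\Lambda\in Q_1$, and the $(H\otimes H)$-linearity of the duality identification, it does not constitute a proof; these Hopf-algebraic facts are where the hypothesis of factorizability actually enters.
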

\begin{proof}
By \cite[Rem. 4.4]{schneider} or \cite[Prop. 3]{radford2} we know that a factorizable Hopf algebra is unimodular, which means that $H$ contains an element $\Lambda \neq 0$ such that $\Lambda h = h \Lambda = \varepsilon(h)\Lambda$ for all $h \in H$, \textit{i.e.} $\Lambda$ is an invariant element in $_HH_H$, and is called a cointegral. In this situation we have Radford's isomorphism of $H$-bimodules \cite[Thm.\,1]{radford}
\[ \Phi : H^* \overset{\sim}{\to} H, \quad \varphi \mapsto (\varphi \otimes \mathrm{id})\bigl( \Delta(\Lambda) \bigr) = \varphi(\Lambda')\Lambda'' \]
where $H$ has the regular actions and $H^*$ has the actions defined by $\langle a \cdot \varphi \cdot b, h\rangle = \langle \varphi, S(a)hS^{-1}(b) \rangle$.
Let $u = \sum_i S(R^2_i)R^1_i$ be the Drinfeld element of $H$, which is such that $S^2(h) = uhu^{-1}$ for all $h \in H$. Then we have an isomorphism of $(H \otimes H)$-modules
\[ \Phi' : \mathcal{F}_{\Delta}(k)^{\vee} \overset{\sim}{\to} \mathcal{F}_{\Delta}(k), \quad \varphi \mapsto \Phi(\varphi)u. \]
We now show that $\Phi'$ yields an isomorphism $Q_1^{\vee} \overset{\sim}{\to} Q_1$. 
First note that $H$ contains $P_1$ (projective cover of the tensor unit) with multiplicity 1, as a left $H$-module, and it is in $Q_1$. We then note that $P_1$ is self-dual, because of unimodularity, so it is also contained in $Q_1^{\vee}$. Now assume that $Q_1^{\vee}$ is not isomorphic to $Q_1$, as an $H$-bimodule, but then it should be isomorphic to some block $Q_s$ with $s\ne 1$ because $H \cong H^*$ as bimodules. However no blocks $Q_s$ contain $P_1$ unless $s=1$, so we get a contradiction and thus $Q_1^{\vee}$ should be isomorphic to $Q_1$. 

Note that $\Phi'(\varepsilon|_{Q_1}) = \Lambda u = \Lambda$ because $\varepsilon(u)=1$. Hence $\Lambda$ belongs to $Q_1$ and generates a submodule isomorphic to $k \boxtimes k$. Moreover, $\mathcal{F}_{\Delta}(k)$ contains a unique submodule isomorphic to $k \boxtimes k$ because by definition such a submodule is spanned by a cointegral, which is known to be unique up to scalar. Let $j : K_1 \to Q_1$ be the inclusion and consider
\[ \pi : Q_1 \xrightarrow{\: \Phi'^{-1}\:} Q_1^{\vee} \xrightarrow{\:j^{\vee}\:} K_1^{\vee}. \]
The morphism $\pi$ is surjective and $\pi(\Lambda) = j^{\vee}(\varepsilon|_{Q_1}) = 0$. Since $\dim(K_1^{\vee}) = \dim(Q_1) - 1$, we conclude that $\ker(\pi) = k\Lambda \cong k \boxtimes k$. Hence $\pi$ provides an isomophism $Q_1/(k \boxtimes k) \overset{\sim}{\to} K_1^{\vee}$.
\end{proof}

\begin{corollary}\label{dimFormulaDYFactoHopfAlg}
Let $H$ be a factorizable Hopf algebra. Write $T = Q_1/(k \boxtimes k)$ (see Prop. \ref{propPropertiesQ1Facto}) and $K_1 = \ker(\varepsilon|_{Q_1} : Q_1 \to k)$. We have for $n \geq 2$
\begin{align*}
\dim H^n_{\mathrm{DY}}(H\text{-}\mathrm{mod}) = \dim \Hom_{H \otimes H}\bigl( K_1, T^{\otimes (n-1)} \bigr) &- \dim \Hom_{H \otimes H}\bigl( Q_1, T^{\otimes (n-1)} \bigr)\\
&+ \dim \Hom_{H \otimes H}\bigl( k \boxtimes k, T^{\otimes (n-1)} \bigr).
\end{align*}
\end{corollary}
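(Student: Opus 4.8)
The plan is to combine the dimension formula of Corollary \ref{corollaryDimensionDYGroups} (or equivalently Corollary \ref{CoroDimExtWithHom}) with the structural results on factorizable Hopf algebras just established, in particular Proposition \ref{lemmaRelProjCoverFactorizable} and Proposition \ref{propPropertiesQ1Facto}. The key observation is that the reformulation $\Ext^n_{D(H),H} \cong \Ext^n_{H \otimes H, \Delta(H)}$ from Proposition \ref{propFactorizable}, together with the isomorphism $H^n_{\mathrm{DY}}(H\text{-}\mathrm{mod}) \cong \Ext^n_{D(H),H}(k,k)$ from \eqref{DYandRelExtDHH}, lets us transport the entire computation to the monoidal resolvent pair $\mathcal{F}_{\Delta} \dashv \mathcal{U}_{\Delta}$, where the relatively projective cover of $k \boxtimes k$ is the concrete block $Q_1$ rather than an abstract object in $D(H)\text{-}\mathrm{mod}$.

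First I would apply Corollary \ref{CoroDimExtWithHom} to the resolvent pair $\mathcal{F}_{\Delta} \dashv \mathcal{U}_{\Delta}$ with $V = W = k \boxtimes k$, which is legitimate because $\mathcal{U}_{\Delta}$ is monoidal (as recorded after Proposition \ref{propFactorizable}), so all the results of \S \ref{sectionMonoidalResolventPairs} apply. Choosing the relatively projective cover $P = R_{\mathbf{1}} = Q_1$ (Proposition \ref{lemmaRelProjCoverFactorizable}) and setting $K = K_1 = \ker(\varepsilon|_{Q_1})$, the second formula of Corollary \ref{CoroDimExtWithHom} gives, for $n \geq 2$,
\[ \dim \Ext^n_{H \otimes H, \Delta(H)}(k \boxtimes k, k \boxtimes k) = \dim \Hom_{H \otimes H}\bigl( K_1, (K_1^{\vee})^{\otimes (n-1)} \bigr) - \dim \Hom_{H \otimes H}\bigl( Q_1, (K_1^{\vee})^{\otimes (n-1)} \bigr) + \dim \Hom_{H \otimes H}\bigl( k \boxtimes k, (K_1^{\vee})^{\otimes (n-1)} \bigr). \]
The only remaining task is to replace $K_1^{\vee}$ by $T = Q_1/(k \boxtimes k)$. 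This is exactly the content of the last assertion of Proposition \ref{propPropertiesQ1Facto}, which states $Q_1/(k \boxtimes k) \cong K_1^{\vee}$ as $(H \otimes H)$-modules, so $T \cong K_1^{\vee}$ and the three Hom spaces match term by term.

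Finally I would combine these isomorphisms with the chain
\[ H^n_{\mathrm{DY}}(H\text{-}\mathrm{mod}) \cong \Ext^n_{D(H),H}(k,k) \cong \Ext^n_{H \otimes H, \Delta(H)}(k \boxtimes k, k \boxtimes k), \]
where the first isomorphism is \eqref{DYandRelExtDHH} and the second is Proposition \ref{propFactorizable} applied with $\Xi^*(k \boxtimes k) \cong k$ (the trivial $D(H)$-module being the image of the trivial $(H \otimes H)$-module under the algebra isomorphism $\Xi$, since both are one-dimensional with the counit action). Substituting $T$ for $K_1^{\vee}$ then yields precisely the stated formula. The main obstacle is not any single computation but ensuring the two identifications are handled cleanly: one must verify that $\Xi^*$ carries the trivial $(H \otimes H)$-module to the trivial $D(H)$-module so that the reformulation of Proposition \ref{propFactorizable} genuinely computes the DY cohomology with trivial coefficients, and one must confirm that the relatively projective cover and the isomorphism $T \cong K_1^{\vee}$ are compatible, i.e. that the same block $Q_1$ appears as the cover of $k \boxtimes k$ and as the self-dual object controlling the quotient $T$. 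Both points are supplied by the preceding propositions, so the argument is essentially a matter of assembling them in the right order.
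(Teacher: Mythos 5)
Your proof is correct and follows essentially the same route as the paper: identify $H^n_{\mathrm{DY}}(H\text{-}\mathrm{mod})$ with $\Ext^n_{H \otimes H, \Delta(H)}(k \boxtimes k, k \boxtimes k)$ via \eqref{DYandRelExtDHH} and Proposition \ref{propFactorizable}, apply Corollary \ref{CoroDimExtWithHom} to the allowable sequence $0 \to K_1 \to Q_1 \to k \boxtimes k \to 0$ from Proposition \ref{lemmaRelProjCoverFactorizable}, and substitute $K_1^{\vee} \cong T$ using Proposition \ref{propPropertiesQ1Facto}. The only difference is that you spell out the verification that $\Xi^*$ carries the trivial $(H \otimes H)$-module to the trivial $D(H)$-module, which the paper leaves implicit.
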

\begin{proof}
By \eqref{DYandRelExtDHH} and Proposition \ref{propFactorizable} we have
\[ H^n_{\mathrm{DY}}(H\text{-}\mathrm{mod}) \cong \Ext^n_{H \otimes H, \Delta(H)}(k\boxtimes k, k \boxtimes k). \]
By Proposition \ref{lemmaRelProjCoverFactorizable} we have an allowable short exact sequence
\[ 0 \longrightarrow K_1 \longrightarrow Q_1 \overset{\varepsilon|_{Q_1}}{\longrightarrow} k \boxtimes k \longrightarrow 0 \]
in $(H \otimes H)\text{-}\mathrm{mod}$. The claim then follows from Corollary \ref{CoroDimExtWithHom} since by Proposition \ref{propPropertiesQ1Facto} we know that $K_1^{\vee} \cong T$.
\end{proof}

\indent We will see an application of the above results in \S \ref{sectionSmallQuantumGroupH2} for $H = u_q(\mathfrak{sl}_2)$.

\section{Examples}\label{sectionExamples}

\indent In this section we demonstrate the use of our results on examples in the case $\mathcal{C} = H\text{-}\mathrm{mod}$ and $F = \mathrm{Id}_{\mathcal{C}}$, where $H$ is a finite-dimensional Hopf algebra. We consider the bosonization of the exterior algebra $\Lambda\mathbb{C}^k \rtimes \mathbb{C}[\mathbb{Z}_2]$, the Taft algebra, the restricted quantum group $\bar U_{\mathbf{i}}(\mathfrak{sl}_2)$ at a fourth root of unity~$\mathbf{i}$ and the small quantum group $u_q(\mathfrak{sl}_2)$ at a root of unity $q$ of odd order.

\subsection{Bosonization of exterior algebras}\label{DYBk}
Let $B_k = \Lambda\mathbb{C}^k \rtimes \mathbb{C}[\mathbb{Z}_2]$ be the $\mathbb{C}$-algebra with the presentation
\begin{equation}\label{defBk}
B_k = \mathbb{C}\langle x_1, \ldots, x_k, g \, | \, \forall \, i,j, \:\: x_ix_j = -x_jx_i, \:\: gx_i = - x_ig, \:\: g^2 = 1 \rangle.
\end{equation}
It has dimension $2^{k+1}$, with basis elements $x_1^{\alpha_1}\ldots x_k^{\alpha_k} g^{\alpha_{k+1}}$ ($\alpha_i \in \{0,1\}$). Its Hopf structure is
\[ \begin{array}{lll}
\Delta(x_i) = 1 \otimes x_i + x_i \otimes g, & \varepsilon(x_i) = 0, & S(x_i) = gx_i,\\
\Delta(g) = g \otimes g, & \varepsilon(g) = 1, & S(g) = g^{-1}.
\end{array} \]

\subsubsection{Dimension of DY cohomology groups}
\indent In \cite[\S 5]{GHS}, the DY cohomologies with trivial coefficients of the identity and forgetful functors have been computed for the category $B_k\text{-mod}$ thanks to Theorem \ref{thGHS} by finding an explicit $G$-projective resolution of $\mathbb{C}$. Here we present a much easier way to get these results, based on Corollaries~\ref{DYrelExt} and~\ref{relATensBsemisimple}, which moreover explains why the cohomologies for the identity and forgetful functors are isomorphic in the case of trivial coefficients (Proposition \ref{propDYCohomologyBk}).

\smallskip

\indent Recall that $\mathrm{sVect}$ is the category of super-vector spaces, which objects are $\mathbb{Z}_2$-graded $\mathbb{C}$-vector spaces $V = V_0 \oplus V_1$ and which morphisms are $\mathbb{C}$-linear maps respecting the gradings. The tensor product in $\mathrm{sVect}$ is defined by
\[ (V \otimes W)_0 = (V_0 \otimes W_0) \oplus (V_1 \otimes W_1), \qquad (V \otimes W)_1 = (V_1 \otimes W_0) \oplus (V_0 \otimes W_1). \]
The symmetry $c_{V,W} : V \otimes W \to W \otimes V$ is defined by
\[ c_{V,W}(v \otimes w) = (-1)^{|v||w|} w \otimes v \]
where $v$ and $w$ are homogeneous elements and $|v| = j$ if $v \in V_j$ ($j \in \{0,1\}$).

\smallskip

\indent $\mathrm{sVect}$ is equivalent as a tensor category to $\mathbb{C}[\mathbb{Z}_2]\text{-}\mathrm{mod}$. Let $H$ be a Hopf algebra in $\mathrm{sVect}$ and write $\mathbb{C}[\mathbb{Z}_2] = \mathbb{C}\langle g | g^2=1 \rangle$. Then $g$ acts on $H$ by $g \triangleright h = (-1)^{|h|}h$ and by assumption $H$ is a $\mathbb{C}[\mathbb{Z}_2]$-module-algebra. Hence the smash product $H \# \mathbb{C}[\mathbb{Z}_2]$ is an algebra in $\mathrm{Vect}$. With $\Delta(g) = g \otimes g$ then it actually is a Hopf algebra (in $\mathrm{Vect}$) which we denote by $H \rtimes \mathbb{C}[\mathbb{Z}_2]$, and called the bosonization of $H$. We have an equivalence of tensor categories
\begin{equation}\label{bosonization}
H\text{-}\mathrm{sVect} \cong (H \rtimes \mathbb{C}[\mathbb{Z}_2])\text{-}\mathrm{mod}
\end{equation}
defined by $V_0 = \ker(g - \mathrm{id}), V_1 = \ker(g+\mathrm{id})$, and where we denote $H\text{-}\mathrm{sVect}$ instead of $H\text{-}\mathrm{mod}_{\mathrm{sVect}}$ (which is the category of $H$-modules internal to $\mathrm{sVect}$).

\smallskip

\indent Now take 
\[ H = \Lambda\mathbb{C}^k = \langle x_1, \ldots, x_k \, | \, \forall \, i,j, \: x_ix_j = -x_jx_i \rangle\]
with coproduct $\Delta(x_i) = 1 \otimes x_i + x_i \otimes 1$. It is a Hopf algebra in $\mathrm{sVect}$.  Its bosonization $\Lambda\mathbb{C}^k \rtimes \mathbb{C}[\mathbb{Z}_2]$ is the Hopf algebra $B_k$ defined in \eqref{defBk}.

\smallskip

\indent We want to compute $H^n_{\mathrm{DY}}(B_k\text{-}\mathrm{mod})$. According to \eqref{DYandRelExtDHH}, this is isomorphic to $\Ext_{D(B_k),B_k}^n(\mathbb{C},\mathbb{C})$. Defining relations for $D(B_k)$ were obtained in \cite[App.\,C]{FGR}, however here we use its presentation from \cite[\S5]{GHS}:
\[ D(B_k) = \left\langle x_1, \ldots, x_k, y_1, \ldots, y_k, g, h \,\left|\, \forall\,i,j, \: 
\begin{array}{lll}
x_ix_j = -x_jx_i, & gx_i = - x_ig, & g^2 = 1,\\
y_iy_j = -y_jy_i, & hy_i = - y_ih, & h^2 = 1,\\
hx_i = -x_ih, & gy_i = - y_ig, & gh=hg,\\
x_iy_j + y_jx_i & \!\!\!\!\!\!\!\!\!\!\!= \delta_{i,j}(1-gh)& 
\end{array}
\right.\right\rangle \]
where $\delta_{i,j}$ is the Kronecker symbol. We see that
\[ \pi_{\pm} = \frac{1 \pm gh}{2} \]
are central orthogonal idempotents and that we have an isomorphism of algebras $D(B_k)\pi_+ \overset{\sim}{\to} B_{2k}$ given by
\[ x_i\pi_+ \mapsto x_i, \quad y_i\pi_+ \mapsto x_{k+i}, \quad g\pi_+ = h\pi_+ \mapsto g. \]
Hence the algebra map 
\[ \iota_+ : B_k \to D(B_k)\pi_+, \quad x \mapsto \iota(x)\pi_+ \]
(where $\iota : B_k \to D(B_k)$ is the canonical injection) can be viewed as a homomorphism $\iota_+ : B_k \to B_{2k}$; it is simply given by $\iota_+(x_i) = x_i$, $\iota_+(g)=g$. In particular it is injective and yields the resolvent pair $(B_{2k},B_k)$. Moreover
\begin{equation}\label{ExtDBB}
\Ext^n_{D(B_k),B_k}(\mathbb{C},\mathbb{C}) \cong \Ext^n_{B_{2k},B_k}(\mathbb{C},\mathbb{C}).
\end{equation}
Indeed, if $0 \leftarrow \mathbb{C} \leftarrow P_0 \leftarrow P_1 \leftarrow \ldots$ is a $\bigl( D(B_k),B_k \bigr)$-relatively projective resolution, then $0 \leftarrow \pi_+\mathbb{C} = \mathbb{C} \leftarrow \pi_+P_0 \leftarrow \pi_+P_1 \leftarrow \ldots$ is both a $\bigl( D(B_k),B_k \bigr)$-relatively projective resolution and a $(B_{2k},B_k)$-relatively projective resolution.

\smallskip

\indent Let $U : B_k\text{-}\mathrm{mod} \to \mathrm{Vect}$ be the forgetful functor.
\begin{proposition}\label{propDYCohomologyBk}
$H^n_{\mathrm{DY}}(B_k\text{-}\mathrm{mod}) \cong H^n_{\mathrm{DY}}(U)$.
\end{proposition}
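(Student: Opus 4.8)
The plan is to identify both cohomologies with relative Ext groups and exhibit a common relatively projective resolution. By Corollary \ref{DYrelExt} together with the identification $\mathcal{Z}(\mathrm{Id}_{B_k\text{-}\mathrm{mod}}) = D(B_k)\text{-}\mathrm{mod}$, we have $H^n_{\mathrm{DY}}(B_k\text{-}\mathrm{mod}) \cong \Ext^n_{D(B_k),B_k}(\mathbb{C},\mathbb{C})$, and by \eqref{ExtDBB} this is isomorphic to $\Ext^n_{B_{2k},B_k}(\mathbb{C},\mathbb{C})$, the relative Ext groups for the resolvent pair $(B_{2k},B_k)$ induced by the injective algebra map $\iota_+ : B_k \hookrightarrow B_{2k}$. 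On the other side, Lemma \ref{DYForgetfulExt} gives $H^n_{\mathrm{DY}}(U) \cong \Ext^n_{(B_k^*)^{\mathrm{op}}}(\mathbb{C},\mathbb{C})$, where the coefficients are trivial. So the statement reduces to producing an isomorphism between $\Ext^n_{B_{2k},B_k}(\mathbb{C},\mathbb{C})$ and $\Ext^n_{(B_k^*)^{\mathrm{op}}}(\mathbb{C},\mathbb{C})$.

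The key structural observation I would exploit is that the pair $(B_{2k},B_k)$ arising from $\iota_+$ is exactly of the type treated in Example \ref{exampleAtensBRelBInVect}. Indeed, $B_{2k}$ is generated over $\iota_+(B_k)$ by the additional odd generators $y_1,\dots,y_k$, and one expects a tensor-factorization of the algebra $B_{2k}$ adapted to the subalgebra $B_k$. Concretely, I would identify a complementary subalgebra $A \subset B_{2k}$ (built from the $y_i$ and the grading group element) so that $B_{2k} \cong A \otimes B_k$ as algebras, with $B_k$ playing the role of the tensored-in factor $1 \otimes B$ of Example \ref{exampleAtensBRelBInVect}. Once this factorization is in place, the isomorphism \eqref{exampleExtABB} applies with $V = V' = \mathbb{C}$ the trivial module and $W = W' = \mathbb{C}$, yielding
\[ \Ext^n_{B_{2k},B_k}(\mathbb{C},\mathbb{C}) \cong \Ext^n_A(\mathbb{C},\mathbb{C}) \otimes \Hom_{B_k}(\mathbb{C},\mathbb{C}) \cong \Ext^n_A(\mathbb{C},\mathbb{C}), \]
since $\Hom_{B_k}(\mathbb{C},\mathbb{C}) \cong \mathbb{C}$. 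The final step is then to recognize that the complementary factor $A$ is isomorphic to $(B_k^*)^{\mathrm{op}}$ (or at least Morita-equivalent in a way that preserves $\Ext$ of the trivial module), which matches the description of $H^n_{\mathrm{DY}}(U)$ from Lemma \ref{DYForgetfulExt}.

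The main obstacle I anticipate is pinning down the algebra factorization $B_{2k} \cong A \otimes B_k$ together with the precise identification of $A$. While the idempotents $\pi_{\pm}$ and the isomorphism $D(B_k)\pi_+ \cong B_{2k}$ are already established in the excerpt, the splitting of $B_{2k}$ as a tensor product relative to the embedded copy $\iota_+(B_k)$ requires checking that the two sets of generators (the image of $B_k$ and the complement) commute appropriately and that the multiplication splits as in the braided tensor product construction. The exterior-algebra relations $x_iy_j + y_jx_i = 0$ (on the $\pi_+$ block, where $1 - gh$ vanishes) are exactly what make this factorization work, but verifying that the resulting $A$ carries the structure of $(B_k^*)^{\mathrm{op}}$ and that the trivial module corresponds correctly under the identification is the delicate point. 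I would therefore devote most of the proof to making this factorization and the identification of $A$ explicit, after which the isomorphism of $\Ext$ groups follows formally from Example \ref{exampleAtensBRelBInVect} and Lemma \ref{DYForgetfulExt}.
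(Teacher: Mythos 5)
Your reductions at the two ends are correct and agree with the paper: $H^n_{\mathrm{DY}}(B_k\text{-}\mathrm{mod}) \cong \Ext^n_{D(B_k),B_k}(\mathbb{C},\mathbb{C}) \cong \Ext^n_{B_{2k},B_k}(\mathbb{C},\mathbb{C})$ via Corollary \ref{DYrelExt} and \eqref{ExtDBB}, and $H^n_{\mathrm{DY}}(U) \cong \Ext^n_{(B_k^*)^{\mathrm{op}}}(\mathbb{C},\mathbb{C})$ via Lemma \ref{DYForgetfulExt}. But the middle step, which is the heart of your argument, fails: there is \emph{no} factorization $B_{2k} \cong A \otimes B_k$ as ordinary algebras in $\mathrm{Vect}$ with $1 \otimes B_k$ equal to the embedded copy $\iota_+(B_k) = \langle x_1,\ldots,x_k,g\rangle$. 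In an ordinary tensor product the factor $A \otimes 1$ commutes with $1 \otimes B_k$, so the image of $A$ would have to lie in the centralizer of $\iota_+(B_k)$ in $B_{2k}$. Already for $k=1$ this centralizer is $\mathbb{C}1 \oplus \mathbb{C}\,x_1x_2 \oplus \mathbb{C}\,x_1x_2g$, and both $x_1x_2$ and $x_1x_2g$ annihilate $x_1$ under multiplication; hence for any $2$-dimensional subalgebra $A = \mathbb{C}1 \oplus \mathbb{C}w$ (with $w$ chosen scalar-free) the multiplication map $A \otimes B_1 \to B_2$ has image of dimension at most $6 < 8 = \dim B_2$. The relations $x_ix_{k+j} = -x_{k+j}x_i$ are precisely the \emph{obstruction} to a $\mathrm{Vect}$-level factorization, not what makes it work: they say the two exterior factors commute only in the \emph{super} sense. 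There is also a dimension mismatch in your final identification: any tensor complement $A$ would have $\dim A = 2^{2k+1}/2^{k+1} = 2^k$, whereas $(B_k^*)^{\mathrm{op}} \cong B_k$ has dimension $2^{k+1}$, so "$A \cong (B_k^*)^{\mathrm{op}}$" cannot hold.

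This is exactly why the paper does not invoke Example \ref{exampleAtensBRelBInVect} but its braided generalization. The correct route passes through $\mathrm{sVect}$: by the bosonization equivalence \eqref{bosonization}, $B_{2k}\text{-}\mathrm{mod} \cong \Lambda\mathbb{C}^{2k}\text{-}\mathrm{sVect}$ and $B_k\text{-}\mathrm{mod} \cong \Lambda\mathbb{C}^k\text{-}\mathrm{sVect}$, and \emph{inside} $\mathrm{sVect}$ one does have the algebra factorization $\Lambda\mathbb{C}^{2k} = \Lambda\mathbb{C}^k \otimes \Lambda\mathbb{C}^k$ (super tensor product). Since $\mathrm{sVect}$ is semisimple, Corollary \ref{relATensBsemisimple} applies and gives
\[ \Ext^n_{B_{2k},B_k}(\mathbb{C},\mathbb{C}) \cong \Ext^n_{(\Lambda\mathbb{C}^k \otimes \Lambda\mathbb{C}^k)\text{-}\mathrm{sVect},\,\Lambda\mathbb{C}^k\text{-}\mathrm{sVect}}(\mathbb{C}^{1|0} \boxtimes \mathbb{C}^{1|0}, \mathbb{C}^{1|0} \boxtimes \mathbb{C}^{1|0}) \cong \Ext^n_{\Lambda\mathbb{C}^k\text{-}\mathrm{sVect}}(\mathbb{C}^{1|0},\mathbb{C}^{1|0}) \cong \Ext^n_{B_k}(\mathbb{C},\mathbb{C}), \]
after which self-duality $B_k \cong (B_k^*)^{\mathrm{op}}$ and Lemma \ref{DYForgetfulExt} finish the proof. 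So your outline can be repaired, but only by replacing the $\mathrm{Vect}$-level factorization with the $\mathrm{sVect}$-internal one; that replacement is the content of \S \ref{sectionAlgebrasInBraidedTensorCategories} and is where the real work of the proof lies.
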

\begin{proof}
Note that $\Lambda\mathbb{C}^{k+l} = \Lambda\mathbb{C}^k \otimes \Lambda\mathbb{C}^l$ in $\mathrm{sVect}$ and that $\mathrm{sVect}$ is semisimple. Hence, using \eqref{ExtDBB}, \eqref{bosonization}, Corollary \ref{DYrelExt}, Corollary \ref{relATensBsemisimple}, the fact that $B_k \cong (B^*_k)^{\mathrm{op}}$ and Lemma \ref{DYForgetfulExt}, we get
\begin{align*}
H^n_{\mathrm{DY}}(B_k\text{-}\mathrm{mod}) &\cong \Ext^n_{D(B_k),B_k}(\mathbb{C},\mathbb{C}) \cong \Ext^n_{B_{2k},B_k}(\mathbb{C},\mathbb{C}) \cong \Ext^n_{\Lambda\mathbb{C}^{2k}\text{-}\mathrm{sVect}, \Lambda\mathbb{C}^k\text{-}\mathrm{sVect}}(\mathbb{C}^{1|0},\mathbb{C}^{1|0})\\
&\cong \Ext^n_{(\Lambda\mathbb{C}^k \otimes \Lambda\mathbb{C}^k)\text{-}\mathrm{sVect}, \Lambda\mathbb{C}^k\text{-}\mathrm{sVect}}(\mathbb{C}^{1|0} \boxtimes \mathbb{C}^{1|0}, \mathbb{C}^{1|0} \boxtimes \mathbb{C}^{1|0}) \cong \Ext^n_{\Lambda\mathbb{C}^k\text{-}\mathrm{sVect}}(\mathbb{C}^{1|0}, \mathbb{C}^{1|0})\\
& \cong \Ext^n_{B_k}(\mathbb{C},\mathbb{C}) \cong \Ext^n_{(B^*_k)^{\mathrm{op}}}(\mathbb{C},\mathbb{C}) \cong H^n_{\mathrm{DY}}(U)
\end{align*}
where $\mathbb{C}^{1|0} = \mathbb{C} \oplus 0$ (no non-zero elements in odd degree). 
\end{proof}
Hence we have $H^n_{\mathrm{DY}}(B_k\text{-}\mathrm{mod}) \cong H^n_{\mathrm{DY}}(U) \cong \Ext^n_{(B^*_k)^{\mathrm{op}}}(\mathbb{C},\mathbb{C})$, from which it is not difficult to compute that (see \cite[Rem.\,5.11]{GHS})
\[ \dim\big( H^n_{\mathrm{DY}}(B_k\text{-}\mathrm{mod}) \big)\cong
\begin{cases}
\binom{k+n-1}{n} & \text{ if } n \text{ is even.}\\
0 & \text{ if } n \text{ is odd.}
\end{cases} \]

\subsubsection{Explicit cocycles from allowable exact sequences}\label{explicitCocyclesBk}
Recall from \eqref{defBarEta} and \eqref{DYandRelExtDHH} that we have isomorphisms of graded algebras (for the Yoneda product)
\[ \Ext^{\bullet}_{D(B_k),B_k}(\mathbb{C}, \mathbb{C}) \cong \YExt^{\bullet}_{D(B_k),B_k}(\mathbb{C}, \mathbb{C}) \cong H^{\bullet}_{\mathrm{DY}}(B_k\text{-}\mathrm{mod}). \]
Recall from \S \ref{subsectionProductOnDY} that the Yoneda product on $H^{\bullet}_{\mathrm{DY}}(B_k\text{-}\mathrm{mod}) = \bigoplus_{n \geq 0} H^n_{\mathrm{DY}}(B_k\text{-}\mathrm{mod})$ is simply given by $f \circ g = (-1)^{|f| |g|} g \otimes f$, for homogeneous elements $f$ and $g$. Here we will describe these spaces and their algebra structure; in particular we will use the method of \S \ref{methodDYCocyclesThanksToSequences} to find a basis of explicit DY cocycles.

\smallskip

\indent For $\epsilon \in \{\pm\}$, let $\mathbb{C}_{\epsilon}$ be the $D(B_k)$-module defined by $x_i \cdot 1 = y_i \cdot 1 = 0$, $g \cdot 1 = h \cdot 1 = \epsilon$. For $1 \leq i \leq n$, let $Y_i^{\epsilon}$ be the $2$-dimensional $D(B_k)$-module with basis $(u_{\epsilon}, u_{-\epsilon})$ and defined by
\[ \begin{array}{llll}
x_j u_{\epsilon} = 0, & y_j u_{\epsilon} = \delta_{i,j}u_{-\epsilon}, & g u_{\epsilon} = \epsilon u_{\epsilon}, & h u_{\epsilon} = \epsilon u_{\epsilon}, \\
x_j u_{-\epsilon} = 0, & y_j u_{-\epsilon} = 0, & g u_{-\epsilon} = -\epsilon u_{-\epsilon}, & h u_{-\epsilon} = -\epsilon u_{-\epsilon}.
\end{array} \]
Its subquotient structure can be depicted by 
\[ \xymatrix@C=.2em@R=.2em{
& \mathbb{C}_{\epsilon} \ar[dd]^{y_i} \\
Y_i^{\epsilon} = & \\
& \mathbb{C}_{-\epsilon}
} \]
so that it is a non-trivial extension of $\mathbb{C}_{\epsilon}$ by $\mathbb{C}_{-\epsilon}$, thus yielding the non-trivial exact sequence
\[ S_i^{\epsilon} = \bigl(0 \longrightarrow \mathbb{C}_{-\epsilon} \longrightarrow Y_i^{\epsilon} \longrightarrow \mathbb{C}_{\epsilon} \longrightarrow 0\bigr).\]
Moreover this short exact sequence splits when we look it in $B_k\text{-}\mathrm{mod}$, so it is allowable and can be seen as an element of $\YExt^1_{D(B_k),B_k}(\mathbb{C}_{\epsilon}, \mathbb{C}_{-\epsilon})$. We also define
\[ S_{i,j} = S_i^- \circ S_j^+ = \bigl( 0 \longrightarrow \mathbb{C} \longrightarrow Y_i^- \longrightarrow Y_j^+ \longrightarrow \mathbb{C} \longrightarrow 0 \bigr) \in \YExt^2_{D(B_k),B_k}(\mathbb{C}, \mathbb{C}). \]

\begin{proposition}\label{lemmaCongruenceBk}
1. Let $V = \mathbb{C}^k$ and $\xi_1, \ldots, \xi_k$ be a basis of $V^*$. There is an isomorphism of graded algebras
\[ \Ext^{\bullet}_{D(B_k),B_k}(\mathbb{C}, \mathbb{C}) \cong \bigoplus_{n \in 2\mathbb{N}} S^n(V^*) = \mathbb{C}[\xi_i\xi_j]_{1 \leq i \leq j \leq k} \]
where the latter is the subalgebra of even degree polynomials in the symmetric algebra $S^{\bullet}(V^*)$.

\smallskip

\noindent 2. Let $\overline{\eta}$ be the isomorphism $\YExt^{\bullet}_{D(B_k),B_k}(\mathbb{C}, \mathbb{C}) \overset{\sim}{\to}\Ext^{\bullet}_{D(B_k),B_k}(\mathbb{C}, \mathbb{C})$ (see \eqref{defBarEta}). Then, under the isomorphism of item 1, we have $\overline{\eta}(S_{i,j}) = \xi_i\xi_j$. It follows that $S_{i,j} \equiv S_{j,i}$ and that for $n$ even the sequences
\[ S_{i_1, i_2} \circ S_{i_3, i_4} \circ \ldots \circ S_{i_{n-1},i_n} \qquad (1 \leq i_1 \leq \ldots \leq i_n \leq k) \]
form a basis of $\YExt^n_{D(B_k),B_k}(\mathbb{C}, \mathbb{C})$.

\smallskip

\noindent 3. The explicit DY cocycle associated to $S_{i,j}$ is $(S_{i,j})_{\mathrm{DY}} = x_j \otimes x_ig$, which is an element of $\mathcal{Z}\bigl( \Delta(B_k) \bigr) \cong C^2_{\mathrm{DY}}(B_k\text{-}\mathrm{mod})$. It follows that for $n$ even, the elements
\[ x_{i_1} \otimes x_{i_2}g \otimes x_{i_3} \otimes x_{i_4}g \otimes \ldots \otimes x_{i_{n-1}} \otimes x_{i_n}g \qquad (1 \leq i_1 \leq \ldots \leq i_n \leq k) \]
form a basis of $H^n_{\mathrm{DY}}(B_k\text{-}\mathrm{mod})$.
\end{proposition}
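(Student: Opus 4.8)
The three assertions are proved together, the computational engine being the explicit cocycle calculation of item~3, which I would carry out by the method of \S\ref{methodDYCocyclesThanksToSequences}. I would first settle the graded algebra in item~1. By \eqref{DYandRelExtDHH} and the chain of isomorphisms in the proof of Proposition~\ref{propDYCohomologyBk}---all of which respect the Yoneda product---one has $H^{\bullet}_{\mathrm{DY}}(B_k\text{-}\mathrm{mod})\cong\Ext^{\bullet}_{B_k}(\mathbb{C},\mathbb{C})$ as graded algebras. Since $B_k=\Lambda V\rtimes\mathbb{C}[\mathbb{Z}_2]$ with $V=\mathbb{C}^k$ and $\mathbb{C}[\mathbb{Z}_2]$ semisimple, restriction and induction identify $\Ext^{\bullet}_{B_k}(\mathbb{C},\mathbb{C})$ with the $\mathbb{Z}_2$-invariants of $\Ext^{\bullet}_{\Lambda V}(\mathbb{C},\mathbb{C})$, the $\mathbb{Z}_2$-action being induced by the automorphism $\mathrm{Ad}_g$ of $\Lambda V$. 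Koszul duality gives $\Ext^{\bullet}_{\Lambda V}(\mathbb{C},\mathbb{C})\cong S(V^*)$, an honestly commutative polynomial algebra with generators $\xi_i$ in cohomological degree~$1$; as $\mathrm{Ad}_g(x_i)=-x_i$, the element $g$ acts by $-1$ on each $\xi_i$, hence by $(-1)^n$ on $S^n(V^*)$, so the invariants are exactly $\bigoplus_{n\in 2\mathbb{N}}S^n(V^*)=\mathbb{C}[\xi_i\xi_j]$, the Veronese subalgebra generated in degree~$2$. This establishes item~1 abstractly; it remains to pin the generators to the sequences $S_{i,j}$.

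The heart of the argument is the explicit computation of $(S_{i,j})_{\mathrm{DY}}$. I would decompose $S_{i,j}=S_i^-\circ S_j^+$ and treat each short exact sequence by \S\ref{methodDYCocyclesThanksToSequences}: for an allowable $S_i^{\epsilon}$ one lifts the identity through the bar resolution \eqref{barResolutionDHH} as in \eqref{IsoYExtExtOnCocycles}, obtaining the cocycle $\eta(S_i^{\epsilon})$, of which---by the formula \eqref{isoBarGammaInverse} for $\widetilde{\Gamma}^{-1}$---only the values on $\varepsilon\otimes?\otimes?$ are needed to produce the $1$-cocycle $(S_i^{\epsilon})_{\mathrm{DY}}$. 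Gluing by \eqref{SDYYoneda} and the simple product formula \eqref{YonedaProductHomH} then yields $(S_{i,j})_{\mathrm{DY}}=x_j\otimes x_i g\in\mathcal{Z}(\Delta(B_k))\cong C^2_{\mathrm{DY}}(B_k\text{-}\mathrm{mod})$. This concrete lift, together with the verification that the glued product is exactly $x_j\otimes x_i g$, is the main obstacle and the only genuinely laborious step.

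With the explicit cocycles in hand I would finish items~2 and~3 simultaneously. Being images of genuine $2$-extensions under $S\mapsto S_{\mathrm{DY}}$, the $x_j\otimes x_i g$ are automatically $2$-cocycles in $\mathcal{Z}(\Delta(B_k))$; a direct computation with the differential \eqref{differentialDYHmodForTrivialCoeffs} gives $\delta^1(-x_ix_j)=x_j\otimes x_i g-x_i\otimes x_j g$, which is precisely the congruence $S_{i,j}\equiv S_{j,i}$. Hence modulo coboundaries the classes $[x_j\otimes x_i g]$ reduce to the $\binom{k+1}{2}$ symmetric combinations, and since $\dim H^2_{\mathrm{DY}}(B_k\text{-}\mathrm{mod})=\binom{k+1}{2}$ by the dimension formula established above, they form a basis of $H^2$. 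Applying \eqref{YonedaProductHomH} to the $n$-fold products $(S_{i_1,i_2}\circ\dots\circ S_{i_{n-1},i_n})_{\mathrm{DY}}$ gives, up to reindexing, the tensors $x_{i_1}\otimes x_{i_2}g\otimes\dots\otimes x_{i_{n-1}}\otimes x_{i_n}g$, which are manifestly linearly independent as cochains; the count $\dim H^n_{\mathrm{DY}}(B_k\text{-}\mathrm{mod})=\binom{k+n-1}{n}$ then shows that those with $i_1\le\dots\le i_n$ form a basis of $H^n_{\mathrm{DY}}$ for even $n$, proving item~3. Defining the degree-$2$ part of the isomorphism of item~1 by $[S_{i,j}]\mapsto\xi_i\xi_j$ and extending multiplicatively realizes the graded algebra isomorphism, and since $\overline{\eta}$ is compatible with Yoneda products (see \eqref{defBarEta}) this gives $\overline{\eta}(S_{i,j})=\xi_i\xi_j$, completing item~2. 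Throughout, all relevant products occur in even total degree, so the signs in \eqref{YonedaProductHomH} are trivial and the even part is honestly commutative, consistent with the polynomial algebra; as an alternative route to non-vanishing one could invoke Corollary~\ref{remarkCriterion2FoldSequences} to see $S_{i,j}\not\equiv0$, but the linear independence needed for the basis statements is cleanest from the explicit cocycles.
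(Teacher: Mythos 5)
Your computation of $(S_{i,j})_{\mathrm{DY}} = x_j\otimes x_ig$ by the method of \S\ref{methodDYCocyclesThanksToSequences} and your coboundary identity $\delta^1(-x_ix_j) = x_j\otimes x_ig - x_i\otimes x_jg$ (which indeed yields $S_{i,j}\equiv S_{j,i}$) are correct and agree with the paper. The gap is in how you promote these cocycles to a \emph{basis}. You argue: the tensors $x_{i_1}\otimes x_{i_2}g\otimes\ldots\otimes x_{i_{n-1}}\otimes x_{i_n}g$ are linearly independent as cochains, there are $\binom{k+n-1}{n}$ of them with sorted indices, and $\dim H^n_{\mathrm{DY}}(B_k\text{-}\mathrm{mod}) = \binom{k+n-1}{n}$, hence they form a basis. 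This inference is invalid: linear independence at the cochain level says nothing about linear independence of the cohomology classes, since a nontrivial combination of linearly independent cocycles can perfectly well be a coboundary --- your own identity $\delta^1(-x_ix_j)=x_j\otimes x_ig - x_i\otimes x_jg$ exhibits exactly such a combination. What must be ruled out is that some \emph{symmetric} combination of the $x_j\otimes x_ig$ (and, in higher degree, of the monomial tensors) lies in $\mathrm{im}(\delta^{n-1})$; neither the dimension count nor Corollary \ref{remarkCriterion2FoldSequences} (which only gives non-vanishing of a single class) does this. The same circularity infects your item 2: you \emph{define} the degree-$2$ part of the isomorphism of item 1 by $[S_{i,j}]\mapsto\xi_i\xi_j$, but for this to be well defined and bijective you already need the classes $[S_{i,j}]$, $i\leq j$, to be linearly independent and to satisfy only the polynomial relations under the Yoneda product --- which is precisely what is to be proved.

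The paper closes this gap by working with the small Koszul-type relatively projective resolution $\ldots \to S^2(V)\otimes\mathcal{C}_+\to S^1(V)\otimes\mathcal{C}_-\to S^0(V)\otimes\mathcal{C}_+\to\mathbb{C}\to 0$ from \cite[\S 5]{GHS}: filling the comparison diagram \eqref{IsoYExtExtOnCocycles} for $S_{i,j}$ against this resolution gives $\overline{\eta}(S_{i,j})(t_at_b\otimes f_+)=\delta_{i,a}\delta_{j,b}+\delta_{j,a}\delta_{i,b}=F(\xi_i\xi_j)(t_at_b\otimes f_+)$, where $F$ is the explicit multiplicative isomorphism of item 1 constructed from that same resolution. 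This pins $\overline{\eta}(S_{i,j})$ to the monomial $\xi_i\xi_j$, so linear independence of all Yoneda products is inherited from the linear independence of monomials in $S^{\bullet}(V^*)$; nothing of the sort is available in your argument. A secondary issue: your Koszul-duality/$\mathbb{Z}_2$-invariants derivation of item 1 presupposes that the chain of isomorphisms in Proposition \ref{propDYCohomologyBk} (in particular the one coming from Proposition \ref{propATensBrelB}) is compatible with Yoneda products; this is plausible but asserted without proof, and it is nowhere established in the paper. To repair your route you would have to prove that compatibility and additionally compute $\mathrm{im}(\delta^{n-1})$ at the cochain level in every even degree; alternatively, compute $\overline{\eta}(S_{i,j})$ against the Koszul resolution as the paper does, at which point items 1--3 all follow at once.
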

\begin{proof}
1. Let us recall the relatively projective resolution of $\mathbb{C}$ constructed in \cite[\S 5]{GHS}, which is based on the Koszul resolution for the exterior algebra. Let $f_{\pm} = \frac{1 \pm h}{2}$, which are orthogonal idempotents in $B_k^*$. Define $\mathcal{C}_{\pm} = B_k^*f_{\pm}$ to be the $D(B_k)$-module where $B_k^*$ acts by multiplication, $x_i$ acts by $0$ for all $i$ and $g$ acts as $h$. Endow $S^n(V) \otimes \mathcal{C}_{\pm}$ with the $D(B_k)$-module structure given by $a\cdot (P \otimes w) = P \otimes (a \cdot w)$ and let $d_n : S^n(V) \otimes \mathcal{C}_{\pm} \to S^{n-1}(V) \otimes \mathcal{C}_{\mp}$ be the $D(B_k)$-linear map defined by
\[ d_n(t_{i_1} \ldots t_{i_n} \otimes f_{\pm}) = \sum_{j=1}^n t_{i_1} \ldots \widehat{t}_{i_j} \ldots t_{i_n} \otimes y_{i_j}f_{\mp} \]
where $(t_i)_{1 \leq i \leq k}$ is the basis of $V$ dual to $(\xi_i)_{1 \leq i \leq k}$, and $\widehat{t}_{i_j}$ means that $t_{i_j}$ is removed from the monomial. Then
\[ \ldots \overset{d_3}{\longrightarrow} S^2(V) \otimes \mathcal{C}_+ \overset{d_2}{\longrightarrow} S^1(V) \otimes \mathcal{C}_- \overset{d_1}{\longrightarrow} S^0(V) \otimes \mathcal{C}^+ \overset{d_0}{\longrightarrow} \mathbb{C} \longrightarrow 0 \]
is a relatively projective resolution. Since $\Hom_{D(B_k)}(\mathcal{C_+},\mathbb{C}) \cong \mathbb{C}$ (with basis element $f_+ \mapsto 1$) and $\Hom_{D(B_k)}(\mathcal{C_-},\mathbb{C}) = 0$, we have
\[ \Ext_{D(B_k),B_k}^n(\mathbb{C}, \mathbb{C}) =
\begin{cases}
\Hom_{D(B_k)}\bigl( S^n(V) \otimes \mathcal{C}_+, \mathbb{C} \bigr) & \text{if } n \text{ is even,}\\
0 & \text{if } n \text{ is odd.}
\end{cases}\]
For $n$ even, let us define (recall that $f_+$ is the generator of $\mathcal{C}_+$)
\begin{equation}\label{isoExtDBkSym}
\foncIso{F}{S^n(V^*)}{\Hom_{D(B_k)}\bigl( S^n(V) \otimes \mathcal{C}_+, \mathbb{C} \bigr)}{\psi}{\bigl( P \otimes f_+ \mapsto \langle \psi, P \rangle\bigr)}
\end{equation}
where we use the standard pairing $\langle -,- \rangle : S^n(V^*) \otimes S^n(V) \to \mathbb{C}$ given by $\langle \psi_1 \ldots \psi_n, v_1 \ldots v_n \rangle = \sum_{\sigma \in \mathfrak{S}_n}\psi_1(v_{\sigma(1)}) \ldots \psi_n(v_{\sigma(n)})$, with $\psi_i \in V^*$ and $v_i \in V$ for all $i$. The map $F$ clearly gives an isomorphism of graded vector spaces, it remains to show that it is compatible with the Yoneda product. So let $\varphi \in S^n(V^*)$ and $\psi \in S^m(V^*)$; according to \eqref{diagramLiftYoneda}, in order to compute $F(\varphi) \circ F(\psi)$ we have to fill the diagram
\[ \xymatrix@C=2.5em@R=2.5em{
S^{m+n}(V) \otimes \mathcal{C}_+ \ar@{-->}[d]^{\widetilde{F(\psi)}_n} \ar[r] & \: \ldots \: \ar[r] & S^{m+1}(V) \otimes \mathcal{C}_- \ar@{-->}[d]^{\widetilde{F(\psi)}_1} \ar[r] & S^m(V) \otimes \mathcal{C}_+ \ar@{-->}[d]^{\widetilde{F(\psi)}_0} \ar[rd]^{F(\psi)} & & \\
S^n(V) \otimes \mathcal{C}_+ \ar[r] & \: \ldots \: \ar[r] & S^1(V) \otimes \mathcal{C}_- \ar[r] & S^0(V) \otimes \mathcal{C}_+ \ar[r] & \mathbb{C} \ar[r] & 0
} \]
One checks that $\widetilde{F(\psi)}_l(v_1 \ldots v_{m+l} \otimes f_{\pm}) = \sum_{\sigma \in \mathfrak{S}_{m+l}}\langle\psi, v_{\sigma(1)}\ldots v_{\sigma(m)}\rangle v_{\sigma(m+1)} \ldots v_{\sigma(m+l)} \otimes f_{\pm}$ does the job for each $l$. Hence
\begin{align*}
&\bigl(F(\varphi) \circ F(\psi)\bigr)(v_1 \ldots v_{m+n} \otimes f_+) = F(\varphi)\bigl( \widetilde{F(\psi)}_n(v_1 \ldots v_{m+n} \otimes f_+) \bigl)\\
& = \sum_{\sigma \in \mathfrak{S}_{m+n}} \langle\psi, v_{\sigma(1)}\ldots v_{\sigma(m)}\rangle \langle\varphi, v_{\sigma(m+1)}\ldots v_{\sigma(m+n)}\rangle = \langle \varphi\psi, v_1 \ldots v_{m+n} \rangle = F(\varphi\psi)(v_1 \ldots v_{m+n} \otimes f_+).
\end{align*}
2. According to \eqref{IsoYExtExtOnCocycles}, we have to fill the diagram
\[ \xymatrix{
S^2(V) \otimes \mathcal{C}_+ \ar[r] \ar@{-->}[d]^{\eta_2 = \overline{\eta}(S_{i,j})} & S^1(V) \otimes \mathcal{C}_- \ar[r] \ar@{-->}[d]^{\eta_1} & S^0(V) \otimes \mathcal{C}_+ \ar[r] \ar@{-->}[d]^{\eta_0} & \mathbb{C} \ar[r] \ar@{=}[d] & 0\\
\mathbb{C} \ar[r] & Y_i^- \ar[r] & Y_j^+ \ar[r] & \mathbb{C} \ar[r] & 0
} \]
The solution is easily found: let $(u_-, u_+)$ be the basis of $Y_i^-$ and $(u'_+, u'_-)$ be the basis of $Y_j^+$, then
\[ \eta_0(f_+) = u'_+, \qquad \eta_1(t_a \otimes f_-) = \delta_{j,a}u_-, \qquad \overline{\eta}(S_{i,j})(t_at_b \otimes f_+) = \delta_{i,a}\delta_{j,b} + \delta_{j,a}\delta_{i,b} \]
(and all extended by $D(B_k)$-linearity), where $(t_i)_{1 \leq i \leq k}$ is the basis of $V$ dual to $(\xi_i)_{1 \leq i \leq k}$. Finally, note that
\[ \overline{\eta}(S_{i,j})(t_at_b \otimes f_+) = \langle \xi_i\xi_j, t_a t_b \rangle = F(\xi_i\xi_j)(t_at_b \otimes f_+) \]
where $F$ is the isomorphism $S^2(V^*) \overset{\sim}{\to} \Ext_{D(B_k),B_k}^2(\mathbb{C}, \mathbb{C})$ defined in \eqref{isoExtDBkSym}.
\\Since $\xi_i\xi_j = \xi_j\xi_i$, we have $S_{i,j} \equiv S_{j,i}$ (which means that $S_{i,j} = S_{j,i}$ as elements of $\YExt^2_{D(B_k),B_k}(\mathbb{C}, \mathbb{C})$). Finally, since the monomials $\xi_{i_1} \ldots \xi_{i_n}$ with $1 \leq i_1 \leq \ldots \leq i_n \leq k$ are a basis of $S^n(V^*)$, the same is true for the corresponding elements in $\YExt^n_{D(B_k),B_k}(\mathbb{C}, \mathbb{C})$ for $n$ even.

\smallskip

\noindent 3. Recall the definition of the DY cocycle associated to an allowable exact sequence in \eqref{defSDY}, as well as its compatibility with the products $\circ$ explained in \eqref{SDYYoneda}. Let us first compute $\eta(S_i^-)$ and $\eta(S_j^+)$. For $S_i^-$, we have to fill the commutative diagram of $D(B_k)$-modules
\[\xymatrix{
\bigl((B_k^*)^{\otimes 2} \otimes \mathbb{C}_-\bigr)_{\mathrm{coad}} \ar[r]^{\quad d_1} \ar@{-->}[d]^{\eta_1 = \eta(S_i^-)} & \bigl(\!B_k^* \otimes \mathbb{C}_-\bigr)_{\mathrm{coad}} \ar[r]^{\qquad \mathrm{act}} \ar@{-->}[d]^{\eta_0} & \mathbb{C}_- \ar[r] \ar@{=}[d] &0\\
\mathbb{C}_+ \ar[r] & Y^-_i \ar[r] & \mathbb{C}_- \ar[r] & 0
}\]
Let $ f_{\pm} = \frac{\varepsilon \pm h}{2}$, then the elements $y_{i_1} \ldots y_{i_l}f_{\pm}$ (with $i_1 < \ldots < i_l$) are a basis of $(B_k^*)^{\mathrm{op}}$. The solution is easily found:
\begin{align*}
&\eta_0(f_- \otimes 1_-) = v_-, \: \eta_0(y_if_- \otimes 1_-) = v_+, \: \text{and } 0 \text{ on the other basis elements,}\\
&\eta\big(S_i^-\big)(\varepsilon \otimes y_if_- \otimes 1_-) = -1_+, \: \text{and } 0 \text{ on the other basis elements of the form } \varepsilon \otimes  (\ldots) \otimes 1_-.
\end{align*}
$\eta\big(S_i^-\big)$ can be extended to all the elements by $(B_k^*)^{\mathrm{op}}$-linearity but we do not need this. One similarly finds
\[ \eta\big(S_j^+\big)(\varepsilon \otimes y_jf_+ \otimes 1_+) = -1_-, \: \text{and } 0 \text{ on the other basis elements of the form } \varepsilon \otimes  (\ldots) \otimes 1_+. \]
Now, let $(x_{i_1} \ldots x_{i_l})^*, (x_{i_1} \ldots x_{i_l}g)^*$ be the elements of the dual basis of the monomial basis of $B_k$. Recall from \cite[\S 5]{GHS} that by definition $y_i = x_i^* - (x_ig)^*$, $h = 1^* - g^*$. It follows that $x_i^* = y_i f_+$ and $(x_ig)^* = -y_if_-$. With this we can compute the DY cocycles associated to $\eta(S_i^-), \eta(S_j^+)$ through the isomorphism \eqref{isoBarGammaInverse}:
\begin{align*}
&\big(S_i^-\big)_{\mathrm{DY}} = \widetilde{\Gamma}^{-1}\eta\big(S_i^-\big) : \mathbb{C}_- \to B_k \otimes \mathbb{C}_+, \quad 1_- \mapsto x_ig \otimes 1_+,\\
&\big(S_j^+\big)_{\mathrm{DY}} = \widetilde{\Gamma}^{-1}\eta\big(S_j^+\big) : \mathbb{C}_+ \to B_k \otimes \mathbb{C}_-, \quad 1_+ \mapsto -x_j \otimes 1_-.
\end{align*}
Finally, thanks to the product \eqref{YonedaProductHomH}, we get $(S_{i,j})_{\mathrm{DY}}(1_+) = \big(S_i^-\big)_{\mathrm{DY}} \circ \big(S_j^+\big)_{\mathrm{DY}}(1_+) = x_j \otimes x_ig \otimes 1_+$, which we identify with $x_j \otimes x_ig$. For the last claim, note that
\[ x_{i_1} \otimes x_{i_2}g \otimes \ldots \otimes x_{i_{n-1}} \otimes x_{i_n}g = (S_{i_n, i_{n-1}})_{\mathrm{DY}} \circ \ldots \circ (S_{i_2, i_1})_{\mathrm{DY}} = \bigl( S_{i_n, i_{n-1}} \circ \ldots \circ S_{i_2, i_1} \bigr)_{\mathrm{DY}} \]
and recall item 2.
\end{proof}

\subsection{Taft algebras}\label{sectionExampleTaftAlgebra}
\indent Let $q \in \mathbb{C}$ be a primitive $n$-th root of unity ($n \geq 2$). The Taft algebra $T_q$ is
\[ T_q = \bigl\langle x, g \, | \, gx = qxg, \: x^n=0,\: g^n=1 \bigr\rangle. \]
It is a $n^2$-dimensional Hopf $\mathbb{C}$-algebra with basis $(x^ig^j)_{0 \leq i,j \leq n-1}$ and with Hopf structure given by
\[\begin{array}{lll}
\Delta(x) = 1 \otimes x  + x \otimes g, & S(x) = -xg^{-1}, & \varepsilon(x) =0,\\
\Delta(g) = g \otimes g, & S(g) = g^{-1}, & \varepsilon(g) = 1.
\end{array}\]
Let $X_d^{(s)} = \mathrm{span}(v^{(s)}_0, \ldots, v^{(s)}_{d-1})$ ($1 \leq d \leq n$, $0 \leq s \leq n-1$) be the $T_q$-module defined by
\[ gv^{(s)}_i = q^{s+i}v^{(s)}_i, \quad xv^{(s)}_i = v^{(s)}_{i+1}, \quad xv^{(s)}_{d-1} = 0. \]
Any indecomposable $T_q$-module is isomorphic to some $X_d^{(s)}$.

\subsubsection{Dimension of DY cohomology groups}
\indent We will compute the DY cohomology of $T_q\text{-}\mathrm{mod}$ thanks to \eqref{DYandRelExtDHH}. The first task is to describe the Drinfeld double $D(T_q) = (T_q^*)^{\mathrm{op}} \otimes T_q$. In order to determine $(T_q^*)^{\mathrm{op}}$, we use the matrix coefficients of $X_2^{(0)}$ in its basis $(v^{(0)}_0, v^{(0)}_1)$. The representation matrix has the form
\[ \rho_{X_2^{(0)}} = \begin{pmatrix}
\varepsilon & 0\\
y & h
\end{pmatrix} \]
where $\varepsilon$ is the counit. It is not difficult to show that
\begin{equation}\label{pairingTaft}
\langle y^kh^l, x^ig^j\rangle = q^{lj}\, (k)_q! \, \delta_{i,k}
\end{equation}
(where $(k)_q = \frac{1 - q^k}{1 - q}$ and $(k)_q! = (1)_q (2)_q \ldots (k)_q$) and it easily follows that the monomials $y^kh^l$ form a basis of $(T_q^*)^{\mathrm{op}}$. Moreover, we have the presentation
\[ (T_q^*)^{\mathrm{op}} = \bigl\langle y, h \, | \, hy = qyh, \: y^n=0,\: h^n=1 \bigr\rangle. \]
The Hopf structure is given by
\[\begin{array}{lll}
\Delta(y) = y \otimes 1  + h \otimes y, & S(y) = -h^{-1}y, & \varepsilon(y) =0,\\
\Delta(h) = h \otimes h, & S(h) = h^{-1}, & \varepsilon(h) = 1.
\end{array}\]
The exchange relations in $D(T_q)$ are easily computed and we find
\[ D(T_q) = \left\langle x, y, g, h \,\left|\,
\begin{array}{llll}
gx = qxg, & x^n=0, & g^n=1,&\\
hy = qyh, & y^n=0,& h^n=1,&\\
hx = q^{-1}xh, & gy = q^{-1}yg, &gh=hg, & [x,y] = h - g
\end{array}
\right.\right\rangle. \]

\indent Let $\mathcal{V}^{(s,t)}$ (with $0 \leq s,t \leq n-1$) be the $D(T_q)$-module generated by a vector $v^{(s,t)}$ such that
\[ xv^{(s,t)} = 0, \quad gv^{(s,t)} = q^sv^{(s,t)}, \quad hv^{(s,t)} = q^tv^{(s,t)} \]
(formally it is an induced representation). It has the basis $v^{(s,t)}_i = y^iv^{(s,t)}$ ($0 \leq i \leq n-1$) with actions
\[ xv^{(s,t)}_i = (i)_q(q^t - q^{s+1-i})v^{(s,t)}_{i-1}, \quad yv^{(s,t)}_i = v^{(s,t)}_{i+1}, \quad gv^{(s,t)}_i = q^{s-i}v^{(s,t)}_i, \quad hv^{(s,t)}_i = q^{t+i}v^{(s,t)}_i \]
where we used that $xy^i = y^ix + (i)_qy^{i-1}\bigl(h - q^{1-i}g \bigr)$. This gives the familiar ladder structure:
\begin{align}
&\xymatrix{
v^{(s,t)}_0 \ar@<0.5ex>[r]^{\quad y} & \ar@<0.5ex>[l]^{\quad x} \:\: \ldots \!\!\!\!\!\!\!\!\!\!\!\! &\ar@<0.5ex>[r]^{\!\!\!\!\!\!\!y} & \ar@<0.5ex>[l]^{\!\!\!\!\!\!\!x} v^{(s,t)}_{s-t} \ar[r]^{\!\!\!y} & v^{(s,t)}_{s-t+1} \ar@<0.5ex>[r]^{\quad y} & \ar@<0.5ex>[l]^{\quad x} \:\: \ldots \!\!\!\!\!\!\!\!\!\!\!\! &\ar@<0.5ex>[r]^{\!\!\!\!\!\!\!y} & \ar@<0.5ex>[l]^{\!\!\!\!\!\!\!x} v^{(s,t)}_{n-1}
} \quad \text{if } t \leq s,\label{pictureVermaModuleDTq}\\
&\xymatrix{
v^{(s,t)}_0 \ar@<0.5ex>[r]^{\quad y} & \ar@<0.5ex>[l]^{\quad x} \:\: \ldots \!\!\!\!\!\!\!\!\!\!\!\! &\ar@<0.5ex>[r]^{\!\!\!\!\!\!\!y} & \ar@<0.5ex>[l]^{\!\!\!\!\!\!\!x} v^{(s,t)}_{n+s-t} \ar[r]^{\!\!\!y} & v^{(s,t)}_{n+s-t+1} \ar@<0.5ex>[r]^{\qquad y} & \ar@<0.5ex>[l]^{\qquad x} \:\: \ldots \!\!\!\!\!\!\!\!\!\!\!\! &\ar@<0.5ex>[r]^{\!\!\!\!\!\!\!y} & \ar@<0.5ex>[l]^{\!\!\!\!\!\!\!x} v^{(s,t)}_{n-1}
} \quad \text{if } t \geq s+1. \nonumber
\end{align}

\begin{lemma}\label{relProjObjectsForTaft}
We have $\mathcal{F}\bigl( X_1^{(s)} \bigr) = \bigoplus_{t=0}^{n-1} \mathcal{V}^{(s, t)}$, where $\mathcal{F}$ is the induction functor described in \eqref{inductionFunctorForDH}; hence the modules $\mathcal{V}^{(s, t)}$ are relatively projective.
\end{lemma}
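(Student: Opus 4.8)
The plan is to realize $\mathcal{F}\bigl(X_1^{(s)}\bigr) = D(T_q) \otimes_{T_q} X_1^{(s)}$ as an iterated induction through the intermediate subalgebra $B = \langle x, g, h \rangle \subset D(T_q)$, and to recognize each $\mathcal{V}^{(s,t)}$ as the module induced from $B$ up to $D(T_q)$. Note first that $X_1^{(s)}$ is the one-dimensional $T_q$-module spanned by $v_0 := v_0^{(s)}$ with $x v_0 = 0$ and $g v_0 = q^s v_0$. From the exchange relations of $D(T_q)$ and its PBW basis $\{y^k h^l x^i g^j\}$ one reads off the chain $T_q = \langle x, g\rangle \subset B \subset D(T_q)$ together with the freeness statements $B = \bigoplus_{l=0}^{n-1} h^l T_q$ as a right $T_q$-module (basis $\{h^l\}$) and $D(T_q) = \bigoplus_{k=0}^{n-1} y^k B$ as a right $B$-module (basis $\{y^k\}$). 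In particular $\dim B = n^3$, so the module $\mathcal{V}^{(s,t)}$, which by construction is generated by a vector on which $x$ acts by $0$ and $g, h$ act by $q^s, q^t$, is exactly the induced module $D(T_q) \otimes_B \mathbb{C}_{s,t}$, where $\mathbb{C}_{s,t}$ is the one-dimensional $B$-module with $x \mapsto 0$, $g \mapsto q^s$, $h \mapsto q^t$ (both sides having dimension $n^4/n^3 = n$).

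By transitivity of induction (associativity of the tensor product), $\mathcal{F}\bigl(X_1^{(s)}\bigr) \cong D(T_q) \otimes_B \bigl( B \otimes_{T_q} X_1^{(s)} \bigr)$, and the next step is to compute the intermediate $B$-module $B \otimes_{T_q} X_1^{(s)}$. Using the basis $\{h^c\}_{c=0}^{n-1}$ of $B$ over $T_q$, this module is $n$-dimensional with basis $\{h^c \otimes v_0\}$, and a short calculation with $gh = hg$, $xh = qhx$ and $h^n = 1$ gives
\begin{align*}
g \cdot (h^c \otimes v_0) &= h^c g \otimes v_0 = q^s\, (h^c \otimes v_0), \\
x \cdot (h^c \otimes v_0) &= q^c\, h^c x \otimes v_0 = 0, \\
h \cdot (h^c \otimes v_0) &= h^{c+1} \otimes v_0 ,
\end{align*}
so that $x$ acts by $0$, $g$ acts by the scalar $q^s$, and $h$ acts as the cyclic shift. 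Since $x$ and $g$ act as scalars, the $B$-module structure is governed entirely by $h$, which is diagonalizable with the $n$ distinct eigenvalues $q^t$ ($0 \le t \le n-1$); the eigenvectors $u^{(t)} = \sum_{c=0}^{n-1} q^{-tc}\, h^c \otimes v_0$ each span a $B$-submodule isomorphic to $\mathbb{C}_{s,t}$. Hence $B \otimes_{T_q} X_1^{(s)} \cong \bigoplus_{t=0}^{n-1} \mathbb{C}_{s,t}$ as $B$-modules.

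Applying the additive functor $D(T_q) \otimes_B (-)$ and using the identification $\mathcal{V}^{(s,t)} = D(T_q) \otimes_B \mathbb{C}_{s,t}$ then yields
\[ \mathcal{F}\bigl(X_1^{(s)}\bigr) \cong \bigoplus_{t=0}^{n-1} D(T_q) \otimes_B \mathbb{C}_{s,t} = \bigoplus_{t=0}^{n-1} \mathcal{V}^{(s,t)}, \]
which is the desired isomorphism; the dimension count $\dim \mathcal{F}\bigl(X_1^{(s)}\bigr) = n^2 = \sum_{t} \dim \mathcal{V}^{(s,t)}$ confirms consistency. Finally, each $\mathcal{V}^{(s,t)}$ is then a direct summand of an object of the form $\mathcal{F}(X)$, hence relatively projective by item~2 of Lemma~\ref{lemmaBasicPropertiesRelProj}.

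I expect the only delicate point to be the bookkeeping that establishes the two freeness statements (and thereby the induced-module description of $\mathcal{V}^{(s,t)}$) from the PBW basis of $D(T_q)$; once these are in place the computation of the intermediate module is immediate. In particular, a pleasant feature of routing the argument through $B$ is that the $x$-action trivializes thanks to $x v_0 = 0$, so the full commutation identity for $x y^k$ recorded before the ladder pictures is never needed here.
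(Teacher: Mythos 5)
Your proof is correct, and it rests on the same key computation as the paper's: diagonalizing the $h$-action with discrete Fourier eigenvectors (your $u^{(t)} = \sum_{c} q^{-tc}\, h^c \otimes v_0$ are, up to the factor $n$, exactly the paper's $\varphi_t \otimes v^{(s)}$ with $\varphi_t = \frac{1}{n}\sum_{j} q^{-jt}h^j$), after observing that $x$ annihilates the generator and $g$ acts by the scalar $q^s$. The difference is organizational. The paper works directly in the coadjoint realization $(T_q^*)^{\mathrm{op}} \otimes X_1^{(s)}$ from \eqref{inductionFunctorForDH}, checks that each $\varphi_t \otimes v^{(s)}$ satisfies the defining relations of the generator of $\mathcal{V}^{(s,t)}$ under the action \eqref{inductionFunctorHDH}, and then uses the basis $(y^i\varphi_t)$ of $(T_q^*)^{\mathrm{op}}$ to see that the resulting submodules exhaust $\mathcal{F}\bigl(X_1^{(s)}\bigr)$. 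You instead factor the induction through the intermediate subalgebra $B = \langle x,g,h\rangle$ and invoke transitivity; this buys you that the identification of each summand with $\mathcal{V}^{(s,t)}$ is automatic, since induction is additive and $\mathcal{V}^{(s,t)}$ is by definition the module induced from the character $\mathbb{C}_{s,t}$ of $B$, so you never need the explicit coadjoint action nor a hand-check that the spans $\langle y^i\varphi_t \otimes v^{(s)}\rangle_{0 \le i \le n-1}$ are $D(T_q)$-submodules. The cost is the PBW bookkeeping behind your two freeness statements, which the paper sidesteps because the basis $(y^ih^l)$ of $(T_q^*)^{\mathrm{op}}$ is already in hand from \eqref{pairingTaft}. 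Both routes are complete; yours is slightly more robust (it would apply verbatim whenever an induction factors through a triangular chain of subalgebras), while the paper's is more concrete and stays inside the explicit model used throughout that section.
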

\begin{proof}
We have $\mathcal{F}\big(X_1^{(s)}\big) = (T_q^*)^{\mathrm{op}} \otimes X_1^{(s)}$ with the $D(T_q)$-action from \eqref{inductionFunctorHDH}. For $0 \leq t \leq n-1$, let $\varphi_t = \frac{1}{n}\sum_{j = 0}^{n-1}q^{-jt}h^j$. Then $h\varphi_t = q^t\varphi_t$ and $(y^i\varphi_t)_{0 \leq i,t\leq n-1}$ is a basis of $(T_q^*)^{\mathrm{op}}$. Let $v^{(s)}$ be the basis vector of $X_1^{(s)}$. Since
\begin{align*}
&x(\varphi_t \otimes v^{(s)}) = \varphi_{t-1} \otimes xv^{(s)} = 0, \\
&g(\varphi_t \otimes v^{(s)}) =  \varphi_t \otimes gv^{(s)} = q^s(\varphi_t \otimes v^{(s)}),\\
&h(\varphi_t \otimes v^{(s)}) =  h\varphi_t \otimes v^{(s)} = q^t(\varphi_t \otimes v^{(s)}),
\end{align*}
the subspace $\mathrm{vect}\bigl(y^i\varphi_t \otimes v^{(s)}\bigr)_{0 \leq i \leq n-1}$ is isomorphic to $\mathcal{V}^{(s,t)}$. The last claim is due to Lemma \ref{lemmaBasicPropertiesRelProj}.
\end{proof}
\noindent Note in particular that $\mathcal{V}^{(0,0)}$ is the relatively projective cover of $\mathbb{C}$ (Definition \ref{defRelProjCover}).

\medskip

\indent Thanks to \eqref{pictureVermaModuleDTq}, we see that $\mathcal{V}^{(n-1,1)}$ and $\mathcal{V}^{(0,0)}$ fit into short exact sequences
\[ 0 \longrightarrow \mathbb{C} \overset{\alpha}\longrightarrow \mathcal{V}^{(n-1,1)} \overset{\beta}{\longrightarrow} \mathcal{K} \longrightarrow 0, \qquad 0 \longrightarrow \mathcal{K} \overset{\gamma}{\longrightarrow} \mathcal{V}^{(0,0)} \overset{\delta}{\longrightarrow} \mathbb{C} \longrightarrow 0 \]
where $\mathcal{K} \cong \mathrm{span}\bigl( v^{(0,0)}_2, \ldots, v^{(0,0)}_{n-1} \bigr)$ is a simple $D(T_q)$-module. These sequences split under the forgetful functor $\mathcal{U} : D(T_q)\text{-}\mathrm{mod} \to T_q\text{-}\mathrm{mod}$, indeed $\mathcal{U}(\mathcal{V}^{(0,0)}) \cong  \mathcal{U}(\mathcal{V}^{(n-1,1)}) \cong \mathbb{C} \oplus X^{(n-1)}_{n-1}$.
It follows that the sequence of relatively projective $D(T_q)$-modules
\begin{equation}\label{relativeResolutionTaft}
0 \longleftarrow \mathbb{C} \overset{\delta}{\longleftarrow} \mathcal{V}^{(0,0)} \overset{\gamma\beta}{\longleftarrow} \mathcal{V}^{(n-1,1)} \overset{\alpha\delta}{\longleftarrow} \mathcal{V}^{(0,0)} \overset{\gamma\beta}{\longleftarrow} \mathcal{V}^{(n-1,1)} \overset{\alpha\delta}{\longleftarrow} \ldots
\end{equation}
is an allowable exact sequence, and thus a relatively projective resolution of $\mathbb{C}$. For trivial coefficients, the resulting sequence is
\begin{align*}
&\Hom_{D(T_q)}(\mathcal{V}^{(0,0)},\mathbb{C}) \longrightarrow \Hom_{D(T_q)}(\mathcal{V}^{(n-1,1)},\mathbb{C}) \longrightarrow \Hom_{D(T_q)}(\mathcal{V}^{(0,0)},\mathbb{C}) \longrightarrow \ldots\\
=\:& \mathbb{C} \longrightarrow 0 \longrightarrow \mathbb{C} \longrightarrow  \ldots
\end{align*}
and we get
\begin{equation}\label{dimDYTaft}
H^k_{\mathrm{DY}}(T_q\text{-}\mathrm{mod}) \cong \Ext^k_{D(T_q),T_q}(\mathbb{C},\mathbb{C}) = \begin{cases}
\mathbb{C} & \text{if } k \text{ is even}\\
0 & \text{if } k \text{ is odd}
\end{cases}
\end{equation}

\begin{remark}
The complex \eqref{relativeResolutionTaft} is in particular a projective resolution of $\mathbb{C}$ in $(T_q^*)^{\mathrm{op}}\text{-}\mathrm{mod}$. Since
\[ \Hom_{D(T_q)}(\mathcal{V}^{(0,0)},\mathbb{C}) = \Hom_{(T_q^*)^{\mathrm{op}}}(\mathcal{V}^{(0,0)},\mathbb{C}), \quad \Hom_{D(T_q)}(\mathcal{V}^{(n-1,1)},\mathbb{C}) = \Hom_{(T_q^*)^{\mathrm{op}}}(\mathcal{V}^{(n-1,1)},\mathbb{C}), \]
it holds by Lemma \ref{DYForgetfulExt} that $H^k_{\mathrm{DY}}(T_q\text{-}\mathrm{mod}) \cong \Ext^k_{(T_q^*)^{\mathrm{op}}}(\mathbb{C},\mathbb{C}) \cong H^k_{\mathrm{DY}}(U)$, where $U : T_q\text{-}\mathrm{mod} \to \mathrm{Vect}_{\mathbb{C}}$ is the forgetful functor.
\end{remark}

\subsubsection{Explicit cocycles from allowable exact sequences}\label{explicitCocyclesTaft}
Here we apply the method described in \S \ref{methodDYCocyclesThanksToSequences}. Recall from the previous paragraph that we have two allowable short exact sequences:
\[ S_1 = \bigl(0 \longrightarrow \mathbb{C} \longrightarrow \mathcal{V}^{(n-1,1)} \longrightarrow \mathcal{K} \longrightarrow 0 \bigr), \qquad S_2 = \bigl(0 \longrightarrow \mathcal{K} \longrightarrow \mathcal{V}^{(0,0)} \longrightarrow \mathbb{C} \longrightarrow 0\bigr). \]
We denote by $S$ their Yoneda product:
\begin{equation}\label{def2FoldSequenceTaftAlgebra}
S = S_1 \circ S_2 = \big( 0 \longrightarrow \mathbb{C} \longrightarrow \mathcal{V}^{(n-1,1)} \longrightarrow \mathcal{V}^{(0,0)} \longrightarrow \mathbb{C} \longrightarrow 0 \big).
\end{equation}
For the next proposition, recall again from \S \ref{subsectionProductOnDY} that $H^{\bullet}_{\mathrm{DY}}(F) = \bigoplus_{k \geq 0} H^k_{\mathrm{DY}}(F)$ is an algebra for the product $f \circ g = (-1)^{|f||g|}g \otimes f$ (where $f,g$ are homogeneous elements of degree $|f|, |g|$ respectively).
\begin{proposition}
1. The $2$-fold exact sequence $S$ in \eqref{def2FoldSequenceTaftAlgebra} is not congruent to $0$ and thus is a basis of $\YExt^2_{D(T_q),T_q}(\mathbb{C}, \mathbb{C})$.
\\2. The explicit DY cocycle associated to $S$ is
\[ S_{\mathrm{DY}} = -\sum_{i=1}^{n-1} \frac{1}{(i)_q!(n-i)_q!}x^i \otimes x^{n-i}g^i \]
which is an element in $\mathcal{Z}\bigl( \Delta(T_q) \bigr) \cong C^2_{\mathrm{DY}}(T_q\text{-}\mathrm{mod})$. It is a basis of $H^2_{\mathrm{DY}}(T_q\text{-}\mathrm{mod})$.
\\3. The DY cocycle $(S_{\mathrm{DY}})^{\circ k} = (S_{\mathrm{DY}})^{\otimes k}$ is a basis of $H^{2k}_{\mathrm{DY}}(T_q\text{-}\mathrm{mod})$ for all $k$ and it follows that $H^{\bullet}_{\mathrm{DY}}(T_q\text{-}\mathrm{mod}) \cong \mathbb{C}[X^2]$ as a graded $\mathbb{C}$-algebra.
\end{proposition}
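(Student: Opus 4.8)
The three parts rest on the relatively projective resolution \eqref{relativeResolutionTaft} and the dimension count \eqref{dimDYTaft}, which already give $\dim\Ext^{2k}_{D(T_q),T_q}(\mathbb{C},\mathbb{C})=1$ and $\Ext^{2k+1}_{D(T_q),T_q}(\mathbb{C},\mathbb{C})=0$; through \eqref{DYandRelExtDHH} and the isomorphism $\overline{\eta}$ this reads $\dim H^{2k}_{\mathrm{DY}}(T_q\text{-}\mathrm{mod})=1$ and $H^{2k+1}_{\mathrm{DY}}(T_q\text{-}\mathrm{mod})=0$. For Part~1, since $\YExt^2_{D(T_q),T_q}(\mathbb{C},\mathbb{C})\cong\Ext^2_{D(T_q),T_q}(\mathbb{C},\mathbb{C})\cong\mathbb{C}$, it suffices to show that $S=S_1\circ S_2$ from \eqref{def2FoldSequenceTaftAlgebra} is not congruent to $0$. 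I would apply Corollary~\ref{remarkCriterion2FoldSequences}(1) with $S_1$ playing the role of $S$ and $S_2$ that of $T$: the two hypotheses are $S_1\not\equiv0$ and $\Ext^1_{D(T_q),T_q}(\mathcal{V}^{(0,0)},\mathbb{C})=0$. The second holds because $\mathcal{V}^{(0,0)}$ is relatively projective (Lemma~\ref{relProjObjectsForTaft}). The first is exactly the non-splitting of $0\to\mathbb{C}\to\mathcal{V}^{(n-1,1)}\to\mathcal{K}\to0$, which follows from the indecomposability of $\mathcal{V}^{(n-1,1)}$ evident from the connected ladder \eqref{pictureVermaModuleDTq}. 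Hence $S\not\equiv0$, so it is a basis.

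For Part~2 I would run the method of \S\ref{methodDYCocyclesThanksToSequences}, using $S=S_1\circ S_2$ and \eqref{SDYYoneda}, so that $S_{\mathrm{DY}}=(S_1)_{\mathrm{DY}}\circ(S_2)_{\mathrm{DY}}$. Each factor is a $1$-cocycle obtained by filling the lifting diagram \eqref{IsoYExtExtOnCocycles} over the bar resolution \eqref{barResolutionDHH}, where only the values $\eta(S_i)(\varepsilon\otimes\,?\,\otimes\,?)$ are needed; applying $\widetilde{\Gamma}^{-1}$ via \eqref{isoBarGammaInverse} yields $(S_1)_{\mathrm{DY}}\in\Hom_{T_q}(\mathcal{K},(T_q)_{\mathrm{ad}})$ and $(S_2)_{\mathrm{DY}}\in\Hom_{T_q}(\mathbb{C},(T_q\otimes\mathcal{K})_{\mathrm{ad}})$. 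The quantum factorials $(i)_q!$ enter through the action $x\,v^{(s,t)}_i=(i)_q(q^t-q^{s+1-i})\,v^{(s,t)}_{i-1}$ on the modules $\mathcal{V}^{(s,t)}$. Combining the two factors by the simple formula \eqref{YonedaProductHomH} and using the identification $\Hom_{T_q}(\mathbb{C},(T_q^{\otimes2})_{\mathrm{ad}})\cong\mathcal{Z}(\Delta(T_q))$ from \eqref{DYHmodForTrivialCoeffs} should produce the stated expression for $S_{\mathrm{DY}}$. Because $\widetilde{\Gamma}^{-1}$ and $\eta$ are cochain-level isomorphisms and $S\not\equiv0$ by Part~1, $S_{\mathrm{DY}}$ represents a nonzero class, hence a basis of $H^2_{\mathrm{DY}}(T_q\text{-}\mathrm{mod})\cong\mathbb{C}$; the membership $S_{\mathrm{DY}}\in\mathcal{Z}(\Delta(T_q))$ is automatic from \eqref{DYHmodForTrivialCoeffs} and can serve as a sanity check.

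For Part~3, the identity $(S_{\mathrm{DY}})^{\circ k}=(S_{\mathrm{DY}})^{\otimes k}$ is read directly off Theorem~\ref{propYonedaProductDY} (equivalently \eqref{YonedaProductHomH}) with $n=m=2$ and trivial coefficients: the sign $(-1)^{nm}=(-1)^4=+1$ and the product reduces to tensor juxtaposition, so by induction the $k$-fold power is the $k$-fold tensor power of $S_{\mathrm{DY}}$. To prove that this power is a basis I would show that right multiplication by $S_{\mathrm{DY}}$ is an isomorphism $H^n_{\mathrm{DY}}\to H^{n+2}_{\mathrm{DY}}$ for $n\geq1$: writing $S_{\mathrm{DY}}=(S_1)_{\mathrm{DY}}\circ(S_2)_{\mathrm{DY}}$ and applying \eqref{connectingMorphismIsoDY} to the two allowable sequences $S_1,S_2$, whose middle terms are relatively projective, the map $g\mapsto g\circ S_{\mathrm{DY}}$ equals, up to sign, the composite of the connecting isomorphisms $H^n_{\mathrm{DY}}(\mathbb{C},\mathbb{C})\xrightarrow{\sim}H^{n+1}_{\mathrm{DY}}(\mathcal{K},\mathbb{C})\xrightarrow{\sim}H^{n+2}_{\mathrm{DY}}(\mathbb{C},\mathbb{C})$. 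Starting from $S_{\mathrm{DY}}$ being a basis of $H^2_{\mathrm{DY}}$ (Part~2), induction then shows $(S_{\mathrm{DY}})^{\circ k}$ is nonzero and hence a basis of $H^{2k}_{\mathrm{DY}}\cong\mathbb{C}$. Together with $H^{\mathrm{odd}}_{\mathrm{DY}}=0$ and the graded-commutativity of $\circ$ (which introduces no sign since $S_{\mathrm{DY}}$ is even), the assignment $X^2\mapsto S_{\mathrm{DY}}$ extends to a graded algebra isomorphism $\mathbb{C}[X^2]\xrightarrow{\sim}H^{\bullet}_{\mathrm{DY}}(T_q\text{-}\mathrm{mod})$.

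The main obstacle is concentrated in Part~2: the diagrammatic lifting computing $\eta(S_1)$ and $\eta(S_2)$, and the ensuing $q$-combinatorial bookkeeping needed to collapse $(S_1)_{\mathrm{DY}}\circ(S_2)_{\mathrm{DY}}$ into the closed form with the coefficients $1/\big((i)_q!(n-i)_q!\big)$. Parts~1 and~3 are then essentially formal, being applications of Corollary~\ref{remarkCriterion2FoldSequences}, the dimension count \eqref{dimDYTaft}, and the connecting-isomorphism statement \eqref{connectingMorphismIsoDY} combined with the explicit Yoneda product.
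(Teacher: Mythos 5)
Your proposal is correct and, for Parts 2 and 3, follows the paper's proof essentially step by step: the paper computes $\eta(S_1),\eta(S_2)$ by filling the lifting diagrams over the bar resolution \eqref{barResolutionDHH}, applies $\widetilde{\Gamma}^{-1}$ from \eqref{isoBarGammaInverse}, combines the two $1$-cocycles via \eqref{YonedaProductHomH}, and proves Part 3 by composing the two connecting isomorphisms \eqref{connectingMorphismIsoDY} attached to $S_1,S_2$, exactly as you describe. The one genuine divergence is Part 1: the paper applies item 2 of Corollary \ref{remarkCriterion2FoldSequences}, whose hypothesis $\Ext^1_{D(T_q),T_q}(\mathbb{C},\mathcal{V}^{(n-1,1)})=0$ it verifies by an explicit computation with the resolution \eqref{relativeResolutionTaft} (using $\mathrm{End}_{D(T_q)}(\mathcal{V}^{(n-1,1)})=\mathbb{C}\,\mathrm{id}$ and injectivity of $d_2^*$), whereas you invoke item 1, whose hypothesis $\Ext^1_{D(T_q),T_q}(\mathcal{V}^{(0,0)},\mathbb{C})=0$ is immediate from the relative projectivity of $\mathcal{V}^{(0,0)}$ (Lemma \ref{relProjObjectsForTaft}); this trades the paper's small computation for a formal vanishing statement and is slightly cleaner, at the cost of having to note that $S_1$ is non-split (indecomposability of $\mathcal{V}^{(n-1,1)}$), a symmetric point the paper leaves implicit for $S_2$. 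Your choice to start the Part 3 induction from $H^2_{\mathrm{DY}}$ rather than $H^0_{\mathrm{DY}}$ is also the more careful one, since \eqref{connectingMorphismIsoDY} is only an isomorphism for $n\geq 1$. Two caveats: the closed formula for $S_{\mathrm{DY}}$ is the computational heart of Part 2, and your proposal only asserts that the method ``should produce'' it, so the formula itself is not actually established; and the $q$-factorials do not enter through the $x$-action on $\mathcal{V}^{(s,t)}$ as you claim, but through the dual-basis normalization $(x^ig^j)^*=\frac{1}{(i)_q!}\,y^i\varphi_j$ (a consequence of the pairing \eqref{pairingTaft}) that appears when applying $\widetilde{\Gamma}^{-1}$.
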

\begin{proof}
1. We show that $\Ext^1_{D(T_q), T_q}\big(\mathbb{C}, \mathcal{V}^{(n-1,1)}\big) = 0$; then according to Corollary \ref{remarkCriterion2FoldSequences} it follows that $S = S_1 \circ S_2 \not\equiv 0$. Recall the relatively projective resolution of $\mathbb{C}$ in \eqref{relativeResolutionTaft} and consider
\[ \Hom_{D(T_q)}\big(\mathcal{V}^{(0,0)}, \mathcal{V}^{(n-1,1)} \big) \overset{d_1^*}{\longrightarrow} \Hom_{D(T_q)}\big(\mathcal{V}^{(n-1,1)}, \mathcal{V}^{(n-1,1)} \big) \overset{d_2^*}{\longrightarrow} \Hom_{D(T_q)}\big(\mathcal{V}^{(0,0)}, \mathcal{V}^{(n-1,1)} \big) \]
It is easy to see that $\Hom_{D(T_q)}\big(\mathcal{V}^{(n-1,1)}, \mathcal{V}^{(n-1,1)} \big) = \mathbb{C}\,\mathrm{id}$ and thus $\ker(d_2^*) = 0$, whence the claim.
\\2. Let us determine $\eta(S_1), \eta(S_2)$ . For $S_1$ we must fill
\[\xymatrix{
\bigl((T_q^*)^{\otimes 2} \otimes \mathcal{K}\bigr)_{\mathrm{coad}} \ar[r]^{\quad d_1} \ar@{-->}[d]^{\eta_1 = \eta(S_1)} & \bigl(T_q^* \otimes \mathcal{K}\bigr)_{\mathrm{coad}} \ar[r]^{\qquad \mathrm{act}} \ar@{-->}[d]^{\eta_0} & \mathcal{K} \ar[r] \ar@{=}[d] &0\\
\mathbb{C} \ar[r] & \mathcal{V}^{(n-1,1)} \ar[r] & \mathcal{K} \ar[r] & 0
}\]
It is straightforward to find a solution:
\[ \eta_0(y^i\varphi_j \otimes s_k) = \delta_{j,k+1} \delta_{i+k \leq n-1} v^{(n-1,1)}_{i+k}, \qquad \eta(S_1)(\varepsilon \otimes y^i\varphi_j \otimes s_k) = -\delta_{j,k+1}\delta_{n-i,k+1}. \]
where $\varphi_j = \frac{1}{n}\sum_{l=0}^{n-1}q^{-lj}h^l$. For $S_2$ one finds similarly:
\[ \eta_0(y^i\varphi_j \otimes 1) = \delta_{j,0}v_i^{(0,0)}, \qquad \eta(S_2)(\varepsilon \otimes y^i \varphi_j \otimes 1) = -\delta_{i > 0}\delta_{j,0}x_{i-1} \]
The morphisms $\eta(S_1), \eta(S_2)$ can be easily extended from these values by the $(T^*_q)^{\mathrm{op}}$-linearity, but we do not need this. From \eqref{pairingTaft}, the dual basis $\bigl( (x^ig^j)^* \bigr)_{0 \leq i,j \leq n-1}$ of the monomial basis $\big(x^ig^j\big)_{0 \leq k, l \leq n-1}$ is given by $(x^ig^j)^* = \frac{1}{(i)_q!} y^i \varphi_j$. With this we can compute the DY cocycles associated to $\eta(S_1), \eta(S_2)$ through the isomorphism \eqref{isoBarGammaInverse}:
\begin{align*}
&(S_1)_{\mathrm{DY}} = \widetilde{\Gamma}^{-1}(\eta(S_1)) : s_k \mapsto \sum_{i,j=0}^{n-1} x^ig^j \otimes \frac{1}{(i)_q!}\eta(S_1)(\varepsilon \otimes y^i\varphi_k \otimes s_k) = \frac{-1}{(n-k-1)_q!}x^{n-k-1}g^{k+1} \otimes 1,\\
&(S_2)_{\mathrm{DY}} = \widetilde{\Gamma}^{-1}(\eta(S_2)) : 1 \mapsto \sum_{i,j=0}^{n-1} x^ig^j \otimes \frac{1}{(i)_q!}\eta(S_2)(\varepsilon \otimes y^i\varphi_k \otimes 1) = -\sum_{i=1}^{n-1} x^i \otimes \frac{1}{(i)_q!}s_{i-1}.
\end{align*}
Then $S_{\mathrm{DY}} = (S_1)_{\mathrm{DY}} \circ (S_2)_{\mathrm{DY}}$ and the formula \eqref{YonedaProductHomH} for the product  $\circ$ gives the result.
\\3. Since $\mathcal{V}^{(0,0)}$ and $\mathcal{V}^{(n-1,1)}$ are relatively projective objects (Lemma \ref{relProjObjectsForTaft}), the sequences $S_1, S_2$ yield for $k \geq 1$ the connecting isomorphisms described in \eqref{connectingMorphismIsoDY}:
\[ \begin{array}{ccccc}
H^k_{\mathrm{DY}}(T_q\text{-}\mathrm{mod}, \mathbb{C}, \mathbb{C}) & \to & H^{k+1}_{\mathrm{DY}}(T_q\text{-}\mathrm{mod}, \mathcal{K}, \mathbb{C}) & \to & \multicolumn{1}{l}{H^{k+2}_{\mathrm{DY}}(T_q\text{-}\mathrm{mod}, \mathbb{C}, \mathbb{C})}\\
\multicolumn{1}{r}{g} & \mapsto & (-1)^l g \circ (S_1)_{\mathrm{DY}} & \mapsto & - g \circ (S_1)_{\mathrm{DY}} \circ (S_2)_{\mathrm{DY}}
\end{array} \]
We know that $H^0_{\mathrm{DY}}(T_q\text{-}\mathrm{mod}) = \mathbb{C}$, so by induction we deduce that $(S_{\mathrm{DY}})^{\otimes k}$ is not equal to $0$ in $H^{2k}_{\mathrm{DY}}(T_q\text{-}\mathrm{mod})$. Since this space has dimension $1$ by \eqref{dimDYTaft}, $(S_{\mathrm{DY}})^{\otimes k}$ is a basis element. For the last claim just note that $(S_{\mathrm{DY}})^{\otimes k} \circ (S_{\mathrm{DY}})^{\otimes l} = (S_{\mathrm{DY}})^{\otimes (k+l)}$.
\end{proof}

\subsection{Restricted quantum group $\bar U_{\mathbf{i}}(\mathfrak{sl}_2)$}\label{sectionExampleBarUiSl2}
\indent Let $\mathbf{i} = \sqrt{-1}$ and $\bar U_{\mathbf{i}} = \bar U_{\mathbf{i}}(\mathfrak{sl}_2)$ be the $\mathbb{C}$-algebra generated by $E,F,K$ modulo
\begin{equation}\label{relationsBarUi}
KE = -EK, \quad KF = -FK, \quad EF - FE = -\frac{\mathbf{i}}{2}(K - K^3), \quad E^2 = F^2 = 0, \quad K^4 = 1.
\end{equation}
The Hopf structure is given by
\begin{equation}\label{HopfStructureUqsl2}
\begin{array}{lll}
\Delta(E) = 1 \otimes E + E \otimes K, & \Delta(F) = F \otimes 1 + K^{-1} \otimes F, & \Delta(K) = K \otimes K,\\
\varepsilon(E) = 0, & \varepsilon(F) = 0, & \varepsilon(K) = 1,\\
S(E) = -EK^{-1}, & S(F) = -KF, & S(K) = K^{-1}.
\end{array}
\end{equation}
We also recall that this Hopf algebra is not factorisable, it admits even no braiding~\cite{GR17}. We thus can't apply the results of Section~\ref{sectionFactoHopfAlg} and proceed via representation theory of its Drinfeld double which we calculate first.

\indent Let $\rho : \bar U_i \to \mathrm{End}(\mathbb{C}^2)$ be the fundamental representation defined by
$\rho(E) = 
\left(\begin{smallmatrix}
0 & 1\\
0 & 0
\end{smallmatrix}\right)$, 
$\rho(F) = 
\left(\begin{smallmatrix}
0 & 0\\
1 & 0
\end{smallmatrix}\right)$, 
$\rho(K) = 
\left(\begin{smallmatrix}
\mathbf{i} & 0\\
0 & -\mathbf{i}
\end{smallmatrix}\right)$.
Write  $\rho = 
\left(\begin{smallmatrix}
a & b\\
c & d
\end{smallmatrix}\right)$, so that $a,b,c,d \in \bar U_{\mathbf{i}}^*$. Then $(\bar U_{\mathbf{i}}^*)^{\mathrm{op}}$ is generated by $b,c,d$ with relations
\begin{equation}\label{relationsBarUiDual}
bc = cb, \:\:\:\: db = -\mathbf{i}bd, \:\:\:\: dc = -\mathbf{i}cd, \:\:\:\: b^2=c^2=0, \:\:\:\: d^4=1.
\end{equation}
The element $a$ is not required as a generator due to the $q$-determinant relation $da + \mathbf{i}cb = 1$, which gives $a = d^3 + \mathbf{i}bcd^3$. The exchange relations of the Drinfeld double are easily computed: $D(\bar U_{\mathbf{i}})$ is the $\mathbb{C}$-algebra generated by $E, F, K, b, c, d$ modulo the relations \eqref{relationsBarUi}--\eqref{relationsBarUiDual} above and
\[\begin{array}{llll}
Ea = \mathbf{i}aE + cK, & Eb = -\mathbf{i}bE + a - dK, & Ec = \mathbf{i}cE, & Ed = -\mathbf{i}dE + c,\\
Fa = \mathbf{i}aF + \mathbf{i}bK^{-1}, & Fb = \mathbf{i}bF, & Fc = -\mathbf{i}cF + \mathbf{i}a - \mathbf{i}dK^{-1}, & Fd = -\mathbf{i}dF + ib,\\
Ka = aK, & Kb = -bK, & Kc = -cK, & Kd = dK.
\end{array}\]
It is useful to introduce the orthogonal idempotents
\begin{equation}\label{FourierTransformsBarUiDual}
\varphi_l = \frac{1}{4}\sum_{j=0}^3 \mathbf{i}^{-jl}d^j
\end{equation}
where $l \in \mathbb{Z}$ is taken modulo $4$. They satisfy $\varepsilon(\varphi_l) = \delta_{l,0}$ and
\begin{equation*}
d\varphi_l = \mathbf{i}^l \varphi_l, \qquad E\varphi_l = \varphi_{l+1}E + \frac{\mathbf{i}^{-l}}{2}c(\varphi_l + \varphi_{l+2}), \qquad F\varphi_l = \varphi_{l+1}F + \frac{\mathbf{i}^{1-l}}{2}b(\varphi_l + \varphi_{l+2}).
\end{equation*}
Moreover $\bigl(b^ic^j\varphi_l\bigr)_{0 \leq i, j \leq 1, \, 0 \leq l \leq 3}$ is a basis of $(\bar U_{\mathbf{i}}^*)^{\mathrm{op}}$.

\subsubsection{Dimension of DY cohomology groups}\label{subsubsectionDYcohomologyBarUi}
\indent It is too difficult to find a relatively projective resolution in this example. Instead, we will use the dimension formulas from Corollary \ref{corollaryDimensionDYGroups}. So let us determine the relatively projective cover $R_{\mathbb{C}}$ of the trivial $D(\bar U_{\mathbf{i}})$-module $\mathbb{C}$; recall from Remark \ref{remarkHowToComputeRelProjCover} that $R_{\mathbb{C}}$ is the minimal direct summand of $G(\mathbb{C})$ on which the counit $\varepsilon : G(\mathbb{C}) = (\bar U^*_{\mathbf{i}})_{\mathrm{coad}} \to \mathbb{C}$ does not vanish. We find that 
\[ (\bar U^*_{\mathbf{i}})_{\mathrm{coad}} = \bigl\langle b^ic^j\varphi_1 \bigr\rangle_{0 \leq i, j \leq 1} \oplus \bigl\langle b^ic^j\varphi_3 \bigr\rangle_{0 \leq i, j \leq 1} \oplus \bigl\langle b^ic^j\varphi_0, \, b^ic^j\varphi_2 \bigr\rangle_{0 \leq i, j \leq 1} \]
as a $D(\bar U_{\mathbf{i}})$-module, where $\langle \ldots \rangle$ means linear span. The direct summands $\bigl\langle b^ic^j\varphi_1 \bigr\rangle_{0 \leq i, j \leq 1}$ and \\$\bigl\langle b^ic^j\varphi_3 \bigr\rangle_{0 \leq i, j \leq 1}$ both are simple; the counit vanishes on them because $\varepsilon(\varphi_l) = \delta_{l,0}$. So the relevant direct summand is $R_{\mathbb{C}} = \bigl\langle b^ic^j\varphi_0, \, b^ic^j\varphi_2 \bigr\rangle_{0 \leq i, j \leq 1}$. It is indecomposable and its generating elements are a basis of eigenvectors for $d$ on which the $(\bar U_{\mathbf{i}}^*)^{\mathrm{op}}$-action is obvious (given by multiplication) and where the $\bar U_{\mathbf{i}}$-action is entirely determined by
\[\begin{array}{l}
E \cdot \varphi_0 = \frac{1}{2}(c\varphi_0 + c\varphi_2), \quad E \cdot \varphi_2 = -\frac{1}{2}(c\varphi_0 + c\varphi_2), \\[.5em]
F \cdot \varphi_0 = \frac{\mathbf{i}}{2}(b\varphi_0 + b\varphi_2), \quad F \cdot \varphi_2 = -\frac{\mathbf{i}}{2}(b\varphi_0 + b\varphi_2),\\[.5em]
\forall \, x, \: K\cdot x = d^2 \cdot x.
\end{array} \]
The action can be schematized by
\[ \xymatrix{
 & *+[F]{ \varphi_0, \:\: \varphi_2 } \ar[ld]_{F,b} \ar[rd]^{E,c}& \\
*+[F]{ b\varphi_0, \:\: b\varphi_2 } \ar[rd]_{E,c} & & *+[F]{ c\varphi_0, \:\: c\varphi_2 } \ar[ld]^{F,b}\\  
 & *+[F]{ bc\varphi_0, \:\: bc\varphi_2 } &
}\]

\smallskip

\indent Let $K_{\mathbb{C}} = \ker(\varepsilon_{|R_{\mathbb{C}}})$, which is generated by $\varphi_2$. By definition we have the allowable short exact sequence $0 \longrightarrow K_{\mathbb{C}} \longrightarrow R_{\mathbb{C}} \longrightarrow \mathbb{C} \longrightarrow 0$. It is not difficult to show that 
\[ R_{\mathbb{C}}^{\vee} \cong R_{\mathbb{C}} \quad \text{and} \quad K_{\mathbb{C}}^{\vee} \cong R_{\mathbb{C}}/\langle bc\varphi_2 \rangle. \]
Note that $H_{\mathrm{DY}}^0(\bar U_{\mathbf{i}}\text{-}\mathrm{mod}) \cong \mathbb{C}$ (as always for identity functor with trivial coefficients) and $H_{\mathrm{DY}}^1(\bar U_{\mathbf{i}}\text{-}\mathrm{mod}) = 0$ by Proposition \ref{propH1Equals0}. For $H_{\mathrm{DY}}^2(\bar U_{\mathbf{i}}\text{-}\mathrm{mod})$, Corollary \ref{corollaryDimensionDYGroups} and an easy computation gives
\begin{equation*}
\dim H_{\mathrm{DY}}^2(\bar U_{\mathbf{i}}\text{-}\mathrm{mod}) = \dim \Hom_{D(\bar U_{\mathbf{i}})}(K_{\mathbb{C}}, K_{\mathbb{C}}^{\vee}) - \dim \Hom_{D(\bar U_{\mathbf{i}})}(R_{\mathbb{C}}, K_{\mathbb{C}}^{\vee}) = 2 - 2 = 0.
\end{equation*}

 For higher cohomology groups, the computation of the Hom spaces appearing in   Corollary \ref{CoroDimExtWithHom} becomes  more difficult because it requires to determine the subquotient structure
 of $(K_{\mathbb{C}}^{\vee})^{\otimes (n-1)}$. However, using that $\Hom_{\mathcal{C}}(V,W) \cong \Hom_{\mathcal{C}}(\mathbf{1},W \otimes V^{\vee})$ in any tensor category $\mathcal{C}$ and the fact that $R_{\mathbb{C}}$ is self-dual we get
\begin{equation}\label{dimensionFormulaDYcohomologyWithInvForBarUi}
\dim H_{\mathrm{DY}}^n(\bar U_{\mathbf{i}}\text{-}\mathrm{mod}) = \dim \mathrm{Inv}\bigl( (K_{\mathbb{C}}^{\vee})^{\otimes n} \bigr) - \dim \mathrm{Inv}\bigl( (K_{\mathbb{C}}^{\vee})^{\otimes (n-1)} \otimes R_{\mathbb{C}} \bigr) + \dim \mathrm{Inv}\bigl( (K_{\mathbb{C}}^{\vee})^{\otimes (n-1)} \bigr)
\end{equation}
where $\mathrm{Inv}(M) = \Hom_{D(\bar U_{\mathbf{i}})}(\mathbb{C},M)$ is the subspace of invariant elements in a $D(\bar U_{\mathbf{i}})$-module $M$. Computing the dimension of these invariant subspaces simply amounts to find the number of solutions of a homogeneous linear system, which we did with a program written in GAP4\footnote{\url{https://github.com/mfaitg/DY\_cohomology/blob/main/dimension\_formula\_DY\_barUi.g}}. For $n=3$ and $n=4$ we find:
\begin{equation}\label{dimH3H4BarUi}
\dim H_{\mathrm{DY}}^3(\bar U_{\mathbf{i}}\text{-}\mathrm{mod}) = 10 - 9 + 2 = 3, \qquad \dim H_{\mathrm{DY}}^4(\bar U_{\mathbf{i}}\text{-}\mathrm{mod}) = 44 - 54 + 10 = 0.
\end{equation}

\subsubsection{Explicit cocycles}\label{sectionExplicitCocyclesBarUi}
\indent Here we address the problem of finding explicit cocycles in $H^3_{\mathrm{DY}}(\bar U_{\mathbf{i}}\text{-}\mathrm{mod})$. Thanks to two allowable $3$-fold exact sequences we constructed two explicit DY $3$-cocycles with the method of \S \ref{methodDYCocyclesThanksToSequences}, as we now explain. For $\lambda, \mu \in \{ \pm 1, \pm \mathbf{i} \}$, let $\mathcal{W}(\lambda, \mu)$ be the $D(\bar U_{\mathbf{i}})$-module with basis $(w, bw, Fw, Fbw)$ where $w$ satisfies
\[ Ew = 0, \:\: cw = 0, \:\: Kw = \lambda w, \:\: dw = \mu w \]
(formally it is the induced representation $\mathbb{C}\langle F,b \rangle \otimes_{\mathbb{C}\langle E,c,K,d \rangle} \mathbb{C}_{\lambda, \mu}$). Define
\begin{equation}\label{basisV}
 w_{\lambda, \mu} = w, \:\:\:\: w_{-\lambda, \mathbf{i}\mu} = Fw - \frac{\mathbf{i}}{2\mu}bw, \:\:\:\: w_{-\lambda, -\mathbf{i}\mu} = bw, \:\:\:\: w'_{\lambda, \mu} = Fbw.
\end{equation}
These elements form a basis of weight vectors for the actions of $K$ and $d$ and the subscripts indicate the weights for $K$ and $d$ respectively. The actions of the other generators are:
\begin{align*}
&Ew_{\lambda, \mu} = 0, \quad Fw_{\lambda,\mu} = w_{-\lambda, \mathbf{i}\mu} + \dfrac{\mathbf{i}}{2\mu}w_{-\lambda, -\mathbf{i}\mu}, \quad bw_{\lambda, \mu} = w_{-\lambda, -\mathbf{i}\mu}, \quad cw_{\lambda, \mu} = 0,\\
&Ew_{-\lambda, \mathbf{i}\mu} = \dfrac{\mathbf{i}}{2}(\lambda^{-1} - \mu^2)w_{\lambda, \mu}, \quad Fw_{-\lambda, \mathbf{i}\mu} = \dfrac{-\mathbf{i}}{2\mu}w'_{\lambda, \mu}, \quad bw_{-\lambda, \mathbf{i}\mu} = -\mathbf{i}w'_{\lambda, \mu}, \quad cw_{-\lambda, \mathbf{i}\mu} = \mu(\mu^2 - \lambda^{-1})w_{\lambda, \mu},\\
&Ew_{-\lambda, -\mathbf{i}\mu} = \mu(\mu^2 - \lambda)w_{\lambda, \mu}, \quad Fw_{-\lambda, -\mathbf{i}\mu} = w'_{\lambda, \mu}, \quad bw_{-\lambda, -\mathbf{i}\mu} = 0, \quad cw_{-\lambda, -\mathbf{i}\mu} = 0,\\
&Ew'_{\lambda, \mu} = \mathbf{i}\dfrac{\mu^2 - \lambda^{-1}}{2}w_{-\lambda, -\mathbf{i}\mu} - \mu(\lambda - \mu^2)w_{-\lambda, \mathbf{i}\mu}, \quad Fw'_{\lambda, \mu} = 0, \quad bw'_{\lambda, \mu} = 0, \quad cw'_{\lambda, \mu} = \mathbf{i}\mu(\mu^2 - \lambda^{-1})w_{-\lambda, -\mathbf{i}\mu}
\end{align*}
From these formulas it is not difficult to show that $\mathcal{W}(\lambda, \mu)$ is simple if and only if $\lambda \neq \mu^2$.\footnote{The complete list of simple $D(\bar U_{\mathbf{i}})$-modules consists actually of the $\mathcal{W}(\lambda, \mu)$ with $\lambda \neq \mu^2$ together with the $\mathbb{C}_{\mu}$, which is the $1$-dimensional module defined by $K\cdot 1 = \mu^2$, $c \cdot 1 = \mu$ and the other generators act by $0$.} For $\lambda = \mu^2$ we write $\mathcal{W}(\mu)$ instead of $\mathcal{W}(\mu^2, \mu)$ and we denote the basis of weight vectors by $w_{\mu}, w_{\mathbf{i}\mu}, w_{\mathbf{i}\mu}, w'_{\mu}$ since $K$ acts as $d^2$. We also notice that $E$ and $c$ act by $0$ on $\mathcal{W}(\mu)$, so that the action is depicted by:
\[\xymatrix@R=1em{
\mathbb{C}_{\mu} = \langle w_{\mu} \rangle \ar[d]^{F,b} \\
\mathbb{C}_{\mathbf{i}\mu} \oplus \mathbb{C}_{-\mathbf{i}\mu} = \langle w_{\mathbf{i}\mu}, w_{-\mathbf{i}\mu} \rangle \ar[d]^{F,b}\\
\mathbb{C}_{\mu} = \langle w'_{\mu} \rangle
}\]
where $\langle \ldots \rangle$ means linear span. Consider the following modules:
\[ \begin{array}{ll}
M = \mathcal{W}(\mathbf{i})/\langle w'_{\mathbf{i}}\rangle = \langle \overline{w}_{\mathbf{i}}, \overline{w}_{-1}, \overline{w}_1 \rangle, & N = \langle w_1, w_{-1}, w'_{-\mathbf{i}}\rangle \subset \mathcal{W}(-\mathbf{i}),\\[.2em]
V_{\mathbf{i}} = \langle w_{\mathbf{i}}, w'_{-1}\rangle \subset \mathcal{W}(-1), & V_{-1} = \mathcal{W}(-1)/V_{\mathbf{i}} = \langle \overline{w}_{-1}, \overline{w}_{-\mathbf{i}} \rangle.
\end{array} \]
They give three allowable short exact sequences:
\begin{align*}
&S_1 = \big( 0 \to \mathbb{C} \to M \to V_{\mathbf{i}} \to 0 \big), \quad S_2 = \big( 0 \to V_{\mathbf{i}} \to \mathcal{W}(-1) \to V_{-1} \to 0 \big), \\
&\qquad \qquad \qquad \quad S_3 = \big( 0 \to V_{-1} \to N \to \mathbb{C} \to 0 \big),
\end{align*}
such that $S_1 \circ S_2 \circ S_3$ is a $3$-fold sequence from $\mathbb{C}$ to $\mathbb{C}$. Let us compute $\widetilde{\Gamma}^{-1}\eta(S_i)$. For instance for $S_3$ we must first fill the following diagram:
\[\xymatrix{
\bigl((\bar U_{\mathbf{i}}^*)^{\otimes 2} \otimes \mathbb{C}\bigr)_{\mathrm{coad}} \ar[r]^{\quad d_1} \ar@{-->}[d]^{\eta_1 = \eta(S_1)} & \bigl(\bar U_{\mathbf{i}}^* \otimes \mathbb{C}\bigr)_{\mathrm{coad}} \ar[r]^{\qquad \mathrm{act}} \ar@{-->}[d]^{\eta_0} & \mathbb{C} \ar[r] \ar@{=}[d] &0\\
V_{-1} \ar[r] & N \ar[r] & \mathbb{C} \ar[r] & 0
}\]
($\mathrm{act}$ is equal to $\varepsilon$ in this case: $\mathrm{act}(\psi \otimes 1) = \varepsilon(\psi)$). Let us give a few details on how to find a solution. Recall the elements $\varphi_l$ defined in \eqref{FourierTransformsBarUiDual}. Since $\mathrm{act}(\varphi_1 \otimes 1) = \mathrm{act}(\varphi_3 \otimes 1) = 0$, we set $\eta_0(\varphi_1 \otimes 1) = \eta_0(\varphi_3 \otimes 1) = 0$. On the other hand, due to the weights and to the commutativity of the right square we necessarily have $\eta_0(\varphi_0 \otimes 1) = w_1$ and $\eta_0(\varphi_2 \otimes 1) = \lambda w_{-1}$ for some scalar $\lambda$. Then we note that
\[ \eta_0(F \cdot (\varphi_2 \otimes 1)) = \eta_0(\varphi_3 \otimes F \cdot 1) - \frac{\mathbf{i}}{2}\eta_0(b(\varphi_0 + \varphi_2) \otimes 1) = -\frac{1}{2}w'_{-\mathbf{i}}, \quad F \cdot \eta_0(\varphi_2 \otimes 1) = \lambda w'_{-\mathbf{i}} \]
which reveals that $\lambda = -\frac{1}{2}$. Hence
\[ \eta_0\big(b^jc^k\varphi_l \otimes 1\big) = b^jc^k\eta_0\big(\varphi_l \otimes 1\big) = \delta_{j,0}\delta_{k,0}\delta_{l,0}w_1 - \mathbf{i}\delta_{j,1}\delta_{k,0}\delta_{l,0}w'_{-\mathbf{i}} - \frac{1}{2}\delta_{j,0}\delta_{k,0}\delta_{l,2}w_{-1}. \]
It follows that
\[ \eta_0d_1\big( \varepsilon \otimes b^jc^k\varphi_l \otimes 1 \big) = \eta_0\big( \delta_{j,0}\delta_{k,0}\delta_{l,0} \varepsilon \otimes 1 - b^jc^k\varphi_l \otimes 1 \big) = \frac{1}{2}\delta_{j,0}\delta_{k,0}(\delta_{l,2} - \delta_{l,0})w_{-1} +  \mathbf{i}\delta_{j,1}\delta_{k,0}\delta_{l,0}w_{-\mathbf{i}} \]
and a solution is simply given by
\[ \eta(S_1)\big( \varepsilon \otimes b^jc^k\varphi_l \otimes 1 \big) = \frac{1}{2}\delta_{j,0}\delta_{k,0}(\delta_{l,2} - \delta_{l,0})\overline{w}_{-1} +  \mathbf{i}\delta_{j,1}\delta_{k,0}\delta_{l,0}\overline{w}_{-\mathbf{i}}. \]
Recall from \eqref{isoBarGammaInverse} that to compute $\widetilde{\Gamma}^{-1}$ we need a pair of dual bases. It is not difficult to find the dual basis of the monomial basis of $\bar U_{\mathbf{i}}$:
\[ (K^l)^* = \varphi_{-l}, \quad (EK^l)^* = \mathbf{i}^lb\varphi_{-l}, \quad (FK^l)^* = \mathbf{i}^{-l}c\varphi_{1-l}, \quad (EFK^l)^* = bc\varphi_{1-l}. \]
(recall that the products on the right-hand sides are in $(\bar U_{\mathbf{i}}^*)^{\mathrm{op}}$). It follows that
\[ (S_3)_{\mathrm{DY}} = \widetilde{\Gamma}^{-1}\eta(S_3) : \mathbb{C} \to \bar U_{\mathbf{i}} \otimes V_{-1}, \quad 1 \mapsto -\boldsymbol{e} \otimes \overline{w}_{-1} + \mathbf{i}E \otimes \overline{w}_{-\mathbf{i}} \]
where $\boldsymbol{e} = \frac{1 - K^2}{2}$. One similarly finds
\begin{align*}
&(S_1)_{\mathrm{DY}} = \widetilde{\Gamma}^{-1}\eta(S_1) : V_{\mathbf{i}} \to \bar U_{\mathbf{i}} \otimes \mathbb{C}, \quad w_{\mathbf{i}} \mapsto \mathbf{i}EK^3 \otimes 1, \quad w'_{-1} \mapsto -\boldsymbol{e} \otimes 1,\\
&(S_2)_{\mathrm{DY}} = \widetilde{\Gamma}^{-1}\eta(S_2) : V_{-1} \to \bar U_{\mathbf{i}} \otimes V_{\mathbf{i}}, \quad \overline{w}_{-1} \mapsto EK^2 \otimes w_{\mathbf{i}}, \quad \overline{w}_{-\mathbf{i}} \mapsto -\frac{\mathbf{i}}{2}K\boldsymbol{e} \otimes w_{\mathbf{i}} - EK \otimes w_{-1}.
\end{align*}
Thanks to \eqref{SDYYoneda} and to the formula \eqref{YonedaProductHomH} for the product $\circ$ we find
\[ (S_1 \circ S_2 \circ S_3)_{\mathrm{DY}} = (S_1)_{\mathrm{DY}} \circ (S_2)_{\mathrm{DY}} \circ (S_3)_{\mathrm{DY}} = \mathbf{i}c_1 \]
where $c_1$ is given in Proposition \ref{propDYbarUi} below.

\smallskip

\indent In a completely analogous fashion, we can consider the $D(\bar U_{\mathbf{i}})$-module $\overline{\mathcal{W}}(\lambda,\mu)$ with basis $(w, cw$, $Ew, Ecw)$ where $w$ satisfies
\[ Fw = 0, \:\: bw = 0, \:\: Kw = \lambda w, \:\: dw = \mu w \]
and if we reproduce the above reasoning with $\overline{\mathcal{W}}(\lambda,\mu)$ instead of $\mathcal{W}(\lambda,\mu)$, we will discover the cocycle $c_2$ in Proposition \ref{propDYbarUi} below (up to a scalar).

\smallskip

\indent Unfortunately, we have not been able to show that the cocycles $c_1, c_2$ so obtained are a free family in $H^3_{\mathrm{DY}}(\bar U_{\mathbf{i}}\text{-}\mathrm{mod})$, or to find another explicit $3$-cocycle. Recall from \eqref{dimH3H4BarUi} that $\dim\bigl(H^3_{\mathrm{DY}}(\bar U_{\mathbf{i}}\text{-}\mathrm{mod})\bigr) = 3$; with the help of a program written in GAP4\footnote{\url{https://github.com/mfaitg/DY\_cohomology/blob/main/basis\_H3\_DY\_barUi.g}}, we get the following:

\begin{proposition}\label{propDYbarUi}
A basis of explicit cocycles for $H^3_{\mathrm{DY}}(\bar U_{\mathbf{i}}\text{-}\mathrm{mod})$ is
\begin{align*}
& c_1 = \boldsymbol{e}  \otimes  EK^2  \otimes  EK^3 - E  \otimes  K\boldsymbol{e}  \otimes  EK^3 - E  \otimes  EK  \otimes  \boldsymbol{e},\\[.3em]
& c_2 = \boldsymbol{e}  \otimes  FK^3  \otimes  F - FK  \otimes  K\boldsymbol{e}  \otimes  F - FK  \otimes  FK^2  \otimes \boldsymbol{e},\\[.3em]
& c_3 = \boldsymbol{e}  \otimes  EK^2  \otimes  FK^2 - \boldsymbol{e}  \otimes  FK^3  \otimes  EK^3 + EK^2  \otimes  K\boldsymbol{e}  \otimes  FK^2 \\
& \qquad + FK  \otimes  K\boldsymbol{e}  \otimes  EK^3 - EK^2 \otimes  FK^2  \otimes \boldsymbol{e} + FK  \otimes  EK  \otimes \boldsymbol{e}
\end{align*}
where $\boldsymbol{e} = \frac{1 - K^2}{2}$.
\end{proposition}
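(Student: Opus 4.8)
The plan is to exploit the very concrete model of the Davydov--Yetter complex with trivial coefficients furnished by \eqref{DYComplexExplicitFormWithCentralizers}, under which a $3$-cochain is simply an element $\mathbf{x} \in \mathcal{Z}\bigl(\Delta^{(2)}(\bar U_{\mathbf{i}})\bigr) \subset \bar U_{\mathbf{i}}^{\otimes 3}$ and the differential is the explicit map \eqref{differentialDYHmodForTrivialCoeffs}. Since \eqref{dimH3H4BarUi} already gives $\dim H^3_{\mathrm{DY}}(\bar U_{\mathbf{i}}\text{-}\mathrm{mod}) = 3$, it suffices to prove two things: (a) each of $c_1, c_2, c_3$ is a genuine $3$-cocycle, i.e.\ lies in $\mathcal{Z}\bigl(\Delta^{(2)}(\bar U_{\mathbf{i}})\bigr)$ and is killed by $\delta^3$; and (b) the classes $[c_1], [c_2], [c_3]$ are linearly independent in $H^3_{\mathrm{DY}}$. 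Together (a) and (b) force these three classes to be a basis, because the cohomology is exactly $3$-dimensional.

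For (a) I would first verify the centralizer condition: an element of $\bar U_{\mathbf{i}}^{\otimes 3}$ lies in $\mathcal{Z}\bigl(\Delta^{(2)}(\bar U_{\mathbf{i}})\bigr)$ as soon as it commutes with $\Delta^{(2)}(E)$, $\Delta^{(2)}(F)$ and $\Delta^{(2)}(K)$, since $E, F, K$ generate $\bar U_{\mathbf{i}}$; this is a finite check using the coproducts \eqref{HopfStructureUqsl2} and the relations \eqref{relationsBarUi}. Then I would apply \eqref{differentialDYHmodForTrivialCoeffs} to each $c_i$ and confirm the result vanishes in $\bar U_{\mathbf{i}}^{\otimes 4}$. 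For $c_1$ and $c_2$ this cocycle property is in fact automatic from the construction of \S\ref{sectionExplicitCocyclesBarUi}, where they arise (up to scalar) as $\widetilde{\Gamma}^{-1}\eta$ of the allowable $3$-fold exact sequence $S_1 \circ S_2 \circ S_3$ and its analogue built from $\overline{\mathcal{W}}(\lambda, \mu)$; the map $\widetilde{\Gamma}^{-1}\eta$ lands in cocycles by \eqref{IsoYExtExtOnCocycles} and \eqref{isoBarGammaInverse}. I would also record that the algebra, anti-coalgebra automorphism $\sigma$ of $\bar U_{\mathbf{i}}$ exchanging $E \leftrightarrow F$ and $K \mapsto K^{-1}$ preserves the defining relations and carries the $c_1$-construction to the $c_2$-construction, so that the cocycle property of $c_2$ follows from that of $c_1$.

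The main obstacle is (b), the linear independence in cohomology, for which no closed-form argument seems available and a computer calculation is required. Concretely I would compute explicit bases of the cochain spaces $C^2_{\mathrm{DY}} \cong \mathcal{Z}\bigl(\Delta(\bar U_{\mathbf{i}})\bigr)$ and $C^3_{\mathrm{DY}} \cong \mathcal{Z}\bigl(\Delta^{(2)}(\bar U_{\mathbf{i}})\bigr)$ by solving the corresponding homogeneous linear systems (the centralizer conditions), assemble the matrix of $\delta^2 : C^2_{\mathrm{DY}} \to C^3_{\mathrm{DY}}$ from \eqref{differentialDYHmodForTrivialCoeffs}, and determine $\mathrm{im}(\delta^2)$ by Gaussian elimination. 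It then remains to check that the cosets of $c_1, c_2, c_3$ in the quotient $\ker(\delta^3)/\mathrm{im}(\delta^2)$ are linearly independent, i.e.\ that the $\mathrm{rank}$ of the family $\{c_1, c_2, c_3\} \cup (\text{basis of } \mathrm{im}\,\delta^2)$ exceeds $\dim \mathrm{im}(\delta^2)$ by exactly $3$. The difficulty here is entirely one of scale, as $\bar U_{\mathbf{i}}^{\otimes 3}$ is $512$-dimensional; I would carry out this linear algebra over $\mathbb{Q}(\mathbf{i})$ with a GAP4 program, exactly as for the dimension computation behind \eqref{dimH3H4BarUi}. Since both $\mathbb{C}^3$ and $H^3_{\mathrm{DY}}$ are $3$-dimensional, verified independence of the three classes is equivalent to their spanning, completing the proof.
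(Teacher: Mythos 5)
Your proposal is correct and takes essentially the same route as the paper: both reduce the statement to the concrete centralizer model $C^n_{\mathrm{DY}}(\bar U_{\mathbf{i}}\text{-}\mathrm{mod}) \cong \mathcal{Z}\bigl(\Delta^{(n-1)}(\bar U_{\mathbf{i}})\bigr)$ with the differential \eqref{differentialDYHmodForTrivialCoeffs}, and settle everything by a GAP4 linear-algebra computation over the Gaussian rationals, invoking $\dim H^3_{\mathrm{DY}}(\bar U_{\mathbf{i}}\text{-}\mathrm{mod}) = 3$ from \eqref{dimH3H4BarUi}. The only cosmetic difference is that you check the cocycle property and the independence of the classes $[c_1],[c_2],[c_3]$ as two separate steps (with the nice extra observations that $c_1, c_2$ are automatically cocycles by their origin in \S\ref{sectionExplicitCocyclesBarUi}, and related by an $E \leftrightarrow F$, $K \mapsto K^{-1}$ symmetry), whereas the paper's program directly searches for three elements of $\ker(\delta^3)$ spanning a complement to $\mathrm{im}(\delta^2)$; these amount to the same computation.
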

\begin{proof}
We use the description of the DY complex given in \eqref{DYComplexExplicitFormWithCentralizers}--\eqref{differentialDYHmodForTrivialCoeffs}. It is a purely linear problem. In GAP4, we represent $\bar U_{\mathbf{i}}$ as a $16$-dimensional vector space over $\mathsf{Qi} := \mathsf{GaussianRationals}$, together with a unit tensor $\mathsf{un} : \mathsf{Qi} \to \bar U_{\mathbf{i}}$ and matrices $\mathsf{m}_E, \mathsf{m}_F, \mathsf{m}_K$ for the left multiplication by $E,F,K$ respectively. From these matrices, we easily construct
\begin{itemize}
\item the multiplication tensor $\mathsf{m} : \bar U_{\mathbf{i}}^{\otimes 2} \to \bar U_{\mathbf{i}}$ (a matrix of size $16 \times 256$),
\item the comultiplication tensor $\mathsf{com} : \bar U_{\mathbf{i}} \to \bar U_{\mathbf{i}}^{\otimes 2}$ (a matrix of size $256 \times 16$),
\item the matrices $\mathsf{r}_E, \mathsf{r}_F, \mathsf{r}_K$ for the right multiplication by $E,F,K$ respectively.
\end{itemize}
From this data it is easy to determine the centralizers of the iterated coproduct and to define the matrices of the differentials $\delta^2, \delta^3$, recall~\eqref{differentialDYHmodForTrivialCoeffs}. Finally, one looks for $3$ basis elements of $\ker(\delta^3)$ which generate a supplementary to $\mathrm{im}(\delta^2) \subset \ker(\delta^3)$.
\end{proof}

\begin{remark}\label{remarkRelExtNot0UsualExt0BarUi}
\begin{enumerate}
\item From the results in \eqref{dimH3H4BarUi}, the category $\bar U_{\mathbf{i}}\text{-}\mathrm{mod}$ admits a $3$-parameter family of deformations of its trivial associator, up to soft tensor autoequivalence \cite{davydov}, and each of them can be extended to the next order due to the fact that $H^4_{\mathrm{DY}} = 0$ \cite[Prop. 3.21]{BD}. In Hopf algebra terms, Proposition \ref{propDYbarUi} means that any infinitesimal coassociator on $\bar U_{\mathbf{i}}$ is gauge equivalent to $\lambda_1c_1 + \lambda_2 c_2 + \lambda_3c_3$ for certain $\lambda_i \in \mathbb{C}$.
\item
We have seen two methods to compute the DY cohomology using a computer program: at the end of \S \ref{subsubsectionDYcohomologyBarUi} and in the previous proof.  If one is just interested in the dimensions of the DY cohomology groups, the method explained at the end of \S \ref{subsubsectionDYcohomologyBarUi} is much faster, although it requires to do a bit of representation theory in order to determine the relatively projective cover of $\mathbb{C}$.

\item  By \eqref{DYandRelExtDHH} and \eqref{dimH3H4BarUi}, we get $\Ext^3_{D(\bar U_{\mathbf{i}}), \bar U_{\mathbf{i}}}(\mathbb{C}, \mathbb{C}) \cong \mathbb{C}^3$.  On the other hand by finding a usual projective resolution of $\mathbb{C}$ one can compute that $\Ext^3_{D(\bar U_{\mathbf{i}})}(\mathbb{C}, \mathbb{C}) = 0$. Hence the two allowable $3$-fold exact sequences constructed above form a free family in $\YExt^3_{D(\bar U_{\mathbf{i}}), \bar U_{\mathbf{i}}}(\mathbb{C}, \mathbb{C}) \cong \mathbb{C}^3$ but are equal to $0$ in \mbox{$\YExt^3_{D(\bar U_{\mathbf{i}})}(\mathbb{C}, \mathbb{C}) = 0$.}

\item
 It is straightforward to compute a usual projective resolution of $\mathbb{C}$ over $(U_{\mathbf{i}}^*)^{\mathrm{op}}$ by using the defining relations \eqref{relationsBarUiDual}. Thanks to Lemma \ref{DYForgetfulExt} we get $H^3_{\mathrm{DY}}(U) = 0$, where $U : \bar U_{\mathbf{i}}\text{-}\mathrm{mod} \to \mathrm{Vect}_{\mathbb{C}}$ is the forgetful functor. This means that the map $\overline{\iota}_3 : H^3_{\mathrm{DY}}(\bar U_{\mathbf{i}}\text{-}\mathrm{mod}) \to H^3_{\mathrm{DY}}(U)$ defined in Remark \ref{remarkConcreteDY} is equal to $0$. In other words, there exist elements $b_i \in \bar U_{\mathbf{i}}^{\otimes 2}$ such that $\iota_3(c_i) = \delta^2(b_i)$ for $i = 1,2,3$. This is in contrast to the previous two examples, the algebras $B_k$ and the Taft algebras, where the DY cohomologies of the identity functor equal those of the forgetful functor.
\end{enumerate}
\end{remark}

\subsection{Small quantum group $u_q(\mathfrak{sl}_2)$}\label{sectionSmallQuantumGroupH2}
Let $q$ be a primitive root of unity of odd order $p \geq 3$. The small quantum group $u_q = u_q(\mathfrak{sl}_2)$ is the $\mathbb{C}$-algebra generated by $E,F,K$ modulo
\[ KE = q^2EK, \quad KF = q^{-2}FK, \quad EF - FE = \frac{K - K^{-1}}{q - q^{-1}}, \quad E^p = F ^p = 0, \quad K^p = 1. \]
The Hopf structure is given by the same formulas as in \eqref{HopfStructureUqsl2}.

\smallskip

\indent The Hopf algebra $u_q$ is known to be factorizable \cite[Cor.\,A.3.3]{lyu}, so the results of \S\ref{sectionFactoHopfAlg} apply and we will not need to compute the Drinfeld double of $u_q$.
\begin{proposition}
$H^2_{\mathrm{DY}}\bigl(u_q(\mathfrak{sl}_2)\text{-}\mathrm{mod}\bigr) = 0$.
\end{proposition}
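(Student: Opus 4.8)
The plan is to reduce the statement to a Hom-space computation in $(u_q\otimes u_q)\text{-}\mathrm{mod}$ and then feed in the representation theory of $u_q(\mathfrak{sl}_2)$ from \cite{FGST}. Since $u_q$ is factorizable, I would first combine the isomorphism \eqref{DYandRelExtDHH} with Proposition \ref{propFactorizable} to write
\[ H^2_{\mathrm{DY}}(u_q\text{-}\mathrm{mod}) \cong \Ext^2_{u_q\otimes u_q,\, \Delta(u_q)}(\mathbb{C}\boxtimes\mathbb{C}, \mathbb{C}\boxtimes\mathbb{C}), \]
the point being that this sidesteps the Drinfeld double entirely. By Proposition \ref{lemmaRelProjCoverFactorizable} the relatively projective cover of the trivial module is the principal block $Q_1$, so writing $K_1 = \ker(\varepsilon|_{Q_1})$ and $T = Q_1/(\mathbb{C}\boxtimes\mathbb{C}) \cong K_1^{\vee}$ (Proposition \ref{propPropertiesQ1Facto}), the $n=2$ case of Corollary \ref{dimFormulaDYFactoHopfAlg}, together with the vanishing $\dim\Hom_{u_q\otimes u_q}(\mathbb{C}\boxtimes\mathbb{C}, T)=0$ coming from the relatively-projective-cover property (Corollary \ref{DimExt2WithHom}), gives
\[ \dim H^2_{\mathrm{DY}}(u_q\text{-}\mathrm{mod}) = \dim\Hom_{u_q\otimes u_q}(K_1, T) - \dim\Hom_{u_q\otimes u_q}(Q_1, T). \]

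Next I would repackage this difference as a single relative $\Ext^1$, which is the most transparent object to attack. Applying $\Hom_{u_q\otimes u_q}(-,T)$ to the allowable sequence $0 \to K_1 \xrightarrow{\iota} Q_1 \xrightarrow{\varepsilon} \mathbb{C}\boxtimes\mathbb{C} \to 0$ and using $\Hom_{u_q\otimes u_q}(\mathbb{C}\boxtimes\mathbb{C}, T)=\mathrm{Inv}(T)=0$ shows $\iota^{*}$ is injective, so the above difference equals $\dim\operatorname{coker}(\iota^{*})$; equivalently, chaining Corollaries \ref{corollaryInductionFormulaExt} and \ref{switchFormulaCoeffsRelExt} yields
\[ H^2_{\mathrm{DY}}(u_q\text{-}\mathrm{mod}) \cong \Ext^1_{u_q\otimes u_q,\, \Delta(u_q)}(\mathbb{C}\boxtimes\mathbb{C}, T). \]
By \eqref{injectionRelExt1FullExt1} this embeds into the ordinary $\Ext^1_{u_q\otimes u_q}(\mathbb{C}\boxtimes\mathbb{C}, T)$, which by duality is isomorphic to $\Ext^1_{u_q\otimes u_q}(K_1, \mathbb{C}\boxtimes\mathbb{C})$. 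Hence it suffices to prove that the trivial bimodule $\mathbb{C}\boxtimes\mathbb{C}$ admits no non-split extension by $T$, equivalently that $\Ext^1_{u_q\otimes u_q}(K_1, \mathbb{C}\boxtimes\mathbb{C}) = 0$.

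Finally I would establish this vanishing from the block structure of $u_q(\mathfrak{sl}_2)$. For $q$ of odd order $p$ the principal block contains exactly the simples $\mathbb{C}$ and $V_{p-1}$, and the projective cover of the trivial module has Loewy series $\mathbb{C}\,|\,V_{p-1}\oplus V_{p-1}\,|\,\mathbb{C}$; from \cite{FGST} one reads off the associated $u_q$-bimodule structure of $Q_1$, and hence the radical and socle layers of $K_1$ and $T$. The computation of $\Ext^1_{u_q\otimes u_q}(K_1, \mathbb{C}\boxtimes\mathbb{C})$ then amounts to checking whether $\mathbb{C}\boxtimes\mathbb{C}$ occurs in the top of the first syzygy of $K_1$, a finite linear-algebra problem of exactly the invariant-counting type used for $\bar U_{\mathbf{i}}$ in \S\ref{subsubsectionDYcohomologyBarUi}, which I expect to come out empty and thus force $H^2_{\mathrm{DY}}=0$. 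The main obstacle is precisely this last step: one must control the genuinely two-sided (bimodule) structure of the principal block, not just the one-sided module $P_1$, since $\Ext^1$ in $(u_q\otimes u_q)\text{-}\mathrm{mod}$ is governed by the full bimodule layers; extracting these correctly from \cite{FGST} is where the real work lies, after which the vanishing should be immediate.
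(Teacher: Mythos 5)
Your opening moves coincide with the paper's own proof: factorizability plus \eqref{DYandRelExtDHH} and Proposition \ref{propFactorizable} reduce everything to the resolvent pair $(u_q\otimes u_q,\Delta(u_q))$, Propositions \ref{lemmaRelProjCoverFactorizable} and \ref{propPropertiesQ1Facto} identify $Q_1$, $K_1$, $T\cong K_1^{\vee}$, and your identity $\dim H^2_{\mathrm{DY}}(u_q\text{-}\mathrm{mod})=\dim\Hom_{u_q\otimes u_q}(K_1,T)-\dim\Hom_{u_q\otimes u_q}(Q_1,T)$, as well as its repackaging as the \emph{relative} group $\Ext^1_{u_q\otimes u_q,\Delta(u_q)}(\mathbb{C}\boxtimes\mathbb{C},T)$, are both correct. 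The gap is your final step: you propose to conclude by showing that the \emph{ordinary} group $\Ext^1_{u_q\otimes u_q}(\mathbb{C}\boxtimes\mathbb{C},T)\cong\Ext^1_{u_q\otimes u_q}(K_1,\mathbb{C}\boxtimes\mathbb{C})$, into which the relative group embeds by \eqref{injectionRelExt1FullExt1}, vanishes. It does not vanish, so this sufficient condition is false and no computation of bimodule layers from \cite{FGST} can ``come out empty''.

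To see this, identify $(u_q\otimes u_q)$-modules with $u_q$-bimodules; then $\mathbb{C}\boxtimes\mathbb{C}$ is the trivial bimodule $\mathbb{C}_{\varepsilon}$ and $Q_1$ is a direct summand of the regular bimodule, so $\Ext^i_{u_q\otimes u_q}(Q_1,\mathbb{C}\boxtimes\mathbb{C})$ is a direct summand of the Hochschild cohomology $\mathrm{HH}^i(u_q,\mathbb{C}_{\varepsilon})\cong\Ext^i_{u_q}(\mathbb{C},{}^{\mathrm{ad}}\mathbb{C}_{\varepsilon})=\Ext^i_{u_q}(\mathbb{C},\mathbb{C})$. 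Hence it vanishes for $i=1$ and has dimension at most $d:=\dim\Ext^2_{u_q}(\mathbb{C},\mathbb{C})$ for $i=2$; by the Ginzburg--Kumar description of $H^{\bullet}(u_q(\mathfrak{sl}_2),\mathbb{C})$ as functions on the nilpotent cone, $d=3$ (only $d\geq 1$ is needed below). On the other hand, the K\"unneth formula together with $\Ext^1_{u_q}(\mathbb{C},\mathbb{C})=0$ gives $\Ext^2_{u_q\otimes u_q}(\mathbb{C}\boxtimes\mathbb{C},\mathbb{C}\boxtimes\mathbb{C})\cong\Ext^2_{u_q}(\mathbb{C},\mathbb{C})^{\oplus 2}\cong\mathbb{C}^{2d}$. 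Applying $\Hom_{u_q\otimes u_q}(-,\mathbb{C}\boxtimes\mathbb{C})$ to $0\to K_1\to Q_1\to\mathbb{C}\boxtimes\mathbb{C}\to 0$ yields the exact sequence
\begin{equation*}
0=\Ext^1_{u_q\otimes u_q}(Q_1,\mathbb{C}\boxtimes\mathbb{C})\longrightarrow\Ext^1_{u_q\otimes u_q}(K_1,\mathbb{C}\boxtimes\mathbb{C})\longrightarrow\mathbb{C}^{2d}\longrightarrow\Ext^2_{u_q\otimes u_q}(Q_1,\mathbb{C}\boxtimes\mathbb{C}),
\end{equation*}
whence $\dim\Ext^1_{u_q\otimes u_q}(K_1,\mathbb{C}\boxtimes\mathbb{C})\geq 2d-d=d\geq 3$. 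So the relative $\Ext^1$ (which is what $H^2_{\mathrm{DY}}$ computes, and which is indeed zero) is a \emph{proper} subspace of a non-zero ordinary $\Ext^1$: the allowability condition --- splitting after restriction to $\Delta(u_q)$ --- is precisely what kills these classes, and discarding it via \eqref{injectionRelExt1FullExt1} loses the theorem. This discrepancy between relative and absolute Ext for these adjunctions is the same phenomenon recorded in Remark \ref{remarkRelExtNot0UsualExt0BarUi}. To finish, you must stay inside the relative theory, as the paper does: read the $(u_q\otimes u_q)$-module structure of $Q_1$ (hence of $K_1$ and $T$) off the bimodule diagrams of \cite{FGST} and compute directly that $\dim\Hom_{u_q\otimes u_q}(K_1,T)=2=\dim\Hom_{u_q\otimes u_q}(Q_1,T)$, which gives $H^2_{\mathrm{DY}}(u_q\text{-}\mathrm{mod})=0$.
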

\begin{proof}
Let $Q_1$ be as in \S\ref{sectionFactoHopfAlg}, $K_1 = \ker(\varepsilon : Q_1 \to \mathbb{C})$ and $T = Q_1/(\mathbb{C} \boxtimes \mathbb{C})$. Then by Corollary \ref{dimFormulaDYFactoHopfAlg}:
\[ \dim H^2_{\mathrm{DY}}(u_q\text{-}\mathrm{mod}) = \dim \Hom_{u_q \otimes u_q}(K_1,T) - \dim \Hom_{u_q \otimes u_q}(Q_1,T). \]
Recall that there are $p$ isomorphism classes of simple $u_q$-modules, denoted by $\mathscr{X}(s)$ with $1 \leq s \leq p$. $\mathscr{X}(s)$ has dimension $s$ and is generated by a vector $v_s$ such that $Ev_s = 0$ and $Kv_s = q^{s-1}v_s$. In particular $\mathscr{X}(1) = \mathbb{C}$. By the same analysis as in \cite[\S 4.4.2]{FGST}, the structure of $Q_1$ can be depicted by diagrams similar to the ones given below \cite[Prop. 4.4.2]{FGST} (one has to take $s=1$ and to erase the signs on the simple modules). Then $K_1$ is obtained by ignoring the node $\mathscr{X}(1) \boxtimes \mathscr{X}(1)$ at the top of the diagrams of $Q_1$ and the arrows out of it. Using the diagrammatic representation of the structure of these modules, we easily find
\[ \dim \Hom_{u_q \otimes u_q}(K_1,T) = 2, \qquad \dim \Hom_{u_q \otimes u_q}(Q_1,T) = 2. \qedhere \]
\end{proof}

\indent For higher cohomology groups, the computation of the Hom spaces appearing in Corollary \ref{CoroDimExtWithHom} becomes much more difficult because it requires to find the structure of $K_1^{\otimes (n-1)}$. If the root of unity $q$ has a small order, one can use a computer with the same strategy as in \eqref{dimensionFormulaDYcohomologyWithInvForBarUi}, but with the formula of Corollary \ref{dimFormulaDYFactoHopfAlg}. For instance if $q^3=1$, 
\begin{align*}
\dim H^3_{\mathrm{DY}}(u_q\text{-}\mathrm{mod}) &= \dim \mathrm{Inv}\bigl( T^{\otimes 3} \bigr) - \dim \mathrm{Inv}\bigl( T^{\otimes 2} \otimes Q_1 \bigr) + \dim \mathrm{Inv}\bigl( T^{\otimes 2} \bigr)\\
&= 8 - 9 + 2 = 1
\end{align*}
where $\mathrm{Inv}(V) = \Hom_{u_q \otimes u_q}(\mathbb{C} \boxtimes \mathbb{C}, V)$ and $T = Q_1/(\mathbb{C} \boxtimes \mathbb{C})$. For the first equality we used that $Q_1$ is self-dual and $K_1^{\vee} \cong T$ (Proposition \ref{propPropertiesQ1Facto}) while the second equality is obtained by a program\footnote{\url{https://github.com/mfaitg/DY_cohomology/blob/main/dimension_formula_DY_uj.g}} written in GAP4.

\end{document}